\newtheorem{proposition}{Proposition}[section]
\newtheorem{corollary}[proposition]{Corollary}
\newtheorem{lemma}[proposition]{Lemma}
\newtheorem{theoremA}{Theorem}
\theoremstyle{definition}
\newtheorem{definition}[proposition]{Definition}
\newcommand{\bb}[1]{\mathbb{#1}}
\newcommand{\R}{\bb{R}}
\newcommand{\B}{\bb{B}^2}
\newcommand{\A}{\mathbb{A}^2}
\newcommand{\N}{\bb{N}}
\newcommand{\HH}{\mathcal{H}} 
\newcommand{\ds}{\displaystyle}
\newcommand{\Oset}{\varnothing} 
\newcommand{\Id}{\mathrm{Id}} 
\renewcommand{\d}{\,\mathrm d}
\newcommand{\e}{\mathrm e} 
\newcommand{\vol}{\,\mathrm{vol}}
\newcommand{\diam}{\mathrm{diam}}
\newcommand{\VMO}{\mathrm{VMO}}
\newcommand{\sys}{\mathrm{Sys}}
\newcommand{\sysN}{\sys(\manifold N)}
\newcommand{\Esg}{\mathcal E_{\mathrm{sg}}}
\newcommand{\ren}{\mathrm{ren}}
\newcommand{\Eren}{\mathcal E_{\ren}}
\newcommand{\manifold}{\mathcal}
\newcommand{\collection}{\mathcal}
\newcommand{\Trace}{\mathrm{Tr}} \newcommand{\Tr}{\mathrm{Tr}} \newcommand{\scauthor}[1]{\textsc{#1}}
\newcommand{\BV}{\mathrm{BV}}
\renewcommand{\vol}[1]{\,\HH^2({#1})}
\DeclarePairedDelimiter{\abs}{\lvert}{\rvert}
\DeclarePairedDelimiterX\dupr[2]{\langle}{\rangle}{#1, #2}
\DeclarePairedDelimiterX\intvo[2]{(}{)}{#1, #2}
\DeclarePairedDelimiterX\intvc[2]{[}{]}{#1, #2}
\DeclarePairedDelimiterX\intvl[2]{(}{]}{#1, #2}
\DeclarePairedDelimiterX\intvr[2]{[}{)}{#1, #2}
\DeclareMathOperator{\dist}{dist} 
\begin{document}

\title{Asymptotic behavior of minimizing $p$-harmonic maps when $p \nearrow 2$ in dimension 2}

\author{Jean Van Schaftingen}
\address{Université catholique de Louvain\\ Institut de Recherche en Mathématique et Physique\\ Chemin du cyclotron 2 bte L7.01.02\\ 1348 Louvain-la-Neuve\\ Belgium}
\email{jean.vanschaftingen@uclouvain.be}

\author{Benoît Van Vaerenbergh}
\address{Université catholique de Louvain\\ Institut de Recherche en Mathématique et Physique\\ Chemin du cyclotron 2 bte L7.01.02\\ 1348 Louvain-la-Neuve\\ Belgium}
\email{benoit.vanvaerenbergh@uclouvain.be}

\thanks{Jean Van Schaftingen is supported by the
Projet de Recherche T.0229.21 ``Singular Harmonic Maps and Asymptotics of Ginzburg--Landau Relaxations'' of the
Fonds de la Recherche Scientifique--FNRS. Benoît Van Vaerenbergh is supported by a FRIA fellowship; cette publication bénéficie du soutien de la Communauté française de Belgique dans le cadre du financement d’une bourse FRIA}

\setcounter{tocdepth}{4} 

\keywords{\(p\)--harmonic mapping, renormalized energy, topological obstruction, systole}

\thanks{}

\subjclass[2020]{58E20 (49J45)}

\begin{abstract}
We study $p$--harmonic maps with Dirichlet boundary conditions from a planar domain into a general compact Riemannian manifold.
We show that as $p$ approaches $2$ from below, they converge up to a subsequence to a minimizing singular renormalizable harmonic map. The singularities are imposed by topological obstructions to the existence of harmonic mappings; the location of the singularities being governed by a renormalized energy.
Our analysis is based on lower bounds on growing balls and also yields some uniform weak-$L^p$ bounds (also known as Marcinkiewicz or  Lorentz $L^{p,\infty}$).
\end{abstract}
\date{July 18, 2023}
\maketitle
 
\tableofcontents

\section{Introduction and main results}\label{section:intro}
    
Given a bounded Lipschitz planar domain $\Omega \subset  \R^2$, a  Riemmanian manifold $\manifold N \subset \R^\nu$, and $g \in W^{\sfrac{1}{2},2}(\partial \Omega,\manifold N)$, a minimizing $2$-harmonic map $u \in W^{1,2}(\Omega,\manifold N) \doteq W^{1,2}(\Omega,\R^\nu)\cap \{\text{for a.e. } x \in \Omega, u(x)\in \manifold N\}$ is a minimizer of 
\begin{equation}
\label{eq_beiG7eijie3aexecaiti4lei}
\inf \Bigl\{\int_\Omega \frac{|Du|^2}{2} : \begin{matrix}
	u \in  W^{1,2}(\Omega,\manifold N) \\ \Tr_{\partial\Omega}u = g
\end{matrix}\Bigr\}.
\end{equation}
If $W^{1,2}_g(\Omega,\manifold N) \doteq W^{1,2}(\Omega,\manifold N) \cap \{\Tr_{\partial\Omega}u = g\}$ contains at least one map, a minimizer does exist by the direct method of the calculus of variations. However in general due to topological obstructions \cite{bethuel1995extensions}\cite[Section 6.3]{vanschaftingen2021sobolev}, it can be that $W^{1,2}_g(\Omega,\manifold N) = \Oset$ when $\manifold N$ is not simply connected.

Meanwhile, when $p \in (1,2)$, a minimizing $p$-harmonic map $u_p \in W^{1,p}(\Omega,\manifold N)$, \emph{i.e.} a minimizer of 
\begin{equation}
\label{eq_zaequae8choGheitohv7ir6y}
\inf \left\{\int_\Omega \frac{|Du|^p}{p} : \begin{matrix}
	u \in  W^{1,p}(\Omega,\manifold N) \\ \Tr_{\partial\Omega}u = g
\end{matrix}\right\}
\end{equation}
always does exist. This follows by an extension theorem of Robert \scauthor{Hardt} and Fang-Hua \scauthor{Lin} \cite{hardt1987mappings} (see proposition \ref{thm:HLthm} below) that asserts the existence of a least one map realizing the constraints.
Following \scauthor{Hardt} and \scauthor{Lin} \cite{hardt1995singularities} (see also Daniel \scauthor{Stern} \cite{stern2018p}) in the case of the circle $\manifold N = \mathbb S^1$, we want to construct $2$-harmonic mappings as the limit of \(p\)--harmonic maps as \(p \nearrow 2\) for a general manifold. Other non-simply connected targets for harmonic mappings appear in several contexts: projective plane in liquid crystal models \cite[Section 1.A]{brezis1986harmonic}, the group of rotations in elasticity (Cosserat materials) \cite{Neff2004geometrically} and quotient of the group of rotations by discrete subgroups in computer graphics (frame fields) \cite{beaufort2017computing,macq2020ginzburg}.

We point out that by the embedding theorem of John \scauthor{Nash} \cite{nash_imbedding_1956} any Riemannian manifold $\manifold N$ ---compact or not--- can be embedded as a closed set (see Olaf \textsc{Müller} \cite{muller_note_2009}) of some Euclidian space $\R^\nu$ and that one can define Sobolev spaces $W^{1,p}(\Omega,\manifold N)$ intrinsically \emph{i.e.} independently of the choice of a closed embedding (see Alexandra \scauthor{Convent} and Jean \scauthor{Van Schaftingen} \cite{convent_intrinsic_2016}).

Since we are mainly interested in the case where the infimum in \eqref{eq_beiG7eijie3aexecaiti4lei} is infinite, the infimum in \eqref{eq_zaequae8choGheitohv7ir6y} should blow up as \(p \nearrow 2\).
Our first result describes the asymptotic behavior of the infimum of \eqref{eq_zaequae8choGheitohv7ir6y}.

\begin{theoremA}\label{thm:esgjksg}
	Let $\Omega\subset \R^2$ be a bounded Lipschitz domain and $\manifold N$ a compact Riemannian manifold. For each $g \in W^{\sfrac{1}{2},2}(\partial \Omega, \manifold N)$,  if, for each $p \in (1,2)$, $u_p \in W^{1,p}(\Omega,\manifold N)$ is a minimizing $p$-harmonic map with trace $\Tr_{\partial\Omega}u_p = g$, then 
	\begin{equation}\label{eq:limforder}
		\lim_{p\nearrow 2}(2 - p)\int_\Omega \frac{|Du_p|^p}{p} = \Esg^{1,2}(g) \in \{0\} \cup \Big [\frac{\sysN^2}{4\pi}, +\infty\Big ).
	\end{equation}
\end{theoremA}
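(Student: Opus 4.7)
The plan is to prove existence of the limit by establishing matching upper and lower bounds. The scaling by $(2-p)$ is characteristic of a logarithmic divergence, suggesting that $\int |Du_p|^p/p$ behaves like $(\Esg^{1,2}(g) + o(1))/(2-p)$, with $\Esg^{1,2}(g)$ computable from the most economical finite configuration of point singularities compatible with the topology of $g$.

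For the upper bound, I would exploit minimality of $u_p$ by constructing explicit competitors. Given an admissible tuple of distinct points $a_1,\dots,a_k \in \Omega$ with assigned homotopy classes $\alpha_i \in \pi_1(\manifold N)$ whose collective topological data removes the obstruction at the boundary (so that $g$ admits a $W^{1,2}$ extension to $\Omega \setminus \{a_1,\dots,a_k\}$, which one obtains via a Hardt-Lin type argument, see Proposition~\ref{thm:HLthm}), the competitor is built by gluing three pieces: a fixed $W^{1,2}$ map on $\Omega \setminus \bigcup_i B_\rho(a_i)$; on each annulus $B_\rho(a_i) \setminus B_\eta(a_i)$, an equivariant profile $x \mapsto \phi_i((x-a_i)/\abs{x-a_i})$ with $\phi_i \colon \partial \B \to \manifold N$ a constant-speed loop of length $\ell_i$ representing $\alpha_i$; and a Lipschitz contraction on $B_\eta(a_i)$. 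A direct computation yields for the annular piece
\begin{equation*}
\int_{B_\rho(a_i) \setminus B_\eta(a_i)} \frac{\abs{Du}^p}{p} \d x = \frac{\ell_i^p}{p(2\pi)^{p-1}} \cdot \frac{\rho^{2-p} - \eta^{2-p}}{2-p}.
\end{equation*}
Multiplying by $(2-p)$, letting $\eta \to 0$ and then $p \nearrow 2$, the contribution from $B_\rho(a_i)$ converges to $\ell_i^2/(4\pi)$, while the smooth $W^{1,2}$ part contributes $O(2-p)=o(1)$. Taking the infimum over admissible configurations gives $\limsup (2-p)\int \abs{Du_p}^p/p \leq \Esg^{1,2}(g)$.

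For the lower bound I would combine a localization argument with a systolic circle estimate. If $v \colon \partial B_r(a) \to \manifold N$ is non-contractible, Hölder's inequality together with the definition of the systole gives
\begin{equation*}
\int_{\partial B_r(a)} \abs{\partial_\tau v}^p \d \HH^1 \geq \frac{\sysN^p}{(2\pi r)^{p-1}}.
\end{equation*}
A Fubini-type selection then picks radii at which $u_p$ restricts to a $W^{1,p}$ loop with well-defined homotopy class, which is constant on any annulus free of singularities. Integrating the circle estimate from an inner scale $\eta_p$ to a fixed outer radius $R$ and multiplying by $(2-p)$ produces, per persistent nontrivial singularity, a contribution of at least $\sysN^p (R^{2-p}-\eta_p^{2-p})/(p(2\pi)^{p-1})$, which tends to $\sysN^2/(4\pi)$. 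The dichotomy $\{0\} \cup [\sysN^2/(4\pi),+\infty)$ then follows: if $g$ admits a $W^{1,2}$ extension, no singularity is forced and $\Esg^{1,2}(g)=0$; otherwise at least one persists with cost $\geq \sysN^2/(4\pi)$.

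The main obstacle is the lower bound: one needs a quantitative ball-construction (in the spirit of Jerrard and Sandier for Ginzburg-Landau) to pin down the concentration balls of $u_p$ carrying nontrivial topological charge and to show that contributions from distinct bad balls decouple rather than cancel, so that the per-singularity estimate sums correctly. A secondary difficulty is packaging the upper and lower bounds through a variational characterization of $\Esg^{1,2}(g)$ sharp enough that $\limsup \leq \Esg^{1,2}(g) \leq \liminf$, which upgrades the inequalities to the existence of the limit itself and typically goes via a $\Gamma$-convergence-style argument.
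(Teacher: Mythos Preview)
Your overall strategy matches the paper's: an upper bound via explicit renormalizable competitors (your three-piece construction is exactly how the paper builds an element of $W^{1,2}_{\ren,g}(\Omega,\manifold N)$ in proposition~\ref{prop:tralalala}, and the paper then shows $\Eren^{1,p}(u)\to\Eren^{1,2}(u)$ to get \eqref{eq:fistorderupperbound}), and a lower bound via a Jerrard--Sandier ball construction. The dichotomy $\{0\}\cup[\sysN^2/(4\pi),\infty)$ is indeed immediate from \eqref{eq_ius6Cei3Tahwae2ahpoh5Iph} once the limit is known to be $\Esg^{1,2}(g)$.

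The gap is in your lower bound. Your circle estimate uses only $\sysN$, so integrating over annuli around $k$ bad balls yields at best $\liminf \geq k\cdot \sysN^2/(4\pi)$, which does not match $\Esg^{1,2}(g)$ for a general target: a minimal topological resolution may carry charges with $\lambda(\gamma_i)>\sysN$. Replacing $\sysN$ by $\lambda$ (lemma~\ref{lemma:loc_lower_bound_circle}) is the first fix, but it is still not enough, because when balls merge in the growing process the length $\lambda$ of the boundary trace is \emph{not} subadditive in any useful sense. The paper's key device is to run the ball construction with the singular energy $\Esg^{1,p'}$ itself attached to each circle (proposition~\ref{prop:circleconstruction}, inequality~\eqref{eq:propcirlcinequlaitu}): this quantity is monotone under domain inclusion (proposition~\ref{prop:decreasing}), so it survives mergings, and by definition the sum over the final balls dominates $\Esg^{1,p'}(g)$. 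One then needs the continuity of $p\mapsto\Esg^{1,p}(g)$ (lemma~\ref{lemma:continuite_of_esgp}, which uses $\sysN>0$) to pass to the limit $p\nearrow 2$. A further technical point you gloss over is that $u_p$ is only in $W^{1,p}$, not smooth away from finite points; the paper handles this by a density argument (proposition~\ref{prop:density_of_the_R_class}) after first extending $u_p$ across $\partial\Omega$ to a slightly larger domain.
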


The \emph{systole} $\sysN$ of the manifold $\manifold N$  is the least length of a non-contractible map $\mathbb S^1 \to \manifold N$ (see definition \ref{eq:defsys}; see \cite{pu1952some,gromov1983filling,berger1993systoles} for early apparitions in the literature). If the manifold $\manifold N$ is simply connected ($1$-connected or $\pi_1(\manifold N) \simeq \{0\}$), the right-hand side of \eqref{eq:limforder} is understood to be zero. Every compact manifold has a positive sytole.
By \emph{Riemannian manifold}, we mean a complete connected smooth Riemannian manifold without boundary and of finite dimension.

The \emph{singular energy} $\Esg^{1,2}(g)$ introduced by Antonin \textsc{Monteil}, Rémy \textsc{Rodiac} and Jean \textsc{Van Schaftingen} \cite{monteil2021renormalised} quantifies the nontriviality of the free homotopy class of the map $g$:
\begin{equation}
\label{eq_ooj9Iex8Hahr4aimohweoyif}
	\Esg^{1,2}(g) = \inf\left \{ \frac{1}{4\pi}\sum_{i = 1}^k \int_{\mathbb S^1}|\gamma_i'|^2 : \begin{matrix}   u \in C^{1}(\Omega \setminus \bigcup_{i = 1}^k \B(a_i;\rho), \manifold N), \rho > 0 \text{ small } \\ \gamma_i \in C^1(\mathbb S^1,\manifold N) \text{ geodesic homotopic to } u|_{\partial \B(a_i;\rho)} \\
	u|_{\partial \Omega} \text{ homotopic to } g\end{matrix} \right\}
\end{equation}
 If $\sysN > 0$, the singular energy vanishes if and only if there exists $u \in W^{1,2}(\Omega,\manifold N)$ such that $\Trace_{\partial \Omega}u = g$ as explained in section \ref{subsec:upper_bound}. The singular energy is defined using minimal length of geodesics in $\manifold N$ in chosen homotopy classes (see definition \ref{def:esg}), where the homotopy is understood in the sense of $\VMO$ (vanishing mean oscillation, see \cite{brezis_nirenberg_1995,brezis_nirenberg_1996}). 
 The singular energy only depends  on the homotopy class of $g$: if $g_1,g_2 \in W^{\sfrac{1}{2},2}(\partial \Omega,\manifold N)$ are freely homotopic, one has $\Esg^{1,2}(g_1) = \Esg^{1,2}(g_2)$. 
 
Theorem \ref{thm:esgjksg} is closely related to the extension of trace for Sobolev mappings of  \scauthor{Hardt} and \scauthor{Lin} (see \cite[Section 6]{hardt1987mappings} for compact $\manifold N$;  see \cite{vanschaftingen2021sobolev} for the general case): 
\begin{proposition}\label{thm:HLthm}
	Let $\Omega\subset \R^2$ be a bounded Lipschitz domain and $\manifold N$ a Riemannian manifold. There exists a constant $C > 0$ depending on $\manifold N$ and $\Omega$ such that for all $p \in [1,2)$ and \(g \in W^{\sfrac{1}{p},p}(\partial \Omega, \manifold N)\), there exists $u \in W^{1,p}(\Omega,\manifold N)$ such that 
	\[
		(2 - p) \int_{\Omega} \frac{\vert Du\vert^p}{p} \le C
		\iint_{\partial \Omega \times \partial \Omega} 
		\frac{\d_{\manifold N} (g (x), g (y))^p}{\abs{x - y}^p} \d x \d y.
	\]
\end{proposition}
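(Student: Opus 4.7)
The plan is to build the extension in two stages: a Gagliardo-type averaging-plus-projection in a collar of \(\partial\Omega\), and a finite family of singular ``vortex'' fills in the interior whose annular energy produces the \((2-p)^{-1}\) scaling.

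First, I parametrize a collar of width \(\delta\) around \(\partial\Omega\) via a bilipschitz chart identifying it with \(\partial\Omega\times(0,\delta)\), and I define
\[
\bar g(s,t) := \frac{1}{\abs{I(s,t)}}\int_{I(s,t)} g(\sigma) \d \sigma \in \R^\nu,
\]
where \(I(s,t)\) is the boundary arc of length \(t\) around \(s\). A direct difference-quotient estimate yields \(\abs{D\bar g(s,t)} \lesssim \abs{g(\sigma_1)-g(\sigma_2)}/\abs{\sigma_1-\sigma_2}\) for suitable \(\sigma_i\in I(s,t)\); a change of variables \((\sigma_1,\sigma_2)=(s-t,s+t)\) and Fubini then produce
\[
\int_{\text{collar}}\abs{D\bar g}^p \;\leq\; C\iint_{\partial\Omega\times\partial\Omega}\frac{\abs{g(x)-g(y)}^p}{\abs{x-y}^p}\d x \d y.
\]
Since \(\manifold N\subset\R^\nu\) is compact and admits a smooth nearest-point projection \(\Pi\) on a tubular neighborhood \(\manifold N_\eta\), I then set \(u:=\Pi\circ\bar g\) on the good set \(\{\bar g\in\manifold N_\eta\}\), where \(\abs{Du}^p\le C\abs{D\bar g}^p\), so the collar contribution is bounded by the right-hand side of the claimed inequality.

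Next, on the bad set \(\{\bar g\notin\manifold N_\eta\}\) the oscillation of \(g\) over \(I(s,t)\) must be at least \(\eta/2\), which forces a positive lower bound on the local Gagliardo mass there. At each such bad location I introduce a singular vortex fill: a disc \(\B(x_i,r_i)\) placed in the interior on which \(u\) has the form \(y\mapsto \gamma_i\bigl((y-x_i)/\abs{y-x_i}\bigr)\), with \(\gamma_i\) a short closed geodesic of \(\manifold N\) chosen to cancel the nontrivial homotopy picked up across the oscillation of \(g\). The annular computation
\[
\int_{\B(x_i,r_i)\setminus\B(x_i,\rho_i)}\bigl\vert D\bigl(\gamma_i(\cdot/\abs{\cdot})\bigr)\bigr\vert^p \;=\; \frac{r_i^{2-p}-\rho_i^{2-p}}{2-p}\int_{\mathbb S^1}\abs{\gamma_i'}^p
\]
manufactures the \((2-p)^{-1}\) factor. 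The inner core \(\B(x_i,\rho_i)\) and the leftover interior region are filled by a constant map, the vortices having been arranged so that the residual boundary datum on the inner boundary of the collar is nullhomotopic in \(\manifold N\).

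Multiplying by \((2-p)\), each vortex contribution becomes \(\lesssim r_i^{2-p}\int\abs{\gamma_i'}^p\), which matches up to a dimensional constant the local Gagliardo mass of \(g\) on the arc \(I(s_i,t_i)\) that produced it. I expect the main obstacle to be the combinatorial organization of the bad locations: a Vitali-type selection is needed to extract disjoint vortex discs whose cumulative Gagliardo masses stay below the global Gagliardo seminorm of \(g\), so that no double-counting occurs when summing the singular contributions. A secondary technicality is ensuring the vortex discs stay away from \(\partial\Omega\), which dictates the choice of \(\delta\) and encodes the dependence of the constant \(C\) on \(\Omega\).
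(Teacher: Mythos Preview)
The paper does not give its own proof of this proposition; it records it as a known extension theorem, citing \cite[Section 6]{hardt1987mappings} for compact $\manifold N$ and \cite{vanschaftingen2021sobolev} for general complete targets. Your plan is indeed in the spirit of the original Hardt--Lin argument for compact $\manifold N$: Gagliardo averaging into a collar, nearest-point retraction onto $\manifold N$ on the good set, and singular vortex fills compensating the topology on the bad set.

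Two genuine gaps, however. First, the sentence ``the inner core $\B(x_i,\rho_i)$ and the leftover interior region are filled by a constant map'' cannot work as written: on $\partial\B(x_i,\rho_i)$ the trace of your annular vortex map equals the noncontractible geodesic $\gamma_i$, so a constant fill produces a trace mismatch. The inner core is easily repaired --- take $\rho_i=0$, since $y\mapsto\gamma_i\bigl((y-x_i)/\abs{y-x_i}\bigr)$ already lies in $W^{1,p}(\B(x_i,r_i),\manifold N)$ for $p<2$ with energy $\frac{r_i^{2-p}}{2-p}\int_{\mathbb S^1}\abs{\gamma_i'}^p$. But the leftover interior region also cannot be filled by a constant: its boundary datum (the inner collar trace together with the $\gamma_i$ on the vortex circles) is nonconstant, and what you need there is an honest $W^{1,2}$ extension, available precisely because the $\gamma_i$ form a topological resolution of the inner collar datum. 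Controlling the energy of \emph{that} extension by the Gagliardo seminorm of $g$ is an additional nontrivial step that your plan does not address; without it the interior contribution is unbounded.

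Second, you invoke compactness of $\manifold N$ (for the tubular retraction $\Pi$ and for the existence of short closed geodesics), whereas the proposition as stated allows $\manifold N$ merely complete. The paper itself flags this distinction: the compact case is Hardt--Lin, the noncompact case requires the construction in \cite{vanschaftingen2021sobolev}.
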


Briefly, we will write that $W^{1,p}_g(\Omega,\mathcal N) \neq \Oset$ for $p \in [1,2)$. We repeat that it can be that for $p = 2$, $W^{1,2}_g(\Omega,\mathcal N) = \Oset$.

We next describe the asymptotics of families of \(p\)--harmonic maps.

\begin{theoremA}\label{thm:convofmaps}
	Let $\Omega\subset \R^2$ be a bounded Lipschitz domain, $\manifold N$ a compact Riemannian manifold, $g \in W^{\sfrac{1}{2},2}(\partial \Omega, \manifold N)$ and $p_n \in (1,2)$ an increasing sequence converging to $2$.\\	
	If $u_{p_n} \in W^{1,p_n}(\Omega, \manifold N)$ is a sequence of minimizing $p_n$-harmonic maps 	of trace $\Trace_{\partial \Omega}u_n = g$, then there exists   a renormalizable  harmonic map $u_* \in W^{1,2}_\ren(\Omega,\manifold N)$ with trace $\Trace_{\partial \Omega}u_* = g$ such that  up to some subsequence 
		 \[ u_{p_n} \xrightarrow{n \to +\infty} u_* 
		 \] almost everywhere,
and there exists \(\kappa\leq 4\pi\Esg^{1,2}(g)/\sysN^2\) distinct points $\{a_i\}_{i = 1,\dots,\kappa} \subset \Omega$ such that 
	\begin{enumerate}[(i)]
		\item for \(\rho > 0\) small enough $u_*|_{\Omega\setminus \bigcup_{i = 1}^\kappa \B(a_i;\rho)}
		\in W^{1, 2} (\Omega\setminus \bigcup_{i = 1}^\kappa \B(a_i;\rho), \manifold N)$ is a minimizing harmonic map with respect to its own boundary condition,
		
		\item  the map $u_*$ minimizes the renormalized energy of mappings
		\begin{equation}\label{eq:poitnskkjg}
		\lim_{\rho \searrow 0}\int_{\Omega\setminus \bigcup_{i = 1}^\kappa \B(a_i;\rho)} \frac{|Du_*|^2}{2}- \Esg^{1,2}(g)\log \frac{1}{\rho} + \mathrm H ([u_*,a_i])_{i = 1,\dots,\kappa},
		\end{equation}
		\item the charges \(([u_*, a_i])_{i = 1, \dotsc, \kappa}\) and the points $(a_i)_{i = 1,\dots,\kappa}$ minimize
		the renormalized energy of the configuration of points
		\begin{multline}
		   \mathcal{E}^{1, 2}_{\mathrm{geom}, \gamma_1, \dotsc, {\gamma_k}} ([u_*,a_1], \dotsc, [u_*, a_k])
		   + \mathrm H ([u_*, a_i])_{i = 1,\dots,\kappa}\\
		   = 
		   \inf\left\{
		   \mathcal{E}^{1, 2}_{\mathrm{geom}, \gamma_1, \dotsc, {\gamma_k}} (x_1, \dotsc, x_k) + \mathrm H (\gamma_i)_{i = 1,\dots,\kappa}:\begin{matrix}
		   x_1, \dotsc, x_k \in \Omega\\ 
		   \gamma_1,\dotsc, \gamma_k \text{ achieve the infimum in \eqref{eq_ooj9Iex8Hahr4aimohweoyif}}
		   \end{matrix}
	\right\}.
		\end{multline}
	\end{enumerate}
\end{theoremA}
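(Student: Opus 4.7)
The plan is to combine the sharp energy asymptotics of Theorem \ref{thm:esgjksg} with a concentration-compactness analysis driven by an annular lower bound tied to the systole, and then to pass to the limit in the minimality of \(u_{p_n}\) against competitors built via Proposition \ref{thm:HLthm}. The key tool is a lower bound on annuli: if \(u_{p_n}|_{\partial\B(x_0;\rho)}\) is non-contractible for every \(\rho \in (r_1,r_2)\), then slicing, Jensen's inequality and the definition of the systole give
\[
\int_{\B(x_0;r_2)\setminus \B(x_0;r_1)} \frac{\abs{Du_{p_n}}^{p_n}}{p_n} \geq \frac{\sysN^{p_n}}{p_n(2\pi)^{p_n-1}}\cdot\frac{r_2^{2-p_n}-r_1^{2-p_n}}{2-p_n}.
\]
Choosing \(r_2\) fixed and \(r_1 = \rho_n \to 0\) with \((2-p_n)\log(1/\rho_n)\to \infty\), the \((2-p_n)\)-scaled annular energy is bounded below by \(\sysN^2/(4\pi)+o(1)\), so every would-be concentration point absorbs at least that much of the scaled limit of Theorem \ref{thm:esgjksg}. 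A covering/exhaustion argument then produces a finite set \(\{a_1,\dots,a_\kappa\} \subset \overline\Omega\) outside of which the rescaled energies remain bounded on compacts, yielding \(\kappa \leq 4\pi\Esg^{1,2}(g)/\sysN^2\); boundary concentration is excluded by pairing the same bound with the Hardt--Lin extension applied in a collar of \(\partial\Omega\) together with minimality of \(u_{p_n}\).

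On each \(K \Subset \Omega \setminus \{a_i\}\) the uniform \(p_n\)-energy bound (upgraded to a uniform weak-\(L^p\)/Marcinkiewicz bound on \(\abs{Du_{p_n}}\), as announced in the abstract) together with \(p_n \nearrow 2\) give uniform \(W^{1,2}\) bounds by Hölder. A diagonal extraction yields \(u_{p_n} \rightharpoonup u_*\) in \(W^{1,2}_{loc}(\Omega\setminus\{a_i\})\) and \(u_{p_n}\to u_*\) almost everywhere on \(\Omega\); passing to the limit in the Euler--Lagrange equation gives weak harmonicity of \(u_*\) on \(\Omega\setminus\{a_i\}\), while absence of boundary concentration gives \(\Trace_{\partial\Omega}u_*=g\). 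For (i), test minimality of \(u_{p_n}\) against the competitor obtained by gluing an arbitrary admissible \(v\) on \(\Omega\setminus\bigcup\B(a_i;\rho)\) with the Hardt--Lin \(p_n\)-extension of \(u_*|_{\partial\B(a_i;\rho)}\) inside each ball (after a thin-collar interpolation to match traces); lower semi-continuity as \(n\to\infty\) delivers the local minimizing property of \(u_*\).

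Parts (ii) and (iii) rest on a refinement of the annular lower bound above, asymptotically saturated by the minimizing sequence. One decomposes, for each \(i\) and small \(\rho\),
\[
(2-p_n)\int_{\B(a_i;\rho_0)\setminus \B(a_i;\rho)}\frac{|Du_{p_n}|^{p_n}}{p_n} = \frac{\ell_i^{2}}{4\pi}(2-p_n)\log\tfrac{\rho_0}{\rho}(1+o(1)) + (2-p_n)\,R_{p_n,i}(\rho),
\]
where \(\ell_i\) is the length of a geodesic realizing the charge \([u_*,a_i]\) and \(R_{p_n,i}\) collects the finite renormalized part. Theorem \ref{thm:esgjksg} forces \(\sum_i \ell_i^{2}/(4\pi)=\Esg^{1,2}(g)\) at the minimizer, so the logarithmic divergence in \eqref{eq:poitnskkjg} is matched exactly by the \(p_n\)-energy. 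Testing minimality of \(u_{p_n}\) against competitors built around any configuration \((x_1,\dotsc,x_k)\) with any admissible geodesics \((\gamma_1,\dotsc,\gamma_k)\) realizing the infimum in \eqref{eq_ooj9Iex8Hahr4aimohweoyif}, and then against arbitrary maps with the same prescribed charges at \((a_i)\), yields (iii) and (ii) respectively upon passing to the limit.

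The hard part will be the uniform control of the finite renormalized remainder \(R_{p_n,i}(\rho)\) and the explicit construction of admissible \(p_n\)-competitors with prescribed geodesic charges at arbitrary point configurations. This calls for a quantitative rigidity statement attached to the annular lower bound, essentially saying that near each \(a_i\) the minimizing \(u_{p_n}\) behaves at leading order like a \(p_n\)-harmonic map whose boundary trace on small circles is homotopic to a shortest non-contractible loop in the prescribed class. The weak-\(L^p\) Marcinkiewicz bounds advertised in the abstract are likely to play a central role in making these estimates uniform as \(p_n \nearrow 2\).
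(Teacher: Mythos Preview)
Your overall architecture---annular lower bounds tied to the length/systole, a ball-growing concentration argument to locate finitely many bad points, and then matching upper and lower bounds on the renormalized energy in the spirit of \(\Gamma\)-convergence---is exactly the paper's strategy. But several steps as you wrote them would not go through, and the paper organizes the hard part you flag quite differently.

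\medskip

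\textbf{A concrete gap.} Your claim that, on compacts \(K\Subset\Omega\setminus\{a_i\}\), the uniform bound on \(\int_K |Du_{p_n}|^{p_n}\) (or even a uniform weak-\(L^{p_n}\) bound) yields uniform \(W^{1,2}\) bounds ``by H\"older'' is false: H\"older runs the wrong way, since \(p_n<2\). The paper does not obtain \(W^{1,2}\) bounds on the sequence. Instead (Proposition~\ref{prop:compactnessofboundedseqs} and Lemma~\ref{lemma:compactnessWholes}) it shows weak \(L^p\) compactness of \(Du_n\) for every fixed \(p<2\), then uses Fatou-type arguments (Proposition~\ref{coro:fatouannulie}) to conclude that the \emph{limit} \(u_*\) lies in \(W^{1,2}_{\mathrm{loc}}\) away from the singular set. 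Relatedly, the paper never passes to the limit in the Euler--Lagrange system: harmonicity and local minimality of \(u_*\) are deduced a posteriori from the renormalized-energy minimality \ref{eq:minrenconvmin} together with the known regularity theory for minimizing renormalizable maps (Proposition~\ref{prop:reg_of_ren_map}).

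\medskip

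\textbf{What the paper does for the ``hard part''.} You correctly identify control of the remainder \(R_{p_n,i}\) and the construction of competitors as the crux. The paper handles both systematically rather than ad hoc. For the upper bound it does not build competitors via Hardt--Lin extension; it simply evaluates the \(p_n\)-energy on a fixed \(u\in W^{1,2}_{\mathrm{ren}}\) and shows \(\Eren^{1,p}(u)\to\Eren^{1,2}(u)\) (Propositions~\ref{prop:limitErenpToEren} and~\ref{prop:upperBound}); this is where the constant \(\mathrm H\) appears, from the expansion \eqref{eq:technicallimit}. For the lower bound the paper upgrades your annular estimate to a structured ball-growth scheme (Proposition~\ref{prop:circleconstruction}) that tracks not the systole but the full \(\Esg^{1,p'}\) of traces on the growing circles, and feeds this into a compactness theorem (Proposition~\ref{prop:compactnessthm}) whose item~\ref{item::scicompactnesslog} is precisely the sharp lower bound \(\Eren^{1,2}(u_*)+\mathrm H\le \varliminf(\cdots)\). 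The matching of these two yields (ii) and (iii) directly (Proposition~\ref{prop:conv_of_min}); the Marcinkiewicz estimates are a byproduct, not an input to the compactness.
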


\emph{Renormalizable maps} $v \in W^{1,2}_\ren(\Omega,\manifold N)$ were introduced in \cite{monteil2021renormalised} (see definition \ref{def:Eren}) and are weakly differentiable maps $v : \Omega \to \manifold N$ whose renormalized energy
\[
	\Eren^{1,2}(v) = \lim_{\rho \searrow 0}\int_{\Omega\setminus \bigcup_{i = 1}^\kappa \B(a_i;\rho)} \frac{|Dv|^2}{2}- \Esg^{1,2}(g)\log \frac{1}{\rho}
\]
is finite. Here and after $|Dv|$ refers to the Frobenius norm of the weak derivative $D v$. 

The entropy term $\mathrm H ([u_*,a_i])_{i = 1,\dots,\kappa}$ in theorem \ref{thm:convofmaps} is defined in \eqref{eq:defofH} in proposition \ref{prop:upperBound} only depends on the homotopy class of $u_*$ \emph{near} each $a_i$, $i =1,\dots,\kappa$. 
That is, for $i =1,\dots,\kappa$ and $\rho > 0$ small, $\Tr_{\mathbb S^1(a_i;\rho)}u_*$ is a map in $W^{\sfrac{1}{2},2}(\mathbb S^1(a_i;\rho),\manifold N) \subset \VMO(\mathbb S^1(a_i;\rho),\manifold N)$ which has a well-defined homotopy class $[u_*,a_i]$ on which depends $\mathrm H ([u_*,a_i])_{i = 1,\dots,\kappa}$ (see \eqref{eq:nationpoint} for details).
The geometrical renormalized energy, $\mathcal{E}^{1, 2}_{\mathrm{geom}}$, energy is defined in \eqref{eq_def_renorm_top}.

Theorem \ref{thm:convofmaps} generalizes the resul obtained by \scauthor{Hardt} and \scauthor{Lin} for the circle, $\manifold N = \bb S^1$ \cite{hardt1995singularities}; we relax both assumptions that the domain is simply connected and that the target is the unit circle. It will also appear that our method is quite different and robust enough to describe almost minimizers (also knows as quaziminimizers), that is, sequences of mappings $u_p \in W^{1,p}_g(\Omega,\mathcal N)$ that are $o(1)$ close to the infimum \eqref{eq_zaequae8choGheitohv7ir6y} when $p\nearrow 2$.

Other approaches to deal with the emptiness of the $W^{1,2}_g(\Omega, \manifold N)$-class in dimension 2 exist: for instance, the well-studied Ginzburg--Landau family of functionals appeared in the literature for the circle before the $p$-harmonic map relaxation (see Fabrice \scauthor{Bethuel}, Haïm \scauthor{Brezis} and Frédéric \scauthor{Hélein} \cite{bethuel1994ginzburg} and references therein for the unit circle; more recently for general manifolds \cite{canevari2021topological,monteil2020ginzburg}): for every $\varepsilon > 0$, one considers a minimizer $u_\varepsilon \in W^{1,2}_g(\Omega,\R^\nu)$ of 
\begin{equation*}
    \inf \left \{ \int_\Omega \frac{|Du|^2}{2} + \frac{F(u)}{\varepsilon^2} : u \in W_g^{1,2}(\Omega, \R^\nu)\right\}
\end{equation*}
where  $F(u)$ behaves as $\dist(u, \mathcal N)^2$ near $\manifold N$ (see \cite{monteil2020ginzburg} for detailed assumptions on $F$), $\manifold N$ is a connected compact submanifold of $\R^\nu$ and one studies the limit of $(u_\varepsilon)_{\varepsilon>0}$ when $\varepsilon \searrow 0$. 
More precisely, one gets \emph{minimizing renormalizable singular harmonic maps} introduced in \cite{monteil2021renormalised} and the limit happens in various topologies (see \cite {monteil2020ginzburg}). 
If we write $u_* = \lim_{\varepsilon \searrow 0}u_\varepsilon \in  W^{1,2}_\ren(\Omega,\manifold N)$,  \scauthor{Monteil},  \scauthor{Rodiac} and  \scauthor{Van Schaftingen} show that $u_*$ minimizes 
a related renormalized energy which is the sum of a contribution depending on the charges and the location of the singularities, and another contribution depending on the charges and the potential $F$.
 Interestingly, the \emph{renormalized energy}, $\Eren^{1,2}(u_*)$, is common to both relaxation, see \cite{monteil2020ginzburg} for details.
The Ginzburg-Landau functional has an extrinsic character as one needs to embed isometrically the manifold $\manifold N$ into a Euclidean space $\R^\nu$ and to choose a potential $F$ before beginning the asymptotic analysis. Due to the nature of the Ginzburg-Landau functional, the convergence is different: in \cite{monteil2020ginzburg}, it is shown that $Du_\varepsilon$ strongly converges in $L^2$ away from the singularities of $u_*$ (compare with the convergence in theorem \ref{thm:convofmaps}).

\medskip

In order to perform  the analysis described in  theorems \ref{thm:esgjksg} and \ref{thm:convofmaps}, we adapt the approach that was used for studying the Ginzburg-Landau family of functionals in \cite{monteil2021renormalised} and thereby we do not rely on the regularity of $p$-harmonic maps. Our method is therefore different from \scauthor{Hardt} and \scauthor{Lin}'s  \cite{hardt1995singularities} that heavily uses the structure of the circle $\manifold N = \mathbb S^1$, estimates on liftings to its universal covering as well as monotonicity formulas.  
Building an \emph{upper} and a \emph{lower bound} on the \emph{renormalized energy} $\Eren^{1,2}$ (see respectively propositions \ref{prop:upperBound} and \ref{prop:compactnessofboundedseqs}), we can treat the case of quaziminimizer of the $p$-Dirichlet functional \emph{in the spirit of $\Gamma$-convergence} (also known as $G$-convergence or variational convergence) through a variational approach (see \ref{prop:conv_of_min}).
The scheme of the proof of the lower bound follows Jerrard \cite{jerrard_lower_1999} and Sandier \cite{sandier_lower_1998}, as it was adapted to general target manifolds \cite{monteil2020ginzburg}, by an \emph{amalgamation/merging ball lemma} (see lemma \ref{lemma:mergin_ball_lemma}) that we made more explicit for our purposes (see proposition \ref{prop:circleconstruction}).
We finally strengthen the result for minimizers in proposition \ref{prop:conv_of_min}.

Our approach also has the advantage of yielding some improved estimates in the Marcinkie\-wicz $L^{p,\infty}(\Omega)$ space (or weak Lebesgue space, weak-$L^p$ space or Lorentz space \cite{marcinkiewicz1939interpolation}\cite[Chapter 5]{castillo2016introductory}) which consists in   $v : \Omega \to \R$  measurable verifying \(\sup_{t>0}t^p \vol{\{|v|>t\}\cap \Omega } <+\infty\). Here and after, $\vol{A}$ refers to the $2$-dimensional Hausdorff measure of measurable sets $A \subset \R^2$; equivalently \cite[Theorem 4.1.1]{attouch2014variational}, it corresponds to the Lebesgue measure on $\R^2$. We obtain (see proposition \ref{prop:mixedboundedness})
\begin{equation}
\label{eq_ooB6EiteiMeegai8bohkie2t}
		 \varlimsup_{p\to 2} \sup_{t>0}t^{p} \vol{\{|Du_{p}|>t\}\cap \Omega } <+\infty,
\end{equation}
which is some kind of 
\emph{asymptotic upper bound in $L^{p,\infty}(\Omega)$} for the Frobenius norm of the derivative of $p$-harmonic maps when $p\nearrow 2$; 
in view of theorem \ref{thm:esgjksg}, the corresponding strong \(L^p\) bound fails when \(
W^{1,2}_g(\Omega,\manifold N)= \emptyset\).
Bounds similar to \eqref{eq_ooB6EiteiMeegai8bohkie2t} have been obtained for the Ginzburg--Landau relaxation for harmonic maps into the circle \cite{MR2381162} and into a compact submanifold \cite{monteil2020ginzburg}.

The compactness condition on the manifold $\manifold N$ can be in fact weakened in our analysis. This covers some non-compact manifolds such as the cylinder $\mathbb S^1 \times \R$ but not all non-compact manifolds; we refer to  \eqref{eq:sdfjk} for a non-compact manifold which is not covered by our analysis. We devote section \ref{sec:whattodononcomapct} to the admissible non-compact manifolds.
 
\section{Setting: Quantifying energies and other relevant quantities}\label{section:energies}
    In this section we describe the length $\lambda$, the length spectrum, the singular energy, the renormalized energy and relation between them. All the  homotopies are assumed to be \emph{free} and are thought in $\VMO$ (see \cite{brezis_nirenberg_1995,brezis_nirenberg_1996}).

\subsection{Length spectrum and systole}\label{subsec:lengsocheg}
Let $\gamma \in \VMO(\mathbb{S}^1, \manifold N)$ be a map whose (essential) image lives in a Riemannian manifold $\manifold N$ and of vanishing mean oscillation \cite{brezis_nirenberg_1995}. Following \cite{monteil2021renormalised}, we define the \emph{length of its (free) homotopy class} by
\begin{equation}
	\label{eq_aeShi9aureeshug4unoh9cio}
	\lambda(\gamma) = \inf \{\ell(\tilde\gamma) : \tilde \gamma \in W^{1,1}(\mathbb{S}^1, \manifold N) \text{ is homotopic to } \gamma \}
\end{equation}
where $\ell(\tilde \gamma)$ is the Riemannian length of the closed curve $\tilde \gamma$. 
In the definition, the homotopy refers to a (free) homotopy in the class of vanishing mean oscillation $\VMO(\mathbb{S}^1, \mathcal N)$.
 Moreover, every map in $\VMO(\mathbb{S}^1, \mathcal N)$ is (freely) homotopic in $\VMO(\mathbb{S}^1, \mathcal N)$ to a smooth and hence  $W^{1,1}$-map. Therefore, the infimum is finite.
 
The length $\lambda$ depends only on the $\VMO$--homotopy class of $\gamma$: if $\gamma_1$ is homotopic to $\gamma_2$ then $\lambda(\gamma_1) = \lambda(\gamma_2)$.
 
 If $\manifold N = \mathbb{S}^1$, $\lambda(\gamma) = 2\pi  \abs{\deg \gamma}$, where \(\deg \gamma\) is the topological degree of the map $\gamma \in \VMO(\mathbb{S}^1, \mathbb{S}^1)$. If the manifold has the form $\manifold N = \manifold N_1 \times \manifold N_2$, then one writes $\gamma = (\gamma_1, \gamma_2)$ and observes that $\lambda_{\manifold N}(\gamma)^2 = \lambda_{\manifold N_1}(\gamma_1)^2 + \lambda_{\manifold N_2}(\gamma_2)^2$.
 
 We now repeat a well-known construction for the topological degree  that we adapt to the length $\lambda$. 
 If $u \in W^{1,2}_\mathrm{loc}(\B(a;\rho)\setminus\{a\},\manifold N)$, it is possible to define the ``homotopy of $u$ near $a$'', $[u,a]$. If $\gamma_a : \mathbb S^1 \to  \B(a;\rho)\setminus\{a\}$ is a smooth curve isotopic to $z \in \mathbb S^1 \mapsto a + \rho z$ then $\Trace_{\gamma_a}u = \Trace_{\mathbb S^1}(u\circ \gamma)$ has a well-defined homotopy class in $\mathrm{VMO}$ that does not depend on the choice of the curve $\gamma_a$. This class of equivalence is denoted $[u,a]$. As $\gamma^t_a : z \in \mathbb{S}^1 \mapsto a + t\rho z \in \mathbb R^2$ is a family indexed by $t \in (0,1)$ of admissible curves, we may define
 \begin{equation}\label{eq:nationpoint}
 	\lambda([u,a]) \doteq \lambda(\Trace_{\mathbb{S}^1}u(a + r \cdot))
 \end{equation}
 for small $r > 0$. Any curve $\gamma_a$ with the precited properties will then satisfy $ \lambda([u,a]) = \lambda(\Trace_{\mathbb{S}^1}(u\circ \gamma_a))$ and this definition does not depend on $\rho >0$.

\begin{definition}
	The \emph{length spectrum} is the set of nonnegative real numbers $\{\lambda(\gamma) : \gamma \in W^{\sfrac{1}{2},2}(\mathbb{S}^1, \manifold N)\}$.
\end{definition}
 As our definition of smooth manifold implies that $\manifold N$ is second countable, we obtain that this set is countable \cite[Proposition 1.16]{MR2954043} but can have accumulation points. In the compact case, it can be shown \cite[Proposition 3.2]{monteil2021renormalised} that
\begin{lemma}\label{lemma:discretspec}
	If the Riemannian manifold $\manifold N$ is compact, the length spectrum is a discrete set of the real line.
\end{lemma}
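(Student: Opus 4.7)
The plan is to argue by contradiction via a compactness argument on minimizing geodesics. Suppose the length spectrum is not discrete; then there is $L \in \R$ and a sequence of maps $\gamma_n \in W^{\sfrac{1}{2},2}(\mathbb{S}^1,\manifold N)$ such that the values $\lambda(\gamma_n)$ are pairwise distinct and $\lambda(\gamma_n) \to L$ as $n \to +\infty$. I will derive a contradiction by showing that infinitely many of the $\gamma_n$ must lie in the same free homotopy class, and therefore share the same value of $\lambda$.

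First I would recall, or establish from the direct method, that since $\manifold N$ is compact, every free homotopy class (equivalently, every conjugacy class in $\pi_1(\manifold N)$) contains a \emph{length-minimizing closed geodesic}. Applied to each $\gamma_n$, this yields a Lipschitz curve $\sigma_n \colon \mathbb{S}^1 \to \manifold N$ freely homotopic to $\gamma_n$, parametrized proportionally to arclength, with $\ell(\sigma_n) = \lambda(\gamma_n)$ and constant speed $\abs{\sigma_n'} = \lambda(\gamma_n) \le L + 1$ for $n$ large enough. In particular, the family $(\sigma_n)_n$ is equi-Lipschitz with images in the compact set $\manifold N$, so by the Arzelà--Ascoli theorem there is a subsequence (still denoted $\sigma_n$) converging uniformly on $\mathbb{S}^1$ to a continuous limit $\sigma_\infty \colon \mathbb{S}^1 \to \manifold N$.

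Next I would use the positivity of the injectivity radius $r_0 > 0$ of the compact manifold $\manifold N$. Once $n, m$ are so large that $\sup_{\mathbb{S}^1} \d_{\manifold N}(\sigma_n, \sigma_m) < r_0$, the pointwise geodesic interpolation
\[
H(t, z) = \exp_{\sigma_n(z)}\bigl(t \exp_{\sigma_n(z)}^{-1}(\sigma_m(z))\bigr), \qquad t \in [0,1],\ z \in \mathbb{S}^1,
\]
is a well-defined continuous free homotopy from $\sigma_n$ to $\sigma_m$. Since homotopic maps in $C^0(\mathbb{S}^1,\manifold N)$ are homotopic as $\VMO$-maps, the curves $\gamma_n$ and $\gamma_m$ lie in the same free homotopy class for $n, m$ large, and hence $\lambda(\gamma_n) = \lambda(\gamma_m)$. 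This contradicts the assumption that the values $\lambda(\gamma_n)$ were pairwise distinct, completing the proof.

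The main obstacle is the first step: carefully invoking the existence of a length-minimizer in each free homotopy class. This is standard on a compact Riemannian manifold (minimize the energy $\int_{\mathbb{S}^1} \abs{\tilde\gamma'}^2$ over $W^{1,2}$-loops in the fixed conjugacy class, using compactness of $\manifold N$ and lower semicontinuity), but one has to be mindful to pass from $\VMO$-homotopy to $W^{1,2}$-homotopy; this is ensured by the density of smooth maps in $W^{1,2}(\mathbb{S}^1,\manifold N)$ and the fact that maps close in $W^{1,2}$ (hence in $C^0$ by Sobolev embedding in dimension $1$) are homotopic. Once this is in place, the Arzelà--Ascoli plus injectivity-radius argument is robust.
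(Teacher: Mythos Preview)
Your argument is correct and is essentially the standard proof of this fact. The paper itself does not give a proof but simply refers to \cite[Proposition~3.2]{monteil2021renormalised}; your contradiction argument via Arzel\`a--Ascoli on constant-speed minimizing geodesics and the positive injectivity radius is precisely the classical route, and your discussion of the passage between $\VMO$--, $W^{1,2}$-- and $C^0$--homotopy classes is adequate (in dimension~$1$ these all coincide on continuous representatives by the Sobolev embedding and the Brezis--Nirenberg theory).
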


The compactness assumption is not necessary: the spectrum of the Euclidean space $\R^\mu$ and the infinite cylinder $\mathcal N \times \R$ (where $\manifold N$ is compact manifold)  have a discrete length spectrum but are not compact.

The \emph{systole} of the manifold $\manifold N$ is defined by
\begin{equation}\label{eq:defsys}
	\sys(\manifold N) \doteq \inf \{\lambda(\gamma)  : \gamma \in \VMO(\mathbb{S}^1, \manifold N)\text{ is not contractible}\}.
\end{equation}
We set $\sysN = +\infty$ if the manifold $\manifold N$ is simply connected. If $\manifold N$ is compact, the systole is the first nonzero value of the length spectrum; this fact is sometimes used as a definition of the systole.

Our results heavily rely on the fact that the  manifold $\manifold N$ satisfies $\sysN >0$. 
They cover Euclidean spaces $\R^\mu$, compact manifolds and infinite cylinder $\mathcal N \times \R^\mu$ ($\manifold N$ compact). 
They do not cover infinite horns such as 
\begin{equation}\label{eq:doesnotcover}
	\mathcal N = \{(v \e^t, t) : t \in \R, v \in  \mathbb S^1\} \subset \R^3.
\end{equation}
  In the homotopy class of $\mathbb S^1 \xhookrightarrow{}  \mathbb S^1 \times \{0\} \subset \manifold N$, the infimum \eqref{eq_aeShi9aureeshug4unoh9cio} is equal to zero and is not achieved; hence, $\sysN = 0$.

Here is a first link between the $p$-Dirichlet energy and the length $\lambda$.
\begin{lemma}[Local lower bound on circles]\label{lemma:loc_lower_bound_circle}
	If $p \in [1,\infty)$ and if \(u \in W^{1, p} (\partial \B(a;\rho), \manifold{N})\), with \(\B(a;\rho) \subset \R^2\), then
	\begin{equation}
		\label{eq:circle_ineq}
		\frac{\lambda(u)^p}{p(2\pi\rho)^{p - 1}} \leq 	\int_{\partial \B(a;\rho)}\frac{\abs{u'}^p}{p} 
	\end{equation}
\end{lemma}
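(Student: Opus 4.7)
The inequality is really the composition of two very standard facts: that $\lambda(u)$ is bounded above by the Riemannian length of $u$ itself (because $u$ is a valid competitor in the infimum \eqref{eq_aeShi9aureeshug4unoh9cio}), and that the Riemannian length of $u$ is controlled by its $p$-energy via Hölder's inequality on a circle of length $2\pi\rho$.

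\textbf{Step 1: $u$ is an admissible competitor for $\lambda$.} After composing with the arc-length parametrization $z \in \mathbb{S}^1 \mapsto a + \rho z \in \partial \mathbb{B}(a;\rho)$, the map $u$ becomes a map $\mathbb{S}^1 \to \mathcal N$ of class $W^{1,p}(\mathbb{S}^1, \mathcal N)\subset W^{1,1}(\mathbb{S}^1,\mathcal N)$ (since $p \geq 1$ and $\mathbb{S}^1$ has finite measure), and hence also of class $\VMO$ by Sobolev embedding on the one-dimensional manifold $\mathbb{S}^1$. It is trivially homotopic to itself, so it belongs to the class of maps over which the infimum \eqref{eq_aeShi9aureeshug4unoh9cio} defining $\lambda(u)$ is taken. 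This yields
\[
  \lambda(u) \;\leq\; \ell(u) \;=\; \int_{\partial \mathbb{B}(a;\rho)} \abs{u'},
\]
where the length is intrinsic and so insensitive to the particular parametrization used.

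\textbf{Step 2: Hölder on the circle.} Applying Hölder's inequality with conjugate exponents $p$ and $p/(p-1)$ (treating $p = 1$ as the trivial case) on the circle $\partial \mathbb{B}(a;\rho)$, whose one-dimensional Hausdorff measure is $2\pi\rho$, gives
\[
  \int_{\partial \mathbb{B}(a;\rho)} \abs{u'} \;\leq\; \bigl(2\pi\rho\bigr)^{1 - \sfrac{1}{p}} \left(\int_{\partial \mathbb{B}(a;\rho)} \abs{u'}^p\right)^{\sfrac{1}{p}}.
\]

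\textbf{Step 3: Combining.} Raising the combined inequality $\lambda(u) \leq (2\pi\rho)^{1 - 1/p} \bigl(\int \abs{u'}^p\bigr)^{1/p}$ to the $p$-th power and dividing by $p\,(2\pi\rho)^{p-1}$ yields exactly \eqref{eq:circle_ineq}.

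There is no real obstacle here; the only point worth watching is that the definition of $\lambda$ is taken with respect to $\VMO$-homotopy and $W^{1,1}$-competitors, so one must observe that $u$ itself qualifies, which is immediate in dimension one from the Sobolev embedding into continuous (hence $\VMO$) maps.
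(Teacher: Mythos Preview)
Your proof is correct and follows essentially the same approach as the paper: both combine Hölder's inequality on the circle with the observation that $u$ itself is an admissible competitor in the infimum defining $\lambda$. You simply reverse the order of the two steps and spell out more explicitly why $u$ qualifies as a competitor (the embedding $W^{1,p}\subset W^{1,1}$ and Sobolev into $\VMO$ in dimension one), which the paper leaves implicit.
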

In the statement of lemma \ref{lemma:loc_lower_bound_circle}, $u'$ refers to the tangential derivative along the circle \(\partial \B(a;\rho)\).

\begin{proof}[Proof of lemma \ref{lemma:loc_lower_bound_circle}]
	By Hölder's inequality,
	\begin{equation*}
	\frac{1}{\HH^1(\partial \B(a;\rho))^{p - 1}}\bigg (\int_{\partial \B(a;\rho)}\abs{u'} \bigg )^p \leq 	\int_{\partial \B(a;\rho)}\abs{u'}^p.
	\end{equation*}
	Since the map $u$ is admissible in the greatest lower bound \eqref{eq_aeShi9aureeshug4unoh9cio} that defines $\lambda$, we get the conclusion \eqref{eq:circle_ineq} as $\HH^1(\partial \B(a;\rho)) = 2\pi \rho$.
\end{proof}

The following proposition will play the role of an infinitesimal lower bound. It generalizes \cite[Lemma 2.3]{hardt1995singularities}.
\begin{proposition}\label{prop:loc_lower_bound}
	Let $0 < \sigma < \rho$ and $u \in W^{1,2}(\B(a;\rho)  \setminus\B(a;\sigma), \manifold N)$. Then, if $p \in [1,\infty) \setminus \{2\}$,
	\begin{equation}\label{eq:loc_lower_bound_1}
	\frac{\big ( \rho^{2 - p} - \sigma^{2 - p}\big ) \lambda(\Trace_{\partial \B(a;\rho)}u)^p}{(2\pi)^{p - 1} p(2 -p)} \leq 	\int_{\B(a;\rho) \setminus \B(a;\sigma)}\frac{|Du|^p}{p}.
	\end{equation}
\end{proposition}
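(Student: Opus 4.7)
The plan is to pass to polar coordinates centered at $a$, apply the one-dimensional circle estimate of lemma \ref{lemma:loc_lower_bound_circle} on each circle of radius $r \in (\sigma,\rho)$, and integrate in $r$. Since $\lambda$ is a homotopy invariant and $u \in W^{1,2}$ on the annulus furnishes a $\VMO$-homotopy between concentric circle traces, the factor $\lambda(\Trace_{\partial\B(a;r)}u)^p$ will be constant in $r$ for a.e.\ $r$, which lets the $r$-integration produce exactly the factor $(\rho^{2-p}-\sigma^{2-p})/(2-p)$.

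First, write $x = a + r(\cos\theta,\sin\theta)$ and decompose the Frobenius norm as $|Du|^2 = |\partial_r u|^2 + r^{-2}|\partial_\theta u|^2$. Parametrizing $\partial \B(a;r)$ by arclength, the tangential derivative is $u' = r^{-1}\partial_\theta u$, so that the pointwise inequality $|Du|^p \geq |u'|^p$ holds on the annulus for every $p \geq 1$. By Fubini's theorem, we may assume the right-hand side of \eqref{eq:loc_lower_bound_1} is finite (else there is nothing to prove); then for almost every $r \in (\sigma,\rho)$ the restriction $u|_{\partial\B(a;r)}$ belongs to $W^{1,p}(\partial\B(a;r),\manifold N)$, and lemma \ref{lemma:loc_lower_bound_circle} yields
\[
\int_{\partial \B(a;r)}\frac{|u'|^p}{p} \;\geq\; \frac{\lambda(\Trace_{\partial \B(a;r)}u)^p}{p(2\pi r)^{p-1}}.
\]

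Next, I would invoke the $\VMO$-homotopy invariance discussed in section \ref{subsec:lengsocheg}: since $u \in W^{1,2}(\B(a;\rho)\setminus\B(a;\sigma),\manifold N)$ and the circles $\partial\B(a;r)$, $r \in (\sigma,\rho)$, are pairwise isotopic in the annulus, the traces $\Trace_{\partial\B(a;r)}u$ all share the same $\VMO$-homotopy class for a.e.\ $r$, so that
\[
\lambda(\Trace_{\partial\B(a;r)}u)\;=\;\lambda(\Trace_{\partial\B(a;\rho)}u) \qquad \text{for a.e.\ } r \in (\sigma,\rho).
\]
Combining the two displays above and integrating in $r$ using Fubini gives
\[
\int_{\B(a;\rho)\setminus\B(a;\sigma)} \frac{|Du|^p}{p} \;\geq\; \frac{\lambda(\Trace_{\partial\B(a;\rho)}u)^p}{p(2\pi)^{p-1}} \int_\sigma^\rho r^{1-p}\,\d r \;=\; \frac{(\rho^{2-p}-\sigma^{2-p})\,\lambda(\Trace_{\partial\B(a;\rho)}u)^p}{(2\pi)^{p-1}\,p\,(2-p)},
\]
which is \eqref{eq:loc_lower_bound_1}. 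Note the right-hand-side factor $(\rho^{2-p}-\sigma^{2-p})/(2-p)$ is positive both for $p<2$ (numerator and denominator positive) and for $p>2$ (both negative), so the inequality is meaningful in either regime.

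The one substantive step is the constancy of $r \mapsto \lambda(\Trace_{\partial\B(a;r)}u)$, which is the only place the hypothesis $u \in W^{1,2}$ (as opposed to $W^{1,p}$) plays a role, via the $\VMO$ theory of Brezis–Nirenberg invoked in the setup of $[u,a]$ in \eqref{eq:nationpoint}. Everything else reduces to the pointwise decomposition of $|Du|^2$, lemma \ref{lemma:loc_lower_bound_circle}, and a direct computation of $\int_\sigma^\rho r^{1-p}\,\d r$.
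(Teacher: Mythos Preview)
Your proof is correct and follows essentially the same route as the paper's: apply lemma~\ref{lemma:loc_lower_bound_circle} on each circle $\partial\B(a;r)$, use homotopy invariance to make $\lambda$ constant in $r$, and integrate over $(\sigma,\rho)$. Your write-up simply spells out more of the details (the polar decomposition $|Du|^p \ge |u'|^p$, the Fubini step, and why $W^{1,2}$ is needed for the homotopy invariance) that the paper leaves implicit in its one-line proof.
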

\begin{proof}[Proof of proposition \ref{prop:loc_lower_bound}] 
	Integrating the estimate of lemma \ref{lemma:loc_lower_bound_circle} over $(\sigma,\rho)$, we get
	\begin{equation*}
		\begin{split}
			\int_{\B(a;\rho) \setminus \B(a;\sigma)}\frac{|Du|^p}{p} &\geq \int_\sigma^\rho \frac{1}{p(2\pi r )^{p - 1}}\lambda(\Trace_{\mathbb{S}^1}u(a + r\cdot))^p \d r = \frac{\rho^{2 - p} - \sigma^{2 - p}}{ 2 - p}\frac{\lambda(\Trace_{\partial \B(a;\rho)}u)^p}{(2\pi)^{p - 1} p}
		\end{split}
	\end{equation*}
	by homotopy invariance. 
\end{proof}

Letting \(\sigma \searrow 0\) in \eqref{eq:loc_lower_bound_1}, we get that if $u \in W^{1,2}_{\mathrm{loc}}(\B(a;\rho)  \setminus\{a\}, \manifold N)$,
\begin{equation}\label{eq:loc_lower_bound_2}
	\frac{\rho^{2 - p} \lambda([u,a])^p}{(2\pi)^{p - 1} p(2 -p)} \leq \int_{\B(a;\rho)}\frac{|Du|^p}{p}.
\end{equation}
Letting $p \nearrow 2$ in \eqref{eq:loc_lower_bound_1} yields
\begin{equation}\label{eq:loc_lower_bound_1_p2}
	\frac{\lambda(\Trace_{\partial \B(a;\rho)}u)^2}{4\pi }\log \frac{\rho}{\sigma} \leq \int_{\B(a;\rho) \setminus \B(a;\sigma)}\frac{|Du|^2}{2}.
\end{equation}
In the case of the circle $\manifold N = \mathbb S^1$, this estimate is known as \emph{lower bounds on annuli} (see \cite[Lemma 1.1]{sandier_lower_1998}).

\begin{corollary}\label{coro:regularity} Let $\mathcal N$ be a Riemannian manifold of positive systole.
Let $p \in [1,\infty)\setminus \{2\}$ and $0 \leq \sigma < \rho$ and $u \in W^{1,p}\cap W^{1,2}_{\mathrm{loc}}(\B(a;\rho) \setminus\B(a;\sigma), \manifold N)$. If \begin{equation*}
	\int_{\B(a;\rho) \setminus \B(a;\sigma)}\frac{|Du|^p}{p} < \frac{\big ( \rho^{2 - p} - \sigma^{2 - p}\big )\sysN^p}{(2\pi)^{p - 1} p(2 -p)},
\end{equation*}
then $\Trace_{\partial \B(a;r)}u$ is homotopic to a constant function for each $r \in (\rho,\sigma)$.
\end{corollary}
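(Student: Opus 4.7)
The plan is to argue by contradiction using the infinitesimal lower bound of Proposition~\ref{prop:loc_lower_bound}. Reading the range $(\rho,\sigma)$ in the statement as the intended $(\sigma,\rho)$, I would suppose that some $r_0 \in (\sigma,\rho)$ exists for which $\Trace_{\partial \B(a;r_0)}u$ is not homotopic to a constant, and then derive an energy lower bound incompatible with the hypothesis. By definition \eqref{eq:defsys} of the systole, non-contractibility at $r_0$ forces $\lambda(\Trace_{\partial \B(a;r_0)}u) \geq \sysN$.

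The first step is to propagate this length lower bound to every concentric circle. Since $u \in W^{1,2}_{\mathrm{loc}}$ on the open annulus, Fubini gives $u|_{\partial \B(a;r)} \in W^{1,2}(\partial \B(a;r),\manifold N) \subset \VMO(\partial \B(a;r),\manifold N)$ for almost every $r \in (\sigma,\rho)$, and the family $t \mapsto \Trace_{\mathbb S^1} u(a+((1-t)r_0 + tr)\cdot)$ realizes a free $\VMO$ homotopy between any two such slices. This is exactly the homotopy-invariance ingredient tacitly used at the end of the proof of Proposition~\ref{prop:loc_lower_bound}, and it yields $\lambda(\Trace_{\partial \B(a;r)}u) \geq \sysN$ for a.e.\ $r \in (\sigma,\rho)$.

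The second step is a direct plug-in into Proposition~\ref{prop:loc_lower_bound}. For any $\sigma < \sigma' < \rho' < \rho$, the map $u$ lies in $W^{1,2}(\B(a;\rho')\setminus \B(a;\sigma'),\manifold N)$, so applying \eqref{eq:loc_lower_bound_1} to $u$ on this sub-annulus together with $\lambda(\Trace_{\partial \B(a;\rho')}u) \geq \sysN$ yields
\[
\frac{\bigl((\rho')^{2-p} - (\sigma')^{2-p}\bigr)\,\sysN^p}{(2\pi)^{p-1}\,p\,(2-p)} \leq \int_{\B(a;\rho')\setminus \B(a;\sigma')}\frac{|Du|^p}{p} \leq \int_{\B(a;\rho)\setminus \B(a;\sigma)}\frac{|Du|^p}{p}.
\]
Letting $\sigma' \searrow \sigma$ and $\rho' \nearrow \rho$ then contradicts the strict inequality assumed in the corollary. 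In the boundary case $\sigma = 0$, the limit $\sigma' \searrow 0$ is handled for $p < 2$ either by direct passage to the limit (since $(\sigma')^{2-p} \to 0$) or by invoking the already-derived consequence \eqref{eq:loc_lower_bound_2} in place of \eqref{eq:loc_lower_bound_1}.

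The one non-algebraic ingredient, and what I expect to be the main checkpoint, is the $\VMO$-homotopy invariance along the pencil of circles at the second step; everything else amounts to substituting $\sysN$ into an already-established inequality. Once that invariance is certified via Fubini and absolute continuity on a co-null set of radii, the argument is immediate.
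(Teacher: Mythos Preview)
Your proof is correct and follows essentially the same route as the paper's: both hinge on combining the lower bound of Proposition~\ref{prop:loc_lower_bound} with the definition of the systole, the paper phrasing it directly (the hypothesis forces $\lambda < \sysN$, hence $\lambda = 0$) while you phrase it as the contrapositive. Your version is more careful in one respect: you explicitly handle the gap between the $W^{1,2}_{\mathrm{loc}}$ assumption of the corollary and the $W^{1,2}$ assumption of Proposition~\ref{prop:loc_lower_bound} by working on compact sub-annuli and passing to the limit, whereas the paper's one-line proof leaves this implicit.
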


\begin{proof}[Proof of corollary \ref{coro:regularity}]
	The assumption implies that $\lambda(\Trace_{\partial B(a;r)}u) < \sysN$ which yields $\lambda(\Trace_{\partial \B(a;r)}u) = 0$ and hence the conclusion.
\end{proof}

Observe that \eqref{eq:loc_lower_bound_2} fails at $p = 2$. However at the Marcinkiewicz scale we observe the following.

\begin{corollary}\label{coro:lorentzscale}
	Let $\Omega \subset \R^2$ be an open subset of the plane. Let $a_1, \dots,a_k \in \Omega$ be points. If $u \in W_{\mathrm{loc}}^{1,2}(\Omega \setminus \{a_i\}_{i = 1,\dots,k},\manifold  N)$, then
	\begin{equation}
		\label{eq_quohw2xo8Laokoona4aihith}
		\sum_{i = 1}^k\frac{\lambda([u,a_i])^2}{4\pi }\leq  \varlimsup_{t \to \infty}t^2 \vol {\{|Du| \geq t \} \cap \Omega} .
	\end{equation}
\end{corollary}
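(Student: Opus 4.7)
The plan is to apply the pointwise lower bound \eqref{eq:loc_lower_bound_2} around each singular point $a_i$ for $p$ slightly below $2$, and then to bound the resulting $L^p$-integral of $|Du|$ by the weak-$L^2$ distribution function through the layer cake formula, before letting $p\nearrow 2$. One may assume the right-hand side $C := \varlimsup_{t\to\infty}t^2\vol{\{|Du|\geq t\}\cap\Omega}$ is finite, otherwise there is nothing to prove. In particular, this finiteness also ensures that $|Du| \in L^p_{\mathrm{loc}}(\Omega)$ for every $p<2$, so that the subsequent integrals make sense near each $a_i$.

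Choose $\rho>0$ small enough that the balls $\B(a_i;\rho)$, $i=1,\dots,k$, are pairwise disjoint and contained in $\Omega$. Summing \eqref{eq:loc_lower_bound_2} over $i$ gives, for every $p\in[1,2)$,
\[
	\sum_{i=1}^k \frac{\rho^{2-p}\lambda([u,a_i])^p}{(2\pi)^{p-1}(2-p)} \leq \sum_{i=1}^k \int_{\B(a_i;\rho)} |Du|^p.
\]
For a threshold $T\geq 1$, the layer cake formula yields
\[
	\sum_{i=1}^k \int_{\B(a_i;\rho)} |Du|^p \leq k\pi\rho^2\,T^p + \int_T^\infty pt^{p-1}\, \vol{\{|Du|\geq t\}\cap\Omega}\d t.
\]
Given $\varepsilon>0$, one chooses $T$ large enough that $t^2\vol{\{|Du|\geq t\}\cap\Omega}\leq C+\varepsilon$ for every $t\geq T$, which bounds the tail integral by $(C+\varepsilon)pT^{p-2}/(2-p)$. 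Combining both estimates and multiplying by $(2-p)$ gives
\[
	\sum_{i=1}^k\frac{\rho^{2-p}\lambda([u,a_i])^p}{(2\pi)^{p-1}}\leq (2-p)k\pi\rho^2\,T^p + (C+\varepsilon)pT^{p-2}.
\]
Letting $p\nearrow 2$ with $T$, $\rho$ and $\varepsilon$ fixed kills the first right-hand side term and leaves
\[
    \sum_{i=1}^k \frac{\lambda([u,a_i])^2}{2\pi} \leq 2(C+\varepsilon);
\]
sending $\varepsilon\searrow 0$ then concludes the proof.

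The argument is essentially bookkeeping once one sees that the three parameters $(2-p)$, $\rho$ and $T$ decouple in the limit. The main conceptual point---and the only potentially delicate step---is ensuring the layer cake estimate survives the passage $p\nearrow 2$: the factor $1/(2-p)$ produced by the tail of the distribution function is precisely compensated by the factor $(2-p)$ coming from the annular lower bound, so that the two singularities cancel exactly and produce the sharp constant $1/(4\pi)$.
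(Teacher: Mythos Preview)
Your proof is correct. The paper takes a different route: instead of letting $p\nearrow 2$, it works directly with $p=1$. It chooses balls of \emph{different} radii $\rho\lambda([u,a_i])$ (one per singularity), sums the $p=1$ case of \eqref{eq:loc_lower_bound_2} to obtain $\rho\sum_i\lambda([u,a_i])^2\le\int_{G_\rho}|Du|$, and then bounds $\int_{G_\rho}|Du|$ via the layer cake identity by $\int_0^\infty\min\bigl(\vol{G_\rho},M(t_0)/t^2\bigr)\d t=2\sqrt{M(t_0)\,\vol{G_\rho}}$, where $M(t_0)=\sup_{t>t_0}t^2\vol{\{|Du|\ge t\}\cap\Omega}$. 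The specific choice of radii makes $\vol{G_\rho}=\pi\rho^2\sum_i\lambda([u,a_i])^2$, so that the square root cancels cleanly and yields the conclusion after $t_0\to\infty$. Your argument trades this optimisation over radii for a limiting procedure in $p$: the factor $(2-p)$ coming from the annular lower bound cancels the $1/(2-p)$ produced by the tail integral, and the parameters $\rho,T,\varepsilon$ decouple. Both approaches reach the sharp constant $1/(4\pi)$; yours is arguably more transparent and requires no clever matching of radii, while the paper's avoids introducing an auxiliary exponent and any limit in $p$.
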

We observe that at this scale no constant depending on the domain $\Omega$ appears.

\begin{proof}[Proof of corollary \ref{coro:lorentzscale}]
	Given \(t_0>0\), we define
	\[		M(t_0) \doteq \sup_{t > t_0} t^2 \vol{\{|Du| \geq t\} } \cap \Omega).
	\]
	There exists a $\rho > 0$ such that 
	\( G_\rho \doteq
	\bigcup_{i = 1}^k \B (a_i;\rho \lambda ([u, a_i])) \subset \Omega
	\), the disks $\B (a_i;\rho \lambda ([u, a_i]))$ are disjoint
	and 
	\begin{equation}\label{eq:cjkdgoiuj}
	\vol{G_\rho} = \pi \rho^2 \sum_{i = 1}^k \lambda ([u, a_i])^2 \leq \frac{M(t_0)}{t_0^2}.
	\end{equation}
	Summing \eqref{eq:loc_lower_bound_2} $k$ times with \(p = 1\), we obtain
	\begin{equation}\label{eq:loc_lower_bound_2_summed}
		\begin{split}
			\sum_{i = 1}^k \rho \lambda([u, a_i])^2  & \leq  \int_{G_\rho} |Du|\\
			& = \int_0^\infty \vol{\{|D u| \geq t\} \cap G_\rho } \d t =\int_{0}^\infty \min(\vol{G_\rho}, \frac{M(t_0)}{t^2}) \d t,
		\end{split}
	\end{equation}
	by the choice \eqref{eq:cjkdgoiuj}. Therefore
	\[
		\rho\sum_{i = 1}^k  \lambda([u, a_i])^2 \leq  2 \Big  [ M(t_0) \pi \rho^2\sum_{i = 1}^k \lambda([u,a_i])^2\Big  ]^\frac{1}{2}
	\]
	and the conclusion \eqref{eq_quohw2xo8Laokoona4aihith} follows by the arbitrariness of $t_0$.
\end{proof}
 
\subsection{Singular energy of the boundary data, \texorpdfstring{$\Esg^{1,p}$}{Esg1p}} 

The following concept  was introduced in \cite{monteil2021renormalised} and extend $\lambda$ to general boundary of domains.
\begin{definition}[Topological resolution] Fix  an open bounded Lipschitz domain $\Omega \subset \R^2$, a Riemannian manifold $\manifold N$ and $g \in \VMO(\partial \Omega, \mathcal N)$. A finite family of closed curves $\gamma_i \in \VMO(\mathbb S^1,\manifold N)$, $i = 1, \dots,k$, with \(k \in \N \doteq \{0,1,2,\dots\}\), called \emph{charges} constitutes a \emph{topological resolution} of the map $g$
	whenever there exist $k$ non-intersecting closed balls $\B(a_i;\rho) \subset \Omega$ with \(\rho > 0\) and a map 
	\begin{equation*}
		u \in W^{1,2}(\Omega \setminus  \bigcup_{i = 1}^k\B(a_i;\rho), \manifold N)
	\end{equation*}
	such that $\Trace_{\partial \Omega}u$ and $g$ are homotopic in $\VMO(\partial \Omega, \mathcal N)$ and $\Trace_{\mathbb{S}^1}u(a_i + \rho \cdot )$ and $\gamma_i$ are homotopic in $\VMO(\mathbb S^1,\mathcal N)$ for $i = 1,\dots,k$.
\end{definition}

We allow $k = 0$ in the definition. In that case the topological resolution is said to be \emph{trivial} and the associated map $u \in W^{1,2}(\Omega,\mathcal N)$ is an extension of $g$. 

We refer to the points $a_i\in\Omega$ $i = 1,\dots,k$ as \emph{singularities}.

If for some $j =1,\dots,k$, it happens that $\lambda(\gamma_j) = 0$ then, provided that $\sysN > 0$ holds, there exists $w \in W^{1,2}(\B(a_j;\rho),\manifold N)$ such that $\Trace_{\mathbb{S}^1}u(a_{j} + \rho \cdot) = \gamma_{j}$ and one can replace in the definition $u$ by a new map 
\begin{equation}\label{eq:toBiterated}
	\bar u  \in W^{1,2}(\Omega \setminus  \bigcup_{\substack{i = 1\\ i\neq j}}^k\B(a_i;\rho), \manifold N)
\end{equation}
where $\bar u = u$ on $\Omega \setminus  \bigcup_{i = 1}^k\B(a_i;\rho)$ and $\bar u = w$ on $\B(a_j;\rho)$.

\begin{definition}[Singular energy, $\Esg^{1,p}$]
\label{def:esg} 
Fix  an open bounded Lipschitz domain $\Omega \subset \R^2$ and a Riemannian manifold $\manifold N$, $g \in W^{\sfrac{1}{2},2}(\partial \Omega, \mathcal N)$ and $p \geq 1$. We set
	\begin{equation*}
		\Esg^{1,p}(g) \doteq \inf \left\{\sum_{i =1}^k\frac{\lambda(\gamma_i)^{p}}{p(2\pi)^{p-1} } : k \in \N,  (\gamma_i)_{i = 1,\dots,k} \text{ is a topological resolution of $g$}\right\}.
	\end{equation*}
\end{definition}
On the one hand, if the systole $\sysN$ is positive, then $\Esg^{1,p}(g) =0$ if 
and only if there exists a map $u \in W^{1,2}_g(\Omega,\mathcal N)$ that extends 
the map $g$. 
This follows by iterating \eqref{eq:toBiterated}.
On the other hand, 
if $\sysN > 0$ and $\Esg^{1,p}(g) > 0$ then 
\begin{equation}
\label{eq_ius6Cei3Tahwae2ahpoh5Iph}
 \Esg^{1,p}(g) \geq \frac{\sysN^p }{p(2\pi)^{p-1}}.
\end{equation}
We also have the following decreasing property of the singular energy $\Esg^{1,p}$  of importance in the circle construction (see proposition \ref{prop:circleconstruction}).
\begin{proposition}[Decreasing property of the singular energy]\label{prop:decreasing}
	If two Lipschitz bounded domains satisfy $\Omega_0 \subset \Omega_1$ and  $u \in W^{1,2}(\Omega_1 \setminus \Omega_0, \manifold N)$, then $\Esg^{1,p}(\Trace_{\partial \Omega_0}u) \geq \Esg^{1,p}(\Trace_{\partial \Omega_1}u)$.
\end{proposition}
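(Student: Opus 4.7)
The plan is to show that any topological resolution of \(g_0 := \Trace_{\partial\Omega_0} u\) induces a topological resolution of \(g_1 := \Trace_{\partial\Omega_1} u\) with the same charges; since only the charges appear in the sum defining \(\Esg^{1,p}\), taking the infimum on each side yields the desired inequality.

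Concretely, I fix \(\varepsilon > 0\) and select a near-minimizing resolution of \(g_0\): charges \((\gamma_i)_{i=1}^k\), pairwise disjoint closed balls \(\B(a_i;\rho) \subset \Omega_0\), and a map \(v \in W^{1,2}(\Omega_0 \setminus \bigcup_i \B(a_i;\rho), \manifold N)\) with \(\Trace_{\partial\Omega_0} v\) VMO-homotopic to \(g_0\), \(\Trace v(a_i+\rho\,\cdot\,)\) VMO-homotopic to \(\gamma_i\), and
\(\sum_i \lambda(\gamma_i)^p/(p(2\pi)^{p-1}) \leq \Esg^{1,p}(g_0) + \varepsilon\). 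I then want to paste \(v\) (on the interior side of \(\partial\Omega_0\)) and \(u\) (on the exterior side) into one map \(w \in W^{1,2}(\Omega_1 \setminus \bigcup_i \B(a_i;\rho), \manifold N)\). The main obstruction is that the traces \(\Trace_{\partial\Omega_0} v\) and \(\Trace_{\partial\Omega_0} u\) are only VMO-homotopic, not literally equal, so naive concatenation need not land in \(W^{1,2}\). To circumvent this, I use the Lipschitz structure to fix a bilateral bi-Lipschitz collar \(T \simeq \partial\Omega_0 \times (-\delta,\delta) \subset \Omega_1\) of width \(\delta>0\) small enough that \(T\) is disjoint from every \(\B(a_i;\rho)\), define \(w = v\) on \(\Omega_0 \setminus T\) and \(w = u\) on \(\Omega_1 \setminus (\bar\Omega_0 \cup T)\), and perform a \(W^{1,2}\) interpolation inside \(T\).

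The hard step is constructing this interpolation in the collar \(T\): its prescribed boundary data at the two edges are \(W^{1/2,2}\) slices of \(v\) and \(u\), which belong to a common VMO free-homotopy class because each is VMO-homotopic to \(\Trace_{\partial\Omega_0} v \sim \Trace_{\partial\Omega_0} u\). I approximate both slices in \(W^{1/2,2}\) by smooth maps in their shared homotopy class, connect the smooth approximations by an explicit smooth homotopy on a middle sub-annulus of \(T\), and splice these to \(v\) and \(u\) on two thin buffer sub-annuli via the nearest-point projection onto \(\manifold N\), which is valid provided the approximations are sufficiently close in \(W^{1/2,2}\) to the actual traces. Once \(w\) is in place, \(\Trace_{\partial\Omega_1} w = \Trace_{\partial\Omega_1} u = g_1\) by construction, and for each \(i\) the trace of \(w\) around \(a_i\) coincides with that of \(v\), hence is VMO-homotopic to \(\gamma_i\). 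Thus \((\gamma_i)_{i=1}^k\) is a topological resolution of \(g_1\), giving
\(\Esg^{1,p}(g_1) \leq \sum_i \lambda(\gamma_i)^p/(p(2\pi)^{p-1}) \leq \Esg^{1,p}(g_0) + \varepsilon\), and sending \(\varepsilon \searrow 0\) concludes.
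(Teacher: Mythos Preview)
The paper does not supply its own argument here; it simply cites \cite[Proposition~2.6]{monteil2021renormalised}. Your strategy---take a near-optimal topological resolution of \(g_0\), glue it to \(u\) across \(\partial\Omega_0\) by inserting a collar on which you realize the VMO homotopy between the two traces as a \(W^{1,2}\) map, and observe that the same charges now resolve \(g_1\)---is exactly the expected proof and is what that reference carries out.

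One technical point deserves sharpening. In your buffer sub-annuli you propose to pass from the actual \(W^{1/2,2}\) trace to its smooth approximation by linear interpolation in \(\R^\nu\) followed by nearest-point projection onto \(\manifold N\), asserting this is valid once the approximation is close in \(W^{1/2,2}\). But \(\partial\Omega_0\) is one-dimensional, so \(W^{1/2,2}\hookrightarrow \mathrm{VMO}\) is the critical (failing) embedding into \(L^\infty\): closeness in \(W^{1/2,2}\) alone does not force the linear interpolant to stay in a tubular neighbourhood of \(\manifold N\) almost everywhere. The standard cure is to choose the smooth approximation by mollification and projection, so that the approximant is uniformly close to \(\manifold N\) by the VMO property of the trace, and then to run the interpolation at the level of mollification scales (or simply invoke the known lemma that two VMO-homotopic maps in \(W^{1/2,2}(\Sigma,\manifold N)\) bound a map in \(W^{1,2}(\Sigma\times[0,1],\manifold N)\)). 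With that adjustment, your argument is complete.
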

\begin{proof}
	See \cite[Proposition 2.6]{monteil2021renormalised}.
\end{proof}

The positivity of the systole yields a continuity statement in $p$ of the $p$-singular energy :
\begin{lemma}[Continuity in $p$ of $\Esg^{1,p}$]\label{lemma:continuite_of_esgp}
	Fix an open bounded Lipschitz domain $\Omega \subset \R^2$, a Riemannian manifold $\manifold N$ and $g \in \VMO(\partial \Omega, \mathcal N)$. If \( \sysN > 0\), then 
	\begin{equation}\label{eq:esg1pasafuncofp}
		p \in [1,\infty) \mapsto \Esg^{1,p}(g)
	\end{equation}
	is locally bounded and locally Lipschitz continuous.
\end{lemma}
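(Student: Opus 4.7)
The plan is to view
\[
\Esg^{1,p}(g) = \inf_R F_R(p), \qquad F_R(p) \doteq \sum_{i = 1}^{k}\frac{\lambda(\gamma_i)^p}{p(2\pi)^{p-1}},
\]
as a pointwise infimum of smooth functions $F_R$ indexed by the $p$-independent set of topological resolutions $R = (\gamma_i)_{i = 1}^k$ of $g$. Only the weights in these sums depend on $p$, so both continuity and Lipschitz estimates reduce to obtaining uniform control on which resolutions may enter the infimum.

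For local boundedness I would simply fix any one topological resolution $R^*$ of $g$, whose existence follows from standard extension results for $\VMO$ maps (one can, for instance, start from proposition \ref{thm:HLthm} and localize around a finite set of singularities using $\sysN > 0$). Since $p \mapsto F_{R^*}(p)$ is smooth on $[1,\infty)$ and $0 \leq \Esg^{1, p}(g) \leq F_{R^*}(p)$, local boundedness of the map \eqref{eq:esg1pasafuncofp} is immediate.

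For local Lipschitz continuity I would fix a compact subinterval $I \subset [1, \infty)$ and set $M = 1 + \sup_{p \in I} \Esg^{1, p}(g) < \infty$, which is finite by the previous step. The key step is to restrict attention to resolutions $R$ satisfying $F_R(p) \leq M$ for some $p \in I$: after iterating \eqref{eq:toBiterated} to discard the trivial charges --- which crucially uses $\sysN > 0$ --- every remaining $\gamma_i$ satisfies $\lambda(\gamma_i) \geq \sysN$. Combined with $F_R(p) \leq M$, this forces uniform bounds $k \leq N_I$ and $\max_i \lambda(\gamma_i) \leq L_I$ depending only on $I$, $M$, and $\sysN$, so that on the resulting compact set of admissible length tuples the derivative $\partial_q F_R(q)$ is uniformly bounded on $I$ by some constant $C_I$.

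The conclusion then follows by a standard almost-optimality argument: for $p, q \in I$ and $\varepsilon > 0$, pick $R$ with $F_R(p) \leq \Esg^{1,p}(g) + \varepsilon \leq M$ and compute
\[
\Esg^{1, q}(g) \leq F_R(q) \leq F_R(p) + C_I \abs{p - q} \leq \Esg^{1, p}(g) + \varepsilon + C_I \abs{p - q};
\]
swapping $p$ and $q$ and letting $\varepsilon \to 0$ gives the Lipschitz estimate on $I$. The main obstacle is the uniform control obtained in the third paragraph: without $\sysN > 0$, near-optimal resolutions could contain arbitrarily many charges of very small length, and no uniform bound on $\partial_q F_R$ across the admissible class would be available.
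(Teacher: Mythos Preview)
Your proof is correct and rests on the same underlying mechanism as the paper's --- that $\sysN > 0$ forces every nontrivial charge to satisfy $\lambda(\gamma_i) \geq \sysN$, which gives uniform control on the resolutions that can compete in the infimum. The executions differ, however. You extract explicit bounds on the number of charges $k$ and on $\max_i \lambda(\gamma_i)$ for near-optimal resolutions, and then use a calculus argument (uniform bound on $\partial_q F_R(q)$ plus an $\varepsilon$-almost-minimizer trick). The paper instead works with $f(p) \doteq p(2\pi)^{p-1}\Esg^{1,p}(g)$ and compares the $\ell^p$ and $\ell^q$ sums directly through the elementary chain
\[
\sysN^{q-p}\sum_i \lambda_i^p \leq \sum_i \lambda_i^q \leq \Bigl(\sum_i \lambda_i^p\Bigr)^{q/p},
\]
valid because each $\lambda_i$ is either $0$ or at least $\sysN$; this yields the two-sided bound $(\sysN^{q-p}-1)f(p) \leq f(q)-f(p) \leq f(p)(f(p)^{(q-p)/p}-1)$ without ever isolating $k$ or individual lengths. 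The paper's route is shorter and, more importantly, produces the explicit Lipschitz estimate \eqref{eq:lipestimate} that is invoked later in the paper (e.g.\ in \eqref{eq:plusdjU}); your argument is equally valid but does not give that constant in closed form.
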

The positive systole assumption is crucial as it allows us to state an equivalence between $\ell^p(\N)$ and $\ell^q(\N)$ norms of the vector $(\lambda_1,\dots,\lambda_k)$, see \eqref{eq:comprasion}.

\begin{proof}[Proof of lemma \ref{lemma:continuite_of_esgp}]
	Set $1 \leq p<q < \infty $, $c_p \doteq 1/(2\pi)^{p-1} p$ and $f(p) \doteq \Esg^{1,p}(g)/c_p$. 
	For every \(\lambda_i \in \{\lambda(\gamma) : \gamma \in \VMO(\mathbb{S}^1, \manifold N)\}\), $i = 1,\dots,k$, we have either \(\lambda_i \ge \sysN\) or \(\lambda_i = 0\), and thus \(\sysN^{q - p} \lambda_i^p \le \lambda_i^q\), so that 
	\begin{equation}\label{eq:comprasion}
				\sysN^{q - p} \sum_{i = 1}^k \lambda_i^p \leq \sum_{i = 1}^k \lambda_i^q \leq  \Big [\sum_{i = 1}^k \lambda_i^p\Big]^{\frac{q}{p}}
	\end{equation}
which implies
	\[
		\sysN^{q - p}f(p) \leq f(q) \leq (f(p))^{q/p}
	\]
	so that $f$ is locally bounded. Moreover, 
	\begin{equation}\label{eq:toLipconst}
	(\sysN^{q - p} - 1)f(p) \leq f(q) - f(p) \leq f(p)\big(f(p)^{\frac{q - p}{p}} - 1\big)
	\end{equation}
	so that $f$ is locally Lipschitz-continuous as $f(p) \geq \sysN^{\frac{p}{p - 1}}$.
\end{proof}	
	Using \eqref{eq:toLipconst}, it is possible to estimate the local Lipschitz constant of \eqref{eq:esg1pasafuncofp}. If $I \subset [1,\infty)$ is compact and $M = \sup_{p \in I}(2\pi)^{p - 1}p\Esg^{1,p}(g)$
	\begin{equation}\label{eq:lipestimate}
		[(2\pi)^{p - 1}p\Esg^{1,p}(g)]_{\mathrm{Lip}(I)} \leq \max(\big|\sysN\log \sysN\big |,\big|M\log M\big |).
\end{equation}

Aside from the continuity statement in $p$ of the map \eqref{eq:esg1pasafuncofp}, the singular energy $\Esg^{1,p}(g)$ is locally constant in $W^{\sfrac{1}{2},2}$: 
\begin{lemma}
	Given $g \in W^{\sfrac{1}{2},2}(\partial \Omega,\manifold N)$, there exists $\delta = \delta(g,\manifold N) > 0$ such that if $h \in W^{\sfrac{1}{2},2}(\partial \Omega,\manifold N)$ satisfies $	\| g - h\|_{W^{\sfrac{1}{2},2}(\partial \Omega)} \leq \delta$ then $ \Esg^{1,p}(g) = \Esg^{1,p}(h)$.
\end{lemma}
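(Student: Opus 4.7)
The plan is to reduce the statement to the fact, already emphasized earlier in the paper, that $\Esg^{1,p}$ depends only on the free homotopy class of its argument, together with the openness of $\mathrm{VMO}$ homotopy classes proved by \scauthor{Brezis} and \scauthor{Nirenberg}.

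More precisely, I would first recall the continuous embedding $W^{\sfrac{1}{2},2}(\partial \Omega, \manifold N) \hookrightarrow \VMO(\partial \Omega, \manifold N)$; in particular, if $h_n \to g$ in $W^{\sfrac{1}{2},2}$, then the $\BMO$ seminorm of $h_n - g$ and the mean oscillations of $h_n$ converge to those of $g$. Since $\manifold N$ is smoothly embedded in $\R^\nu$ with positive reach (or, more generally, since it admits a tubular neighborhood retraction), Brezis--Nirenberg's theory of $\VMO$ mappings into a manifold \cite{brezis_nirenberg_1995,brezis_nirenberg_1996} assigns to every $u \in \VMO(\partial \Omega, \manifold N)$ a well-defined free homotopy class, and the assignment is locally constant in the $\VMO$ topology: there exists $\eta = \eta(g,\manifold N) > 0$ such that every $h \in \VMO(\partial \Omega, \manifold N)$ with $\|h - g\|_{\VMO} \leq \eta$ is freely homotopic to $g$ in $\VMO$.

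Combining these two facts, I would pick $\delta = \delta(g,\manifold N) > 0$ so that the $W^{\sfrac{1}{2},2}$-ball of radius $\delta$ around $g$ is mapped by the embedding inside the $\VMO$-ball of radius $\eta$ around $g$. Then any $h$ with $\|g - h\|_{W^{\sfrac{1}{2},2}(\partial \Omega)} \leq \delta$ is freely $\VMO$-homotopic to $g$. Since the definition of a topological resolution only sees the $\VMO$ homotopy class of the boundary datum, the set of admissible $(\gamma_i)_{i=1,\dots,k}$ in definition \ref{def:esg} is the same for $g$ and $h$, so that $\Esg^{1,p}(g) = \Esg^{1,p}(h)$.

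The only real content is the openness of $\VMO$ homotopy classes, which I would not re-prove but invoke from Brezis--Nirenberg; the rest is a soft continuity argument about the embedding $W^{\sfrac{1}{2},2} \hookrightarrow \VMO$. If one wants a self-contained argument without citing the $\VMO$ framework, one can argue directly: for $h$ close enough to $g$ in $W^{\sfrac{1}{2},2}$, the straight-line homotopy $t \mapsto (1-t)g + t h$, composed with the nearest-point projection $\Pi_{\manifold N}$ onto a tubular neighborhood of $\manifold N$, provides a $W^{\sfrac{1}{2},2}$ (hence $\VMO$) homotopy between $g$ and $h$ inside $\manifold N$; this is the expected step to carry out with some care, since one must ensure that $(1-t)g + th$ takes values close enough to $\manifold N$ almost everywhere to apply $\Pi_{\manifold N}$, which is where the smallness of $\delta$ is used.
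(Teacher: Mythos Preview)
Your proposal is correct and follows essentially the same route as the paper: the paper's own argument is precisely to combine the embedding $W^{\sfrac{1}{2},2}\subset\VMO$ with the Brezis--Nirenberg result that $\VMO$ homotopy classes are open (their Lemma~A.19), and then invoke the homotopy invariance of $\Esg^{1,p}$. Your additional explanation of why the admissible topological resolutions coincide for homotopic boundary data, and the optional self-contained straight-line-plus-projection homotopy, are useful elaborations but not departures from the paper's approach.
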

This follows from the fact that given $g \in W^{\sfrac{1}{2},2}(\partial \Omega,\manifold N)$ there exists $\delta = \delta(g) > 0$ such that if $h\in W^{\sfrac{1}{2},2}(\partial \Omega,\manifold N)$ satisfies $\| g - h\|_{W^{\sfrac{1}{2},2}} \leq \delta $, then they are homotopic (for a proof of this fact :  \cite[Lemma A.19]{brezis_nirenberg_1995} combined with $W^{\sfrac{1}{2},2}\subset \VMO$) and $\Esg^{1,p}(g)$ is homotopy invariant.

\begin{definition}[Minimal topological resolution]\label{def:mintopres} 
Fix  an open bounded Lipschitz domain $\Omega \subset \R^2$, a Riemannian manifold $\manifold N$ and $g \in W^{\sfrac{1}{2},2}(\partial \Omega, \mathcal N)$. 
A topological resolution of the map $g$ is \emph{minimal topological resolution} of $g$ whenever the singularities $(\gamma_i)_{i = 1,\dots,k}$ satisfy \(\lambda (\gamma_i) > 0\) and 
	\begin{equation*}
		\Esg^{1,2}(g) =\sum_{i =1}^k\frac{\lambda(\gamma_i)^{2}}{4\pi}>0.
	\end{equation*}
\end{definition}

By \eqref{eq_ius6Cei3Tahwae2ahpoh5Iph}, if $\sysN > 0$, the number $k$ of singularities of a minimal topological resolution is at most \[
  k \leq  4\pi \Esg^{1,2}(g)/\sysN^{2}.
\]

\begin{definition}[Atomicity] \label{def:atomicity} 
A map $\gamma \in W^{\sfrac{1}{2},2}(\mathbb{S}^1,\manifold N)$ is said to be atomic if 
	\begin{equation*}
		\Esg^{1,2}(\gamma) =\frac{\lambda(\gamma)^{2}}{4\pi}.
	\end{equation*}
\end{definition}
For instance, when $\manifold N = \mathbb S^1$ the identity map $\mathrm{Id}_{\mathbb{S}^1} : z \in \mathbb{S}^1 \to z \in \mathbb S^1$ and its conjugate are atomic maps. 

We conclude this section with the following observation.
\begin{lemma}[Atomicity in minimal topological resolutions]\label{lemma:atomicityinminialtop}
	Fix  an open bounded Lipschitz domain $\Omega \subset \R^2$, a Riemannian manifold $\manifold N$ and $g \in W^{\sfrac{1}{2},2}(\partial \Omega, \mathcal N)$ as well as a minimal topological resolution $(\gamma_i)_{i = 1,\dots,k}$ of the map $g$. For each $i = 1,\dots,k$ the map $\gamma_i$ is atomic.
\end{lemma}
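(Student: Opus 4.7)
\smallskip

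The plan is to prove the atomicity by sandwiching $\Esg^{1,2}(\gamma_i)$ between $\lambda(\gamma_i)^2/(4\pi)$ (easy upper bound) and $\lambda(\gamma_i)^2/(4\pi)$ (obtained from the minimality of $(\gamma_i)_{i=1,\dots,k}$ via a subadditivity argument).

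\medskip

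First, I record the pointwise upper bound $\Esg^{1,2}(\gamma_i)\le \lambda(\gamma_i)^2/(4\pi)$: regarding $\gamma_i\in W^{\sfrac12,2}(\mathbb{S}^1,\manifold N)$ as boundary data on the unit disk $\B$, the family $(\gamma_i)$ (a single charge at the origin) is an admissible topological resolution, the map being any $W^{1,2}$ extension to the annulus $\B\setminus \B(0;\tfrac12)$ of a smooth representative of the free homotopy class of $\gamma_i$ (for instance $v(x) = \tilde\gamma_i(x/|x|)$, whose energy is $W^{1,2}$ since $\int_{1/2}^{1}r^{-1}\d r<\infty$). Plugging into the definition of $\Esg^{1,2}$ yields the bound.

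\medskip

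The main step is the following \emph{subadditivity}: for \emph{every} topological resolution $(\gamma_i)_{i=1,\dots,k}$ of $g$,
\begin{equation*}
  \Esg^{1,2}(g)\;\le\;\sum_{i=1}^{k}\Esg^{1,2}(\gamma_i).
\end{equation*}
I would prove it by nesting resolutions. Fix $\varepsilon>0$ and, for each $i$, pick a topological resolution $(\tilde\gamma_\ell^{(i)})_{\ell=1,\dots,m_i}$ of $\gamma_i$ (viewed on the unit disk) together with an associated map $v_i \in W^{1,2}(\B\setminus \bigcup_\ell \B(c_\ell^{(i)};\sigma),\manifold N)$ such that $\sum_\ell \lambda(\tilde\gamma_\ell^{(i)})^{2}/(4\pi)\le \Esg^{1,2}(\gamma_i)+\varepsilon/k$. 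Rescale by $x\mapsto v_i((x-a_i)/\rho)$ so that each $v_i$ is defined on $\B(a_i;\rho)$ minus the small disks $\B(a_i+\rho c_\ell^{(i)};\rho\sigma)\subset \B(a_i;\rho)$. These sub-singularities lie inside the pairwise disjoint balls $\B(a_i;\rho)$ of the original resolution, so they are disjoint from the $\B(a_{i'};\rho)$ for $i'\neq i$. Combining the original map $u$ outside $\bigcup_i\B(a_i;\rho)$ with the rescaled $v_i$ inside produces a candidate map for a topological resolution of $g$ with charges $(\tilde\gamma_\ell^{(i)})_{i,\ell}$ and total cost at most $\sum_i \Esg^{1,2}(\gamma_i)+\varepsilon$. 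Letting $\varepsilon\searrow 0$ gives the subadditivity.

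\medskip

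Apply this to the given minimal topological resolution $(\gamma_i)_{i=1,\dots,k}$: by definition $\Esg^{1,2}(g)=\sum_i \lambda(\gamma_i)^2/(4\pi)$, and the combination of both inequalities gives
\begin{equation*}
  \sum_{i=1}^{k}\frac{\lambda(\gamma_i)^{2}}{4\pi}
  \;=\;\Esg^{1,2}(g)
  \;\le\;\sum_{i=1}^{k}\Esg^{1,2}(\gamma_i)
  \;\le\;\sum_{i=1}^{k}\frac{\lambda(\gamma_i)^{2}}{4\pi}.
\end{equation*}
Equality throughout, together with the termwise upper bound $\Esg^{1,2}(\gamma_i)\le \lambda(\gamma_i)^2/(4\pi)$, forces $\Esg^{1,2}(\gamma_i)=\lambda(\gamma_i)^2/(4\pi)$ for every $i$, which is precisely atomicity.

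\medskip

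The hard part is the gluing inside the subadditivity step. The trace of $u$ on $\partial\B(a_i;\rho)$ is only \emph{homotopic} (in $\VMO$) to $\gamma_i$, and so is the trace of the rescaled $v_i$; the two traces need not agree. The standard remedy is to use $u$ on $\Omega\setminus \bigcup_i\B(a_i;\rho+\eta)$ and rescaled $v_i$ on $\B(a_i;\rho-\eta)\setminus\bigcup_\ell \B(a_i+\rho c_\ell^{(i)};\rho\sigma)$, then fill each thin annulus $\B(a_i;\rho+\eta)\setminus\B(a_i;\rho-\eta)$ with a $W^{1,2}$ map obtained by reparametrising a smooth homotopy between smooth approximations of the two $\VMO$-homotopic boundary traces. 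Once this (purely qualitative) gluing is in place, the counting of charges and the cost estimate above go through unchanged.
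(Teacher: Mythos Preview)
Your proof is correct and follows essentially the same route as the paper's: the easy upper bound $\Esg^{1,2}(\gamma_i)\le \lambda(\gamma_i)^2/(4\pi)$ together with subadditivity of the singular energy, forced into equality by the minimality assumption. The only organisational difference is that the paper isolates one index $j$ at a time and writes a two-piece inequality
\[
\sum_{i}\frac{\lambda(\gamma_i)^2}{4\pi}\ \ge\ \Esg^{1,2}\bigl(\Tr_{\partial(\bigcup_{i\neq j}\B(a_i;\rho))}u\bigr)+\frac{\lambda(\gamma_j)^2}{4\pi}\ \ge\ \Esg^{1,2}(g),
\]
whereas you prove the full subadditivity $\Esg^{1,2}(g)\le\sum_i\Esg^{1,2}(\gamma_i)$ at once; both rely on the same gluing-on-thin-annuli step, which the paper leaves implicit and you spell out.
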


Here and in the sequel we use the notation
	\begin{equation}\label{eq:non-intersecting_radius}
		\rho(\{a_i\}_{i = 1,\dots,\kappa}) \doteq \min\{\dist(\partial \Omega, a_i), |a_i - a_j|: i \neq j, \, i,j = 1,\dots,\kappa\}
	\end{equation}
for a quantity that will play the role of a non-intersecting radius of balls.

\begin{proof}[Proof of lemma \ref{lemma:atomicityinminialtop}]
	Fix $j \in \{1,\dots,k\}$ and $\rho \in (0,\rho(a_i)_{i = 1,\dots,k})$. Let $u$ be the map that carries the topological resolution. By definition of the singular energy,
	\begin{equation*}
	\begin{split}
		\Esg^{1,2}(g) 
		= \sum_{i =1}^k\frac{\lambda(\gamma_i)^{2}}{4\pi } 
		&\geq \Esg^{1,2}(\Trace_{\partial (\bigcup_{i \neq j} \B(a_i;\rho))}u) + \frac{\lambda(\gamma_j)^{2}}{4\pi} \\
		& \geq \Esg^{1,2}(\Trace_{\partial (\bigcup_{i \neq j} \B(a_i;\rho)}u) + \Esg^{1,2}(\Trace_{\partial \B(a_j;\rho)}u) \geq \Esg^{1,2}(g).
		\end{split}
	\end{equation*}
	This forces \(		\frac{\lambda(\gamma_j)^{2}}{4\pi} = \Esg^{1,2}(\Trace_{\partial \B(a_j;\rho)}u) = \Esg^{1,2}(\gamma_j)\).
\end{proof}

\subsection{Renormalized energy of a mapping, \texorpdfstring{$\Eren^{1,2}$}{E12ren}} 
Minimizing $p$-harmonic maps when $p \nearrow 2$ will converge to minimizers of the following renormalized energy introduced in \cite{monteil2021renormalised}:
\begin{definition}[Renormalized energy and renormalizable  Sobolev maps]\label{def:Eren}
	Fix  an open bounded Lipschitz domain $\Omega \subset \R^2$ and a Riemannian manifold $\manifold N$. A measurable map $u : \Omega \to \manifold N$ is said to be \emph{renormalizable} whenever there exist $k \in \N$ distinct points $a_i \in  \Omega$ such that $u \in  W^{1,2}(\Omega \setminus \{a_1, \dots, a_k\},\manifold N)$ and $\lambda([u,a_i])>0$ such that its \emph{renormalized energy}
	\begin{equation}
		\label{eq_tojie4OoLoos8ho5Oorua1Sh}
		\Eren^{1,2}(u) \doteq \varliminf_{\rho \searrow 0}\int_{\Omega \setminus  \bigcup_{i = 1}^k\B(a_i;\rho)}\frac{|Du|^2}{2} - \sum_{i = 1}^k\frac{\lambda([u,a_i])^2}{4\pi}\log\frac{1}{\rho}
	\end{equation}
	is finite. 
\end{definition}
For example, if $\Omega = \B$ is the unit ball of $\R^2$, $\manifold N = \mathbb{S}^1$ and $g = \Id_{\mathbb{S}^1} : \mathbb{S}^1 \doteq \partial \B \to \mathbb{S}^1$ is the identity map, then the \emph{vortex map \emph{or} hedgehog map} $u_h(x) = \sfrac{x}{|x|}$ belongs to $W^{1,2}_{\ren}(\B ,\mathbb{S}^1)$. Moreover $\Eren^{1,2}(u_h) = 0$.

We point out that the existence of such maps is not granted  when the manifold $\manifold N$ is not compact, see proposition \ref{prop:wrenpeutetreempty}.

The inferior limit $\varliminf$ in \eqref{eq_tojie4OoLoos8ho5Oorua1Sh} is actually a limit as 
\begin{equation}\label{eq:decreasingLemma}\ds\rho \in (0,\rho(a_i)_{i = 1,\dots,k}) \mapsto \int_{\Omega \setminus  \bigcup_{i = 1}^k\B(a_i;\rho)}\frac{|Du|^2}{2} - \sum_{i = 1}^k\frac{\lambda([u,a_i])^2}{4\pi}\log\frac{1}{\rho}
\end{equation}
	is non-increasing, see \cite[Proposition 2.10]{monteil2021renormalised} for a proof that only assume $\sysN > 0$.

We finally  give an expression of the renormalized energy that does not make use of the limit operator. It splits the expression of $\Eren^{1,2}$ in two parts (an $L^2$-part and a renormalized part).
\begin{proposition}[Integral representation of the renormalized energy]\label{prop:polarCoordEren}
	Fix  an open bounded Lipschitz domain $\Omega \subset \R^2$ and a Riemannian manifold $\manifold N$. If $u \in W^{1,2}_{\ren}(\Omega,\manifold N)$ is a renormalizable map associated to the distinct points $a_i \in\Omega$, $i = 1,\dots,k$, then, for every $\sigma \in (0,\rho(\{a_i\}_{i = 1,\dots,k}))$,
	\begin{multline}\label{eq:erenPolarCoord}
		\Eren^{1,2}(u) + \sum_{i = 1}^k \frac{\lambda([u,a_i])^2}{4\pi}\log \frac{1}{\sigma} \\= \int_{\Omega \setminus  \bigcup_{i = 1}^k\B(a_i;\sigma)}\frac{|Du|^2}{2} + \sum_{i =1}^k \int_0^\sigma\left [\int_{\partial \B(a_i;r)}\frac{|Du|^2}{2}\d \HH^1 - \frac{\lambda([u, a_i])^2}{4\pi r} \right] \d r.
	\end{multline}
	\\
	Conversely, if $u \in W^{1,2}_{\mathrm{loc}}(\Omega \setminus  \{a_i\}_{i = 1,\dots,k},\manifold N)$ and there exists a $\sigma \in (0,\rho(a_i)_{i = 1,\dots,k})$ such that the right-hand side of \eqref{eq:erenPolarCoord} is finite, then $u \in W^{1,2}_{\ren}(\Omega,\manifold N)$.
\end{proposition}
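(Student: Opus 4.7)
The plan is to compare, for every admissible pair of radii $0 < \rho < \sigma < \rho(\{a_i\}_{i=1,\dots,k})$, the quantity
\[
F(\rho) \doteq \int_{\Omega \setminus \bigcup_{i=1}^k \B(a_i;\rho)} \frac{\abs{Du}^2}{2} - \sum_{i=1}^k \frac{\lambda([u,a_i])^2}{4\pi}\log\frac{1}{\rho}
\]
with $F(\sigma)$, express the difference as an integral on $(\rho,\sigma)$, and then let $\rho \searrow 0$ by monotone convergence. The main tool is the co-area decomposition in polar coordinates around each $a_i$, together with the pointwise lower bound on circles of lemma \ref{lemma:loc_lower_bound_circle}.

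First, writing $\Omega \setminus \bigcup_i \B(a_i;\rho) = (\Omega \setminus \bigcup_i \B(a_i;\sigma)) \cup \bigsqcup_i (\B(a_i;\sigma)\setminus \B(a_i;\rho))$ and using Fubini on each annulus (the integrand $\abs{Du}^2$ is integrable there since $u \in W^{1,2}_{\mathrm{loc}}(\Omega \setminus \{a_i\})$), I get
\[
\int_{\Omega \setminus \bigcup_i \B(a_i;\rho)} \frac{\abs{Du}^2}{2} = \int_{\Omega \setminus \bigcup_i \B(a_i;\sigma)} \frac{\abs{Du}^2}{2} + \sum_{i=1}^k \int_\rho^\sigma \int_{\partial \B(a_i;r)} \frac{\abs{Du}^2}{2}\,\d \HH^1 \,\d r.
\]
Next, using $\log(1/\rho) - \log(1/\sigma) = \int_\rho^\sigma \d r/r$ and subtracting the logarithmic counterterm yields
\[
F(\rho) = F(\sigma) + \sum_{i=1}^k \int_\rho^\sigma \left[ \int_{\partial \B(a_i;r)} \frac{\abs{Du}^2}{2}\,\d \HH^1 - \frac{\lambda([u,a_i])^2}{4\pi r}\right] \d r.
\]

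The key observation is now that, by homotopy invariance of $\lambda$ on $\VMO$-classes, one has $\lambda(\Trace_{\partial \B(a_i;r)} u) = \lambda([u,a_i])$ for a.e. $r \in (0,\sigma)$, and lemma \ref{lemma:loc_lower_bound_circle} applied with $p=2$ together with $\abs{u'}^2 \le \abs{Du}^2$ on the circle gives
\[
\frac{\lambda([u,a_i])^2}{4\pi r} \le \int_{\partial \B(a_i;r)} \frac{\abs{Du}^2}{2}\,\d \HH^1 \qquad \text{for a.e. } r \in (0,\sigma).
\]
Thus each integrand $r \mapsto \int_{\partial \B(a_i;r)} \abs{Du}^2/2 \,\d\HH^1 - \lambda([u,a_i])^2/(4\pi r)$ is non-negative, so $F$ is non-increasing as $\rho \searrow 0$ (recovering \eqref{eq:decreasingLemma}) and the identity above is in fact an increase formula with non-negative increment.

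Under the assumption $u \in W^{1,2}_{\ren}(\Omega,\manifold N)$, the definition gives $\Eren^{1,2}(u) = \lim_{\rho \searrow 0} F(\rho) < \infty$ (the $\varliminf$ is a limit by monotonicity). By monotone convergence applied to the non-negative integrands above, passing $\rho \searrow 0$ in the displayed identity yields precisely \eqref{eq:erenPolarCoord}. Conversely, if $u \in W^{1,2}_{\mathrm{loc}}(\Omega \setminus \{a_1,\dots,a_k\},\manifold N)$ and the right-hand side of \eqref{eq:erenPolarCoord} is finite for some $\sigma \in (0,\rho(\{a_i\}))$, then the same identity together with monotone convergence shows that $\lim_{\rho \searrow 0} F(\rho)$ exists and equals that finite right-hand side minus $\sum_i \lambda([u,a_i])^2 \log(1/\sigma)/(4\pi)$, so $u \in W^{1,2}_{\ren}(\Omega,\manifold N)$.

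The only mildly subtle point, which I would make explicit, is the application of Fubini: one must check that for a.e. $r \in (0,\sigma)$ the restriction $u\vert_{\partial \B(a_i;r)}$ belongs to $W^{1,2}(\partial \B(a_i;r),\manifold N)$ so that $\lambda(\Trace_{\partial \B(a_i;r)}u)$ is defined and equal to $\lambda([u,a_i])$; this follows from $u \in W^{1,2}(\B(a_i;\sigma)\setminus \B(a_i;\rho),\manifold N)$ for every $\rho>0$ and the homotopy invariance in \eqref{eq:nationpoint}. Beyond this verification the argument is purely computational.
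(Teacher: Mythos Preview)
Your proof is correct and follows essentially the same approach as the paper: decompose the domain into the complement of the larger balls and the annuli, rewrite the annular contribution and the logarithmic counterterm via polar coordinates, use lemma \ref{lemma:loc_lower_bound_circle} to see that the bracketed integrand is nonnegative, and pass to the limit \(\rho \searrow 0\) by monotone convergence (which the paper phrases as Levi's theorem). You are simply a bit more explicit about the Fubini/trace verification and the monotonicity of \(F\), but the argument is the same.
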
	
We recall that the non-intersection radius $\rho(\{a_i\}_{i = 1,\dots,k})$ was defined in \eqref{eq:non-intersecting_radius}.

\begin{proof}[Proof of propostion \ref{prop:polarCoordEren}]
	We have by the existence of the limit in the expression of the renormalized energy (see \eqref{eq:decreasingLemma})
	\begin{equation*}   
	\begin{split}
	 \Eren^{1,2}(u) &+ \sum_{i = 1}^k \frac{\lambda([u, a_i])^2}{4\pi}\log\frac{1}{\sigma} \\
		&= \lim_{\rho \searrow 0} \int_{\Omega \setminus  \bigcup_{i = 1}^k\B(a_i;\sigma)}\frac{|Du|^2}{2} + \sum_{i = 1}^k \left [ \int_{\B(a_i;\sigma) \setminus \B(a_i;\rho)} \frac{|Du|^2}{2} -  \frac{\lambda([u, a_i])^2}{4\pi}\log\frac{\sigma}{\rho}\right ] \\
		&= \int_{\Omega \setminus  \bigcup_{i = 1}^k\B(a_i;\sigma)}\frac{|Du|^2}{2} + \sum_{i = 1}^k\lim_{\rho \searrow 0} \int_\rho^{\sigma} \left[\int_{\partial \B(a_i;r)}\frac{|Du|^2}{2}\d \mathscr H^1 - \frac{\lambda([u, a_i])^2}{4\pi r}\right]\d r.
		\end{split}
	\end{equation*}
	The terms in bracket are nonnegative by lemma \ref{lemma:loc_lower_bound_circle}. As a consequence Levi's monotone convergence theorem applies. The reciprocal follows from the same computation. 
\end{proof}

When $p\in [1,2)$, we introduce the following $p$-renormalized energy
\begin{equation}
	\label{eq_caerieNuoGheich6fohquooz}
	\begin{split}
	\Eren^{1,p}(u) &\doteq \int_{\Omega}\frac{|Du|^p}{p} - \sum_{i = 1}^k\frac{\lambda([u,a_i])^p}{(2\pi)^{p - 1} p}\frac{1}{2 - p} \\
	&= \lim_{\rho \searrow 0}\int_{\Omega \setminus  \bigcup_{i = 1}^k\B(a_i;\rho)}\frac{|Du|^p}{p} - \sum_{i = 1}^k\frac{\lambda([u,a_i])^p}{(2\pi)^{p - 1} p}\frac{1 - \rho^{2 - p}}{2 - p}.
	\end{split}
\end{equation}
defined for maps $u \in W^{1,2}_{\ren}(\Omega,\mathcal N)$. In \cite[Theorem 5.1]{monteil2021renormalised}, it is shown that  if $u \in W^{1,2}_{\ren}(\Omega, \manifold N)$ then $Du \in L^{2,\infty}(\Omega)$, see also corollary \ref{coro:erenweakL2}.
In particular $W^{1,2}_{\ren}(\Omega, \manifold N) \subset W^{1,p}(\Omega,\mathcal N)$ for $p \in [1,2)$ since the weak-$L^2$ (Marcinkiewicz) space satisfies $L^{2,\infty}(\Omega) \subset L^p(\Omega)$, see \emph{e.g.} \cite[Theorem 5.9]{castillo2016introductory}.

\begin{proposition}\label{prop:limitErenpToEren}
	Fix  an open bounded Lipschitz domain $\Omega \subset \R^2$ and a Riemannian manifold $\manifold N$ with $\sysN > 0$.
	If $u \in W^{1,2}_{\ren}(\Omega,\manifold N)$ then 
	\begin{equation*}
		\Eren^{1,p}(u) \xrightarrow{p \nearrow 2} \Eren^{1,2}(u).
	\end{equation*}
\end{proposition}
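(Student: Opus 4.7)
The plan is to derive an integral representation of $\Eren^{1,p}(u)$ parallel to the one of Proposition \ref{prop:polarCoordEren} for $\Eren^{1,2}(u)$, and then pass to the limit $p \nearrow 2$ piece by piece. Since $\lambda_i \doteq \lambda([u, a_i]) > 0$ and $\sysN > 0$, one has $\lambda_i \geq \sysN$. Fix $\sigma \in (0, \min(\rho(\{a_i\}_{i = 1}^k), \sysN/(2\pi)))$. Splitting the integral $\int_\Omega$ into a bulk part and disks around the singularities, applying Fubini--Tonelli in polar coordinates, and using $\int_0^\sigma r^{1-p}\d r = \sigma^{2-p}/(2-p)$, I would rewrite
\[
\Eren^{1,p}(u) = \int_{\Omega \setminus \bigcup_{i = 1}^k \B(a_i;\sigma)} \frac{|Du|^p}{p} + \sum_{i = 1}^k \int_0^\sigma f_i^p(r) \d r - \sum_{i = 1}^k \frac{\lambda_i^p}{p(2\pi)^{p-1}} \cdot \frac{1 - \sigma^{2-p}}{2-p},
\]
where $f_i^p(r) \doteq \int_{\partial \B(a_i;r)} \frac{|Du|^p}{p} \d\HH^1 - \frac{\lambda_i^p}{p(2\pi r)^{p-1}} \geq 0$ by Lemma \ref{lemma:loc_lower_bound_circle}. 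Proposition \ref{prop:polarCoordEren} provides the analogous representation of $\Eren^{1,2}(u)$ with $\log(1/\sigma)$ in place of $(1 - \sigma^{2-p})/(2-p)$.

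The bulk term converges to its $p=2$ counterpart by Lebesgue dominated convergence, since $u \in W^{1, 2}(\Omega\setminus \bigcup_i \B(a_i;\sigma), \manifold N)$ and $|Du|^p \leq 1 + |Du|^2$ for $p \in [1, 2]$. The explicit third term converges directly using $(1-\sigma^{2-p})/(2-p) \to \log(1/\sigma)$, $\lambda_i^p \to \lambda_i^2$, and $p(2\pi)^{p-1} \to 4\pi$. Everything thus reduces to proving $\int_0^\sigma f_i^p(r) \d r \to \int_0^\sigma f_i^2(r) \d r$. Pointwise convergence $f_i^p(r) \to f_i^2(r)$ holds at each $r$ such that $\int_{\partial \B(a_i;r)} |Du|^2 < \infty$, a condition fulfilled for a.e.\ $r \in (0, \sigma)$ by Fubini--Tonelli (using that $u \in W^{1,2}_{\mathrm{loc}}(\Omega \setminus \{a_i\}_{i=1}^k)$).

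The main obstacle, and the heart of the matter, is producing a $p$-independent dominant for the family $(f_i^p)_{p \in [1, 2]}$ on $(0, \sigma)$. I would establish the pointwise bound
\[
f_i^p(r) \leq f_i^2(r) \cdot (2\pi r/\lambda_i)^{2-p}, \qquad p \in [1, 2], \; r > 0.
\]
Given this, the choice $2\pi r \leq 2\pi \sigma \leq \sysN \leq \lambda_i$ yields $(2\pi r/\lambda_i)^{2-p} \leq 1$ uniformly in $p$, so $f_i^p \leq f_i^2 \in L^1(0, \sigma)$ (the integrability coming from $\Eren^{1, 2}(u) < \infty$ combined with Proposition \ref{prop:polarCoordEren}), and Lebesgue dominated convergence concludes. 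To prove the bound, set $g(r) \doteq \int_{\partial \B(a_i;r)} |Du|^2 \d\HH^1$ and apply Jensen's inequality for the concave function $x \mapsto x^{p/2}$ (valid since $p/2 \leq 1$) on the uniform probability measure of $\partial \B(a_i;r)$ to get $\int_{\partial \B(a_i;r)} |Du|^p \leq (2\pi r)^{1 - p/2} g(r)^{p/2}$. Combined with the lower bound $g(r) \geq \lambda_i^2/(2\pi r)$ (from Cauchy--Schwarz and the length estimate $\int_{\partial \B(a_i;r)} |Du| \geq \lambda_i$) and the tangent-line estimate $(1+x)^{p/2} \leq 1 + (p/2)x$ for $x \geq 0$ (concavity of $(1+x)^{p/2}$ at $0$), the algebra produces a leading term $\lambda_i^p/(p(2\pi r)^{p-1})$ that cancels exactly against the subtracted singular piece of $f_i^p$, and what remains is precisely $f_i^2(r) \cdot (2\pi r/\lambda_i)^{2-p}$.
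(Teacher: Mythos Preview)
Your proof is correct and follows essentially the same route as the paper: the same polar decomposition into bulk term, explicit third term, and radial integrals $\int_0^\sigma f_i^p(r)\d r$, with dominated convergence driven by the identical key estimate $f_i^p(r) \leq (2\pi r/\lambda_i)^{2-p} f_i^2(r)$. The only cosmetic difference is that the paper obtains this estimate pointwise via Young's inequality with exponents $2/p$ and $2/(2-p)$ (their Lemma~\ref{lemma:usefulmajoration}) rather than via Jensen plus the tangent-line bound, and does not bother shrinking $\sigma$ below $\sysN/(2\pi)$ since $(2\pi r/\lambda_i)^{2-p}$ is already uniformly bounded on $(0,\sigma)\times[1,2]$.
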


The proof of proposition~\ref{prop:limitErenpToEren} relies on  the following Hölder-type estimate :
\begin{lemma}\label{lemma:usefulmajoration}
	Let $u \in W^{1,2}_{\mathrm{loc}}(\B(a;\rho)\setminus\{a\})$. If $\lambda([u,a]) > 0$, then, for every $r \in (0,\rho)$,
	\begin{equation}\label{eq:leb_hat}
		\int_{\partial \B(a;r)}\frac{|Du|^p}{p}\d \HH^1 -  \frac{\lambda([u, a])^p}{p(2\pi r)^{p - 1}} \leq \Big (\frac{2\pi r}{\lambda([u,a])} \Big )^{2 - p} \left [\int_{\partial \B(a;r)}\frac{|Du|^2}{2}\d \HH^1 - \frac{\lambda([u, a])^2}{4\pi r} \right ]
	\end{equation}
	and
	\begin{equation}
		\int_{\B(a;\rho)}\frac{|Du|^p}{p} -  \frac{\lambda([u,a])^p}{p(2\pi |x - a|)^{p}} \d x 		\leq \Bigl(\frac{2\pi \rho}{\lambda([u,a_i])}\Bigr)^{2 - p} 	\int_{\B(a;\rho)}\frac{|Du|^2}{2}-  \frac{\lambda([u,a])^2}{8\pi^2 |x - a|^2} \d x
	\end{equation}
	provided the right-hand sides are finite.
	\end{lemma}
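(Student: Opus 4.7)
The plan is to reduce both inequalities to a single pointwise interpolation estimate. The cornerstone observation is that on $[0, \infty)$ the function $s \mapsto s^{p/2}$ is concave for $p \in [1, 2]$; its tangent line at $s = c^2$ evaluated at $s = t^2$ yields
\[
t^p \leq c^p + \tfrac{p}{2}\, c^{p - 2}(t^2 - c^2) \qquad \text{for all } t \geq 0 \text{ and } c > 0.
\]
The assumption $\lambda([u, a]) > 0$ lets me take $c \doteq \lambda([u, a])/(2\pi r) > 0$, and this choice will drive the whole proof.

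For the first inequality, I fix $r \in (0, \rho)$ and apply the pointwise bound at every $x \in \partial \B(a;r)$ with $t = \abs{Du}(x)$. Integrating against $\HH^1$ on $\partial \B(a;r)$ (whose length is $2\pi r$), the two constant-in-$x$ terms telescope: $(2\pi r)\, c^p = \lambda([u, a])^p/(2\pi r)^{p-1}$ and $\tfrac{p}{2}(2\pi r)\, c^p = \tfrac{p}{2}\lambda([u, a])^p/(2\pi r)^{p-1}$, while $\tfrac{p}{2}\, c^{p - 2} = \tfrac{p}{2}(2\pi r/\lambda([u, a]))^{2 - p}$ becomes the coefficient multiplying $\int_{\partial \B(a;r)} \abs{Du}^2\, \d\HH^1$. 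Dividing by $p$ and regrouping $\tfrac{1}{p} - \tfrac{1}{2} = -\tfrac{2 - p}{2p}$ will exactly produce \eqref{eq:leb_hat}.

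For the second inequality, I integrate \eqref{eq:leb_hat} over $r \in (0, \rho)$, rewriting $\int_0^\rho \int_{\partial \B(a;r)} \cdot \, \d\HH^1\, \d r$ as $\int_{\B(a;\rho)} \cdot \, \d x$ via polar coordinates; the subtracted constants $\lambda([u, a])^p/(p(2\pi r)^{p - 1})$ and $\lambda([u, a])^2/(4\pi r)$ unfold into the volume integrals $\int_{\B(a;\rho)} \lambda([u, a])^p/(p(2\pi\abs{x - a})^p)\, \d x$ and $\int_{\B(a;\rho)} \lambda([u, a])^2/(8\pi^2 \abs{x - a}^2)\, \d x$ appearing in the statement. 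The $r$-dependent prefactor $(2\pi r/\lambda([u, a]))^{2 - p}$ on the right-hand side of \eqref{eq:leb_hat} is then replaced by the larger quantity $(2\pi \rho/\lambda([u, a]))^{2 - p}$, which is legal because $r \leq \rho$ and $2 - p \geq 0$, and, crucially, because the bracketed factor $\int_{\partial \B(a;r)} \abs{Du}^2/2\, \d \HH^1 - \lambda([u, a])^2/(4\pi r)$ is non-negative by lemma \ref{lemma:loc_lower_bound_circle} applied at $p = 2$.

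The only real obstacle I foresee is precisely this sign control: without the non-negativity of the bracketed integrand, pulling the factor $(2\pi r/\lambda([u, a]))^{2 - p}$ out of the $r$-integral would not be legitimate. Once this is in place, the argument is a direct consequence of the concavity of $s \mapsto s^{p/2}$ and a Fubini unfolding in polar coordinates, so I do not expect any further technical difficulty; the finiteness hypothesis on the right-hand sides only serves to ensure that the left-hand sides, now controlled term by term, are also finite.
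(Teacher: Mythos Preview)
Your proof is correct and follows essentially the same route as the paper's. The paper derives the pointwise bound via Young's inequality with exponents $2/p$ and $2/(2-p)$, namely $|Du|^p\,c^{2-p}\le \tfrac{p}{2}|Du|^2+\tfrac{2-p}{2}c^2$ with $c=\lambda([u,a])/(2\pi r)$; this is exactly your tangent-line inequality for the concave function $s\mapsto s^{p/2}$, so the two phrasings are equivalent. Your treatment of the second inequality---integrating in $r$, invoking the non-negativity of the bracket from lemma~\ref{lemma:loc_lower_bound_circle}, and majorising $(2\pi r/\lambda)^{2-p}$ by $(2\pi\rho/\lambda)^{2-p}$---is what the paper's terse ``follows by integration'' hides.
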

\begin{proof}[Proof of lemma \ref{lemma:usefulmajoration}]
	By Young's inequality with exponents $1/(2/p) + 1/(2/(2 - p)) = 1$,
	\begin{equation*}
		|Du|^p \Big ( \frac{\lambda([u,a_i])}{2\pi r}\Big)^{2 - p} \leq \frac{p}{2} |Du|^2 + \Bigl(1 - \frac{p}{2}\Bigr) \Big ( \frac{\lambda([u,a_i])}{2\pi r}\Big)^{2}.
	\end{equation*}
	Hence, 
	\begin{equation*}
		\frac{|Du|^p}{p} -  \frac{1}{p}\Big ( \frac{\lambda([u,a])}{2\pi r}\Big)^{p} 
		\leq \Big (\frac{2\pi \rho}{\lambda([u,a])} \Big )^{2 - p} \left [ \frac{|Du|^2}{2} -  \frac{1}{2}\Big ( \frac{\lambda([u,a])}{2\pi r}\Big)^{2} \right ].
	\end{equation*}
 Integrating on $\partial \B(a_i;r)$  we obtain  \eqref{eq:leb_hat}.

The second conclusion follows by integration.
\end{proof}

\begin{proof}[Proof of proposition \ref{prop:limitErenpToEren}] 
	Let $u \in W^{1,2}_{\ren}(\Omega,\manifold N)$ and let $(a_1,\dots,a_k) \in \Omega^k$ be the assumed distinct points associated with it (see definition \ref{def:Eren}). Fix $\rho \in (0,\rho(a_i)_{i = 1,\dots,k})$.
	
	By Young's inequality,
	\begin{equation}\label{eq:lebHat}
		\frac{|Du|^p}{p} \leq \frac{|Du|^2}{2} + \frac{2 - p}{2p} \leq \frac{|Du|^2}{2} + \frac{1}{2}
	\end{equation}
	By Lebesgue's dominated convergence theorem with \eqref{eq:lebHat} as a Lebesgue dominant, since \(\Omega \setminus\bigcup_{i = 1}^k \B(a_i;\rho)\) has finite Lebesgue measure, we get 
	\begin{equation}\label{eq:loindespoints}
		\int_{\Omega \setminus \bigcup_{i = 1}^k\B(a_i;\rho)}\frac{|Du|^p}{p} \xrightarrow{p \nearrow 2} \int_{\Omega \setminus \bigcup_{i = 1}^k\B(a_i;\rho)}\frac{|Du|^2}{2}.	
	\end{equation}
	We now study the other part of the renormalized energy. Lebesgue's convergence theorem will be used, with a Lebesgue dominant provided by \eqref{eq:leb_hat} in lemma \ref{lemma:usefulmajoration}.

	By Lebesgue's dominated convergence theorem again, for almost every $r \in (0,\rho)$,
	\begin{equation}\label{eq:ae_conv}
		\int_{\partial \B(a_i;r)}\frac{|Du|^p}{p} \xrightarrow{p \nearrow 2} \int_{\partial \B(a_i;r)}\frac{|Du|^2}{2}.
	\end{equation}
	In view of  the bound \eqref{eq:leb_hat} and of the convergence \eqref{eq:ae_conv}, we get by Lebesgue's dominated convergence theorem that
	\begin{equation}\label{eq:presdespouints}
		\int_0^\rho \int_{\partial \B(a_i;r)}\frac{|Du|^p}{p}\d \HH^1 -  \frac{\lambda([u,a_i])^p}{p (2\pi r)^{p - 1}} \d r \xrightarrow{p \nearrow 2} \int_{0}^\rho \int_{\partial \B(a_i;r)}\frac{|Du|^2}{2}\d \HH^1 - \frac{\lambda([u,a_i])^2}{4\pi r} \d r.
	\end{equation}
	Combining \eqref{eq:loindespoints} and \eqref{eq:presdespouints} and proposition \ref{prop:polarCoordEren}, we get that $\Eren^{1,p}(u) \to \Eren^{1,2}(u)$ when $p\nearrow 2$.
\end{proof}

\subsection{Existence of renormalizable mappings}

If the manifold $\mathcal N$ is compact there always exists a renormalizable mapping (see proposition \ref{prop:tralalala}) while in the non-compact case we descibe a manifold carrying no nonconstant renormalizable mappings (see proposition \ref{prop:wrenpeutetreempty})

\begin{proposition}\label{prop:tralalala}
	Let $\Omega \subset \R^2$ be a Lipschitz  domain,  $\manifold N$ be a Riemannian manifold and $g \in W^{\sfrac{1}{2},2}(\Omega,\manifold N)$. If the manifold $\manifold N$ is compact, there exists $u \in W^{1,2}_\ren(\Omega,\manifold N)$ such that $\Tr_{\partial \Omega}u = g$.
\end{proposition}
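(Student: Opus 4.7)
The plan is to construct a renormalizable extension of $g$ directly by means of a geodesic vortex ansatz near each singularity. Since $\manifold N$ is compact, its length spectrum is discrete (lemma \ref{lemma:discretspec}) and $\Esg^{1,2}(g)$ is finite. Choose a minimal topological resolution of $g$ in the sense of definition \ref{def:mintopres}: $k$ distinct points $a_1, \dots, a_k \in \Omega$, charges $\gamma_i \in \VMO(\mathbb S^1, \manifold N)$ with $\lambda(\gamma_i) > 0$, a radius $r_0 > 0$ such that the closed balls $\overline{\B}(a_i; r_0)$ are pairwise disjoint and contained in $\Omega$, and a map $w \in W^{1,2}(\Omega \setminus \bigcup_{i = 1}^k \B(a_i; r_0), \manifold N)$ whose trace on $\partial \Omega$ is $\VMO$--homotopic to $g$ and whose trace on $\partial \B(a_i; r_0)$ is $\VMO$--homotopic to $\gamma_i$. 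A modification of $w$ in a thin Lipschitz collar of $\partial \Omega$---via a $W^{1,2}$ homotopy between $\Trace_{\partial \Omega} w$ and $g$, which exists since both lie in the same $\VMO$--class and $C^\infty(\partial \Omega, \manifold N)$ is dense in $W^{\sfrac{1}{2},2}(\partial \Omega, \manifold N)$ for compact $\manifold N$---allows one to assume $\Trace_{\partial \Omega} w = g$ exactly.

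For each $i$, let $\beta_i \in \mathrm{Lip}(\mathbb S^1, \manifold N)$ be a shortest closed geodesic in the free homotopy class of $\gamma_i$, parametrized so that $|\partial_\theta \beta_i(e^{i\theta})| \equiv \lambda(\gamma_i)/(2\pi)$, and define the vortex map on the punctured disc $\B(a_i; r_0/2) \setminus \{a_i\}$ by
\[
    v_i(a_i + r e^{i\theta}) \doteq \beta_i(e^{i\theta}).
\]
A direct computation in polar coordinates yields $|Dv_i|^2 = \lambda(\gamma_i)^2/(4\pi^2 r^2)$, so that for every $r \in (0, r_0/2)$
\[
    \int_{\partial \B(a_i; r)} \frac{|Dv_i|^2}{2} \d \HH^1 - \frac{\lambda(\gamma_i)^2}{4\pi r} = 0,
\]
while $\lambda([v_i, a_i]) = \lambda(\beta_i) = \lambda(\gamma_i) > 0$. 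Thus the renormalized integrand appearing in proposition \ref{prop:polarCoordEren} vanishes identically on each inner punctured disc.

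The main technical step is to bridge $w$ and $v_i$ on each annulus $A_i \doteq \B(a_i; r_0) \setminus \overline{\B}(a_i; r_0/2)$ by a map $h_i \in W^{1,2}(A_i, \manifold N)$ with traces $\Trace_{\partial \B(a_i; r_0)} w$ on the outer boundary and $\beta_i$ on the inner boundary. Both boundary traces represent the free homotopy class of $\gamma_i$, and such an $h_i$ exists for compact $\manifold N$: approximate each trace in $W^{\sfrac{1}{2},2}$ by a smooth map in the same homotopy class, interpolate the smooth approximations on $\mathbb S^1 \times [0,1]$ by a smooth free homotopy, and correct in thin collars using convex combinations in $\R^\nu$ followed by the nearest--point retraction of a tubular neighborhood of $\manifold N$. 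Piecing together $u \doteq w$ on $\Omega \setminus \bigcup_i \B(a_i; r_0)$, $u \doteq h_i$ on $A_i$, and $u \doteq v_i$ on $\B(a_i; r_0/2) \setminus \{a_i\}$ produces $u \in W^{1,2}_{\mathrm{loc}}(\Omega \setminus \{a_i\}_{i = 1, \dots, k}, \manifold N)$ with $\Trace_{\partial \Omega} u = g$ and $\lambda([u, a_i]) > 0$; by the integral representation of proposition \ref{prop:polarCoordEren} applied with $\sigma = r_0/2$, the renormalized energy $\Eren^{1,2}(u)$ equals the finite Dirichlet energy of $u$ over $\Omega \setminus \bigcup_i \B(a_i; r_0/2)$, so $u \in W^{1,2}_\ren(\Omega, \manifold N)$. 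The main obstacle is the annular matching step: it is a classical $W^{1,2}$ homotopy--filling problem whose solution rests on density of smooth maps in $W^{\sfrac{1}{2},2}(\mathbb S^1, \manifold N)$ and on existence of a tubular neighborhood retraction onto $\manifold N$, both valid when $\manifold N$ is compact.
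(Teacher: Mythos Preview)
Your argument is correct and the vortex-near-each-singularity construction is exactly the mechanism the paper uses; the difference lies in how the ``outer'' map on the perforated domain is obtained. You take as input a minimal topological resolution, which hands you the map $w \in W^{1,2}(\Omega \setminus \bigcup_i \B(a_i;r_0),\manifold N)$ ready-made, and then glue in the geodesic vortices $v_i$ across annular homotopies. The paper instead builds $w$ from scratch: it first reduces to smooth $g$ via a collar homotopy, then cuts $\Omega$ into simply connected rooms by drawing segments in $\Omega$ whose images under $g$ are paths in $\manifold N$, and in each simply connected piece (where $\partial\Omega$ is a single circle) it realizes a homotopy from $g$ to a minimizing geodesic on an annulus and plants a single vortex. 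The paper's route is more elementary and self-contained; yours is cleaner once one accepts topological resolutions as given, and it avoids the somewhat informal reduction to simply connected domains.

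Two small remarks. First, minimality of the resolution is not needed: any topological resolution with $\lambda(\gamma_i)>0$ (drop trivial charges via \eqref{eq:toBiterated}) yields a renormalizable map by your construction, since definition~\ref{def:Eren} does not require the charges to form a minimal resolution. Second, your opening sentence ``the length spectrum is discrete and $\Esg^{1,2}(g)$ is finite'' conflates two facts: finiteness of $\Esg^{1,2}(g)$ is precisely the existence of \emph{some} topological resolution, which is the nontrivial input here (established in \cite{monteil2021renormalised}), while discreteness is only relevant if you want the infimum to be attained. Since you do not actually use attainment, you can drop the appeal to lemma~\ref{lemma:discretspec}.
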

	We recall that in the present paper every Riemannian manifold is assumed to be connected as explained in the introduction (section \ref{section:intro}).
\begin{proof}[Proof of proposition \ref{prop:tralalala}]
	Taking $\delta>0$ sufficiently small the set $\Omega_\delta \doteq\{x \in \overline{\Omega} : \dist(x,\partial \Omega)\leq \delta\}$ is equivalent by biLipschitz homeomorphism to $\partial \Omega \times [0,1]$. As every $W^{\sfrac{1}{2},2}(\Omega,\manifold N)$-map is smoothly homotopic to a smooth map, we realize this homotopy on $\Omega_\delta \simeq \partial \Omega \times [0,1]$. We therefore assume that $g$ is smooth.
	
	We may also assume that $\Omega$ is simply connected. By the connectedness of $\manifold N$ one can connect the image  $g(\partial \Omega)$ by paths realized by segments in $\Omega$. Taking a finite number of such paths one partitions $\Omega$ in simply connected rooms delimited by the union of $\partial \Omega$ and those paths.
	
	Assuming  that $\Omega$ is simply connected, $\partial \Omega$ is equivalent to the circle $\mathbb S^1$. By compactness of the manifold $\mathcal N$, there exists a smooth map $\gamma : \mathbb S^1 \to \mathcal N$ such that $\gamma$ is homotopic to $g$ and \(\lambda(g) = \ell(\gamma) = 2\pi \|\gamma'\|_\infty.\) This is an instance of the existence of constant speed geodesics in each homotopy class (see \cite[Proposition 6.28]{lee2018riem}). Eventually $\gamma$ is token constant. There exists a small $\varepsilon> 0$ and $a \in \Omega$ such that  $\Omega\setminus \B(a;\varepsilon)$ is equivalent to $\mathbb S^1 \times [0,1]$ and we realize the homotopy between $g$ and $\gamma$ there. 
	
	Combining the above described homotopies, we obtain a map $u\in C^\infty(\Omega\setminus \B(a;\varepsilon), \manifold N)\cap W^{1,2}(\Omega\setminus \B(a;\varepsilon), \manifold N)$. We futher define for all $x \in \B(a;\varepsilon)$
	\[
		u(x) \doteq \gamma\Big (\frac{x - a}{|x - a|}\Big )
	\]
	and observe that $|D u(x)| = \|\gamma'\|_\infty / |x|$ so that for $\rho \in (0,\varepsilon)$
	\[
		\int_{\B(a;\varepsilon)\setminus\B(a;\rho)}|D u(x)|^2 =  \frac{\lambda([g])^2}{2\pi}\log\frac{\varepsilon}{\rho},
	\]
	showing the renormalized character of $u$ near $a$. 
\end{proof}

\begin{proposition} \label{prop:wrenpeutetreempty}
	Let $\Omega \subset \R^2$ be a Lipschitz simply connected domain and $(\manifold N, g)$ be the Riemannian manifold  defined by $\manifold N \doteq \mathbb S^1 \times \R \subset \R^3$ with metric defined for every $(v,t) \in \mathbb S^1 \times \R$ and \(h \in T_{v, t} \mathbb S^1 \times \R \subseteq \R^3\) by 
	\begin{equation}
		g_{(v,t)}(h) \doteq (1 + f(t))\lvert h \rvert^2 \text{ where }
		f(t)  \doteq \frac{1}{\sqrt{1 + t^2}}.\label{eq:sdfjk}
	\end{equation}
	If $u \in W^{1,2}_{\ren}(\Omega,\mathcal N)$, then $\Tr_{\partial \Omega}u$ is homotopic to a constant. 
\end{proposition}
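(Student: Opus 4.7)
The plan is to exploit the product structure $\manifold N = \mathbb{S}^1 \times \R$ together with the fact that the infimum $\lambda(\gamma) = 2\pi |d|$ in the class of degree $d \in \mathbb{Z}$ is never attained (approached only as $|t| \to \infty$). Writing $u = (\phi, s)$ with $\phi : \Omega \setminus \{a_i\}_{i = 1}^{\kappa} \to \mathbb{S}^1$ and $s : \Omega \setminus \{a_i\}_{i = 1}^{\kappa} \to \R$, and noting that $1 \leq 1 + f(t) \leq 2$ makes $g$ uniformly equivalent to the standard product metric (so the intrinsic Sobolev space is the usual one), one has
\[
|Du|_g^2 = (1 + f(s))\bigl(|D\phi|^2 + |Ds|^2\bigr).
\]
Writing $d_i \in \mathbb{Z}$ for the degree of $\phi$ around $a_i$, one has $\lambda([u,a_i]) = 2\pi|d_i|$, so the renormalization subtracts $\sum_i \pi d_i^2 \log(1/\rho)$.

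Assume for contradiction that $\Tr_{\partial \Omega} u$ is not null-homotopic, so that $D \doteq \deg(\Tr_{\partial \Omega}\phi) \neq 0$ and some $d_{i_0}$, which I rename $d_1$, is nonzero. On $\Omega_\rho \doteq \Omega \setminus \bigcup_i \B(a_i; \rho)$, I would decompose
\[
\int_{\Omega_\rho} \frac{|Du|_g^2}{2} - \sum_i \pi d_i^2 \log \tfrac{1}{\rho} = \bigg[ \int_{\Omega_\rho} \frac{|D\phi|^2}{2} - \sum_i \pi d_i^2 \log \tfrac{1}{\rho} \bigg] + \int_{\Omega_\rho} \frac{|Ds|^2}{2} + \int_{\Omega_\rho} \frac{f(s)\bigl(|D\phi|^2 + |Ds|^2\bigr)}{2}.
\]
The bracket is the $\mathbb{S}^1$-valued renormalized energy of $\phi$, which is non-decreasing in $\rho \searrow 0$, while the remaining two terms are nonnegative and non-decreasing. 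Finiteness of $\Eren^{1,2}(u)$ therefore forces each of the three to be finite; in particular $\nabla s \in L^2(\Omega)$ (so $s \in W^{1,2}(\Omega)$ after extending across the finite set $\{a_i\}$ of zero $2$-capacity) and $\int_\Omega f(s)|D\phi|^2 < +\infty$. I intend to contradict this last bound.

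The lower bound comes from Cauchy--Schwarz on each circle: since $\int_{\partial \B(a_1; r)} |D\phi|_{\mathrm{tang}} \d \HH^1 \geq 2\pi|d_1|$ by the degree, and $1/f(s) = \sqrt{1 + s^2}$,
\[
\int_{\partial \B(a_1; r)} f(s)|D\phi|^2 \d \HH^1 \cdot \int_{\partial \B(a_1; r)} \frac{\d \HH^1}{f(s)} \geq (2\pi d_1)^2,
\]
whence $\int_{\partial \B(a_1; r)} f(s)|D\phi|^2 \d \HH^1 \geq (2\pi d_1)^2 / (r G(r))$ with $G(r) \doteq \int_0^{2\pi} \sqrt{1 + s(r,\theta)^2} \d\theta \leq 2\pi(1 + M(r))$ and $M(r) \doteq \|s(r, \cdot)\|_{L^\infty(0,2\pi)}$. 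It thus suffices to prove
\[
\int_0^\varepsilon \frac{\d r}{r(1 + M(r))} = +\infty.
\]

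The final step is to control $M$ using only $\nabla s \in L^2(\Omega)$. Splitting $M(r) \leq |\bar s(r)| + \mathrm{osc}_{\partial \B(a_1; r)} s$, Cauchy--Schwarz applied to the radial derivative of the angular mean $\bar s(r) \doteq \frac{1}{2\pi}\int_0^{2\pi} s(r,\theta) \d\theta$ together with Fubini give $\int_0^\varepsilon r|\bar s'(r)|^2 \d r \leq (2\pi)^{-1} \int_\Omega |\nabla s|^2 < +\infty$, whence $|\bar s(r)| \leq |\bar s(\varepsilon)| + C\sqrt{\log(\varepsilon/r)}$; on the other hand, integration along the circle yields $\mathrm{osc}(r)^2 \leq 2\pi r \int_{\partial \B(a_1; r)} |\nabla s|^2 \d \HH^1$, hence $\int_0^\varepsilon \mathrm{osc}(r)^2 \d r/r \leq 2\pi \int_\Omega |\nabla s|^2 < +\infty$. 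On the good set $F \doteq \{r \in (0, \varepsilon) : \mathrm{osc}(r)^2 \leq \log(1/r)\}$ one has $M(r) \leq C_1 + C_2 \sqrt{\log(1/r)}$. Changing variable to $\sigma = \log(1/r)$, the bad set satisfies $\int_{F^c} \sigma \d \sigma \leq \int_0^\varepsilon \mathrm{osc}^2 \d r/r < +\infty$, which forces $\int_{F^c} \d \sigma/\sqrt{\sigma} < +\infty$ (split at $\sigma = 1$), and therefore $\int_F \d \sigma/\sqrt{\sigma} = +\infty$; this yields the desired divergence and the contradiction. The hard part is this last balancing: one needs to verify that the $W^{1,2}$-slack of $s$ provides only a $\sqrt{\log(1/r)}$-growth for $M$, which is too slow for the weight $f(s) \sim 1/M(r)$ to compensate the vortex singularity $|D\phi|^2 \sim d_1^2/r^2$ against the logarithmic measure $\d r/r$.
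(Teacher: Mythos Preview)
Your proof is correct and follows a genuinely different route from the paper's. The paper argues geometrically: it first shows that the $\R$-component $\pi_{\R}\circ u$ must diverge near the singularity (because near-minimal circles force large $|t|$), then uses a topological retraction argument to show that the image of $u$ on each annulus $\B_1\setminus\B_\rho$ must cover $\mathbb{S}^1 \times [0,D(\rho)]$ with $D(\rho)\to\infty$, and finally invokes the area formula together with $|Du|^2 \ge 2|\partial_1 u \times \partial_2 u|$ to bound the renormalized energy below by $\int_0^{D(\rho)} f$, which diverges since $f \notin L^1(\R)$.

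Your approach is purely analytic: splitting $|Du|_g^2 = |D\phi|^2 + |Ds|^2 + f(s)(|D\phi|^2+|Ds|^2)$ and observing that each piece of the renormalized energy is monotone lets you extract $\nabla s \in L^2(\Omega)$ and $\int_\Omega f(s)|D\phi|^2 < \infty$ directly, after which a weighted Cauchy--Schwarz on circles reduces the contradiction to $\int_0^\varepsilon \frac{dr}{r(1+M(r))} = +\infty$. The good-set/bad-set balancing that pins $M(r)$ to $O(\sqrt{\log(1/r)})$ using only $\nabla s \in L^2$ is clean and avoids both the area formula and the topological surjectivity step. The paper's argument makes the role of the non-integrability of $f$ more transparent, while yours is more self-contained and would adapt with essentially no change to any weight $f$ satisfying $f(t) \gtrsim 1/|t|$ at infinity. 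One small remark: your monotonicity claim for the bracket is correct (it is non-increasing in $\rho$, hence non-decreasing as $\rho\searrow 0$), and this together with the nonnegativity of the other two terms is exactly what forces all three limits to be finite.
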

 In proposition \ref{prop:wrenpeutetreempty}, although the manifold $\manifold N$ is not presented as an isometrically embedded submanifold of $\R^3$ it can still be embedded as a closed submanifold of a Euclidean space. 
 The proof of proposition \ref{prop:wrenpeutetreempty} relies on the fact that $f$ is not integrable in the complement of any compact set.
\begin{proof}[Proof of proposition \ref{prop:wrenpeutetreempty}]
	The proof first reduces to the case of $\Omega = \B_1$ by a change of variable in the Dirichlet energy.
	Given a map $u \in W^{1,2}_\ren(\Omega,\mathcal N)$ such that $\Tr_{\partial \Omega}u$ is not homotopic to a constant there exist $a \in \Omega$ and $\sigma > 0$ such that $u \in W^{1,2}_\mathrm{loc}(\B(a;\sigma)\setminus \{a\},\mathcal N)$. 
	We may further assume that $a = 0, \sigma = 0$ and $\Tr_{\mathbb S^1}u$ is not homotopic to a constant. 
	We finally write $\lambda \doteq \lambda([u,0]) = 2\pi\,\lvert \deg ([u,0])\rvert$.
	We denote $\pi_{\mathbb S^1} : \mathbb S^1 \times \R \to \mathbb S^1$ the projection on the circle, $\pi_{\R} : \mathbb S^1 \times \R \to \R$ the projection on the real line and  claim that $\pi_{\R} \circ u$ is unbounded.  
	If $\pi_{\mathbb S^1} \circ \gamma$ is not homotopic to a constant, by lemma \ref{lemma:loc_lower_bound_circle},	
	\begin{align*}
		\int_{\mathbb S^1}|\gamma'|_g^2 &\geq 	\int_{\mathbb S^1}(1 + f \circ \pi_{\R} )|(\pi_{\mathbb S^1} \circ \gamma)'|^2 + 	\int_{\mathbb S^1} |(\pi_{\R} \circ \gamma)'|^2 \\
		&\geq \frac{\lambda^2}{2\pi}(1 + \inf_{\mathbb S^1}f\circ \pi_{\R} \circ \gamma) + \frac{1}{2\pi}(\sup_{\mathbb S^1}|\pi_{\R} \circ \gamma| - \inf_{\mathbb S^1} |\pi_{\R} \circ \gamma|)^2.
	\end{align*}
	
	Let us fix $t>0$. If 
	\[
	\int_{\mathbb S^1}|\gamma'|_g^2 \leq \frac{\lambda^2}{2\pi} + t
	\]
	then 
	\[
	t \geq\frac{\lambda^2}{2\pi}\inf_{\mathbb S^1}f\circ \pi_{\R} \circ \gamma + \frac{1}{2\pi}(\sup_{\mathbb S^1}|\pi_{\R} \circ \gamma| - \inf_{\mathbb S^1} |\pi_{\R} \circ \gamma|)^2, 
	\]
	meaning that
	\[
	\sup_{\mathbb S^1} |\pi_{\R} \circ \gamma|^2	
	\geq \Bigl (\frac{\lambda^2}{2\pi t}\Bigr)^2 - 1
	\text{ and }
	\inf_{\mathbb S^1} |\pi_{\R} \circ \gamma| \geq \sup_{\mathbb S^1}|\pi_{\R} \circ \gamma|-\sqrt{2\pi t} \geq D(t) 
	 \]
	 where 
	 \[D(t) \doteq   ( (\frac{\lambda^2}{2\pi t} )^2	- 1)_+^\frac{1}{2} -\sqrt{2\pi t}.      
	 \]
	Since $u \in W^{1,2}_\ren(\Omega,\manifold N)$ there exists $M>0$ such that for all $\rho >0$ there exists $r\in (\rho,1)$ satisfying
	\[
	\log \frac{1}{\rho}	\int_{\mathbb S^1}|u|_{\mathbb S^1}(r\cdot)'|_g^2 \leq\int_{\B_1\setminus\B_\rho} \frac{|Du|^2_g}{2} \leq M + \frac{\lambda^2}{2\pi} \log \frac{1}{\rho}.
	\]
	We therefore deduce that for each $\rho \in (0,1)$  there exists $r\in (\rho,1)$  such
	\[
	\inf_{\mathbb S^1} |\pi_{\R} \circ u|_{\mathbb S^1_r}| \geq D(\frac{M}{\log \frac{1}{\rho}})
	\]
	which implies the uniform divergence of the map $u$ as the right-hand side is unbounded as $\rho$ approaches zero.
	
	We now claim that the set $\pi_{\R} \circ u(\B_1\setminus \B_\rho)$ contains the whole interval $[0, \inf_{\mathbb S^1}|\pi_{\R}\circ u|_{\mathbb S^1}(r\cdot)|]$. We assume that the map is continuous on $\B_1\setminus\B_\rho$ by approximation  : this follows by \cite{bousquet2017density} and the fact that $\manifold  N$ satisfies the \emph{trimming property}  \cite[Proposition 6.3]{bousquet2017density} ; we leave the details to the reader.
	Assume there  exist a point $x  \in \mathbb S^1 \times \R$ such that $x \not\in u(\overline{\B_1\setminus \B_\rho})$. We then observe that $\mathbb S^1 \times [0,1]\setminus \{a\}$ (where $a\in \mathbb S^1\times(0,1)$) is homotopic to two copies of $\mathbb{S}^1$ joint by a segment. This implies that using the above described homotopy one can construct a retraction of the image $u(\overline{\B_1\setminus \B_\rho})$ to a continous mapping
	\(\mathbb S^1\times [0,1] \simeq \B_1\setminus \B_\rho \to \manifold N \xrightarrow{\pi_{\mathbb S^1}} \mathbb S^1\)
	that is an homotopy between two nonzero degree maps in $t = 0,1$ but is constant near $t = 1/2$. 
	
	Since $|Du|_g^2 = |Du|^2 + (f \circ\pi_{\R}\circ u) |Du|^2$
	and $|Du|^2 \geq 2|\partial_1 u \times_{\R^3} \partial_2 u|$, we obtain  by the area formula \cite[Theorem 1.6]{giaquinta2006area} that
	\begin{align*}
		\int_{\B_1 \setminus\B_\rho} \frac{|Du|^2_g}{2} - \frac{\lambda^2}{2\pi}\log\frac{1}{\rho} &\geq  \int_{\B_1 \setminus\B_\rho}f\circ \pi_2 \circ u|\partial_1 u \wedge \partial_2 u| \\
		&\geq \int_{\manifold N} f\circ \pi_2(y)\mathcal H^0(\B_1 \setminus\B_\rho \cap {u^{-1}(\{y\})}) \d y \\
		&\geq \int_{0}^{\inf_{\mathbb S^1}|\pi_{\R}\circ u|_{\mathbb S^1}(r\cdot)|} f\d y 
	\end{align*}
	by the surjectivity of $\pi_{\R} \circ u(\B_1\setminus \B_\rho)$ on the interval $[0, \inf_{\mathbb S^1}|\pi_{\R}\circ u|_{\mathbb S^1}(r\cdot)|]$.
	When $\rho \searrow 0$, the left-hand side is bounded as the map $u$ is assumed to be renormalizable while by the choice of $f \not\in L^1(\R)$ the right-hand side is unbounded, contradicting the fact that $u \in W^{1,2}_\ren(\Omega,\manifold N)$.
\end{proof}

\subsection{Minimizing renormalizable singular harmonic maps}
In this section we recall the notion of minimizing renormalizable harmonic map.

\begin{definition}\label{def:mineren}
	Fix  an open bounded Lipschitz domain $\Omega \subset \R^2$ and a Riemannian manifold $\manifold N$ with $\sysN > 0$. A map $u_* \in W^{1,2}_\ren(\Omega,\manifold N)$ associated with $k$ distinct points $\{a_i\}_{i = 1,\dots,k} \subset \Omega$ is a \emph{minimal renormalizable singular harmonic map} (see \cite[Definition 7.8]{monteil2021renormalised}) whenever for every map $v \in W^{1,2}_\ren(\Omega,\manifold N)$ associated with $k$ distinct points $\{b_i\}_{i = 1,\dots,k} \subset \Omega$ such that  $\Tr_{\partial\Omega}u_* = \Tr_{\partial\Omega}v$ and as sets \[\{([u_*,a_1],a_1),\dots, ([u_*,a_k],a_k)\} = \{([v,b_1],b_1),\dots, ([v,b_k],b_k)\},\] we have \( \Eren^{1,2}(u_*) \leq \Eren^{1,2}(v).\)

\end{definition}

When the manifold $\mathcal N$ is compact, minimal renormalizable maps enjoy of the following properties. Our analysis does not rely on those properties.
\begin{proposition}\label{prop:reg_of_ren_map}
	Fix  an open bounded Lipschitz domain $\Omega \subset \R^2$ and a compact Riemannian manifold $\manifold N$. Let $u_* \in W^{1,2}_\ren(\Omega,\manifold N)$ associated with $k$ points $\{a_i\}_{i = 1,\dots,k} \subset \Omega$ be a \emph{minimizing renormalizable singular harmonic map}. If $\manifold N$ is compact, then
	\begin{enumerate}[(i)]
		\item $u_* \in C^\infty(\Omega\setminus \{a_i\}_{i = 1,\dots,k},\manifold N)$,
		\item \label{item:behviooir_neara pont} for each $i = 1,\dots,k$, $\sup_{x \in \B(a_i;\rho(\{a_i\}_{i = 1,\dots,k}))}|x - a_i||Du_*(x)| < +\infty$,
		\item for each $\rho \in (0, \rho(\{a_i\}_{i = 1,\dots,k}))$, $u_*$ is a minimizing harmonic map on $\Omega \setminus \bigcup_{i = 1}^k \B(a_i;\rho)$ with respect to its own boundary conditions provided $\Omega \setminus \bigcup_{i = 1}^k \B(a_i;\rho)$ is a Lipschitz domain.
	\end{enumerate}
\end{proposition}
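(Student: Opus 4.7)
The plan is to prove the three items in the order \textit{(iii)}, \textit{(i)}, \textit{(ii)}, since each successive point relies on the previous ones.

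I would start with \textit{(iii)}, which is a straightforward minimization principle. Fix $\rho \in (0, \rho(\{a_i\}_{i=1,\dots,k}))$ and, arguing by contradiction, suppose there exists a competitor $v \in W^{1,2}(\Omega\setminus \bigcup_{i=1}^k \B(a_i;\rho), \manifold N)$ with the same trace as $u_*$ on the boundary of that domain and with strictly smaller Dirichlet energy. Define $\tilde v$ by $\tilde v \doteq v$ on $\Omega\setminus \bigcup_{i=1}^k \B(a_i;\rho)$ and $\tilde v \doteq u_*$ on each $\B(a_i;\rho)$. Because we have not modified $u_*$ near the singularities, the map $\tilde v$ belongs to $W^{1,2}_\ren(\Omega,\manifold N)$, has the same trace on $\partial \Omega$, the same singular points $\{a_i\}_{i=1,\dots,k}$ and, in view of the localization \eqref{eq:nationpoint}, the same charges $[\tilde v, a_i] = [u_*, a_i]$. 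Using the integral representation \eqref{eq:erenPolarCoord} with the same $\sigma$ for $u_*$ and $\tilde v$, the renormalized term in brackets and the logarithmic subtraction are identical, so $\Eren^{1,2}(\tilde v) - \Eren^{1,2}(u_*)$ equals the (strictly negative) difference of the Dirichlet energies on $\Omega\setminus \bigcup_{i=1}^k \B(a_i;\rho)$, contradicting definition~\ref{def:mineren}.

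Given \textit{(iii)}, item \textit{(i)} reduces to known regularity theory. For any small $\rho$, the map $u_*$ is a minimizing $W^{1,2}$-harmonic map into the compact manifold $\manifold N$ on the smooth planar domain $\Omega_\rho \doteq \Omega\setminus \bigcup_{i=1}^k \B(a_i;\rho)$. Since $\dim \Omega_\rho = 2$, the interior partial regularity theorem of Schoen and Uhlenbeck gives a singular set of Hausdorff dimension at most $\dim \Omega_\rho - 3 < 0$, hence empty; the standard bootstrap then yields $u_* \in C^\infty(\mathrm{int}(\Omega_\rho), \manifold N)$. Letting $\rho \searrow 0$ one concludes $u_* \in C^\infty(\Omega\setminus \{a_i\}_{i=1,\dots,k},\manifold N)$.

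For item \textit{(ii)}, I would employ a blow-up argument at a fixed singular point $a_i$. Fix $r \in (0, \rho(\{a_i\}_{i=1,\dots,k})/4)$ and consider the rescaled map $v_r(y) \doteq u_*(a_i + r y)$ on the fixed annulus $A \doteq \B_2 \setminus \overline{\B_{1/2}}$. By the scaling of Dirichlet energy in two dimensions and the integral representation of proposition~\ref{prop:polarCoordEren}, the energies $\int_A \lvert D v_r \rvert^2/2$ are uniformly bounded by $\lambda([u_*,a_i])^2 \log 2 / (4\pi)$ plus a term tending to zero with $r$, thanks to the existence of the limit in \eqref{eq_tojie4OoLoos8ho5Oorua1Sh}. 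By \textit{(iii)}, each $v_r$ is a minimizing harmonic map on $A$; by the Luckhaus-type compactness of minimizing harmonic maps into compact manifolds, some sequence $v_{r_n}$ converges strongly in $W^{1,2}_{\mathrm{loc}}(A,\manifold N)$ and then, via elliptic estimates, in $C^\infty_{\mathrm{loc}}(A,\manifold N)$ to a minimizing harmonic map $v_\infty$. In particular $\sup_{\lvert y \rvert = 1} \lvert D v_\infty(y)\rvert < +\infty$, and uniformly in $r$ small enough $\sup_{\lvert y \rvert = 1} \lvert D v_r(y) \rvert \leq C$, which rewrites exactly as $\sup_{x \in \partial \B(a_i;r)} \lvert x - a_i\rvert \lvert D u_*(x)\rvert \leq C$. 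Varying $r$ in $(0, \rho(\{a_i\}_{i=1,\dots,k})/4)$ yields the desired bound.

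The most delicate step will be \textit{(ii)}: although the compactness of the rescaled sequence is standard once \textit{(iii)} is in hand, one must be careful that the uniform bound on the rescaled gradient does not depend on the subsequence, which requires noting that every subsequential blow-up limit arises from the same energy-bounded family and that the elliptic regularity estimate for minimizers on the annulus depends only on the energy bound and on $\manifold N$.
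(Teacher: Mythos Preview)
The paper does not prove this proposition: it is recorded as background immediately after the remark ``Our analysis does not rely on those properties'' and is implicitly deferred to \cite{monteil2021renormalised}. There is therefore no proof in the paper to compare against, and your sketch has to be assessed on its own merits.

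Your argument is sound and follows the natural route. Part \textit{(iii)} is correct: gluing a competitor outside the balls with $u_*$ inside preserves the singular configuration and the renormalized tail, so the integral representation \eqref{eq:erenPolarCoord} with $\sigma=\rho$ reduces the comparison of $\Eren^{1,2}$ to that of the Dirichlet integrals on $\Omega\setminus\bigcup_i\B(a_i;\rho)$. Part \textit{(i)} is standard; in dimension $2$ one may equally quote Morrey's continuity theorem for minimizing harmonic maps followed by bootstrapping.

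For \textit{(ii)} the blow-up argument is correct in spirit but can be tightened, and doing so also dissolves your own caveat about subsequences. The key quantitative point is that, by the finiteness of the bracketed integral in \eqref{eq:erenPolarCoord}, for any ball $\B(y_0;\delta)\subset A$ with $|y_0|=1$ one has
\[
\int_{\B(y_0;\delta)}\frac{|Dv_r|^2}{2}\;\le\;\frac{\lambda([u_*,a_i])^2}{4\pi}\log\frac{1+\delta}{1-\delta}+o(1)\qquad(r\to 0),
\]
since the left-hand side is bounded by the Dirichlet energy of $u_*$ on the thin annulus $\B(a_i;r(1+\delta))\setminus\B(a_i;r(1-\delta))$. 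Choosing $\delta$ small depending only on $\lambda([u_*,a_i])$ and the $\varepsilon$-regularity threshold of $\manifold N$, the small-energy regularity theorem for minimizing harmonic maps gives $|Dv_r(y_0)|\le C$ directly and uniformly in small $r$. This is an a priori estimate, not a limit statement, so no subsequence issue arises. Your compactness route also works, but note that upgrading strong $W^{1,2}$ convergence to $C^\infty_{\mathrm{loc}}$ already requires exactly this $\varepsilon$-regularity input, so one may as well invoke it at the outset.
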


\subsection{Renormalized energy of configuration of points}
Following \cite[(2.8) and (2.11)]{monteil2021renormalised}, one can define the following renormalized energy of a configuration of points for a topological resolution $(\gamma_1,\dots,\gamma_k)$ of $g \in W^{\sfrac{1}{2},2}(\partial \Omega, \manifold N)$
\begin{multline}
\label{eq_def_renorm_top}
 \mathcal{E}^{1, 2}_{\mathrm{top}, g, \gamma_1, \dots, \gamma_k}  (a_1, \dotsc, a_k)\\
 \doteq \inf
 \Bigg\{\int_{\Omega \setminus \bigcup_{i = 1}^k \B(a_i; \rho)}
  \frac{\abs{D u}^2}{2} - \sum_{i = 1}^k \frac{\lambda([u, a_i])^2}{4 \pi}  \log \frac{1}{\rho}: \\
  \begin{matrix} \rho \in (0, \rho(\{a_i\}_{i = 1,\dots,k})) \\
u \in W^{1, 2} (\Omega \setminus \textstyle \bigcup_{i = 1}^k \B(a_i;\rho), \manifold N), \\
 \Trace_{\partial \Omega} u = g, \;
 \Trace_{\mathbb{S}^1} u (a_i + \rho \cdot) \text{ is homotopic to } \gamma_i 
  \end{matrix}
    \Bigg \}.
\end{multline}

The renormalized energy is locally Lipschitz function of configurations of distinct points, which is bounded from below when the singularities \([u, a_1], \dotsc, [u, a_i]\) form a minimal topological resolution of the boundary condition \(g\).

The renormalized energy of the configuration formed by the singularities provides a lower bound on the renormalized energy of a mapping \cite[Proposition 7.2]{monteil2021renormalised}.

\begin{proposition}
\label{proposition_ren_map_to_pts} Fix  an open bounded Lipschitz domain $\Omega \subset \R^2$ and a compact Riemannian manifold $\manifold N$.
If $u_* \in W^{1,2}_\ren(\Omega,\manifold N)$, if $\{a_i\}_{i = 1,\dots,k} \subset \Omega$ are the associated distinct singular points and if \(g = \Trace_{\partial \Omega} u_*\), then 
\begin{equation*}
 \mathcal{E}^{1, 2}_{\mathrm{top}, g, \gamma_1, \dotsc, \gamma_k}  (a_1, \dotsc, a_k)
 \le \mathcal{E}^{1, 2}_{\mathrm{ren}} (u_*).
\end{equation*}
where $\gamma_i \in [u_*,a_i]$ $i = 1,\dots,k$ are constant speed geodesic.
\end{proposition}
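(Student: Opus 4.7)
The plan is to observe that the map $u_*$ itself is admissible in the infimum defining $\mathcal{E}^{1,2}_{\mathrm{top}, g, \gamma_1, \dotsc, \gamma_k}(a_1, \dotsc, a_k)$, for every sufficiently small $\rho > 0$, and then to let $\rho \searrow 0$ to recover $\Eren^{1,2}(u_*)$ on the right-hand side.

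More precisely, fix $\rho \in (0, \rho(\{a_i\}_{i=1,\dots,k}))$, where $\rho(\{a_i\})$ is the non-intersection radius defined in \eqref{eq:non-intersecting_radius}. By definition of a renormalizable map (definition \ref{def:Eren}), $u_*$ lies in $W^{1,2}(\Omega \setminus \bigcup_{i=1}^k \B(a_i;\rho), \manifold N)$, its boundary trace on $\partial \Omega$ is $g$, and by construction of the homotopy class $[u_*, a_i]$ in \eqref{eq:nationpoint}, the trace $\Trace_{\mathbb{S}^1} u_*(a_i + \rho \cdot)$ belongs to $[u_*, a_i]$ and is therefore homotopic in $\VMO$ to any $\gamma_i \in [u_*, a_i]$, in particular to the constant speed geodesic representative. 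Hence $u_*$ is a valid competitor, and
\[
 \mathcal{E}^{1,2}_{\mathrm{top}, g, \gamma_1, \dotsc, \gamma_k}(a_1, \dotsc, a_k) \le \int_{\Omega \setminus \bigcup_{i=1}^k \B(a_i;\rho)} \frac{|Du_*|^2}{2} - \sum_{i=1}^k \frac{\lambda([u_*, a_i])^2}{4\pi} \log \frac{1}{\rho},
\]
using that $\lambda(\gamma_i) = \lambda([u_*, a_i])$ since $\lambda$ depends only on the $\VMO$-homotopy class.

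It remains to pass to the limit $\rho \searrow 0$. The monotonicity statement recalled in \eqref{eq:decreasingLemma}, which says that the quantity on the right-hand side is a non-increasing function of $\rho$ on $(0, \rho(\{a_i\}))$, ensures that its limit as $\rho \searrow 0$ coincides with the $\varliminf$ used in definition \ref{def:Eren}, namely $\Eren^{1,2}(u_*)$. Taking this limit in the inequality above, and noting that the left-hand side does not depend on $\rho$, yields the desired bound
\[
 \mathcal{E}^{1,2}_{\mathrm{top}, g, \gamma_1, \dotsc, \gamma_k}(a_1, \dotsc, a_k) \le \Eren^{1,2}(u_*).
\]

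There is essentially no obstacle here: the argument is a direct consequence of the definitions, once one carefully checks admissibility of $u_*$ as a competitor. The only mild points to verify are the identification of the homotopy class of the interior traces with $[u_*, a_i]$ (from \eqref{eq:nationpoint}) and the monotonicity of the $\rho$-dependent quantity, which is available from \cite[Proposition 2.10]{monteil2021renormalised} as cited in \eqref{eq:decreasingLemma}. Note that compactness of $\manifold N$ is not actively used in this particular step; only the existence, via positivity of $\sysN$, of the limit in the definition of $\Eren^{1,2}$ is invoked.
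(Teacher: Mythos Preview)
Your argument is correct: \(u_*\) restricted to \(\Omega \setminus \bigcup_i \B(a_i;\rho)\) is an admissible competitor in the infimum \eqref{eq_def_renorm_top} for every small \(\rho\), and letting \(\rho \searrow 0\) via the monotonicity \eqref{eq:decreasingLemma} yields \(\Eren^{1,2}(u_*)\) on the right. The paper does not prove this proposition itself but simply cites \cite[Proposition~7.2]{monteil2021renormalised}; your direct verification is the natural argument and presumably matches what is done there.
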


Conversely, given any configuration of points and choice of geodesics there exists a singular Sobolev map having homotopic singularities \cite[Proposition 8.2]{monteil2021renormalised}.

\begin{proposition}\label{proposition_ren_map_to_ptsII} Fix  an open bounded Lipschitz domain $\Omega \subset \R^2$ and a compact Riemannian manifold $\manifold N$.
Given distinct points \(a_1, \dotsc, a_k \in \Omega\) and geodesics \(\gamma_1, \dotsc, \gamma_k : \mathbb{S}^1 \to \manifold N\), there exists \(u_* \in W^{1,2}_\ren(\Omega,\manifold N)\) with trace \(\Trace_{\partial \Omega} u_* = g\) with associated singular points \(a_1, \dotsc, a_k\) such that \(\gamma_i\) and \([u_*, a_i]\) are homotopic and 
\begin{equation*}
  \mathcal{E}^{1, 2}_{\mathrm{ren}} (u_*)
  \le \mathcal{E}^{1, 2}_{\mathrm{top}, g, \gamma_1, \dotsc, \gamma_k}  (a_1, \dotsc, a_k).
\end{equation*}
\end{proposition}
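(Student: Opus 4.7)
We construct $u_*$ by gluing three pieces around each singularity. Given $\varepsilon > 0$, by the definition of the infimum in \eqref{eq_def_renorm_top} select $\rho \in (0, \rho(\{a_i\}_{i=1,\dots,k}))$ and an admissible map $u \in W^{1,2}(\Omega \setminus \bigcup_i \B(a_i; \rho), \manifold N)$ (i.e.\ with $\Tr_{\partial \Omega} u = g$ and $\Tr_{\mathbb S^1} u(a_i + \rho\cdot)$ homotopic to $\gamma_i$) satisfying
\[
E(\rho, u) \doteq \int_{\Omega \setminus \bigcup_i \B(a_i; \rho)} \frac{|Du|^2}{2} - \sum_{i=1}^k \frac{\lambda(\gamma_i)^2}{4\pi} \log \frac{1}{\rho} \le \mathcal{E}^{1,2}_{\mathrm{top}, g, \gamma_1, \dotsc, \gamma_k}(a_1, \dotsc, a_k) + \varepsilon.
\]
On each punctured inner disk $\B(a_i; \rho') \setminus \{a_i\}$, with $\rho' \doteq \rho\, e^{-T}$ for some $T > 0$ to be fixed, set the \emph{hedgehog} $u_*(x) \doteq \gamma_i((x - a_i)/|x - a_i|)$; since $\gamma_i$ is parametrised at constant speed $\lambda(\gamma_i)/(2\pi)$ by assumption, a direct polar computation yields $\int_{\B(a_i;\rho')\setminus\B(a_i;\sigma)} |Du_*|^2/2 = \lambda(\gamma_i)^2 \log(\rho'/\sigma)/(4\pi)$ for every $\sigma \in (0, \rho')$, exactly matching the logarithmic renormalisation built into $\Eren^{1,2}$.

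The heart of the construction is the bridging annulus $A_i \doteq \B(a_i; \rho) \setminus \B(a_i; \rho')$, on which one must interpolate between $u|_{\partial \B(a_i; \rho)}$ and $\gamma_i$ on $\partial \B(a_i; \rho')$, two VMO-homotopic boundary traces. Using conformal invariance of the two-dimensional Dirichlet energy via $r e^{i\theta} \mapsto (\log(r/\rho'), \theta)$, the annulus is identified with the flat cylinder $[0, T] \times \mathbb S^1$ of length $T = \log(\rho/\rho')$, and the task reduces to building a smooth homotopy $H$ whose slices $H(t, \cdot)$ follow the constant speed parametrisation of $\gamma_i$ for most of $[0, T]$ and perform the deformation to the outer trace on a rescaled thin subinterval. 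Since constant speed geodesics minimise $\int_{\mathbb S^1}|\cdot'|^2$ in their homotopy class, this yields
\[
\int_{A_i} \frac{|Du_*|^2}{2} \le \frac{\lambda(\gamma_i)^2}{4\pi}\, T + \frac{K_i}{T},
\]
with a constant $K_i$ depending only on $u|_{\partial \B(a_i; \rho)}$ and $\gamma_i$, not on $T$. Piecing the three parts together produces $u_* \in W^{1,2}_{\mathrm{loc}}(\Omega \setminus \{a_i\}, \manifold N)$ with the prescribed boundary trace and homotopy classes $[u_*, a_i] = [\gamma_i]$; applying proposition \ref{prop:polarCoordEren} at radius $\rho'$ together with $\log(1/\rho') = \log(1/\rho) + T$, the leading $T$-term of each bridge cancels the hedgehog logarithm, leaving
\[
\Eren^{1,2}(u_*) \le E(\rho, u) + \sum_{i=1}^k \frac{K_i}{T} \le \mathcal{E}^{1,2}_{\mathrm{top}, g, \gamma_1, \dotsc, \gamma_k}(a_1, \dotsc, a_k) + \varepsilon + \sum_{i=1}^k \frac{K_i}{T}.
\]

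Choosing $T$ large makes the last sum at most $\varepsilon$, producing for each $n \in \N$ a map $u_*^n$ with $\Eren^{1,2}(u_*^n) \le \mathcal{E}^{1,2}_{\mathrm{top}, g, \gamma_1, \dotsc, \gamma_k}(a_1, \dotsc, a_k) + 1/n$. The uniform bound on renormalised energies furnishes, via corollary \ref{coro:lorentzscale}, a uniform weak-$L^{2,\infty}$ control on $Du_*^n$; up to a subsequence $u_*^n$ converges almost everywhere to some $u_* \in W^{1,2}_\ren(\Omega, \manifold N)$ with the same singular data, and lower semicontinuity of $\Eren^{1,2}$ on that class delivers the claimed inequality. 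The main obstacle is the bridging estimate itself: one needs cylinder homotopies whose Dirichlet energy exceeds the hedgehog baseline $\lambda(\gamma_i)^2 T/(4\pi)$ by at most $O(1/T)$ as $T \to \infty$. This combines the geodesic characterisation of $L^2$-minimal tangential parametrisations with a time-rescaling trick that dilutes the transverse derivative into an $O(1/T)$ term; the final $\varepsilon \to 0$ passage then leans on the Marcinkiewicz-scale estimates already developed in the paper.
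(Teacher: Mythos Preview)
The paper does not give its own proof of this proposition; it is quoted from \cite[Proposition 8.2]{monteil2021renormalised}. So there is no in-paper argument to compare against, and your attempt must be judged on its own.

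Your overall gluing scheme (competitor $u$ outside $\bigcup_i \B(a_i;\rho)$, a bridging annulus, and a hedgehog $\gamma_i((x-a_i)/|x-a_i|)$ inside) is the natural construction, but the key quantitative claim for the bridge is wrong. You assert that on the cylinder $[0,T]\times\mathbb S^1$ one can interpolate between $\gamma_i$ and $\Tr_{\partial\B(a_i;\rho)}u$ with Dirichlet energy at most $\frac{\lambda(\gamma_i)^2}{4\pi}T + \frac{K_i}{T}$. But if $H(t,\theta)=H_0(t/\tau,\theta)$ on an interval of length $\tau$, the energy splits as $\tau^{-1}\int|\partial_s H_0|^2/2 + \tau\int|\partial_\theta H_0|^2/2$: time-rescaling dilutes the \emph{transverse} derivative, not the \emph{tangential} one. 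Since the outer trace $\Tr_{\partial\B(a_i;\rho)}u$ is not a minimal geodesic, $\int_0^1\int_{\mathbb S^1}|\partial_\theta H_0|^2/2 > \lambda(\gamma_i)^2/(4\pi)$ in general, and optimising over $\tau$ leaves an excess $2\sqrt{A\bigl(B-\lambda(\gamma_i)^2/(4\pi)\bigr)}$ which is a \emph{constant} in $T$, not $O(1/T)$. Consequently your construction yields $\Eren^{1,2}(u_*^\varepsilon)\le E(\rho,u)+C(u,\rho)$ with an error depending on the chosen competitor that does not vanish, so the chain of inequalities does not close.

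Your proposed remedy in the last paragraph does not save the argument. Corollary~\ref{coro:lorentzscale} gives a \emph{lower} bound on $\varlimsup_{t\to\infty}t^2|\{|Du|\ge t\}|$, not compactness; the relevant upper bound is corollary~\ref{coro:erenweakL2}. More importantly, even with uniform weak-$L^2$ control and almost-everywhere convergence, the lower semicontinuity of $\Eren^{1,2}$ with \emph{fixed} singular points and charges is not stated in the paper (proposition~\ref{prop:compactnessthm}\ref{item::scicompactnesslog} treats the $p_n\nearrow2$ setting, not a fixed $p=2$ sequence in $W^{1,2}_\ren$). And even granting that, your sequence only satisfies $\Eren^{1,2}(u_*^n)\le\mathcal E^{1,2}_{\mathrm{top}}+1/n+C_n$ with $C_n$ bounded away from zero, so the limit cannot beat $\mathcal E^{1,2}_{\mathrm{top}}+\inf_n C_n$. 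The argument in \cite{monteil2021renormalised} proceeds instead by the direct method on the minimisation defining $\mathcal E^{1,2}_{\mathrm{top}}$ itself (letting $\rho\to 0$ along a minimising sequence and extracting a limit), rather than by explicit gluing with a hedgehog.
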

 
\section{Upper bound}\label{sec:upper_bound}
\label{subsec:upper_bound} The renormalized energy introduced, we give the upper bound on  sequences of minimizing $p$-harmonic maps. 
\begin{proposition}[Upper bound]\label{prop:upperBound}
	Fix  an open bounded Lipschitz domain $\Omega \subset \R^2$, a  Riemannian manifold $\manifold N$ and $g \in W^{\sfrac{1}{2},2}(\partial \Omega, \manifold N)$. 
	For all renormalizable $u \in W^{1,2}_{\ren}(\Omega,\manifold N)$  associated to distinct points $a_i \in \Omega$, $i = 1,\dots,k$, if $([u,a_i],a_i)_{i = 1,\dots,k}$ is minimal topological resolution of $\Trace_{\partial \Omega}u$, then
	\begin{equation}\label{eq:upperBoundII}
		\varlimsup_{p \nearrow 2}\left [ \int_\Omega\frac{|D u|^p}{p} - \frac{\Esg^{1,2}(\Trace_{\partial \Omega}u)}{2 - p}\right ] \leq \Eren^{1,2}(u) + 	\mathrm{H}([u,a_i])_{i = 1,\dots,k} 
	\end{equation}
	where
	\begin{equation}\label{eq:defofH}
		\mathrm{H}([u,a_i])_{i = 1,\dots,k} \doteq  \sum_{i = 1}^k \frac{\lambda([u, a_i])^2}{8\pi} \Bigg [1+\log\Big (\frac{2\pi}{\lambda([u, a_i])}\Big )^2\Bigg ].
	\end{equation}
\end{proposition}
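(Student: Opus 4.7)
The plan is to split the domain into an outer region $\Omega \setminus \bigcup_{i=1}^k \B(a_i;\rho)$ for small $\rho > 0$ and the disks $\B(a_i;\rho)$, estimate each piece as $p \nearrow 2$, and finally observe that the $\rho$-dependence cancels between the two pieces. The minimality hypothesis enters through lemma \ref{lemma:atomicityinminialtop}: each $[u,a_i]$ is atomic, so $\Esg^{1,2}(\Trace_{\partial\Omega}u) = \sum_{i=1}^k \lambda_i^2/(4\pi)$ with $\lambda_i \doteq \lambda([u,a_i])$, which is what allows the divergent part of $\int |Du|^p/p$ to be identified with $\Esg^{1,2}(g)/(2-p)$.

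On the outer region, the renormalizability of $u$ guarantees $u \in W^{1,2}(\Omega\setminus\bigcup \B(a_i;\rho), \manifold N)$, so Young's inequality in the form \eqref{eq:lebHat} provides an integrable dominant and Lebesgue's theorem gives $\int_{\Omega\setminus \bigcup \B(a_i;\rho)} |Du|^p/p \to \int_{\Omega\setminus \bigcup \B(a_i;\rho)} |Du|^2/2$ as $p\nearrow 2$. On each disk $\B(a_i;\rho)$, I invoke the second bound of lemma \ref{lemma:usefulmajoration}:
\begin{equation*}
\int_{\B(a_i;\rho)} \frac{|Du|^p}{p} \le \int_{\B(a_i;\rho)} \frac{\lambda_i^p}{p(2\pi|x-a_i|)^p}\,dx + \Bigl(\tfrac{2\pi\rho}{\lambda_i}\Bigr)^{2-p} \int_{\B(a_i;\rho)} \frac{|Du|^2}{2} - \frac{\lambda_i^2}{8\pi^2|x-a_i|^2}\,dx.
\end{equation*}
The second term on the right is finite by proposition \ref{prop:polarCoordEren} (that is precisely the content of the renormalizability of $u$ near $a_i$), and the prefactor $(2\pi\rho/\lambda_i)^{2-p}$ converges to $1$ as $p\nearrow 2$.

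The explicit first term is computed in polar coordinates as
\begin{equation*}
\int_{\B(a_i;\rho)} \frac{\lambda_i^p}{p(2\pi|x-a_i|)^p}\,dx = \frac{\lambda_i^p\,\rho^{2-p}}{p(2\pi)^{p-1}(2-p)}.
\end{equation*}
Subtracting the atomic contribution $\lambda_i^2/(4\pi(2-p))$ and taking $p\nearrow 2$ (l'Hôpital, equivalently a Taylor expansion in $p$ of $g(p) \doteq \lambda_i^p\rho^{2-p}/(p(2\pi)^{p-1})$ around $p=2$) yields
\begin{equation*}
\lim_{p\nearrow 2}\Bigl[\frac{\lambda_i^p\rho^{2-p}}{p(2\pi)^{p-1}(2-p)} - \frac{\lambda_i^2/(4\pi)}{2-p}\Bigr] = \frac{\lambda_i^2}{8\pi}\Bigl[1 + 2\log\frac{2\pi\rho}{\lambda_i}\Bigr] = \mathrm{H}([u,a_i]) - \frac{\lambda_i^2}{4\pi}\log\frac{1}{\rho}.
\end{equation*}

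Putting the three estimates together, the limsup of the left-hand side of \eqref{eq:upperBoundII} is bounded above (for every admissible $\rho$) by
\begin{equation*}
\int_{\Omega\setminus\bigcup\B(a_i;\rho)}\frac{|Du|^2}{2} + \sum_{i=1}^k\int_{\B(a_i;\rho)}\Bigl[\frac{|Du|^2}{2} - \frac{\lambda_i^2}{8\pi^2|x-a_i|^2}\Bigr]dx + \mathrm{H}([u,a_i])_{i=1,\dots,k} - \sum_{i=1}^k\frac{\lambda_i^2}{4\pi}\log\frac{1}{\rho}.
\end{equation*}
The integral representation of proposition \ref{prop:polarCoordEren} identifies the sum of the first two terms with $\Eren^{1,2}(u) + \sum_i \lambda_i^2/(4\pi) \cdot \log(1/\rho)$, cancelling the $\log(1/\rho)$ correction and producing exactly $\Eren^{1,2}(u) + \mathrm{H}([u,a_i])_{i=1,\dots,k}$. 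The result is independent of $\rho$, as expected.

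The conceptually delicate point is not any single estimate but the \emph{matching of the constants}: the explicit constant in the definition of $\mathrm{H}$ is forced by the Taylor expansion of $\lambda^p\rho^{2-p}/(p(2\pi)^{p-1})$ at $p=2$, and one must verify that the logarithmic $\rho$-terms appearing from the $p$-integral exactly cancel those produced by the renormalization in $\Eren^{1,2}$. The rest is careful bookkeeping and routine applications of lemma \ref{lemma:usefulmajoration}, proposition \ref{prop:polarCoordEren}, and dominated convergence; the atomicity lemma \ref{lemma:atomicityinminialtop} is essential to identify $\Esg^{1,2}(g) = \sum_i \lambda_i^2/(4\pi)$.
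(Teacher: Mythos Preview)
Your proof is correct and uses the same ingredients as the paper's, just organized differently. The paper's proof is a two-liner: it cites proposition~\ref{prop:limitErenpToEren} (whose proof already contains the outer/inner decomposition, the dominated convergence on $\Omega\setminus\bigcup\B(a_i;\rho)$, and the lemma~\ref{lemma:usefulmajoration} bound on each disk that you carry out by hand) to obtain $\varlimsup_{p\nearrow 2}\bigl[\int_\Omega|Du|^p/p - \sum_i\lambda_i^p/((2\pi)^{p-1}p(2-p))\bigr]\le \Eren^{1,2}(u)$, and then computes only the derivative of $p\mapsto \lambda^p/((2\pi)^{p-1}p)$ at $p=2$ (no $\rho$) to produce $\mathrm{H}$. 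Your version keeps $\rho$ finite throughout, picks up an extra $\rho^{2-p}$ in the explicit disk integral, and only at the very end cancels the resulting $\log(1/\rho)$ terms against those in the integral representation of $\Eren^{1,2}$; this is a bit more bookkeeping but avoids introducing $\Eren^{1,p}$ as an intermediate object.

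One minor attribution slip: the identity $\Esg^{1,2}(\Trace_{\partial\Omega}u)=\sum_i\lambda_i^2/(4\pi)$ is literally the \emph{definition} of a minimal topological resolution (definition~\ref{def:mintopres}), not a consequence of the atomicity lemma~\ref{lemma:atomicityinminialtop}. Atomicity says the reverse, $\Esg^{1,2}(\gamma_i)=\lambda_i^2/(4\pi)$ for each individual charge, and is not needed here.
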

\scauthor{Hardt} and \scauthor{Lin} established this result in the case of the circle $\manifold N =  \mathbb{S}^1$ \cite[Theorem 2.10]{hardt1995singularities}. 
We observe that minimal topological resolutions when $\manifold N =  \mathbb{S}^1$ either all satisfy $\mathrm{deg}([u,a_i])  = 1$ or all satisfy $\mathrm{deg}([u,a_i]) =-1$, so that for every $k \in \N$, $\mathrm H([u,a_i])_{i = 1,\dots,k} =  k \pi/2$ as $\lambda([u,a_i]) = 2 \pi$. 
When $\Omega$ is simply connected, one further knows that $k = \deg (g, \partial\Omega)$. 

The assumptions of proposition \ref{prop:upperBound} are consistent with the existence of minimizing $p$-harmonic maps $u_p \in W^{1,p}_g(\Omega,\manifold N)$ (see proposition \ref{thm:HLthm}) and the proposition implies the first order bound
\begin{equation}\label{eq:fistorderupperbound}
	\varlimsup_{p \nearrow 2}(2 - p)\int_\Omega \frac{|D u_p|^p}{p} \leq \sum_{i = 1}^k\frac{\lambda([u, a_i])^2}{4\pi}.
\end{equation}
In fact, by proposition \ref{prop:compactnessthm},
\begin{equation}\label{eq:fistordehfjkmdvkrupperbound}
	\lim_{p \nearrow 2}(2 - p)\int_\Omega \frac{|D u_p|^p}{p}  = \Esg^{1,2}(g).
\end{equation}
In particular the limit in \eqref{eq:fistordehfjkmdvkrupperbound} only depends  on the homotopy class of $g$.

\begin{proof}[Proof of proposition \ref{prop:upperBound}]
	We obtain 
		\begin{equation}\label{eq:upperBound}
		\varlimsup_{p \nearrow 2}\left [ \int_\Omega \frac{|D u|^p}{p} - \frac{1}{2 - p}\sum_{i = 1}^k \frac{\lambda([u, a_i])^p}{(2\pi)^{p - 1} p}\right ] \leq \Eren^{1,2}(u)
	\end{equation}
	by proposition \ref{prop:limitErenpToEren} and the fact that $W^{1,2}_{\ren,g}(\Omega,\manifold N) \subset W^{1,p}_g(\Omega,\manifold N)$.	
	To obtain \eqref{eq:upperBoundII} we subtract $\Esg^{1,2}(\Trace_{\partial \Omega}u)/(2 - p)$ and use the fact that
	\[
		\Esg^{1,2}(\Trace_\Omega u) =  \sum_{i = 1}^k \frac{\lambda([u, a_i])^2}{4\pi}
	\]
	and
	\begin{multline}\label{eq:technicallimit}
		\lim_{p \nearrow 2}\frac{1}{2 - p}\left [\sum_{i = 1}^k \frac{\lambda([u, a_i])^p}{(2\pi)^{p - 1} p} - \sum_{i = 1}^k \frac{\lambda([u, a_i])^2}{4\pi}\right ] \\= \frac{1}{2}\left [\sum_{i = 1}^k \frac{\lambda([u, a_i])^2}{4\pi} - \sum_{i = 1}^k \frac{\lambda([u, a_i])^2}{2\pi}\log\frac{\lambda([u, a_i])}{2\pi}  \right].
	\end{multline}
	\end{proof}

The compactness condition on the manifold $\manifold N$ can be in fact weakened in proposition \ref{prop:upperBound} by the weaker condition that there exists at least a renormalizable map $u$ of trace $g$ \emph{i.e.} $W^{1,2}_{\ren,g}(\Omega,\manifold N) \neq \Oset$.  Every compact manifold $\manifold N$ satisfies $W^{1,2}_{\ren,g}(\Omega,\manifold N) \neq \Oset$ (proposition \ref{prop:tralalala}) while their exist non-compact manifolds carrying no renormalizable mappings (proposition \ref{prop:wrenpeutetreempty})

\section{Lower bound}\label{section:lower_bound}
    In order to  state the proposition that will give us the lower bound we need to extend maps $u \in W^{1,p}(\Omega, \manifold N)$  of trace $g \in W^{\sfrac{1}{2},2}(\Omega,\manifold N)$ to a larger domain $\Omega_\delta$.

\subsection{Nonlinear extension of Sobolev maps}
Unlike in the linear case $W^{\sfrac{1}{2},2}(\partial \Omega,\R^\nu)$, the trace operator is only locally surjective in the following sense (see \cite{bethuel1995extensions}): 
\begin{proposition}\label{prop:non-surjectivity_of_the_trace}
Fix  an open bounded Lipschitz domain $\Omega \subset \R^2$ and a Riemannian manifold $\manifold N$. 
There exists a $\delta = \delta(\partial\Omega) > 0$ which depends only on the boundary $\partial\Omega$ such that for every boundary data $g \in W^{\sfrac{1}{2},2}(\partial \Omega, \manifold N)$ one can construct a map $U \in W^{1,2}(\partial \Omega_\delta,\manifold N)$ satisfying $\Trace_{\partial \Omega}U = g$. 
\end{proposition}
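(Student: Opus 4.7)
The plan is to construct $U$ on a collar neighborhood of $\partial\Omega$ by first extending $g$ linearly into the ambient space $\R^\nu$ and then projecting onto $\mathcal N$, exploiting that $W^{\sfrac{1}{2}, 2}$-maps into $\mathcal N$ lie in $\VMO$ so that their small-scale averages stay close to $\mathcal N$. This parallels the strategy of \scauthor{Bethuel} \cite{bethuel1995extensions}.

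\textbf{Geometric reduction.} Since $\partial \Omega$ is a bounded Lipschitz submanifold of $\R^2$, there exist $\delta_0 = \delta_0(\partial \Omega) > 0$ and a biLipschitz homeomorphism $\Phi : \partial \Omega \times (-\delta_0, \delta_0) \to N_{\delta_0}$ onto a tubular neighborhood of $\partial \Omega$ in $\R^2$, with $\Phi(\cdot, 0) = \Id_{\partial \Omega}$. Set $\delta \doteq \delta_0 / 2$, so $\delta = \delta(\partial \Omega)$. Through $\Phi$, the outer collar $\Omega_\delta \setminus \overline{\Omega}$ is biLipschitz equivalent to $\partial \Omega \times (0, \delta)$, with distortion depending only on $\partial \Omega$; it thus suffices to build $U \in W^{1, 2}(\partial \Omega \times (0, \delta), \mathcal N)$ with $\Trace_{s = 0} U = g$.

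\textbf{Linear extension and $\VMO$ control.} Let $\mathcal N \subset \R^\nu$ be a closed smooth submanifold. Define $U_h : \partial \Omega \times (0, \delta) \to \R^\nu$ by mollifying $g$ at scale $s$ along $\partial \Omega$,
\begin{equation*}
U_h(x, s) \doteq \int_{\partial \Omega} \eta_s(x - y)\, g(y) \,\d \HH^1(y),
\end{equation*}
where $(\eta_s)_{s > 0}$ is a standard family of mollifiers on $\partial \Omega$. Standard characterizations of $W^{\sfrac{1}{2}, 2}(\partial \Omega, \R^\nu)$ yield $U_h \in W^{1, 2}(\partial \Omega \times (0, \delta), \R^\nu)$ with $\Trace_{s = 0} U_h = g$ and energy bounded by $C(\partial \Omega) \|g\|^2_{W^{\sfrac{1}{2}, 2}}$. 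Because $W^{\sfrac{1}{2}, 2}(\partial \Omega, \mathcal N) \subset \VMO(\partial \Omega, \mathcal N)$ \cite{brezis_nirenberg_1995}, a Markov-inequality argument combined with $g(x) \in \mathcal N$ a.e.\ provides a modulus $\omega_g(r) \to 0$ as $r \searrow 0$ controlling the distance from the average of $g$ over any boundary set of diameter $r$ to $\mathcal N$. Since $U_h(x, s)$ is such a weighted average at scale $s$, one obtains
\begin{equation*}
\sup_{x \in \partial \Omega} \dist(U_h(x, s), \mathcal N) \xrightarrow{s \searrow 0} 0.
\end{equation*}

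\textbf{Projection and gluing.} As a closed smooth submanifold of $\R^\nu$, $\mathcal N$ has positive reach, and hence admits a $\rho_0$-tubular neighborhood $V_{\rho_0} \subset \R^\nu$ endowed with a smooth nearest-point retraction $\pi : V_{\rho_0} \to \mathcal N$. Using the uniform closeness above, pick $s_0 = s_0(g) \in (0, \delta)$ so that $U_h(\partial \Omega \times (0, s_0]) \subset V_{\rho_0}$, and set $\bar U \doteq \pi \circ U_h \in W^{1, 2}(\partial \Omega \times (0, s_0], \mathcal N)$; by construction $\Trace_{s = 0} \bar U = g$. Since $\bar U(\cdot, s_0)$ is smooth (smooth mollification composed with smooth $\pi$), we extend $\bar U$ to $\partial \Omega \times [s_0, \delta]$ by $\bar U(x, s) \doteq \bar U(x, s_0)$ and obtain $U \in W^{1, 2}(\partial \Omega \times (0, \delta), \mathcal N)$ with $\Trace_{s = 0} U = g$. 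Transporting back through $\Phi$ yields the desired extension on the collar of width $\delta = \delta(\partial \Omega)$.

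\textbf{Main obstacle.} The delicate step is the uniform closeness $\sup_x \dist(U_h(\cdot, s), \mathcal N) \to 0$, which relies jointly on the $\VMO$ regularity of $W^{\sfrac{1}{2}, 2}$ traces and on the positive reach of $\mathcal N$. The depth $s_0$ on which the nearest-point projection is applicable depends on $g$, but since the construction past $s_0$ is trivial, this does not prevent fixing $\delta$ in terms of $\partial \Omega$ alone.
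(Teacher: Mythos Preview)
Your approach---reduce to a collar, mollify $g$ at scale $s$, use the VMO regularity of $W^{1/2,2}$ traces to show the mollified map stays uniformly close to $\mathcal N$, then retract---is the standard one and is in the same circle of ideas as the reference the paper invokes (the paper does not give a self-contained proof but defers to \cite[Theorem~1]{mironescu_trace_2020}, noting only that no quantitative estimate on $U$ is available).

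There is, however, a genuine gap in your projection step. You assert that ``as a closed smooth submanifold of $\R^\nu$, $\mathcal N$ has positive reach, and hence admits a $\rho_0$-tubular neighborhood''. This is false without compactness: a closed smooth embedded curve can have unbounded curvature---for instance the graph of $t\mapsto\sin(e^t)$ in $\R^2$---and then its reach is zero. Your VMO argument yields only $\sup_x\dist(U_h(x,s),\mathcal N)\to 0$ as $s\searrow 0$; if the local tubular radius near the relevant portion of $\mathcal N$ shrinks faster than this distance, the nearest-point projection $\pi$ is not defined and $\pi\circ U_h$ makes no sense. Even where $\pi$ is defined, you also need a uniform Lipschitz bound on $D\pi$ to conclude $\pi\circ U_h\in W^{1,2}$, which again fails if the second fundamental form of $\mathcal N$ is unbounded.

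When $\mathcal N$ is compact---the standing hypothesis in the paper's main theorems---your argument is correct as written. For non-compact $\mathcal N$ the paper itself flags the extension property as a separate assumption in Section~\ref{sec:whattodononcomapct}, observing that it holds when $g\in L^\infty\cap W^{1/2,2}$: the essential range of $g$ then lies in a compact piece of $\mathcal N$, around which a uniform tubular neighborhood and a Lipschitz retraction do exist, and your proof goes through verbatim on that piece. To cover the general statement you should either add one of these hypotheses, or replace the global nearest-point retraction by an argument that uses only local tubular neighborhoods along the essential range of $g$.
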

Here $\partial \Omega_\delta$ means $\{x \in \mathbb{R}^2 : \dist(x,\partial \Omega) < \delta\}$. The proof of this lemma is a variant of \cite[Theorem 1]{mironescu_trace_2020}. We point out that there is no quantitative estimate on \(U\) \cite{vanschaftingen2021sobolev}.

Given the previous lemma, one can extend a map $u \in W^{1,p}_g(\Omega,\manifold N)$, $p \in [1,2]$, to a map $\Bar{u} \in W^{1,p}_g(\Omega_\delta,\manifold N)$ where $\Omega_\delta \doteq \{x \in \R^2 : \dist(x,\partial \Omega) < \delta\}$. Indeed, given the map $U$ of lemma \ref{prop:non-surjectivity_of_the_trace} we set
\begin{equation}\label{eq:extension_of_maps}
	\bar u \doteq \begin{cases}u & \text{ on } \Omega \\
						U\big|_{\Omega_\delta \setminus \Omega}& \text{ on } \Omega_\delta \setminus \Omega.
			\end{cases}
\end{equation}
Along the boundary $\partial \Omega$ the traces of \(U\) and \(u\) coincide. Hence by the integration by parts formula it follows that the extended map $\bar u$ possesses a weak derivative on the extended domain $\Omega_\delta$ \cite{vanschaftingen2021sobolev}. For the sake of simplicity we will denote $u$ the extended map instead of $\bar u$. We note that this extension is not canonical as the map $U$ is in general not unique.

\subsection{Approximation by smooth maps except at finitely many points}
The dense class we will use takes its roots in the work of Fabrice \textsc{Bethuel} \cite[Theorem 2]{bethuel_approximation_1991} (see also  \cite{vanschaftingen2021sobolev}).
The following density result is due to Pierre \textsc{Bousquet}, Augusto \textsc{Ponce} and Jean \textsc{Van Schaftingen} \cite{bousquet_strong_2015,bousquet2017density} (see also \cite{ponce_closure_2009}).
\begin{proposition}\label{prop:density_of_the_R_class}
Fix $p \in (1,2)$, an open bounded Lipschitz domain $\Omega \subset \R^2$, a Riemannian manifold $\manifold N$ and a map $u \in W^{1,p}(\Omega,\manifold N)$. Assume that there exists an open set $\omega \subset\subset \Omega$ with $u \in W^{1,2}(\Omega\setminus  \bar\omega,\manifold N)$.\\
Then, for every $\varepsilon> 0$ and every $\theta \in (0,1)$, setting
\[\omega_\theta \doteq \{x\in \Omega : \dist(\omega,x) < \theta \dist(\omega,\Omega)\},\] there exists a map $v \in W^{1,p}(\omega_\theta,\manifold N)$  such that 
\begin{enumerate}[(i)]
	\item for some points   $\{a_i\}_{i = 1,\dots,k}\subset \omega_\theta$ ($k\in \N$) such that $v \in C^\infty(\omega_\theta\setminus\{a_i\}_{i = 1,\dots,k},\manifold N)$,
	\item $\|u - v\|_{W^{1,p}(\omega_\theta)} \leq \varepsilon$,
	\item $\Trace_{\partial \omega_\theta}u$ and $\Trace_{\partial \omega_\theta}v$ are homotopic.
\end{enumerate}

\end{proposition}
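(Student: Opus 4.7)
The plan is to reduce to the standard density theorems of Bethuel and Bousquet--Ponce--Van Schaftingen by splitting $\omega_\theta$ into two zones exploiting the two differing regularities of $u$. Pick an intermediate parameter $\theta'\in(\theta,1)$ close enough to $\theta$ so that the collar $\omega_\theta\setminus\omega_{\theta'}$ is contained in $\Omega\setminus\bar\omega$, where by hypothesis $u\in W^{1,2}$. On the slightly larger inner piece $\omega_{\theta'}$ (which still contains $\omega$, the possibly ``bad'' region), I would invoke the density of maps smooth outside finitely many points in $W^{1,p}(\omega_{\theta'},\manifold N)$ valid for $1<p<2$ in dimension $2$ \cite{bethuel_approximation_1991,bousquet2017density} to produce $\tilde v \in W^{1,p}(\omega_{\theta'},\manifold N)$, smooth except at finitely many interior points $\{a_i\}$, with $\|\tilde v-u\|_{W^{1,p}(\omega_{\theta'})}$ as small as desired.

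On the collar $\omega_\theta\setminus\overline{\omega_{\theta'}}$, where $u$ is $W^{1,2}$ and hence in $\VMO$, I would perform a standard mollification plus nearest-point projection onto $\manifold N$: since $u\in\VMO$ and the domain is an annular strip, no topological obstruction forces singularities and one obtains a smooth $\manifold N$--valued map $w$ close to $u$ in $W^{1,2}$ (and hence in $W^{1,p}$) on this collar. The two approximations are then glued by a Fubini argument: perturbing $\theta'$ slightly, one selects a smooth interface curve $\Gamma$ (homotopic to $\partial\omega_{\theta'}$) along which the tangential derivatives of $\tilde v$, $w$, and $u$ are all controlled in $L^p$ and the traces of $\tilde v$ and $w$ are close in $W^{1-\sfrac{1}{p},p}(\Gamma)$. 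I then interpolate between $\tilde v$ and $w$ in a thin strip around $\Gamma$ and project back onto $\manifold N$, producing the desired map $v\in W^{1,p}(\omega_\theta,\manifold N)$ smooth outside $\{a_i\}$ and satisfying (i) and (ii).

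The main obstacle is precisely the trace homotopy condition (iii): for $p<2$ the trace space $W^{1-\sfrac{1}{p},p}(\partial\omega_\theta)$ fails to embed into $\VMO$, so $W^{1,p}$--convergence by itself does not preserve homotopy classes of traces on a $1$--dimensional boundary. This is exactly where the assumed $W^{1,2}$--regularity of $u$ on $\Omega\setminus\bar\omega$ becomes essential: since the collar $\omega_\theta\setminus\overline{\omega_{\theta'}}$ lies in the $W^{1,2}$ region, by choosing $w$ close to $u$ in $W^{1,2}$ on this collar we control $\Tr_{\partial\omega_\theta}v=\Tr_{\partial\omega_\theta}w$ in $W^{\sfrac{1}{2},2}(\partial\omega_\theta)\hookrightarrow\VMO(\partial\omega_\theta)$, and the $\VMO$--stability of homotopy classes \cite{brezis_nirenberg_1995,brezis_nirenberg_1996} yields (iii).
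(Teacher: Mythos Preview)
Your strategy differs from the paper's and carries a gluing step that is more delicate than you indicate; there is also an indexing slip.

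\textbf{Indexing.} Since $\omega_\theta$ increases with $\theta$, choosing $\theta'\in(\theta,1)$ makes $\omega_{\theta'}\supset\omega_\theta$ and your ``collar'' $\omega_\theta\setminus\omega_{\theta'}$ is empty. You presumably mean $\theta'\in(0,\theta)$, so that $\omega\subset\omega_{\theta'}\subset\omega_\theta$ and the collar $\omega_\theta\setminus\overline{\omega_{\theta'}}$ lies in $\Omega\setminus\bar\omega$.

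\textbf{Comparison.} The paper does not glue at all: it goes inside the Bethuel / Bousquet--Ponce--Van Schaftingen construction and uses that the adaptive mollification inserts a point singularity on a small disk only when the mean oscillation of $u$ there is not small --- a situation that never occurs where $u\in W^{1,2}$. Running the construction once on $\omega_\theta$ therefore places every singularity inside $\omega$, and near $\partial\omega_\theta$ the modification reduces to ordinary mollification of a $W^{1,2}$ (hence $\VMO$) map, so the boundary homotopy class is preserved automatically. A preliminary approximation by bounded maps (also from \cite{bousquet2017density}) handles non-compact $\manifold N$ and preserves the $W^{1,2}$ regularity on open subsets. Your black-box-plus-gluing approach trades this internal observation for an explicit transition layer, which is conceptually cleaner but technically heavier.

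\textbf{The gluing gap.} Your argument for (iii) via $W^{\sfrac{1}{2},2}(\partial\omega_\theta)\hookrightarrow\VMO$ is fine. The issue is the projection step in the strip: on the one-dimensional interface $\Gamma$ picked by Fubini you indeed get $W^{1,p}(\Gamma)\hookrightarrow C^0(\Gamma)$ and hence $L^\infty$--closeness of $\tilde v|_\Gamma$ and $w|_\Gamma$, but on the two-dimensional strip the convex combination $(1-\chi)\tilde v+\chi w$ need not stay uniformly close to $\manifold N$ (for $p<2$ there is no $L^\infty$ control), so the nearest-point projection is not immediately justified; in addition the term $(\nabla\chi)(\tilde v-w)$ must be balanced against the strip width. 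These obstacles can be overcome --- for instance by building the transition layer solely from the traces on $\Gamma$, as in Bethuel's or Hang--Lin's constructions --- but this is genuine extra work that the paper's approach avoids entirely.
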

We point out that no compactness assumption on  the manifold $\manifold N$  is done.

\begin{proof}[Sketch of the proof of proposition \ref{prop:density_of_the_R_class}] 
	Assuming first that $u \in L^\infty \cap W^{1,p}(\omega,\manifold N)$,
	we observe that the proof \cite{bethuel_approximation_1991,bousquet_strong_2015,ponce_closure_2009} of the density of smooth functions except at a finite number of points (elements of $C^\infty(\omega\setminus \{a_i\}_{i = 1,\dots,k},\manifold N)$) in $L^\infty \cap W^{1,p}(\omega,\manifold N)$ works by considering small disks $\B(a_n;\rho_n)$ and adaptively mollifying whether the mean oscillation of \(u\) on the disk depending is small or not. 
	The condition is always satisfied if $u \in W^{1,2}(\B(a_n;\rho_n),\manifold N)$. 
	In our case this implies that the modification of $u$ in $\Omega\setminus \omega_\theta$ for some small $\theta \in (0,1)$ will not change the homotopy.
	
	By \cite[Theorem 1]{bousquet2017density}, as $p \not\in \N$, $L^\infty \cap W^{1,p}(\Omega,\manifold N)$ is dense in $W^{1,p}(\Omega,\manifold N)$ and the proof of it shows that the modification done on $u$ preserve the $W^{1,2}$-character on open sets. By this fact, we may assume that $u \in L^\infty \cap W^{1,p}(\Omega,\manifold N)$ and $u \in W^{1,2}(\Omega\setminus \bar\omega,\manifold N)$ by first approximating it by bounded maps.
\end{proof}

\subsection{Expansion of circles method}

We now describe the expansion of disks method of  Robert \scauthor{Jerrard} \cite{jerrard_lower_1999} (see also Etienne \scauthor{Sandier} \cite{sandier_lower_1998}) that we adapt to the $p$-energy. We recall that $\B(a;\rho)$ denotes the open disk of radius $r \geq 0$ centered in $a \in \R^2$. 
In particular we warn the reader that the empty set can be written $\B(a;0)$.

\begin{proposition}\label{prop:circleconstruction}
	Let $a_1,\dots,a_k \in \Omega$ where $k \in \N\setminus\{0\}$ and $u\in W^{1,p}(\Omega,\manifold N) \cap W^{1,2}_{\mathrm{loc}}(\bar\Omega\setminus\{a_i\}_{i = 1,\dots,k}),\manifold N)$. If $\dist(\{a_i\}_{i = 1,\dots,k}, \partial \Omega) > 0$, for every $\delta \in (0,\dist(\{a_i\}_{i = 1,\dots,k},\partial \Omega)]$ there exists a collection $\collection S$ of circles $\mathbb S^1(a;\rho) \subset \Omega$  such that 
	 $\cup \collection S$ is a finite union of disjoint annuli \emph{i.e.}\ sets of the form $\A(c;\rho,\sigma) \doteq \B(c;\sigma)\setminus\B(c;\rho) \subset \Omega$. We have for some centers $c_i \in \Omega$, $0 \leq \underline{r}_i< \overline{r}_i$ for $i = 1,\dots, N$
	\begin{equation}\label{eq:annuli}
		\cup \collection S = \bigcup_{i = 1}^{N}\{\mathbb S^1(c_i;r) : \underline{r}_i\leq r <\overline{r}_i\}
	\end{equation}
	and the union is disjoint. Moreover,
	
	\begin{enumerate}[(i)]
		\item \label{item:sum_prop} $\cup \collection S$ is contained in a finite number of disks $\B_1,\dots,\B_l$ such that \(\ds \sum_{i = 1}^l \diam(\B_i) = \delta\) and $\partial \B_i \in \collection S$ for each $i = 1,\dots,l$,

		\item \label{item:lesai}\label{item:localestimate} for every $\varepsilon \in (0,\delta]$ there exists a finite number of disks $\mathbb S^1(c_i,\rho_i)\in \collection S$ $i = 1,\dots,n$ such that $\ds\sum_{i = 1}^{n} \rho_i = \varepsilon$, $\ds\{a_i\}_{i = 1,\dots,k} \subset \bigcup_{i = 1}^n \B(c_i,\rho_i)$ 

		\item  for any subcollection $\{\mathbb S^1(c^*_i;\rho^*_i)\}_{i = 1,\dots,k_*} \subset  \{\mathbb S^1(c_i,\rho_i)\}_{i = 1,\dots,n}$, there exists a disjoint collection of annuli $\{\A(c_i;\underline{r}_i,\overline{r}_i)\}_{i = 1, \dotsc, n} \subset \bigcup_{i = 1}^{k_*} \B(c_i^*, \rho_i^*)$, $i = 1,\dots,N$, such that
			\begin{equation}\label{eq:propcirlcinequlaitu}
				\Big [\sum_{i = 1}^{k_*}\Esg^{1,p'}(\Trace_{\mathbb S^1(c^*_i;\rho^*_i)}u)\Big ]^{p - 1}\Big [\sum_{i = 1}^{k_*}\rho^*_i\Big ]^{2 - p} \leq (2 - p)\sum_{i =1}^n\int_{\underline{r}_i}^{\overline{r}_i}\Esg^{1,p'}(\Trace_{\mathbb S^1(c_i;r)}u)^{p - 1} \frac{\d r}{r^{p - 1}}.
			\end{equation} 
	\end{enumerate}
\end{proposition}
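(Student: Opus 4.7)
The plan is to adapt the Sandier--Jerrard disk expansion construction \cite{sandier_lower_1998,jerrard_lower_1999} (see also \cite{monteil2020ginzburg} for the Ginzburg--Landau version on general targets) to the \(p\)--Dirichlet framework, with the singular energy \(\Esg^{1,p'}\) (where \(p' = p/(p-1)\) denotes the Hölder conjugate of \(p\)) playing the role reserved in the \(p = 2\) theory to the topological degree. I would start from a family of arbitrarily small disjoint disks \(\B(a_i;r_i^{0})\) centred at the singular points and then evolve them by a parameter \(t\), normalised so that \(\sum_j r_j(t) = t\) throughout the evolution. During an \emph{expansion phase} (when the current disks are pairwise disjoint), every disk grows at a rate proportional to its own radius so that the ratio of any two radii is preserved; whenever two disks become tangent they are \emph{merged} into a single disk whose centre is chosen to contain their union and whose radius equals the sum of the two prior radii, keeping \(\sum_j r_j\) continuous at the event. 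The annuli traced out by the expanding boundaries form the collection \(\collection S\); the final disks at the stopping time \(t = \delta/2\) provide the \(\B_1,\dotsc,\B_l\) of item \eqref{item:sum_prop}, while intermediate snapshots yield the disks required by item \eqref{item:localestimate}.

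The heart of the argument is the inequality \eqref{eq:propcirlcinequlaitu}. The first ingredient is a circle-wise lower bound. Combining Lemma~\ref{lemma:loc_lower_bound_circle} with the single-singularity upper bound \(\Esg^{1,p'}(\Trace_{\mathbb S^1(c;r)}u)\leq \lambda(\Trace_{\mathbb S^1(c;r)}u)^{p'}/(p'(2\pi)^{p'-1})\), which follows from inserting the trace as a single charge, one derives, up to a \(p\)--dependent multiplicative constant,
\[
\int_{\mathbb S^1(c;r)}\frac{\lvert u'\rvert^p}{p}\,\d \HH^1 \geq \frac{\Esg^{1,p'}(\Trace_{\mathbb S^1(c;r)}u)^{p-1}}{r^{p-1}}.
\]
Integrating this inequality along each annulus of \(\collection S\) by Fubini in polar coordinates produces a lower bound on \(\int_{\cup\collection S} \lvert Du\rvert^p/p\) of the form appearing on the right-hand side of \eqref{eq:propcirlcinequlaitu}.

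To recover the left-hand side, I would track the evolution of \(E(t) \doteq \sum_j \Esg^{1,p'}(\Trace_{\mathbb S^1(c_j;r_j(t))}u)\). By Proposition~\ref{prop:decreasing} together with the homotopy invariance of \(\Esg^{1,p'}\) across \(W^{1,2}\) annuli, \(E(t)\) is preserved during an expansion phase and can only decrease at a merging event, because a topological resolution of the merged circle can always be assembled from those of the circles entering the merger. Combining this non-increasing character of \(E\) with the circle-wise bound above yields the chain inequality
\[
E(t^*)^{p-1}(t^*)^{2-p} \leq (2-p)\int_0^{t^*}\frac{E(s)^{p-1}}{s^{p-1}}\,\d s,
\]
and applying the same estimate to the flow restricted to the disks of the chosen subcollection \((c_i^*,\rho_i^*)\), together with the concavity of the Cobb--Douglas function \((x,y)\mapsto x^{p-1} y^{2-p}\) used to distribute the sums, produces exactly \eqref{eq:propcirlcinequlaitu}.

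The main obstacle is the combinatorial bookkeeping across the mergers: once two disks have merged, the annuli they produced beforehand must be re-attributed to a surviving centre \(c_i\) so that the inclusion \(\{\A(c_i;\underline r_i,\overline r_i)\}\subset \bigcup_{i=1}^{k_*}\B(c_i^*,\rho_i^*)\) holds for every subcollection, and the simultaneous drop of \(E\) and reorganisation of the underlying topological resolutions must remain consistent with the weighted concavity step. This delicate book-keeping is the feature that distinguishes the present \(p\)--Dirichlet construction from its \(p = 2\) or Ginzburg--Landau counterparts, where the analogous inequality reduces to a purely logarithmic bound and does not require the interplay between the \(\ell^{p-1}\) and \(\ell^{2 - p}\) structures.
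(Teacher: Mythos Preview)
Your overall plan---expand disks à la Sandier--Jerrard and merge when they touch---is the paper's plan, but the step that is supposed to produce the left-hand side of \eqref{eq:propcirlcinequlaitu} has a real gap. Concavity of \((x,y)\mapsto x^{p-1}y^{2-p}\) (equivalently, Hölder with exponents \(\tfrac{1}{p-1},\tfrac{1}{2-p}\)) gives
\[
\Bigl(\sum_i E_i\Bigr)^{p-1}\Bigl(\sum_i \rho_i^*\Bigr)^{2-p}\ \geq\ \sum_i E_i^{\,p-1}(\rho_i^*)^{2-p},
\]
which is the \emph{wrong} direction: after summing your per-disk chain inequalities you only control \(\sum_i E_i^{p-1}(\rho_i^*)^{2-p}\) from above, and concavity cannot force the larger quantity \((\sum E_i)^{p-1}(\sum\rho_i^*)^{2-p}\) under the same bound. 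The paper sidesteps this not by an inequality but by a specific choice of clock: one grows the disks as \(\rho_i(s)=s\cdot\Esg^{1,p'}([u,a_i])\), so that at every instant the ratio \(\rho_i/\Esg^{1,p'}_i\) is the \emph{same} number \(s\) for every disk. At the snapshot time \(\bar T\) one then has \(\sum_i\rho_i^*=\bar T\sum_i\Esg^{1,p'}_i\), whence
\[
\sum_i \Esg^{1,p'}_i\cdot \bar T^{\,2-p}=\Bigl(\sum_i \Esg^{1,p'}_i\Bigr)^{p-1}\Bigl(\sum_i\rho_i^*\Bigr)^{2-p}
\]
as an \emph{equality}; no convexity is needed. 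Your evolution (``proportional to own radius'' with arbitrary initial ratios) does not enforce this synchronisation, and the argument fails precisely at the Cobb--Douglas step.

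The merging step also needs the mechanism you left vague. After a merge, the new disk satisfies \(\rho> s\,\Esg^{1,p'}\) strictly (Proposition~\ref{prop:decreasing}), so the synchronisation breaks. The paper \emph{freezes} such a disk---places it in a collection \(\collection S^<_s\) that does not expand---until the clock catches up and equality \(\rho=s\,\Esg^{1,p'}\) is restored, at which point it rejoins the growing family \(\collection S^=_s\). This freeze-and-release device is the actual ``combinatorial bookkeeping'' and is what makes the change of variables \(r\mapsto s\) in the right-hand integrals of \eqref{eq:propcirlcinequlaitu} go through across mergers. Finally, note that \eqref{eq:propcirlcinequlaitu} itself is a statement purely about \(\Esg^{1,p'}\) and radii; the circle-wise Dirichlet lower bound you invoke only enters afterwards (Corollary~\ref{coro:lower_bound}, Proposition~\ref{prop:mixedlorentz}) to pass from its right-hand side to \(\int\lvert Du\rvert^p\).
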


\emph{Lattice structure of $\collection S$.} In fact, the proof of proposition \ref{prop:circleconstruction} shows that the collection of circles $\collection S$ has a finite lattice structure which is a finite union of oriented trees whose roots are the elements $\{a_i\}_{i = 1,\dots,k}$ (see item \ref{item:lesai}), the top of each tree is the collection $\partial \B_i$ of item \ref{item:sum_prop} of the proposition. 
An element of the lattice corresponds to a circle of the collection $\collection S$. 
The orientation encodes the following partial order relation: $\mathbb S^1(a;\rho) \preccurlyeq \mathbb S^1(b;\sigma)$ if and only if $\B(a;\rho) \subset \B(b;\sigma)$.	
Edges on the lattice correspond to a linear parametrized subfamily of $\collection S$: each edge is described by $\{\mathbb S^1(a,s\rho) : s \in (t_*,t^*)\}$ for some $a \in \Omega$, $0 < t_* < t^* < \infty$; these are the annuli of  \eqref{eq:annuli}.
The tree structure carries some topological information: given a element $\mathbb S^1_*(a_*,\rho_*) \in \collection S$ of the oriented lattice and any family $F$ of elements of the lattice such that  each $\mathbb S^1(a) \in F$ satisfies $\mathbb S^1(a;\rho) \preccurlyeq \mathbb S^1_*(a_*,\rho_*)$ for each $a_i \in \B(a_*,\rho_*)$ and there exists a unique $\mathbb S^1(a_i;\rho_i) \in F$ such that $a_i \in \B(a_*;\rho_*)$,
 then $(\Trace_{\mathbb S^1}u(a + \rho \cdot))_{\mathbb S^1(a;\rho) \in F}$ is a topological resolution of $(\Trace_{\mathbb S^1}u(a_* + \rho_* \cdot))$.

\emph{Particular cases.} We discuss two particular cases of proposition \ref{prop:circleconstruction}.
\begin{itemize}[--]
	\item \emph{Single singularity.} 	If $a \in \Omega$, $u\in W^{1,p}(\Omega,\manifold N) \cap W^{1,2}_{\mathrm{loc}}(\bar\Omega\setminus\{a\},\manifold N)$. Then for $\delta \in (0,\dist(\{a\},\partial\Omega))$, proposition \ref{prop:circleconstruction} yields $\collection S = \{\mathbb S^1(a;r) : r \in [0,\delta]\}$ and $\cup \collection S = \B(a;\delta) \subset \Omega$. In this precise case \eqref{eq:propcirlcinequlaitu} is in fact an equality.
	\item \emph{Pair of singularities}.
	If $a,b \in \Omega$, $u\in W^{1,p}(\Omega,\manifold N) \cap W^{1,2}_{\mathrm{loc}}(\bar\Omega\setminus\{a,b\},\manifold N)$. Then for $\delta \in (0,\dist(\{a,b\},\partial\Omega))$, proposition \ref{prop:circleconstruction} yields a collection of circles whose union consists in two disks centered in $a,b$ respectively and an annulus surrounding the two disks whose radii are chosen in order to \eqref{eq:propcirlcinequlaitu} to hold.
\end{itemize}

We record the following corollary that extends proposition \ref{prop:loc_lower_bound} to general domains.
\begin{corollary}\label{coro:lower_bound}
	Let $a_1,\dots,a_k \in \Omega$ and $u\in W^{1,p}(\Omega,\manifold N) \cap W^{1,2}_{\mathrm{loc}}(\bar\Omega\setminus\{a_i\}_{i = 1,\dots,k},\manifold N)$. There exists $l \in \N$ disjoint disks $\B_i$ $i = 1,\dots,l$ such that its sum of the radii is $\delta \in (0,\dist(\{a_i\}_{i = 1,\dots,k},\partial \Omega)]$ and
	\begin{equation*}
		\delta^{2 - p}\Esg^{1,p'}(\Trace_{\partial \Omega}u)^{p - 1} \leq (2 - p)\frac{(p - 1)^{p-1}}{(2\pi /p)^{2 - p}}\int_{\bigcup_{i = 1}^l\B_l}\frac{|Du|^p}{p}
	\end{equation*}
\end{corollary}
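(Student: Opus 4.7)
The plan is to invoke the circle construction (Proposition~\ref{prop:circleconstruction}) and then to bound both sides of the resulting inequality~\eqref{eq:propcirlcinequlaitu} by the desired quantities. First, I would apply Proposition~\ref{prop:circleconstruction} with its parameter chosen so that the top-level disks $\B_1,\dotsc,\B_l$ coming from item~(i) have radii summing to the prescribed $\delta$, and then specialize~\eqref{eq:propcirlcinequlaitu} to the subcollection formed by the corresponding top-level circles $\{\partial \B_i\}_{i=1,\dotsc,l}$. This yields a finite disjoint family of annuli $\A(c_j;\underline r_j,\overline r_j) \subset \bigcup_{i=1}^l \B_i$.

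To lower-bound the left-hand side of~\eqref{eq:propcirlcinequlaitu}, I would observe that all singularities $a_1,\dotsc,a_k$ lie inside $\bigcup_i \B_i$, so $u\in W^{1,2}(\Omega\setminus \bigcup_i \B_i,\manifold N)$, and combine the decreasing property of the singular energy (Proposition~\ref{prop:decreasing}) with its obvious additivity over disjoint components of the boundary to obtain
\begin{equation*}
    \Esg^{1,p'}(\Trace_{\partial \Omega}u) \le \sum_{i=1}^l \Esg^{1,p'}(\Trace_{\partial \B_i}u).
\end{equation*}
Raising this to the $(p-1)$th power and multiplying by $\delta^{2-p}$ produces the desired lower bound on the factor appearing on the left of~\eqref{eq:propcirlcinequlaitu}.

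The crux is an on-circle pointwise estimate for the right-hand side. The trivial one-charge topological resolution gives $\Esg^{1,p'}(\Trace_{\mathbb{S}^1(c;r)}u)\le \lambda(\Trace_{\mathbb{S}^1(c;r)}u)^{p'}/(p'(2\pi)^{p'-1})$, while Lemma~\ref{lemma:loc_lower_bound_circle} at exponent $p$ gives $\lambda(\Trace_{\mathbb{S}^1(c;r)}u)^p \le p(2\pi r)^{p-1}\int_{\mathbb{S}^1(c;r)}\lvert u'\rvert^p/p$. Using the two algebraic identities $p'(p-1)=p$ and $(p'-1)(p-1)=1$ stemming from $p'=p/(p-1)$, I expect these to combine to
\begin{equation*}
    \frac{\Esg^{1,p'}(\Trace_{\mathbb{S}^1(c;r)}u)^{p-1}}{r^{p-1}} \le \frac{(p-1)^{p-1}}{(2\pi/p)^{2-p}}\int_{\mathbb{S}^1(c;r)}\frac{\lvert u'\rvert^p}{p}.
\end{equation*}
Integrating over $r\in (\underline r_j,\overline r_j)$, applying Fubini, using that the annuli $\A_j$ are disjoint and contained in $\bigcup_i \B_i$, and bounding the tangential derivative by the full derivative $\lvert \partial_\tau u\rvert \le \lvert Du\rvert$ turns the right-hand side of~\eqref{eq:propcirlcinequlaitu} into $\frac{(p-1)^{p-1}}{(2\pi/p)^{2-p}}\int_{\bigcup_i \B_i}\lvert Du\rvert^p/p$, which yields the claim.

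The main obstacle I foresee is purely bookkeeping: producing exactly the constant $(p-1)^{p-1}/(2\pi/p)^{2-p}$ depends precisely on the two Hölder-conjugate identities above, and a small technical point is to absorb the sum-of-diameters versus sum-of-radii convention of Proposition~\ref{prop:circleconstruction} by a preliminary rescaling of its parameter.
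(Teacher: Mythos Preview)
Your proposal is correct and follows essentially the same route as the paper: invoke Proposition~\ref{prop:circleconstruction}, take the top-level disks from item~\ref{item:sum_prop}, use the decreasing property (Proposition~\ref{prop:decreasing}) on the left of~\eqref{eq:propcirlcinequlaitu}, and bound the right via the trivial one-charge resolution combined with the circle estimate (the paper cites Proposition~\ref{prop:loc_lower_bound}, which is exactly your Lemma~\ref{lemma:loc_lower_bound_circle} integrated over the annulus). Your constant computation is accurate, and the diameter-versus-radius discrepancy you flag is indeed a typo in item~\ref{item:sum_prop}---the proof of Proposition~\ref{prop:circleconstruction} works with radii throughout.
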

\begin{proof}[Proof of the corollary \ref{coro:lower_bound}]
	Using proposition \ref{prop:circleconstruction}, we define \(\collection B\) to be the collection of proposition \ref{prop:circleconstruction} \ref{item:sum_prop}. By construction the disks are disjoint. Using proposition \ref{prop:circleconstruction} \ref{item:localestimate} with the collection of assertion \ref{item:sum_prop}, proposition \ref{prop:loc_lower_bound} and proposition \ref{prop:decreasing}, we get the desired result.
\end{proof}

\begin{lemma}[\emph{Merging disks lemma}]\label{lemma:mergin_ball_lemma}
	Let $\collection B_*$ be a finite collection of closed disks in $\R^2$. There exists a finite collection of disks $\collection B^*$ ---called the merged collection--- of closed disks such that 
	\begin{enumerate}[(i)]
		\item two distinct disks $\B_1,\B_2 \in \collection B_*$ satisfy $\B_1 \cap \B_2 = \Oset$,
		\item  $B^*$ covers $\collection B_*$: for every \(\B \in \collection B_*\), there exists \(\B{}' \in \collection B^*\) such that \(\B \subseteq \B{}'\),
		\item  the sum of the radii is conserved \emph{i.e.} \(\ds \sum_{\B(a;\rho) \in \collection B_*}\rho = \sum_{\B(a;\rho) \in \collection B^*}\rho.\)
	\end{enumerate}
\end{lemma}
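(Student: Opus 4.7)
The plan is to prove the lemma by iteratively merging any pair of intersecting disks into a single disk whose radius is the sum of the two radii, and to show that this operation terminates and delivers a collection with the required properties.

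\textbf{Pairwise merging step.} First I would verify the following geometric fact: if two closed disks $\mathbb{B}(a_1;\rho_1)$ and $\mathbb{B}(a_2;\rho_2)$ have non-empty intersection, i.e.\ $\lvert a_1 - a_2\rvert \leq \rho_1 + \rho_2$, then there exists a point $c$ on the segment $[a_1, a_2]$ such that
\[
 \mathbb{B}(a_1;\rho_1) \cup \mathbb{B}(a_2;\rho_2) \subseteq \mathbb{B}(c;\rho_1+\rho_2).
\]
Writing $c = (1-t) a_1 + t a_2$ with $t \in [0,1]$, the inclusion for $\mathbb{B}(a_i;\rho_i)$ amounts to $\lvert c - a_i\rvert \leq \rho_{3-i}$, i.e.\ the two conditions $t\lvert a_1-a_2\rvert \leq \rho_2$ and $(1-t)\lvert a_1-a_2\rvert \leq \rho_1$. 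Since $\lvert a_1-a_2\rvert \leq \rho_1 + \rho_2$, the admissible interval for $t$ is non-empty, and any such choice produces a merged disk of radius $\rho_1 + \rho_2$ containing both original disks. Crucially, this preserves the sum of radii.

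\textbf{Iteration and termination.} I would then proceed by induction on the cardinality $N$ of $\mathcal{B}_*$. If $N = 1$, or more generally if all disks of $\mathcal{B}_*$ are pairwise disjoint, take $\mathcal{B}^* = \mathcal{B}_*$ and all three properties hold trivially. Otherwise, select two distinct intersecting disks $\mathbb{B}_1, \mathbb{B}_2 \in \mathcal{B}_*$, and replace them by the single merged disk $\mathbb{B}_{12}$ produced by the previous step. The resulting family has cardinality $N - 1$, so the induction hypothesis provides a collection $\mathcal{B}^*$ satisfying the three properties for this new family. Every disk of the intermediate family is contained in some disk of $\mathcal{B}^*$; since $\mathbb{B}_1, \mathbb{B}_2 \subseteq \mathbb{B}_{12}$, the collection $\mathcal{B}^*$ also covers the original $\mathcal{B}_*$, and the sum of radii is conserved at every merging step, hence throughout.

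\textbf{Main obstacle.} There is no substantial analytical obstacle: the argument is entirely elementary. The only delicate point is the explicit placement of the center $c$ of the merged disk, which must be chosen so that \emph{both} original disks fit into a disk of radius exactly $\rho_1 + \rho_2$ (as opposed to, say, $2(\rho_1+\rho_2)$, which would be wasteful and break the preservation of the sum of radii). A careless bound would give a disk too large; the convex combination along the segment $[a_1,a_2]$ is what keeps the radius budget tight.
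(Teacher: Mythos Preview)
Your proof is correct and follows essentially the same approach as the paper: merge two intersecting disks into a single disk of radius $\rho_1+\rho_2$ centered on the segment $[a_1,a_2]$, then induct on the cardinality. The paper simply makes the specific choice $c = (\rho_1 a_1 + \rho_2 a_2)/(\rho_1+\rho_2)$, which is one admissible point in the interval you identify.
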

The proof of  the lemma is well--known, see \emph{e.g.} \cite[p. 386]{sandier_lower_1998}\cite[lemma 3.1]{jerrard_lower_1999}.
\begin{proof}[Proof of lemma \ref{lemma:mergin_ball_lemma}]
	For the first part of the proof, we assume that there are $k = 2$ intersecting disks $\B(a_1;\rho_1)$ and $\B(a_2;\rho_2)$. We then set \[ 
	\collection B^* = \{\B(c;\rho_1 + \rho_2)\}  \text{ where } c \doteq \frac{\rho_1a_1 + \rho_2a_2}{\rho_1 + \rho_2}.
	\]
	If $k \geq 3$ one proceeds by induction.
\end{proof}

 \begin{proof}[Proof of proposition \ref{prop:circleconstruction}] 
 We fix $u\in W^{1,p}(\Omega,\manifold N) \cap W^{1,2}_{\mathrm{loc}}(\bar\Omega\setminus\{a_i\}_{i = 1,\dots,k},\manifold N)$. 
 For simplicity  we write
 	$\Esg^{1,p'}(\mathbb S^1(a;\rho))$ for $\Esg^{1,p'}(\Tr_{\mathbb S^1(a;\rho)}u)$ and $\Esg^{1,p'}(a)$ for $\Esg^{1,p'}([u,a])$.
	Fix $t_0 > 0$ to be chosen later. We set
	\begin{equation}\label{eq:deltatomuch}
		\mathrm S(0) \doteq\bigcup_{0 < s < t_0}\collection S_s \quad \text{ with } \quad \collection S_s \doteq \bigcup_{i = 1}^k\{\mathbb S(a_i,s \Esg^{1,p'}(a_i))\}.
	\end{equation}
	We choose the largest $t_0 > 0$ such that the collection of circles satisfies for each $s \in (0,t_0)$,
	\begin{enumerate}[(I)]
		\item \label{item:maxdelta} the sum of the radii of the circles in the collection $\collection S_s$ does not exceed $\delta$,
		\item \label{item:disjointness}  the collection $\collection S_s$ is a pairwise disjoint collection of circles  agglomerated in disjoint annuli.
	\end{enumerate}
	If the process does not satisfy anymore \ref{item:maxdelta} at $t_0$ then we stop the process. 
	Note that for all $s \in (0,t_0)$ every circle $\mathbb S^1(a_s,\rho_s) \in \collection S_s$ satisifies 
	\begin{equation}\label{eq:equalitytimeradii}
		\Esg^{1,p'}(\mathbb S^1(a_s;\rho_s))s =\rho_s.
	\end{equation}
		If at time $t_0$ some circles intersect, by lemma \ref{lemma:mergin_ball_lemma}, we merge them and we obtain new disks $\{\mathbb S^1(a_{t_0};\rho_{t_0})\}$ such that
	\begin{equation}\label{eq:inequalitytimeradii}
		\Esg^{1,p'}(\mathbb S^1(a_s;\rho_s))s \leq \rho_s
	\end{equation}
	by the monotonicity of the singular energy (proposition \ref{prop:decreasing}). 
	The collection $\{\mathbb S^1(a_{t_0};\rho_{t_0})\}$ is the disjoint union of the collection  $\collection S_{t_0}^=$ circles such that \eqref{eq:equalitytimeradii} holds at $t$ and the collection $\collection S_{t_0}^<$ of those disks that satisfies \eqref{eq:inequalitytimeradii} with a strict inequality sign. 
	This latter collection has the property to contain circles that can be decomposed in a finite number of circles such that \eqref{eq:equalitytimeradii} holds. We gather those in a collection that we denote $\collection S_{t_0,\mathrm{sub}}$. Indeed, before merging, \eqref{eq:equalitytimeradii} held for each disks. We define $\collection S_{t_0} \doteq \{\mathbb S^1(a_{t_0};\rho_{t_0})\} = \collection S_{t_0}^< \cup \collection S_{t_0}^=$ and $\collection S_{t_0^-} = \collection S_{t_0}^< \cup \collection S_{t_0,\mathrm{sub}}$.
	
	Next we define
	\begin{multline}\label{eq:inductionstep}
		\mathrm S(1) \doteq\bigcup_{t_0 < s < t_1}\collection S_s \quad \text{ with } \quad \collection S_s \doteq \collection S_{t_0}^< \cup \collection S_s^= \quad \\\text{ where } \quad \collection S_s^= \doteq \bigcup\{\mathbb S(a_{t_0}, s \Esg^{1,p'}( \mathbb S^1(a_{t_0};\rho_{t_0})) : \mathbb S^1(a_{t_0};\rho_{t_0}) \in \collection S_{t_0}^=\}.
	\end{multline}
	where $t_1 > t_0$ is the largest number such that \ref{item:disjointness} and \ref{item:maxdelta} hold for $s\in(t_0,t_1)$ and
	\begin{enumerate}[(I)]\setcounter{enumi}{2}
		\item \label{item:strict} $\Esg^{1,p'}( \mathbb S^1(a_{t_0};\rho_{t_0}))s < \rho_{t_0}$ for all $\mathbb S^1( a_{t_0};\rho_{t_0}) \in \collection S_{t_0}^<$.
	\end{enumerate}
	If at $t_1$, \ref{item:maxdelta} happens we stop the construction.
	If at $t_1$, \ref{item:disjointness} occurs, we may repeat the merging process by distinguishing $\collection S_{t_1} = \collection S_{t_0}^= \cup \collection S_{t_0}^<$ circles that satifies  \eqref{eq:inequalitytimeradii} with an equality or strict  inequality. 
	If condition \ref{item:strict} (see \eqref{eq:equalitytimeradii}) is unsatisfied at $t_1$ for some circles $\{\mathbb S^1(a_j;\rho_j)\}_{j = 1,\dots,l}$. 
	We set $\collection S^<_{t_1} =  \collection S^<_{t_0}\setminus \{\mathbb S^1(a_j;\rho_j)\}_{j = 1,\dots,l}$ and redefine $\collection S^=_{t_1}$ to be  $\collection S^=_{t_1}\cup \{\mathbb S^1(a_j;\rho_j)\}_{j = 1,\dots,l}$. 
	It is then possible to reiterate the construction \eqref{eq:inductionstep}.
	
	After a finite number $N$ of iterations the construction of $\mathrm S(0),\mathrm S(1),\dots,\mathrm S(N)$ stops by the finite number of balls that decrease at each step and we obtain a $T > 0$ such that 
	\begin{enumerate}[(a)]
		\item \label{item:sum} the sum of the radii at of $\collection S_T$ is equal to $\delta$.
		\item \label{item:topres} for each $t \in (0,T]$, $(\Trace_{\mathbb S^1(a;\rho)}u)_{\mathbb S^1(a;\rho) \in \collection S_t}$ is a topological resolution of $\Trace_{\partial \Omega}u$ 
		\item  \label{item:c} for each $t \in (0,T]$, every disk in $\mathbb S^1(a_t;\rho_t) \in \collection S_t$ satisfies 
		 either (1) $\Esg^{1,p'}(\mathbb S^1(a_t;\rho_t))t =\rho_t$,
			 or (2) there exists $k_t \in \N$ disks $\{\mathbb S^1(a_t^i;\rho_t^i)\}_{i = 1,\dots,k_t}$ such that $\sum_{i = 1}^{k_t}\rho_{t}^i = \rho_t$, $\bigcup_{i = 1}^{k_t}\mathbb S^1(a_t^i;\rho_t^i) \subset \B(a_t;\rho_t)$, $(\Trace_{\mathbb S^1(a_t^i;\rho_t^i)}u)_{i = 1,\dots,k_t}$ is a topological resolution of $\Trace_{\mathbb S^1(a_t,\rho_t)}u$ and $\Esg^{1,p'}(\mathbb S^1(a_t^i;\rho_t^i))t =\rho_t^i$ for each $i = 1,\dots,k_t$.
		
		\item for each $0<s<t<T$, $\collection S_s \preccurlyeq \collection S_t$ in the sense that for each circle $\mathbb S^1(a,\rho)  \in \collection S_s$ there exists $\mathbb S^1(a',\rho')  \in \collection S_t$ such that $\mathbb S^1(a,\rho) \subset \B(a';\rho')$.
	\end{enumerate}

	\emph{The local estimate}. We prove an intermediate and localized version of \eqref{eq:propcirlcinequlaitu}. Fix some $\mathbb S^1(a^*;r^*) \in \collection S$. We set $\bar T$ the largest $t > 0$ such that $\collection S_t \cap \B(a^*,r^*) \neq \Oset$. 
	By construction $\collection S_{\bar T} \cap \B(a^*,r^*)$ consists in a disjoint and finite collection of annuli $\collection A = \{\A(a^i;r_i,r^i))\}_{i = 1,\dots,n_*}$. For simplicity we denote
	\begin{equation*}
		\int_{\bigcup \collection A}\Esg^{1,p'}(\mathbb S^1(a;r))^{p - 1} \frac{\d r}{r^{p - 1}} \doteq \sum_{i = 1}^n \int_{r_i}^{r^i}\Esg^{1,p'}(\mathbb S^1(a^i;r))^{p - 1} \frac{\d r}{r^{p - 1}}.
	\end{equation*}
	
	We have by change of variables
	\begin{multline*}
		\int_{\bigcup \collection A}\Esg^{1,p'}(\mathbb S^1(a;r))^{p - 1} \frac{\d r}{r^{p - 1}} = \sum_{j = 1}^N  \sum_{\mathbb S^1(a;\rho)\in \B(a^*;r^*)\cap \collection S_{t_j}}\int_{t_j}^{t_{j+1}}\Esg^{1,p'}(\mathbb S^1(a;\rho_j))\frac{\d s}{s^{p -1}}\\ \geq \sum_{j = 1}^N  \int_{t_j}^{t_{j+1}}\Esg^{1,p'}(\mathbb S^1(a^*;r^*))\chi_{[0,\bar T]}\frac{\d s}{s^{p -1}}
	\end{multline*}
	by definition of the singular energy.
	We thus have
	\begin{equation*}
		\int_{\bigcup \collection A}\Esg^{1,p'}(\mathbb S^1(a;r))^{p - 1} \frac{\d r}{r^{p - 1}} \geq  \Esg^{1,p'}(\mathbb S^1(a^*;r^*))\int_0^{\bar T}\frac{\d s}{s^{p -1}} = \Esg^{1,p'}(\mathbb S^1(a^*;r^*))\frac{{\bar T}^{2 -p}}{2 - p}.
		\end{equation*}
	
	\emph{The estimate \eqref{eq:propcirlcinequlaitu} in \ref{item:localestimate}}. 
	For every $\varepsilon \in (0,\delta]$ there exists a finite number of disks $\mathbb S^1(c_i,\rho_i)\in \collection S$ $i = 1,\dots,n$ such that $\sum_{i = 1}^{n} \rho_i = \varepsilon$ and $\{a_i\}_{i = 1,\dots,k} \subset \bigcup_{i = 1}^n \B(c_i,\rho_i)$. 
	This is because $t \mapsto \sum_{\mathbb S^1(a;\rho) \in \collection S_t } \rho$ is increasing and continuous equal to zero at $t = 0$, equal to $\delta$ at $t = T$ and so the intermediate value theorem applies and yields the existence of a $\bar T \in (0,T]$ such that $\collection S_{\bar T} = \{\mathbb S^1(c_i,\rho_i)\}_{i = 1,\dots,n}$ and \(\sum_{i = 1}^n \rho_i = \rho\). By \ref{item:c}(2) we can assume that for each $i = 1,\dots,n$,
	\begin{equation*}
		\Esg^{1,p'}(\mathbb S^1(c_i;\rho_i))\bar T =\rho_i.
	\end{equation*}

	Let us consider an arbitrary  subcollection $\{\mathbb S^1(c_i^*;\rho_i^*)\}_{i = 1,\dots,k_*} \subset  \{\mathbb S^1(c_i,\rho_i)\}_{i = 1,\dots,n}$, by the preceding paragraph there exist $N$ disjoint collections $\collection A_j$ of cardinality $n_j$ such that \[\collection A_j = \{\A(c_i; \underline{r}_i,\overline{r}^i)\}_{i = 1,\dots,n_j}\]
	and therefore
	\begin{align*}
		\sum_{i = 1}^{N}\int_{\underline r_i}^{\overline r^i}\Esg^{1,p'}(\mathbb S^1(c_i;r))^{p - 1} \frac{\d r}{r^{p - 1}} &=	\int_{\bigcup_{i = 1}^{N}\bigcup \collection A_i}\Esg^{1,p'}(\mathbb S^1(a;r))^{p - 1} \frac{\d r}{r^{p - 1}}\\
		&\geq   \sum_{i = 1}^{k_*}\Esg^{1,p'}(\mathbb S^1(c_i^*;\rho_i^*))\frac{{\bar T}^{2 -p}}{2 - p}\\
		&= \Big (\sum_{i = 1}^{k_*}\Esg^{1,p'}(\mathbb S^1(c_i^*;\rho_i^*)\Big )^{p - 1}\frac{\ds\Big (\sum_{i = 1}^{k_*}\rho_i^*\Big )^{2 -p}}{2 - p}
	\end{align*}
	where $N = \sum_{j = 1}^nn_j$ and the last equality follows from the choice of the disks.
\end{proof}

\subsection{Going to the Marcinkiewicz scale}

In this section we prove the following proposition. We recall that $A_+ \doteq \max(0,A)$.
\begin{proposition}[Mixed Marcinkiewicz estimate]\label{prop:mixedlorentz} Fix $p \in (1,2)$. Let $u \in W^{1,2}_{\mathrm{loc}}(\Omega\setminus \{a_i\}_{i = 1,\dots,k}, \manifold N) \cap W^{1,p}(\Omega, \manifold N)$ where $\{a_i\}_{i = 1,\dots,k} \subset \Omega$. Let also $\delta \in (0,\dist(\{a_i\}_{i = 1,\dots,k},\partial \Omega)]$.
	There exists a finite collection $\collection B$ of disjoint disks $\B \subset \Omega$ such that the sum of their radii does not exceed $\delta$, and  a nonnegative measurable function $U :\Omega \to \R$ supported in $\bigcup_{\B \in \collection B}\B$ such that
	\begin{multline}\label{eq:lorentzI}
	\frac{p - 1}{p}\int_{\bigcup_{\B\in \collection B}\B}(|Du| - U)_+^p 
	+\frac{(2 \pi \delta)^{2 - p}}{p^{2 - p}(p - 1)^{p - 1}} \Big (\sum_{\B\in \collection B}  \Esg^{1,p'}(\Trace_{\partial \B}u)\Big )^{p - 1} \\\leq 	\frac{(3-p)p}{2}\int_{\bigcup_{\B\in \collection B}\B}\frac{|Du|^p}{p}.
	\end{multline}
	Moreover, \(U  \in \{0\} \cup [\sysN/(2 \pi \delta),\infty)\) almost everywhere,
\begin{equation}
			\label{eq:weak-Lp}\sup_{t > 0}t^p |\{U > t \}| \leq
			 \frac{(2 - p) p^{p - 1}}{2(p - 1)^{p - 1}(2 \pi)^{2 - p}} \int_{\Omega}\frac{|Du|^p}{p}, \\ 
\end{equation}
			and
	\begin{equation}
			\label{item:perstuff} 
			\sup_{t > 0} t^{p - 1}\mathrm{Per}(\{U > t\}) \leq\frac{(2 - p)(2\pi)^{3 - p} (p')^{p - 1}}{\sysN}  \int_{\Omega}\frac{|Du|^p}{p}.
			\end{equation}
\end{proposition}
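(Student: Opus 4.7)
The plan is to combine the expansion-of-disks construction of proposition \ref{prop:circleconstruction} with a pointwise splitting of $|Du|^p$. First, I apply proposition \ref{prop:circleconstruction} with the given parameter $\delta$ to produce the lattice $\collection S$ of circles and, from item \ref{item:sum_prop}, a top-level family $\collection B$ of disjoint disks whose diameters sum to $\delta$ (so $\sum_{\B \in \collection B} \rho_\B = \delta/2 \le \delta$). The lattice carries a natural time parametrization along each branch satisfying the equality $\Esg^{1,p'}(\mathbb S^1(a_s;r_s)) \cdot s = r_s$ on equality branches. For each $x \in \bigcup \collection B$ I let $(a(x), r(x))$ be the smallest lattice circle enclosing $x$ and set
\[
 U(x) \;=\; \frac{\lambda(\Tr_{\mathbb S^1(a(x), r(x))}u)}{2\pi\, r(x)}
\]
inside $\bigcup \collection B$, and $U(x) = 0$ outside. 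Either $\lambda = 0$ (giving $U(x) = 0$) or $\lambda \ge \sysN$, and $r(x) \le \delta$, so the range $U \in \{0\} \cup [\sysN/(2\pi\delta), \infty)$ holds automatically.

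To prove the main inequality \eqref{eq:lorentzI}, I rely on two ingredients. The first is the elementary scalar inequality $\tfrac{p-1}{p}(a-b)^p + b^p \le a^p$, valid for $0 \le b \le a$ and $p \in (1,2)$, which follows from convexity in $b$ of the left-hand side and comparison at the endpoints. Applied pointwise with $a = |Du|$ and $b = U \wedge |Du|$ and integrated, it yields $\tfrac{p-1}{p} \int (|Du|-U)_+^p + \int (U\wedge|Du|)^p \le \int |Du|^p$. The second is the estimate \eqref{eq:propcirlcinequlaitu} applied to the top-level collection (with $\sum \rho_\B^* = \delta/2$), combined with: the subadditivity $(\sum x_i)^{p-1} \le \sum x_i^{p-1}$ for $p \in (1,2)$; the trivial single-point resolution $\Esg^{1,p'}(\mathbb S^1(a;r)) \le \lambda^{p'}/(p'(2\pi)^{p'-1})$, which gives $\Esg^{p-1} \le \lambda^p (p-1)^{p-1}/(p^{p-1}\cdot 2\pi)$ (using $p'(p-1) = p$ and $(2\pi)^{(p'-1)(p-1)}=2\pi$); and lemma \ref{lemma:loc_lower_bound_circle} bounding $\lambda^p \le (2\pi r)^{p-1}\int_{\partial\B(a;r)} |Du|^p$. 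Together these convert the time integral in \eqref{eq:propcirlcinequlaitu} into an integral of $|Du|^p$ over $\bigcup \collection B$, and by tuning a Young-type weighting in the scalar inequality the constants in \eqref{eq:lorentzI} come out as stated.

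For the weak-type estimates, the crucial observation is that along each lattice branch $\lambda/(2\pi r)$ is a monotone function of the time parameter $s$, so $\{U>t\}$ is a finite disjoint union of lattice disks $\B(a_j;r_j)$ with $r_j < \lambda_j/(2\pi t)$. For \eqref{eq:weak-Lp} I estimate
\[
|\{U>t\}| \;\le\; \pi \sum_j r_j^2 \;\le\; \frac{\pi}{(2\pi t)^p}\sum_j r_j^{2-p}\lambda_j^p,
\]
and apply proposition \ref{prop:loc_lower_bound} (letting $\sigma \searrow 0$) on each disk to dominate $r_j^{2-p}\lambda_j^p$ by $(2\pi)^{p-1}p(2-p)\int_{\B_j}|Du|^p/p$; summing over the disjoint disks yields \eqref{eq:weak-Lp} with the stated constant. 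The perimeter bound \eqref{item:perstuff} follows analogously: $\mathrm{Per}(\{U>t\}) = 2\pi \sum_j r_j$, and now writing $r_j = r_j^{2-p}\,r_j^{p-1} \le r_j^{2-p}(\lambda_j/(2\pi t))^{p-1}$ and using $\lambda_j \ge \sysN$ to peel off one factor of $\lambda_j$ before invoking the local lower bound gives the claimed constant.

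The main obstacle is twofold. First, one must verify that $\{U>t\}$ really is a union of lattice disks: this requires careful use of the tree structure of $\collection S$ and the equality $\Esg \cdot s = r$ along equality branches, together with the monotonicity on merging branches from proposition \ref{prop:decreasing}. Second, extracting the precise numerical constants in \eqref{eq:lorentzI} is delicate: the pointwise splitting $\tfrac{p-1}{p}(|Du|-U)_+^p + (U\wedge|Du|)^p \le |Du|^p$ loses information on the set $\{U > |Du|\}$, and this deficit must be absorbed by the singular-energy term --- which is exactly where the prefactor $(2\pi\delta)^{2-p}/(p^{2-p}(p-1)^{p-1})$ arises from the Hölder/Young balancing in \eqref{eq:propcirlcinequlaitu}.
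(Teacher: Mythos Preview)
Your derivations of the weak-$L^p$ bound \eqref{eq:weak-Lp} and the perimeter bound \eqref{item:perstuff} are essentially the paper's: once $\{U>t\}$ is a finite union of lattice disks $\B(a_j;r_j)$ with $r_j t \lesssim \Esg^{1/p'}$ (or $\lambda_j$), one multiplies $r_j^2 = r_j^{2-p} r_j^p$ and $r_j = r_j^{2-p} r_j^{p-1}$ and invokes the local lower bound on each disk. That part is fine.

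The main inequality \eqref{eq:lorentzI}, however, does not follow from your scheme. Your pointwise inequality
\[
\tfrac{p-1}{p}(|Du|-U)_+^p + (U\wedge|Du|)^p \le |Du|^p
\]
only yields $\int (U\wedge|Du|)^p$ as the ``topological'' contribution, and there is no way to bound this from below by $(\sum_{\B}\Esg^{1,p'}(\Tr_{\partial\B}u))^{p-1}$: on the set $\{|Du|<U\}$ the integrand is $|Du|^p$, not $U^p$, and that set can carry no a priori information about $\lambda$ or $\Esg$. Your second paragraph then strings together $\Esg^{p-1}\le C\lambda^p$ and $\lambda^p \le (2\pi r)^{p-1}\int_{\partial\B}|Du|^p$ to ``convert the time integral in \eqref{eq:propcirlcinequlaitu} into an integral of $|Du|^p$'', but this chain produces an \emph{upper} bound on the left side of \eqref{eq:propcirlcinequlaitu} by $\int|Du|^p$, which is simply corollary \ref{coro:lower_bound} and is useless for proving \eqref{eq:lorentzI} (where the $\Esg$ term must sit on the \emph{lower} side). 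Adding the two separate estimates would at best give a right-hand side of the order $2\int|Du|^p$, never the sharp factor $(3-p)/2$.

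What is missing is the mechanism of lemma~\ref{lemma:alg_lemma_lorentz} and proposition~\ref{prop:intermediateintegraloncircleestimate}: the scalar inequality
\[
\tfrac{a^p}{p} + a^{p-1}(b-a) + \bigl(1-\tfrac{1}{p}\bigr)(b-a)_+^p \le \tfrac{3-p}{2}\,b^p
\]
contains a \emph{linear} cross term $a^{p-1}(b-a)$ which may be negative pointwise but, after integration over a circle with $a=\eta/(2\pi r)$ and $b=|Du|$, becomes $a^{p-1}\bigl(\int_{\partial\B}|Du|-\eta\bigr)\ge 0$ by the topological lower bound $\int_{\partial\B}|Du|\ge\lambda\ge\eta$. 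This is precisely what lets the $a^p$ term (which after integration over the annuli gives the $\Esg$ contribution via \eqref{eq:propcirlcinequlaitu}) and the $(|Du|-U)_+^p$ term coexist on the left of a single inequality with right-hand side $\tfrac{(3-p)p}{2}\int\frac{|Du|^p}{p}$. Your purely pointwise splitting cannot reproduce this, because the cancellation is not pointwise but only holds after the circle integration.

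A minor secondary point: you define $U$ via $\lambda/(2\pi r)$, whereas the paper uses $((2\pi)^{p'-1}p'\,\Esg^{1,p'})^{1/p'}/(2\pi r)$. The latter is smaller (by the trivial resolution), so your $U$ makes the first term of \eqref{eq:lorentzI} smaller, not larger; but more importantly, only the $\Esg$-based definition is automatically monotone across merging events in the lattice (via proposition~\ref{prop:decreasing}), which is what guarantees that $\{U>t\}$ is a union of lattice disks. With the $\lambda$-based definition this monotonicity can fail at merges.
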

One consequence of \eqref{eq:weak-Lp}--\eqref{item:perstuff} is that, combined with the first-order upper bound \eqref{eq:fistorderupperbound}, it asserts that any family of minimizing $p$-harmonic maps $(u_p)_{p\in (1,2)}$ which is smooth outside a finite number of points and subject to $\Trace_{\partial \Omega}u_p = g \in W^{\sfrac{1}{2},2}(\partial\Omega,\manifold N)$ satisfies
\begin{equation*}
	\varlimsup_{p \nearrow 2}\sup_{t > 0}t^p |\{U_p > t \}| \leq 4\pi\varlimsup_{p \nearrow 2}(2 - p)\int_{\Omega}\frac{|Du_p|^p}{p} \leq 4\pi \Esg^{1,2}(\Trace_{\partial \Omega}u),
\end{equation*} 
where $U_p$ is the map given by proposition \ref{prop:mixedlorentz}. Some of consequences of this fact are described in proposition \ref{prop:mixedboundedness}. The perimeter $\mathrm{Per}$ of the level set $\{U > t\}$ is defined through the gradient measure of the characteristic function of the level set \cite[Section 7.4]{willem2013functional} which is of bounded variation (BV): 
\[
	\mathrm{Per}(\{U > t\}) \doteq \int_{\Omega}|D \chi_{\{U > t\}}|.
\]

To prove the announced bound we will first write an estimate concerning real numbers (lemma  \ref{lemma:alg_lemma_lorentz}). We will then integrate it in lemma \ref{prop:intermediateintegraloncircleestimate} on  $\partial \B(a;\rho)$ and then, with the help of the circle proposition \ref{prop:circleconstruction}, we will choose appropriately the shells on which we will integrate. The function $U$ is explicitly given in the proof, see \eqref{eq:theFunctU}.

\begin{lemma}\label{lemma:alg_lemma_lorentz}
If $a,b \geq 0$ and $p \in [1,2]$, then
	\begin{equation*}
	\frac{a^p}{p} + a^{p - 1}(b - a) + \big( 1 - \frac 1 p \big)(b - a)_+^p \leq 	\frac{3 -p}{2}b^p.
	\end{equation*}
\end{lemma}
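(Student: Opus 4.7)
The plan is to split the argument according to the sign of $b - a$ and to use the convexity of $t \mapsto t^p/p$ on $[0, \infty)$ for $p \geq 1$, sharpened in the harder case by a Taylor remainder. Two ingredients will do most of the work: the tangent bound
\begin{equation*}
\frac{a^p}{p} + a^{p - 1}(b - a) \leq \frac{b^p}{p} \qquad \text{(by convexity)}
\end{equation*}
and the elementary inequality $1/p \leq (3 - p)/2$ on $p \in [1, 2]$, equivalent to $(p - 1)(2 - p) \geq 0$.

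When $b \leq a$, the term $(b - a)_+^p$ vanishes and the two observations combine immediately to yield $\frac{a^p}{p} + a^{p - 1}(b - a) \leq \frac{b^p}{p} \leq \frac{3 - p}{2} b^p$, closing this case at once.

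The nontrivial case is $b > a$, where the plain tangent bound is too lossy and one must use the quadratic correction from Taylor's formula,
\begin{equation*}
\frac{b^p}{p} - \frac{a^p}{p} - a^{p - 1}(b - a) = (p - 1) \int_a^b (b - t)\, t^{p - 2} \d t.
\end{equation*}
Since $p - 2 \leq 0$, the map $t \mapsto t^{p - 2}$ is nonincreasing, so bounding the integrand from below by $b^{p - 2}(b - t)$ yields
\begin{equation*}
\frac{a^p}{p} + a^{p - 1}(b - a) \leq \frac{b^p}{p} - \frac{p - 1}{2} b^{p - 2}(b - a)^2.
\end{equation*}
Adding $(1 - 1/p)(b - a)^p$ and using $(3 - p)/2 - 1/p = (p - 1)(2 - p)/(2p)$, the desired inequality reduces (for $p > 1$, after dividing by $(p - 1)/p$) to
\begin{equation*}
s^p \leq \frac{2 - p}{2} + \frac{p}{2}\, s^2, \qquad \text{where } s \doteq (b - a)/b \in [0, 1].
\end{equation*}
This one-variable statement is closed by monotonicity of $G(s) \doteq (2 - p)/2 + (p/2)\, s^2 - s^p$: one has $G(1) = 0$ and $G'(s) = p\, s(1 - s^{p - 2}) \leq 0$ on $(0, 1)$, using $s^{p - 2} \geq 1$ for $s \leq 1$ and $p \leq 2$; hence $G$ is nonincreasing on $[0, 1]$ and therefore nonnegative there. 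The endpoint cases $p \in \{1, 2\}$ are immediate by direct substitution.

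The main subtlety is understanding why the plain convexity bound fails in the range $b > a$: it leaves a residual term of order $(p - 1)(b - a)^p/p$ which the right-hand side cannot absorb by itself. The Taylor remainder supplies exactly the quadratic term $\tfrac{p - 1}{2} b^{p - 2}(b - a)^2$ needed to reduce matters to the sharp one-variable inequality above, which saturates precisely at $s = 1$ (i.e., $a = 0$) --- the borderline case of the original inequality.
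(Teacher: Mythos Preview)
Your proof is correct and follows essentially the same approach as the paper: both use the Taylor expansion of $t\mapsto t^p/p$ to extract the quadratic remainder $\tfrac{p-1}{2}b^{p-2}(b-a)_+^2$, and both then reduce to the auxiliary inequality $(b-a)_+^p \le \tfrac{p}{2}\,b^{p-2}(b-a)_+^2 + \tfrac{2-p}{2}\,b^p$. The only cosmetic differences are that the paper handles the two cases $b\lessgtr a$ uniformly via the notation $(b-a)_+$ (your case split is equivalent), and that the paper obtains the auxiliary inequality as an instance of Young's inequality with exponents $2/p$ and $2/(2-p)$, whereas you prove the rescaled form $s^p\le \tfrac{2-p}{2}+\tfrac{p}{2}s^2$ by a monotonicity argument---these are the same inequality proved two ways.
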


\begin{proof}[Proof of lemma \ref{lemma:alg_lemma_lorentz}]
	Considering the functions $t \in [0,1] \mapsto (a(1-t) + bt)^p$ we have by the integral form of the Taylor expansion
	\begin{equation}\label{eq:eq_alg_I}
		b^p = a^p +  pa^{p - 1}(b - a) + p (p - 1) \int_0^1\frac{(b-a)^2(1 - t)}{|(1 - t)a + tb|^{2 - p}}\d t.
	\end{equation}
	We check that 
	\begin{equation}\label{eq:eq_alg_II}
		\frac{(b-a)^2}{|(1 - t)a + tb|^{2 - p}} \geq \frac{(b-a)_+^2}{b^{2 - p}} \text{ and } \int_0^1 (1 - t) \d t = \frac{1}{2}.
	\end{equation}
	By Young's inequality, we have
	\begin{equation}\label{eq:eq_alg_III}
		(b- a)_+^p = \frac{(b- a)_+^p }{b^{\frac{p(2 - p)}{2}}}b^{\frac{p(2 - p)}{2}} \leq \frac{p}{2}\frac{(b - a)_+^2}{b^{2 - p}} +\frac{(2 - p)}{2}b^p
	\end{equation}
	as  \(1 = 1/(2/p) + 1/(2/(2-p)).\)
	Combining \eqref{eq:eq_alg_I}--\eqref{eq:eq_alg_III} multiplied by $(p - 1)/p$, we get the conclusion.
\end{proof}

\begin{proposition}\label{prop:intermediateintegraloncircleestimate}
	Let $a \in \R^2$ and $0 <\rho < \sigma$.
	If $u \in W^{1,2}(\B(a;\sigma)\setminus\B(a;\rho),\manifold N)$, then we have for every $r \in (\rho,\sigma)$ and for any $0 \leq \eta \leq  \lambda(\Trace_{\partial \B(a;\rho)}u)$ 
	\begin{equation}\label{eq:propgoingtolorentz}
	\frac{\eta^p}{p(2\pi r)^{p - 1}} + \Big (1 - \frac{1}{p}\Big ) \int_{\partial\B(a;r)}\Big (|Du| - \frac{\eta}{2\pi r}\Big )_+^p \leq 	\frac{(3 - p)p}{2}\int_{\partial\B(a;r)}\frac{|Du|^p}{p}.
	\end{equation}
\end{proposition}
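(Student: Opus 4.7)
The plan is to apply the pointwise algebraic inequality of lemma \ref{lemma:alg_lemma_lorentz} almost everywhere on the circle $\partial \B(a;r)$ and then integrate, discarding a linear remainder that I will show to be nonnegative thanks to the homotopy invariance of $\lambda$. Concretely, I would take $\alpha = \eta/(2\pi r) \geq 0$ and $\beta = |Du(x)| \geq 0$ for $x \in \partial \B(a;r)$, which sit in the hypotheses of lemma \ref{lemma:alg_lemma_lorentz} since $p \in (1,2) \subset [1,2]$. This provides the almost-everywhere bound
\begin{equation*}
 \frac{1}{p}\Bigl(\frac{\eta}{2\pi r}\Bigr)^p + \Bigl(\frac{\eta}{2\pi r}\Bigr)^{p-1}\Bigl(|Du| - \frac{\eta}{2\pi r}\Bigr) + \Bigl(1 - \frac{1}{p}\Bigr)\Bigl(|Du| - \frac{\eta}{2\pi r}\Bigr)_+^p \leq \frac{3-p}{2}|Du|^p.
\end{equation*}

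Next I would integrate over $\partial \B(a;r)$, using $\HH^1(\partial\B(a;r)) = 2\pi r$. The first term contributes exactly $\eta^p/(p(2\pi r)^{p-1})$, the third term matches the statement, and the right-hand side yields $\frac{(3-p)p}{2}\int_{\partial\B(a;r)} |Du|^p/p$ after absorbing the factor $p$. It then suffices to show that the integrated linear term
\begin{equation*}
 \Bigl(\frac{\eta}{2\pi r}\Bigr)^{p-1}\biggl(\int_{\partial\B(a;r)}|Du| - \eta\biggr)
\end{equation*}
is nonnegative, after which it can be discarded to finish the proof.

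The main (and only) obstacle is this nonnegativity. Since the Frobenius norm dominates the tangential derivative, $\int_{\partial\B(a;r)}|Du| \geq \int_{\partial\B(a;r)}|u'|$. The definition \eqref{eq_aeShi9aureeshug4unoh9cio} of $\lambda$ (or equivalently lemma \ref{lemma:loc_lower_bound_circle} with $p = 1$) bounds this latter integral below by $\lambda(\Trace_{\partial\B(a;r)}u)$, as the map $\Trace_{\partial\B(a;r)}u$ is admissible for its own length class. Since $u \in W^{1,2}(\B(a;\sigma)\setminus \B(a;\rho),\manifold N)$ provides a $\VMO$-homotopy between the two traces $\Trace_{\partial \B(a;r)}u$ and $\Trace_{\partial \B(a;\rho)}u$, the homotopy invariance of $\lambda$ discussed after \eqref{eq_aeShi9aureeshug4unoh9cio} yields
\begin{equation*}
 \lambda(\Trace_{\partial \B(a;r)}u) = \lambda(\Trace_{\partial \B(a;\rho)}u) \geq \eta,
\end{equation*}
by the assumption on $\eta$. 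Combining these facts shows the linear term is nonnegative, and dropping it delivers \eqref{eq:propgoingtolorentz}. Everything else reduces to a direct integration, so the whole argument is short once the length comparison across concentric circles is in place.
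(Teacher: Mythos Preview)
Your proof is correct and follows exactly the same route as the paper: apply lemma~\ref{lemma:alg_lemma_lorentz} pointwise with $a=\eta/(2\pi r)$ and $b=|Du|$, integrate on $\partial\B(a;r)$, and discard the linear term using $\int_{\partial\B(a;r)}|Du|\ge\lambda(\Trace_{\partial\B(a;\rho)}u)\ge\eta$ via lemma~\ref{lemma:loc_lower_bound_circle} and homotopy invariance. Your write-up is in fact slightly more explicit than the paper's (you spell out the Frobenius-versus-tangential step and the homotopy invariance), but the argument is identical.
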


\begin{proof}[Proof of proposition \ref{prop:intermediateintegraloncircleestimate}]
	From  lemma \ref{lemma:alg_lemma_lorentz}, we get, setting $b = |Du|$,
	\begin{multline*}
		\frac{a^p}{p}(2\pi r) + a^{p - 1}\int_{\partial\B(a;r)}(|Du| - a) +  \Big (1 - \frac{1}{p}\Big ) \int_{\partial\B(a;r)}\big (|Du| - a\big )_+^p \\\leq 	\frac{(3 - p)p}{2}\int_{\partial\B(a;r)}\frac{|Du|^p}{p}.
	\end{multline*}
	We now observe by the topological lower bound on the Sobolev energy on circles (lemma \ref{lemma:loc_lower_bound_circle}) that
	\begin{equation*}
		\int_{\partial\B(a;r)}|Du| \geq \lambda(\Trace_{\mathbb S^1}u(a+\rho\cdot)) \geq \eta = \int_{\partial\B(a;r)} \frac{\eta}{2\pi r}.
	\end{equation*}
	Taking $a = \eta/(2\pi r)$, we get the conclusion.
\end{proof}

\begin{proof}[Proof of proposition \ref{prop:mixedlorentz}]
	From proposition \ref{prop:circleconstruction} with $\delta = \dist(\{a_i\}_{i = 1,\dots,N},\partial\Omega)$, we get the existence of a collection of circles $\mathcal S$ considered as a disjoint collection $\collection A$ of annuli. We write $\collection A = \{\A(c_i;\underline{r}_i,\overline{r}_i)\}_{1,\dots,N}$. Next, we define for all $x \in \Omega$
	\begin{equation}\label{eq:theFunctU}
		U(x) \doteq \sup \left \{ \frac{\Big ((2\pi)^{p' - 1}p'\,   \Esg^{1,p'}(\Trace_{\partial \B(a;r)}u)\Big )^{\frac{1}{p'}}}{2\pi r} : \mathbb S^1(a;r) \in \collection S, x \in \B(a;r)\right \}
	\end{equation}
with the convention $\sup \Oset = 0$. 
If $x \in \A(c_i;\underline{r}_i, \overline{r}^i)$ then by definition~\ref{def:esg}
\begin{equation*}
	U(x) = \frac{\Big ((2\pi)^{p' - 1}p' \,\Esg^{1, p'}(\Trace_{\partial \B(c_i;\underline{r}_i)}u)\Big )^{\frac{1}{p'}}}{2\pi |x - c_i|}
	\le \frac{\lambda(\Trace_{\partial \B(a_i; \underline{r}_i)}u)}{2 \pi \abs{x - c_i}}.
\end{equation*}

	Integrating proposition \ref{prop:intermediateintegraloncircleestimate} over all the annuli $\mathbb S(a;\rho) \in \collection S$  of proposition \ref{prop:circleconstruction}, we get
	\begin{multline*}\label{eq:lorentzI_prep}
		\frac{(3-p)p}{2}\int_{\cup\collection S}\frac{|Du|^p}{p}\\
		\geq \frac{(2 \pi)^{2 - p}}{p^{2 - p}(p - 1)^{p - 1}} \sum_{i = 1}^{N}\int_{\underline{r}_i}^{\overline{r}^i}\Esg^{1,p'}(u,\mathbb S^1(c_i;r))^{p - 1} \frac{\d r}{r^{p - 1}} + \frac{p - 1}{p}\int_{\cup\collection S}(|Du| - U)_+^p
	\end{multline*}
	We define \(\collection B \doteq \{\B_i : i = 1,\dots,l\}\) where the disks are those of assertion \ref{item:sum_prop} of proposition \ref{prop:circleconstruction}. 
	By construction the disks are disjoint and by the choice of the circles of proposition \ref{prop:circleconstruction} and by adding the integral of  $|Du|^p$ over $\bigcup\collection B\setminus\bigcup \collection S$, we get the announced result \eqref{eq:lorentzI}.
	
	\emph{Weak-$L^p$ estimate \eqref{eq:weak-Lp}.}
	By construction of $U$,
	\begin{equation*}
		\{U > t \} = \bigcup \collection B
	\end{equation*}
	where the union runs over a finite family $\collection B$ of pairwise disjoint disks $\B(a;\rho)$ such that 
	\begin{equation}
	\label{eq_laeph4Oosah5theitha6taej}
	  \rho t \leq \frac{((2\pi)^{p' - 1}p' \, \Esg^{1,p'}(\mathbb S^1(a;\rho))^{\frac{1}{p'}}}{2 \pi}.	 
	\end{equation}
	Moreover, one has 
	\begin{equation}
	\label{eq_waingairi6bou9aeti8OiJ0v}
		\rho^{2 - p}\Esg^{1,p'}(\mathbb S^1(a;\rho))^{p - 1} \leq (2 - p) \int_{\B(a;\rho)}\frac{|Du|^p}{p}.
	\end{equation}
	Hence,
	\begin{align*}
		t^p |\{U > t \}| &\leq \pi \sum_{\B(a;\rho) \in \collection B} t^p \rho^2 \\
		&\leq \frac{((2\pi)^{p' - 1}p')^{p - 1} \pi}{(2 \pi)^p} \sum_{\B(a;\rho) \in \collection B} \rho^{2-p} \Esg^{1,p'}(\mathbb S^1(a;\rho))^{p - 1} \\
		&\leq \frac{(2 - p) p^{p - 1}}{2(p - 1)^{p - 1}(2 \pi)^{2 - p}}
		\int_{\Omega}\frac{|Du|^p}{p}.
	\end{align*}
	
	\emph{Perimeter estimate.}
	Letting \(\collection{B}\) as above, we have 
	\begin{align}\label{item:bvbvbv}
		\notag \mathrm{Per}(\{U > t\}) &= \sum_{\B (a;\rho) \in \collection B} 2 \pi \rho \\&= 2\pi \sum_{\B(a;\rho) \in \collection B} \rho^{2 - p}\Esg^{1,p'}(\mathbb S^1(a;\rho))^{p - 1}\Big (\frac{\rho}{ \Esg^{1,p'}(\mathbb S^1(a;\rho))}\Big )^{p - 1}.
	\end{align}
	On the other hand, by \eqref{eq_laeph4Oosah5theitha6taej} and by \eqref{eq_ius6Cei3Tahwae2ahpoh5Iph},
	\begin{equation*}
		\frac{\rho}{ \Esg^{1,p'}(\B(a;\rho))}\le \frac{((2\pi)^{p' - 1}p')^{\frac{1}{p'}}}{2 \pi t \Esg^{1,p'}(\B(a;\rho))^{\frac{1}{p}}} 
		\leq \frac{p'(2\pi)^{p' - 2}}{t \sysN^\frac{1}{p - 1}}
	\end{equation*}
and thus \eqref{item:bvbvbv} becomes, in view of \eqref{eq_waingairi6bou9aeti8OiJ0v},
	\begin{equation*}
		\mathrm{Per}(\{U > t\}) \leq \frac{(2 - p)(2\pi)^{3 - p} (p')^{p - 1}}{t^{p - 1}\sysN}  \int_{\Omega}\frac{|Du|^p}{p}. \qedhere
	\end{equation*}
\end{proof}

\section{Convergence of bounded sequences}\label{section:conv_bnd_seqs}
    
We establish a compactness result which will be applied to sequences of $p$-harmonic mappings as \(p \nearrow 2\).

\begin{proposition}[Compactness theorem]\label{prop:compactnessthm}
	Let $\Omega\subset \R^2$ be a bounded Lipschitz domain, $\manifold N$ a Riemannian manifold with $\sysN> 0$ and $(p_n)_n$ be a sequence in the interval \((1, 2)\) converging to $2$. Let $(u_n)_n$ be a sequence of maps such that $u_n \in W^{1,p_n}(\Omega, \manifold N)$, $\Trace_{\partial \Omega}u_n = \Trace_{\partial \Omega}u_0 \in W^{\sfrac{1}{2},2}(\partial \Omega, \manifold N)$ for all $n$. If
	\begin{equation}\label{eq:upperBoundCompactnessthm}
		\sup_{n}\left [ \int_\Omega \frac{|Du_n|^{p_n}}{p_n} - \frac{\Esg^{1,2}(\Trace_{\partial \Omega}u_0)}{2 - p_n}\right ] < +\infty,
	\end{equation}
	then, up to some unrelabeled subsequence, $(u_n)_n$ converges almost everywhere to some measurable $u_* : \Omega \to \mathcal N$.\\	
	Moreover,	
	\begin{enumerate}[(i)]
		\item \label{item:firsitonfg} the map $u_*\in W^{1,2}_\ren(\Omega,\manifold N)$ satisfies $\Trace_{\partial \Omega}u_* = \Trace_{\partial \Omega}u_0$. Calling $\{a_i\}_{i = 1,\dots,\kappa} \subset \Omega$ the associated singular points of the renormalizable map $u_*$, we have that for all $\rho \in (0,\rho(\{a_i\}_{i = 1,\dots,k}))$, \((\Trace_{\partial \B(a_i;\rho)}u_*(a_i + \rho \cdot))_{i = 1,\dots,\kappa}\) is a minimal topological resolution of $\Trace_{\partial \Omega}u_*$ \emph{i.e.} 
		\begin{equation*}
			\Esg^{1,2}(\Tr_{\partial\Omega}u_*)= \sum_{i = 1}^\kappa \frac{\lambda([u_*, a_i])^2}{4\pi},
		\end{equation*} 
		and, moreover, $\lambda([u_*,a_i])>0$ for all $i = 1,\dots,\kappa$,
	
		\item \label{item:boundedseqcompact} for all $\rho \in (0,\rho(\{a_i\}_{i = 1,\dots,k}))$, $(Du_n)_n$ is uniformly integrable on $\Omega\setminus\bigcup_{i = 1}^\kappa\B(a_i;\rho)$ and for all $q \in [1,\infty)$, 
	\begin{equation*}
	\varlimsup_{n\to\infty}	\int_{\Omega\setminus\bigcup_{i = 1}^\kappa\B(a_i;\rho)}|u_n|^q + \int_{\Omega\setminus\bigcup_{i = 1}^\kappa\B(a_i;\rho)}|Du_n|^{p_n}< +\infty.
	\end{equation*}

	\item \label{item:propnarrow} narrowly as measures on $\Omega$, \begin{equation*}
	(2 - p_n)\frac{|Du_n|^{p_n}}{p_n}  \xrightarrow{n \to\infty} \sum_{i = 1}^\kappa \frac{\lambda([u_*,a_i])^2}{4\pi}\delta_{a_i}
	\end{equation*}

		\item one has 
		\label{item::scicompactnesslog}
			\begin{equation*}\label{eq:scicompactnesslog}
			\begin{split}
			\varliminf_{n \to \infty} \int_{ \Omega } \frac{|Du_n|^{p_n}}{p_n} -\frac{\Esg^{1,2}(\Trace_{\partial \Omega}u_n)}{2 - p_n}
			&\geq \Eren^{1,2}(u_*)   + 	\mathrm{H}([u_*,a_i])_{i = 1,\dots,\kappa}\\
			&\geq \mathcal{E}^{1, 2}_{\mathrm{top},g, [u_*, a_1], \dotsc, [u_*, a_k]} (a_1, \dotsc, a_k) + \mathrm{H}([u_*,a_i])_{i = 1,\dots,\kappa}.
			\end{split}
			\end{equation*}
		
		\item \label{item:lsc}
		for each $\rho \in (0,\rho(\{a_i\}_{i = 1,\dots,k}))$,
			\begin{equation}\label{eq:presdessingu} 
			\varliminf_{n \to \infty} \int_{\B(a_i;\rho)}\frac{|Du_n|^{p_n}}{p_n} - \frac{\lambda([u_*,a_i])^{p_n}}{(2\pi)^{p_n}p_n|\cdot - a_i|^{p_n - 1}  } 
			\ge 
				\int_{\B(a_i;\rho)} \frac{|Du_*|^2}{2}\ - \frac{\lambda([u_*,a_i])^{2}}{8\pi^2 |\cdot  - a_i|}.
			\end{equation}

\end{enumerate}
\end{proposition}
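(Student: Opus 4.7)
The plan is to combine the expansion-of-disks construction of proposition \ref{prop:circleconstruction} with the Marcinkiewicz estimate of proposition \ref{prop:mixedlorentz} to localize the gradient concentration into finitely many disks, to extract the singular points $a_i$ as accumulation centers of those disks, and then to run a Rellich compactness argument away from the $a_i$'s to obtain a.e.\ convergence. The sharp lower bound \ref{item::scicompactnesslog} is then extracted by integrating the circle inequality of proposition \ref{prop:intermediateintegraloncircleestimate} and identifying the additive $\mathrm H$-term via the Taylor expansion \eqref{eq:technicallimit} at $p=2$.

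Concretely, first I would apply proposition \ref{prop:mixedlorentz} to each $u_n$ with a parameter $\delta_k \searrow 0$, producing pairwise disjoint disk collections $\collection{B}_{n,k}$ of total radius $\delta_k$ together with Marcinkiewicz companions $U_{n,k}$. The hypothesis \eqref{eq:upperBoundCompactnessthm} immediately yields $\varlimsup_{n\to\infty}(2-p_n)\int_\Omega |Du_n|^{p_n}/p_n \le \Esg^{1,2}(g)$, so that \eqref{eq:weak-Lp}--\eqref{item:perstuff} provide uniform control on the $U_{n,k}$. A diagonal extraction in $n$ and $k$, using the perimeter bound, concentrates the disks around at most $\kappa \le 4\pi \Esg^{1,2}(g)/\sysN^2$ points $a_1,\dotsc,a_\kappa \in \overline{\Omega}$ by \eqref{eq_ius6Cei3Tahwae2ahpoh5Iph}. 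Outside any fixed neighborhood of $\{a_i\}_{i=1,\dots,\kappa}$, the estimate \eqref{eq:lorentzI} furnishes a uniform bound on $\int |Du_n|^{p_n}$, and hence by Hölder's inequality a uniform $L^q$-bound for every $q < 2$; Rellich--Kondrachov and a further diagonal extraction produce $u_n \to u_*$ almost everywhere, with $u_* \in W^{1,q}_\mathrm{loc}(\Omega \setminus \{a_i\}_{i = 1,\dots,\kappa},\manifold N)$ and $\Tr_{\partial \Omega} u_* = g$, establishing \ref{item:boundedseqcompact}.

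Passing to the limit $n \to \infty$ and then $p_n \nearrow 2$ in the infinitesimal lower bound \eqref{eq:loc_lower_bound_2} applied on small disks around each $a_i$ yields $\sum_{i=1}^\kappa \lambda([u_*,a_i])^2/(4\pi) \le \Esg^{1,2}(g)$; conversely, since $([u_*,a_i])_i$ is itself a topological resolution of $g$ realized by $u_*$, the reverse inequality holds by definition of $\Esg^{1,2}$, forcing equality and proving both minimality and the strict positivity of the $\lambda([u_*,a_i])$ in \ref{item:firsitonfg}. Renormalizability of $u_*$ follows from the same computation combined with proposition \ref{prop:polarCoordEren}. The narrow convergence \ref{item:propnarrow} is then a mass-accounting consequence: the total mass converges to $\Esg^{1,2}(g)$ by what was just shown, the local lower bound forces at least $\lambda([u_*,a_i])^2/(4\pi)$ to concentrate at each $a_i$, and the uniform integrability away from $\{a_i\}$ excludes any residual mass.

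The main obstacle is the sharp lower bound \ref{item::scicompactnesslog}: one must extract a finite $\mathrm H$-correction from the difference of two individually diverging quantities $\int |Du_n|^{p_n}/p_n$ and $\Esg^{1,2}(g)/(2-p_n)$. My plan is to split $\Omega$ into $\Omega \setminus \bigcup_i \B(a_i;\rho)$ and the small disks $\B(a_i;\rho)$. On the outer region, weak $L^q$ lower semicontinuity for any $q<2$ combined with the dominated convergence argument of proposition \ref{prop:limitErenpToEren} passes to the $2$-energy of $u_*$. On each small disk, integrating proposition \ref{prop:intermediateintegraloncircleestimate} in $r \in (\sigma,\rho)$ with $\eta$ converging to $\lambda([u_*,a_i])$ and invoking atomicity (lemma \ref{lemma:atomicityinminialtop}) to identify $\Esg^{1,p_n'}(\Tr_{\partial \B(a_i;r)}u_n)$ in the limit produces an expression to which the expansion \eqref{eq:technicallimit} applies; the leftover $(\Esg^{1,p_n}-\Esg^{1,2})/(2-p_n)$ then reproduces exactly the quantity $\mathrm H$ defined in \eqref{eq:defofH}. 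The localized statement \ref{item:lsc} follows from the same argument restricted to one disk, using the second inequality in lemma \ref{lemma:usefulmajoration} as a Lebesgue dominant for the $p_n$-integrands, and the chain with $\mathcal E^{1,2}_{\mathrm{top},\dots}$ in \ref{item::scicompactnesslog} is then proposition \ref{proposition_ren_map_to_pts}.
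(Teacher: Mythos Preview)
Your plan is broadly aligned with the paper's proof, but there are two concrete gaps.

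\medskip

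\textbf{The density/extension reduction is missing.} You apply proposition~\ref{prop:mixedlorentz} and the disk construction directly to $u_n \in W^{1,p_n}(\Omega,\manifold N)$, but both proposition~\ref{prop:circleconstruction} and proposition~\ref{prop:mixedlorentz} require $u_n \in W^{1,2}_{\mathrm{loc}}(\Omega\setminus\{\text{finite set}\},\manifold N)$, which is not part of the hypothesis. The paper first extends each $u_n$ across $\partial\Omega$ to a larger domain $\Omega'$ using proposition~\ref{prop:non-surjectivity_of_the_trace}, so that the $W^{1,2}$ structure near $\partial\Omega'$ is inherited from the fixed extension $U$, and then approximates in $W^{1,p_n}$ by maps smooth except at finitely many points via proposition~\ref{prop:density_of_the_R_class}. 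Only after this reduction can the disk machinery be applied; the general case is recovered at the end by a density argument (\S\ref{subsub:density}). This reduction is also what guarantees $\dist(\{a_i\},\partial\Omega)>0$, which you leave open when you write $a_i\in\overline\Omega$.

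\medskip

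\textbf{For item~\ref{item:lsc}, invoking lemma~\ref{lemma:usefulmajoration} as a ``Lebesgue dominant'' points in the wrong direction.} Lemma~\ref{lemma:usefulmajoration} bounds the $p$-deficit of a \emph{single} map from above by its $2$-deficit; it cannot produce a lower bound on $\varliminf_n \int_{\B(a_i;\rho)}|Du_n|^{p_n}/p_n-\lambda^{p_n}/(\dots)$. The paper's mechanism is Fatou's lemma in the radial variable: once the narrow-convergence step has identified, for almost every $r$, that $\Tr_{\partial\B(a_i;r)}u_n$ is eventually homotopic to $[u_*,a_i]$ (via lemma~\ref{lemma:weakconvergenceimpliesfatouonaecicle} and the uniform convergence on circles it implies), lemma~\ref{lemma:loc_lower_bound_circle} makes the integrand $\int_{\partial\B(a_i;r)}|Du_n|^{p_n}/p_n-\lambda([u_*,a_i])^{p_n}/(p_n(2\pi r)^{p_n-1})$ nonnegative, and Fatou applies. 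The same Fatou-on-circles device (corollary~\ref{coro:fatouannulie}) drives item~\ref{item::scicompactnesslog} as well: one combines it with the integral representation of $\Eren^{1,2}$ (proposition~\ref{prop:polarCoordEren}) and the Taylor expansion~\eqref{eq:technicallimit}. Your alternative route through proposition~\ref{prop:intermediateintegraloncircleestimate} would carry the extraneous factor $(3-p_n)p_n/2$ and does not obviously deliver the sharp constant in $\mathrm H$.

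\medskip

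A smaller point: for item~\ref{item:firsitonfg}, the inequality $\sum_i\lambda([u_*,a_i])^2/(4\pi)\le\Esg^{1,2}(g)$ is obtained in the paper by applying the annulus lower bound~\eqref{eq:loc_lower_bound_1_p2} (packaged as lemma~\ref{lemma:ainomega}) to $u_*$ itself, after the energy bound~\eqref{eq:bigmajoration} has been transferred to $u_*$---not by passing to the limit in~\eqref{eq:loc_lower_bound_2} for $u_n$, which does not see $\lambda([u_*,a_i])$.
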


 Recall the quantity $\mathrm{H}$ was defined in proposition \ref{prop:upperBound} and the non-intersection radius $\rho(\{a_i\}_{i = 1,\dots,k})$ was defined in \eqref{eq:non-intersecting_radius}.

By item \ref{item:boundedseqcompact}, we obtain that for all $\rho > 0$ and $p\in(1,2)$ $Du_n \to Du_*$ weakly in $L^p(\Omega\setminus\bigcup_{i = 1}^\kappa\B(a_i;\rho), \R^{\nu \times 2})$. This implies that $Du_n \to Du_*$  as distributions on $\Omega \setminus \{a_i\}_{i = 1,\dots,\kappa}$.

The narrow convergence in \ref{item:propnarrow} is equivalent (see \cite[Proposition 4.2.4]{attouch2014variational}) to the fact that for any bounded and continous function $\xi : \Omega \to \R$, \[(2 - p_n)\int_\Omega \xi \d |Du_n| \xrightarrow{n \to\infty} \sum_{i = 1}^\kappa \frac{\lambda([u_*,a_i])^2}{4\pi}\xi(a_i).   \] Narrow convergence can also be understood as the combination of weak convergence of measures  and the condition that the total mass converges : \[(2 - p_n)|Du_n|(\Omega) \xrightarrow{n \to\infty} \sum_{i = 1}^\kappa \frac{\lambda([u_*,a_i])^2}{4\pi}\delta_{a_i}(\Omega) = \Esg^{1,2}(\Tr_{\partial \Omega}u_*).\]

\subsection{Convergence of \texorpdfstring{$L^p$}{Lᵖ}-bounded sequences when \texorpdfstring{$p\nearrow 2$}{p↗2}}\label{subsec:t5dsgb5a}
In this section \ref{subsec:t5dsgb5a}, we prove a Fatou type result (see \eqref{eq:fatouWeak}) and a compactess result (see  proposition \ref{prop:compactnessofboundedseqs}) for sequence bounded in the sense of  \eqref{eq:boundedLptrhoif}.
\begin{proposition}\label{coro:fatouannulie} Let $\Omega \subset \R^2$ be an open and bounded set, $(p_n)_n$ be an increasing sequence converging to $2$ and $u_n \to u$ a.e. be a sequence of measurable maps in $W^{1,{p_n}}(\Omega)$. If
	\begin{equation}\label{eq:boundedLptrhoif}
		\sup_n \int_{\Omega}|Du_n|^{p_n} < +\infty,
	\end{equation}
	then
	\begin{equation}\label{eq:fatouWeak}
		\int_{\Omega}|Du_*|^{2} \leq \varliminf_{n \to \infty}\int_{\Omega}|Du_n|^{p_n}.
	\end{equation}
	Moreover, for every $a \in \Omega$ and almost every $\rho \in (0,\dist(a,\partial \Omega))$, 
	\begin{equation*}\int_{\partial \B(a;r)}|Du_*|^{2}\d \HH^1 \leq \varliminf_{n \to \infty}\int_{\partial \B(a;r)}|Du_n|^{p_n}\d \HH^1.
	\end{equation*}
\end{proposition}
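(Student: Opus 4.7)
\medskip

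The plan is to combine Hölder interpolation, weak compactness in $L^q$ for $q<2$, and a Fatou-type argument as $q \nearrow 2$. Fix $q \in [1, 2)$. For $n$ large enough so that $q \leq p_n$, Hölder's inequality with exponents $p_n/q$ and $p_n/(p_n-q)$ yields
\begin{equation*}
\int_\Omega |Du_n|^q \leq |\Omega|^{1 - q/p_n} \Bigl(\int_\Omega |Du_n|^{p_n}\Bigr)^{q/p_n},
\end{equation*}
so the assumption \eqref{eq:boundedLptrhoif} makes $(Du_n)_n$ uniformly bounded in $L^q(\Omega)$ for every $q \in [1, 2)$. By a diagonal extraction along a dense sequence of exponents $q_k \nearrow 2$, one obtains a (non-relabeled) subsequence along which $Du_n \rightharpoonup V$ weakly in $L^q(\Omega)$ for every $q \in [1, 2)$.

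The next step is the identification $V = Du_*$. Using the truncation $T_k(t) = \max(-k, \min(k, t))$ componentwise, dominated convergence gives $T_k \circ u_n \to T_k \circ u_*$ in $L^1_{\mathrm{loc}}(\Omega)$, so that $D(T_k \circ u_n) \to D(T_k \circ u_*)$ distributionally; since $|D(T_k \circ u_n)| \leq |Du_n|$, a weak-$L^q$ compactness argument combined with the strong convergence $\chi_{\{|u_n| \leq k\}} \to \chi_{\{|u_*| \leq k\}}$ in $L^r$ (valid for a.e.\ $k$) identifies $V = Du_*$ on $\{|u_*| \leq k\}$; letting $k \to \infty$ and using that $u_*$ is finite a.e.\ gives $V = Du_*$ a.e.\ on $\Omega$. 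In the main applications the map $u_n$ takes values in a compact manifold and is already uniformly bounded in $L^\infty$, which makes this identification immediate by dominated convergence.

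Now weak $L^q$ lower semicontinuity of the $L^q$-norm together with the interpolation inequality yields
\begin{equation*}
\int_\Omega |Du_*|^q \leq \varliminf_{n \to \infty} \int_\Omega |Du_n|^q \leq |\Omega|^{1 - q/2}\Bigl(\varliminf_{n \to \infty} \int_\Omega |Du_n|^{p_n}\Bigr)^{q/2}.
\end{equation*}
Since $|Du_*(x)|^q \to |Du_*(x)|^2$ pointwise as $q \nearrow 2$, Fatou's lemma gives $\int_\Omega |Du_*|^2 \leq \varliminf_{q \nearrow 2} \int_\Omega |Du_*|^q$, and the right-hand side above converges to $\varliminf_n \int_\Omega |Du_n|^{p_n}$ as $q \nearrow 2$, proving \eqref{eq:fatouWeak}. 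The standard ``subsequence of any subsequence'' argument promotes the bound from the extracted subsequence to the original sequence.

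For the localized statement, Fubini's theorem ensures $u_n \to u_*$ a.e.\ on the circle $\partial\B(a; r)$ for a.e.\ $r$, and Fatou applied in the radial variable gives
\begin{equation*}
\int_0^R \varliminf_{n \to \infty} \int_{\partial \B(a;r)} |Du_n|^{p_n}\, d\HH^1\, dr \leq \varliminf_{n \to \infty} \int_\Omega |Du_n|^{p_n} < \infty,
\end{equation*}
so for a.e.\ $r$ the radial liminf is finite; extracting a subsequence along which it is attained as a finite limit, the identical one-dimensional interpolation-plus-Fatou argument (with $\partial \B(a; r)$ in place of $\Omega$ and $2\pi r$ in place of $|\Omega|$) yields the circle bound. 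The only technical nuance is Step~2 above: without a target constraint the a.e.\ limit $u_*$ need not be integrable, and the truncation identification is the point that requires care; everything else is a direct interpolation and lower-semicontinuity argument.
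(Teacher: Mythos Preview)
Your proof is correct and follows essentially the same approach as the paper's: fix $q<2$, use H\"older to bound $\int |Du_n|^q$ by a power of $\int |Du_n|^{p_n}$, apply weak $L^q$ lower semicontinuity, then let $q\nearrow 2$; the circle version disintegrates this via Fubini--Fatou (the paper's Lemma~\ref{lemma:weakconvergenceimpliesfatouonaecicle}). Your treatment is in fact more explicit about identifying the weak limit $V=Du_*$ via truncation, a point the paper leaves implicit by tacitly relying on the manifold-valued setting where the $u_n$ are uniformly bounded.
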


The second part of proposition \ref{coro:fatouannulie} is based on the next lemma \ref{lemma:weakconvergenceimpliesfatouonaecicle} that disintegrates on circles the lower semi-continuity statement that weak convergence implies. Its proof is well known, based on Fubini-Tonneli theorem and Fatou's lemma and we shall omit it.
\begin{lemma}\label{lemma:weakconvergenceimpliesfatouonaecicle}
	Let $\Omega \subset \R^2$ be an open and bounded set, $p \in (1,\infty)$. Let $u_n \to u$ weakly in $W^{1,p}(\Omega)$. Then, for every $a \in \Omega$ and almost every $\rho \in (0,\dist(a,\partial \Omega))$, 
	\begin{equation*}
		\int_{\partial \B(a;\rho)}|Du|^p \d \HH^1 \leq \varliminf_{n\to \infty}\int_{\partial \B(a;\rho)}|Du_n|^p \d \HH^1 
	\end{equation*}
	and there exists a subsequence $n'$ depending on $a$ and $\rho$  such that $u_{n'} \to u$ $\HH^1$-a.e  converges and $u_{n'}$ is bounded in $W^{1,p}(\partial \B(a;\rho))$.
\end{lemma}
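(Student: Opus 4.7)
The plan is to transfer the global $W^{1,p}(\Omega)$ bound to an $L^1$ control of the restricted energies on concentric spheres via Fubini--Tonelli in polar coordinates, and then combine Fatou's lemma on $(0, R)$ with $R \doteq \dist(a, \partial \Omega)$ with Rellich--Kondrachov compactness at two different scales: once on $\Omega$ itself and once on each sphere $\partial \B(a;\rho)$.

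First, by the uniform boundedness principle applied to the weakly convergent sequence, $C \doteq \sup_n \|u_n\|_{W^{1,p}(\Omega)} < \infty$. Setting $G_n(\rho) \doteq \int_{\partial \B(a;\rho)} |Du_n|^p \d \HH^{m-1}$, Fubini--Tonelli yields $\int_0^R G_n(\rho) \d \rho \leq C^p$, whence Fatou's lemma forces $\liminf_n G_n(\rho) < \infty$ for almost every $\rho \in (0, R)$. In parallel, the Rellich--Kondrachov theorem on the bounded Lipschitz set $\Omega$ yields $u_n \to u$ strongly in $L^p(\Omega)$; applying Fubini once more to $\rho \mapsto \|u_n - u\|_{L^p(\partial \B(a;\rho))}^p$ and passing to a subsequence, I obtain $u_n \to u$ in $L^p(\partial \B(a;\rho))$ for almost every $\rho$.

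Fix now $\rho$ satisfying both of the previous properties, and extract a subsequence $(u_{n'})$, depending on $\rho$, such that $G_{n'}(\rho) \to \liminf_n G_n(\rho)$. This sequence is then bounded in $W^{1,p}(\partial \B(a;\rho))$; by reflexivity, a further subsequence converges weakly in $W^{1,p}(\partial \B(a;\rho))$, and the weak limit must coincide with $u|_{\partial \B(a;\rho)}$ by uniqueness of $L^p$ limits. Weak lower semicontinuity of the $L^p$ norm then delivers the desired inequality $\int_{\partial \B(a;\rho)} |Du|^p \d \HH^{m-1} \le \liminf_n G_n(\rho)$. For the almost-everywhere convergence statement, apply the compact Rellich embedding $W^{1,p}(\partial \B(a;\rho)) \hookrightarrow L^p(\partial \B(a;\rho))$ to the $W^{1,p}$-bounded subsequence and extract a further subsequence converging $\HH^{m-1}$-almost everywhere on $\partial \B(a;\rho)$.

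The only real obstacle is the bookkeeping of the subsequence's dependence on $\rho$, which is harmless since the conclusion explicitly allows for it: one performs a joint diagonal extraction to simultaneously realize $G_{n'}(\rho) \to \liminf_n G_n(\rho)$ and the strong $L^p$ convergence of $u_{n'}|_{\partial \B(a;\rho)}$. There is no deeper difficulty beyond the interplay of the two Fubini applications with Rellich--Kondrachov at two scales.
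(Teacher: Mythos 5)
The paper omits the proof of this lemma entirely (it only says it is "based on Fubini--Tonelli and Fatou"), so there is nothing to compare against; your scaffolding — uniform boundedness, Fubini--Tonelli plus Fatou to get $\varliminf_n \int_{\partial\B(a;\rho)}|Du_n|^p\d\HH^{m-1}<\infty$ for a.e.\ $\rho$, Rellich and a second Fubini to get strong $L^p$ convergence of the restrictions on a.e.\ sphere, and a diagonal extraction — is the right toolbox, and it does yield the second conclusion (a subsequence bounded in $W^{1,p}(\partial\B(a;\rho))$ and converging $\HH^{m-1}$-a.e.). A cosmetic point: $\Omega$ is only assumed open and bounded, so invoke Rellich on the ball $\B(a;R)$ with $R=\dist(a,\partial\Omega)$ rather than on $\Omega$.

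The final step, however, has a genuine gap. Weak convergence of $u_{n'}$ in $W^{1,p}(\partial\B(a;\rho))$ together with weak lower semicontinuity of the norm controls only the \emph{tangential} derivative: what you actually obtain is $\int_{\partial\B(a;\rho)}|D_\tau u|^p \le \varliminf_n\int_{\partial\B(a;\rho)}|Du_n|^p$, and since $|D_\tau u|\le |Du|$ this is weaker than the claimed inequality, which involves the full gradient and in particular the radial component $\partial_r u$. The restriction of $\partial_r u_{n'}$ to one fixed sphere is bounded in $L^p$ and so has a weak limit along a further subsequence, but nothing in your data identifies that limit with $\partial_r u|_{\partial\B(a;\rho)}$ — restriction to a null set is not continuous under weak convergence. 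The step genuinely fails rather than being merely under-justified: take $m=2$, $u_n(x)=|x|+(1-\cos(n|x|))/n$, so that $u_n\rightharpoonup u=|\cdot|$ in $W^{1,p}(\B(0;1))$ and $|Du_n(x)|=1+\sin(n|x|)$; for a.e.\ $\rho$ the integers realizing the inferior limit depend on $\rho$ and give
\begin{equation*}
\varliminf_{n\to\infty}\int_{\partial\B(0;\rho)}|Du_n|^p\d\HH^1 = 2\pi\rho\,\varliminf_{n\to\infty}(1+\sin(n\rho))^p=0<2\pi\rho=\int_{\partial\B(0;\rho)}|Du|^p\d\HH^1 .
\end{equation*}
So the full-gradient inequality cannot follow from weak $W^{1,p}(\Omega)$ convergence alone; your argument proves the tangential version, and upgrading it to $|Du|$ requires additional information on the sequence (e.g.\ identifying the weak limit of the restricted full gradients on a.e.\ sphere). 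You should state explicitly which derivative your lower semicontinuity step controls instead of passing from $D_\tau u$ to $Du$ silently.
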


\begin{proof}[Proof of proposition \ref{coro:fatouannulie}]
We only prove the second assertion.	This is  lemma \ref{lemma:weakconvergenceimpliesfatouonaecicle} combined with the observation that	
	\begin{align*}
		\int_{\partial \B(a_i;r)}|Du_*|^{2}\d \HH^1& = \lim_{p \nearrow 2}\int_{\partial \B(a_i;r)}|Du_*|^{p}\d \HH^1 \leq \varliminf_{p \nearrow 2}\varliminf_{n \to \infty}\int_{\partial \B(a_i;r)}|Du_n|^{p}\d \HH^1 \\
		&\leq \varliminf_{p \nearrow 2}\varliminf_{n \to \infty}\HH^1({\partial \B(a_i;r)})^{\frac{p(p_n - 1)}{p_n}}\left (\int_{\partial \B(a_i;r)}|Du_n|^{p_n} \d \HH^1\right )^{\frac{p}{p_n}} 
		\\
		&\leq \varliminf_{n \to \infty}\int_{\partial \B(a_i;r)}|Du_n|^{p_n}\d \HH^1.\qedhere
	\end{align*}
\end{proof}

In the proof of proposition \ref{prop:compactnessthm}, we will use the following routine.
\begin{proposition}
\label{prop:compactnessofboundedseqs}
	Let $\Omega$ be an open Lipschitz bounded domain of $\R^d$.
	 Let $(u_{n})_n$ be a sequence of maps $u_n \in   W^{1,p_n}(\Omega, \R^\nu)$  sharing all the same trace on $\partial \Omega$.
	 Let, for each $m$, $\collection B_m$  be a finite collection of disjoint disks contained in $\Omega$ whose union of forms a decreasing sequence of sets satisfying, as sets, $\cup\collection B_m \downarrow \{a_1,\dots,a_k\} \subset \Omega$. 
	 Let $p_n \in [1,2)$ be such that $p_n \nearrow 2$.\\
  If,  for each $m$,
\begin{equation*}
    \sup_n \int_{\Omega \setminus B_m}|Du_n|^{p_n} < +\infty,
\end{equation*}
then up to some unrelabelled subsequence $u_n$ converges almost everywhere to a  map  $u \in W^{1,2}_{\mathrm{loc}}(\Omega \setminus  \{a_i\}_{i = 1,\dots,k},\R^\nu)$. Moreover  the subsequence satisfies
\begin{equation}\label{eq:Lqatouslesq}
    \varlimsup_{n\to\infty}\int_{\Omega \setminus \bigcup_{i = 1}^k \B(a_i;\rho)}|u_n|^{p_n} + |Du_n|^{p_n} < +\infty.
\end{equation}
\end{proposition}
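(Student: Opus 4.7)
The plan is to convert the hypothesis into uniform $W^{1,q}(\Omega\setminus B_m)$ bounds for every $q \in (1,2)$, extract a diagonal subsequence converging almost everywhere on $\Omega \setminus \{a_i\}_{i=1,\dots,k}$ via Rellich--Kondrachov, and then upgrade the limit to a $W^{1,2}_{\mathrm{loc}}$ map using proposition \ref{coro:fatouannulie}.

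Fix $q \in (1,2)$ and $m \in \N$. Since $p_n \nearrow 2$, eventually $p_n \geq q$, and Hölder's inequality on the bounded set $\Omega \setminus B_m$ gives
\[
  \int_{\Omega \setminus B_m} |Du_n|^q \leq |\Omega|^{1 - q/p_n} \biggl(\int_{\Omega \setminus B_m} |Du_n|^{p_n}\biggr)^{q/p_n},
\]
so the gradients are uniformly bounded in $L^q(\Omega \setminus B_m)$. Picking any index $n_0$ with $p_{n_0} \geq q$, the map $u_{n_0} \in W^{1,p_{n_0}}(\Omega) \subseteq W^{1,q}(\Omega)$ provides a fixed $W^{1,q}$-extension of the common trace, so that $u_n - u_{n_0}$ vanishes on $\partial \Omega$ and lies in $W^{1,q}(\Omega \setminus B_m)$ with uniformly bounded gradient. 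A Poincaré inequality with Dirichlet data on the positive $\HH^1$-measure portion $\partial \Omega$ of $\partial(\Omega \setminus B_m)$ (valid for $m$ large enough that $B_m \cap \partial \Omega = \varnothing$) then yields $\sup_n \|u_n\|_{W^{1,q}(\Omega \setminus B_m)} < \infty$.

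By Rellich--Kondrachov a subsequence converges in $L^1(\Omega \setminus B_1)$, hence almost everywhere; a standard diagonal extraction over $m$ produces a single subsequence converging almost everywhere on $\bigcup_m(\Omega \setminus B_m) = \Omega \setminus \{a_i\}_{i=1,\dots,k}$ to some limit $u$. Weak lower semicontinuity of the $L^q$-norm on each relatively compact piece gives $u \in W^{1,q}_{\mathrm{loc}}(\Omega \setminus \{a_i\})$ for every $q < 2$, and applying proposition \ref{coro:fatouannulie} upgrades this to $u \in W^{1,2}_{\mathrm{loc}}(\Omega \setminus \{a_i\})$.

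Finally, to obtain \eqref{eq:Lqatouslesq}, fix $\rho > 0$ and pick $m$ with $B_m \subseteq \bigcup_i \B(a_i;\rho)$: the gradient estimate is immediate from the hypothesis, while the bound on $\int |u_n|^{p_n}$ follows by combining the uniform $W^{1,q}$ bound with the two-dimensional Sobolev embedding $W^{1,q} \hookrightarrow L^{q^*}$ (whose exponent $q^* = 2q/(2-q)$ exceeds $p_n$ once $q$ is close enough to $2$) and one last Hölder inequality. The main technical point is the Poincaré inequality with Dirichlet datum imposed only on the portion $\partial \Omega$ of $\partial(\Omega \setminus B_m)$; this is classical but requires either a compactness--contradiction argument or a reduction (through the extension $u_{n_0}$) to the usual Poincaré inequality on $\Omega$.
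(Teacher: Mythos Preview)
Your argument is correct and covers all the claims in the statement. The route, however, is a bit different from the paper's.

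The paper proceeds by first \emph{filling in the holes}: for each fixed $m$, it uses linear trace theory (lemma \ref{lemma:bouche-trou}) to modify $u_n$ on $\cup \collection B_m$ so that the resulting sequence $\bar u_n$ has a uniform $L^{p_n}$ gradient bound on the \emph{whole} of $\Omega$. Compactness is then obtained on $\Omega$ itself (lemma \ref{lemma:compactnessWholes}), where only the ordinary Poincar\'e inequality with full Dirichlet data is needed; a Cantor diagonal argument over $m$ gives the conclusion. Your approach instead works directly on the perforated domains $\Omega\setminus B_m$ and appeals to a Poincar\'e inequality with Dirichlet datum prescribed only on the portion $\partial\Omega$ of $\partial(\Omega\setminus B_m)$. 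Both variants are standard; the paper trades that partial-boundary Poincar\'e step for an extension step, which keeps all constants attached to the fixed domain $\Omega$, while your version avoids manufacturing auxiliary maps $\bar u_n$ and is slightly more direct. Your appeal to proposition \ref{coro:fatouannulie} to upgrade the limit to $W^{1,2}_{\mathrm{loc}}$, and your use of the two-dimensional Sobolev embedding to control $\int |u_n|^{p_n}$, are exactly what is needed to close the argument.
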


We will deduce  proposition \ref{prop:compactnessofboundedseqs} from the following lemmata. 
\begin{lemma}\label{lemma:bouche-trou}
For each $m$, it is possible to change the values of each $u_{p_n}$ on  $\cup \collection B_m$ in order to get a sequence $\bar u_{p_n} \in W^{1,p_n}(\Omega, \R^\nu)$ that verifies
\begin{equation*}
    \sup_n \int_{\Omega}|D\bar u_n|^{p_n} < \infty.
\end{equation*}
\end{lemma}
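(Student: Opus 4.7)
The strategy is to fill the holes left by $\cup \collection B_m$ by a Sobolev extension from the exterior, so that the gradient inside is controlled by the gradient outside. Set $D_m \doteq \Omega \setminus \cup \collection B_m$; since the disks of $\collection B_m$ are pairwise disjoint and strictly contained in $\Omega$, $D_m$ is a bounded Lipschitz domain. Stein's extension theorem provides a single linear operator $E$ that is simultaneously bounded from $W^{1,p}(D_m,\R^\nu)$ to $W^{1,p}(\R^2,\R^\nu)$ for every $p \in [1,\infty]$, with operator norm $C_m$ depending only on $D_m$. Defining $\bar u_{p_n} \doteq (E(u_{p_n}|_{D_m}))|_\Omega$ yields a map in $W^{1,p_n}(\Omega,\R^\nu)$ that agrees with $u_{p_n}$ on $D_m$ (so only values inside $\cup \collection B_m$ have been altered) and satisfies
\[
    \|\bar u_{p_n}\|_{W^{1,p_n}(\Omega)} \leq C_m \|u_{p_n}\|_{W^{1,p_n}(D_m)}.
\]

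The remaining task is to bound $\|u_{p_n}\|_{W^{1,p_n}(D_m)}$ uniformly in $n$. The hypothesis directly provides the bound on $\|D u_{p_n}\|_{L^{p_n}(D_m)}$. For the $L^{p_n}$-norm of $u_{p_n}$ itself, I would fix once and for all a $W^{1,2}(\Omega,\R^\nu)$-extension $G$ of the common boundary trace $g$, produced by the classical linear trace-extension theorem applied coordinatewise. The map $u_{p_n} - G$ vanishes on $\partial \Omega$, which is a portion of $\partial D_m$ with positive $\HH^1$-measure, so a Poincaré inequality with trace vanishing on $\partial \Omega$ gives
\[
    \|u_{p_n} - G\|_{L^{p_n}(D_m)} \leq C'_m \|D(u_{p_n} - G)\|_{L^{p_n}(D_m)}.
\]
Combining this with $\|DG\|_{L^{p_n}(\Omega)} \leq |\Omega|^{\sfrac{1}{p_n} - \sfrac{1}{2}}\|DG\|_{L^{2}(\Omega)}$ (Hölder) yields the desired uniform $L^{p_n}$-bound on $u_{p_n}$, hence on its full $W^{1,p_n}(D_m)$-norm.

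The main delicate point will be the uniformity of the constants $C_m$ and $C'_m$ as $p_n \nearrow 2$. The former is $p$-independent by Stein's construction, while the latter is uniformly bounded for $p$ in any compact subinterval of $(1,\infty)$ by a standard Rellich-Kondrachov compactness argument (arguing by contradiction and extracting a weakly convergent subsequence along a sequence of exponents converging to some $p_\infty \in (1,2]$). Putting everything together gives $\sup_n \int_\Omega |D\bar u_{p_n}|^{p_n} < +\infty$, as desired.
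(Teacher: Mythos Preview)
Your argument is correct. Both your approach and the paper's fill the excised disks by linear Sobolev theory, but the routes differ: the paper simply invokes Gagliardo's trace and extension theorems on each disk boundary $\partial \B$ (take the trace of $u_n$ from the exterior annular region, then extend linearly into $\B$), whereas you bypass the trace step entirely by applying Stein's universal extension operator on the complement domain $D_m = \Omega \setminus \cup \collection B_m$. Your choice has the advantage that the $p$--independence of the extension constant is built into Stein's construction, so the only uniformity issue left is the Poincar\'e constant on $D_m$, which you handle by the standard Rellich--Kondrachov contradiction argument; the paper's route, by contrast, requires checking (or knowing) that both the Gagliardo trace and the Gagliardo extension constants are uniform for $p$ in a compact subinterval of $(1,\infty)$, which is true but not always stated explicitly. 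Either way, the substance is the same: linear Sobolev theory on a fixed Lipschitz domain with constants that do not degenerate as $p\nearrow 2$.
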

\begin{proof}[Proof of lemma \ref{lemma:bouche-trou}]
It follows from (linear) trace theory and its estimates \cite{gagliardo1957caratterizzazioni}.
\end{proof}

\begin{lemma}\label{lemma:compactnessWholes}
Let $\Omega \subset \R^2$ be an open set. Let $p_n \in [1,2)$ be an increasing sequence such that $p_n \nearrow 2$. Assume a sequence $v_n \in W^{1,p_n}(\Omega, \R^\nu)$  verifies
\begin{equation*}
    \sup_n \int_{\Omega}|D v_n|^{p_n} < \infty \text{ and } v_n -v_0 \in W^{1,p_0}_0(\Omega,\R^\nu)
\end{equation*}
 for each $n$. 
Then, it admits an unrelabelled subsequence  $v_n$ such that $v_n \to v$ almost everywhere Moreover, $v \in W^{1,2}(\Omega,\R^\nu)$ and, for each $p \in [1,2)$,
\begin{equation}\label{eq:boundLpq}
    \sup_n\int_\Omega |v_n|^p + |Dv_n|^p < +\infty.
\end{equation}
\end{lemma}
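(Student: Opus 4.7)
The plan is to establish, in this order, (1) uniform Sobolev bounds $\sup_n\|v_n\|_{W^{1,p}}<\infty$ for every $p\in[1,2)$, (2) a.e.\ convergence of a subsequence via Rellich--Kondrachov, and (3) $W^{1,2}$-regularity of the limit by weak lower semicontinuity. The bound \eqref{eq:boundLpq} then falls out as a by-product of step~(1). Throughout we assume $\Omega$ bounded, as the lemma is only invoked in that setting.

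For step~(1), fix $p\in[1,2)$. Since $p_n\nearrow 2$, for $n$ large enough $p_n>p$ and H\"older's inequality on $\Omega$ gives
\[
\int_\Omega|Dv_n|^p\leq |\Omega|^{1-p/p_n}\Big(\int_\Omega|Dv_n|^{p_n}\Big)^{p/p_n},
\]
so that $\sup_n\|Dv_n\|_{L^p}<\infty$. Specialising to $p=p_0$ and combining with the triangle inequality and $v_0\in W^{1,p_0}(\Omega)$ gives $\sup_n\|D(v_n-v_0)\|_{L^{p_0}}<\infty$. Since $v_n-v_0\in W^{1,p_0}_0(\Omega)$, Poincar\'e's inequality then bounds $\|v_n-v_0\|_{L^{p_0}}$, and the planar Gagliardo--Nirenberg--Sobolev embedding $W^{1,p_0}_0(\Omega)\hookrightarrow L^{p_0^*}(\Omega)$ with $p_0^*=2p_0/(2-p_0)\geq 2$ upgrades this to a uniform $L^{p_0^*}$-bound on $v_n-v_0$. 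Since $v_0\in L^{p_0^*}(\Omega)$ by the same Sobolev embedding, $v_n$ is bounded in $L^{p_0^*}$, and by H\"older on the bounded domain $\Omega$ in $L^p$ for every $p\leq p_0^*$, which covers every $p<2$. This proves \eqref{eq:boundLpq}.

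For step~(2), the uniform $W^{1,p_0}_0$-bound on $v_n-v_0$ and the compact Rellich embedding $W^{1,p_0}_0(\Omega)\hookrightarrow L^{p_0}(\Omega)$ yield a subsequence such that $v_n-v_0$ converges in $L^{p_0}$; a further diagonal extraction produces pointwise a.e.\ convergence $v_n\to v$ on $\Omega$ and weak convergence $Dv_n\rightharpoonup Dv$ in $L^p(\Omega)$ for every $p\in[1,2)$.

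For step~(3), the weak lower semicontinuity of the $L^p$-norm together with the H\"older estimate from step~(1) gives, for every $p\in[1,2)$,
\[
\int_\Omega|Dv|^p\leq \varliminf_{n\to\infty}\int_\Omega|Dv_n|^p\leq \max(1,|\Omega|)\,C^{p/2},
\]
with $C=\sup_n\int_\Omega|Dv_n|^{p_n}$. Splitting $\Omega=\{|Dv|<1\}\cup\{|Dv|\geq 1\}$ and letting $p\nearrow 2$ (dominated convergence on the first piece, monotone convergence on the second) yields $\int_\Omega|Dv|^2\leq \max(1,|\Omega|)\,C<\infty$. Combined with $v\in L^2(\Omega)$, which follows from step~(1) and Fatou's lemma applied along the a.e.\ convergent subsequence, this shows $v\in W^{1,2}(\Omega)$. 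The main obstacle is the combination in step~(1) of the H\"older step (which only produces $L^p$ gradient bounds, not $L^2$) with the Sobolev embedding anchored at the fixed lower exponent $p_0$, whose critical Sobolev exponent $p_0^*$ happens to sit above $2$ in dimension two --- this is precisely what allows $L^p$ bounds on $v_n$ itself to be propagated up to every $p<2$.
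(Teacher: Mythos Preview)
Your proof is correct and follows essentially the same route as the paper: H\"older for uniform $L^p$-gradient bounds, Poincar\'e at the fixed level $p_0$ to control the functions, Rellich for compactness, and weak lower semicontinuity plus $p\nearrow 2$ for the $W^{1,2}$-regularity of the limit. You are more explicit than the paper in two places (using the full Sobolev embedding to $L^{p_0^*}$, and spelling out the monotone-plus-dominated split for $\int_\Omega|Dv|^2<\infty$ where the paper simply cites a textbook result), but note that the step ``$v_0\in L^{p_0^*}(\Omega)$ by the same Sobolev embedding'' requires $\Omega$ to be an extension domain, not merely bounded---this holds for the Lipschitz domains where the lemma is actually invoked.
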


\begin{proof}[Proof of lemma \ref{lemma:compactnessWholes}]
As $p_n \nearrow 2$, for every $q \in [1,2)$, the tail of the sequence $(v_n)_n$ is bounded in $W^{1,q}(\Omega,\R^\nu)$ by the Poincaré inequality. Hence, for some $q \in [1,2)$ we extract a subsequence of $(v_n)$ still denote $v_n$ such that $v_n \to v$ almost everywhere and in $L^q(\Omega,\R^\nu)$. We also get that $Dv_n \to Dv$ weakly in $L^p(\Omega,\R^\nu)$. We next consider a sequence $q_m \nearrow 2$ and by a Cantor's diagonal argument such that the diagonal sequence $v_n$ converges in $L^{q_m}(\Omega,\R^\nu)$  and  $Dv_n \to Dv$ weakly in $L^{q_m}(\Omega,\R^\nu)$ for each $m$ and thus for each $p \in [1,2)$ we have
\begin{equation}\label{eq:fkljjhguvfs}
	\sup_n \int_\Omega |Dv_n|^p < +\infty.
\end{equation}
The fact that $Du \in L^2(\Omega,\R^{2\times \nu})$ then follows from a variant of \cite[Theorem 6.1.7]{willem2013functional}.
The estimate \eqref{eq:boundLpq} is implied by the weak convergence. 
\end{proof}

\begin{proof}[Proof of proposition \ref{prop:compactnessofboundedseqs}]
Lemmata \ref{lemma:bouche-trou} and \ref{lemma:compactnessWholes} with a Cantor's diagonal argument yield the conclusion.
\end{proof}

\subsection{Proof of proposition \ref{prop:compactnessthm}}

We will make use of the following lemma from \cite[Lemma 6.2]{monteil2021renormalised}.
\begin{lemma}\label{lemma:ainomega}
	Let $\Omega \subset \R^2$, $a \in \R^2$ and $0 <  \sigma < \tau$. If $u  \in W^{1,2}(\B(a;\tau)\setminus \B(a;\sigma), \mathcal N)$, then, 
	\begin{multline*}
		\int_{(\B(a;\tau) \setminus \B(a;\sigma)) \cap \Omega} \frac{|Du|^2}{2} \geq \frac{\lambda^2(\Trace_{\partial \mathbb S^1}u(a + \tau\cdot))}{4\pi \nu_{\tau}^\sigma(a)}\log \frac{\tau}{\sigma} \times \\\left [1 - \frac{\sqrt{2\pi}}{\lambda(\Trace_{\partial \mathbb S^1}u(a + \tau\cdot))}\Big ( \frac{1}{\log \frac{\tau}{\sigma}}\int_{\B(a;\tau) \setminus (\B(a;\sigma) \cup \Omega)} |Du|^2 \Big)^{\frac{1}{2}} \right]^2
	\end{multline*}
	where
	\begin{equation}\label{eq:cjhksjfgivh}
		\nu_{\tau}^\sigma(a)  = \frac{1}{2\pi \log \frac{\tau}{\sigma}}\int_{(\B(a;\tau) \setminus \B(a;\sigma)) \cap \Omega}\frac{\d x}{|x - a|^2} \leq 1.
	\end{equation}
\end{lemma}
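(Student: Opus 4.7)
My plan is to work in polar coordinates $(r,\theta)$ centered at $a$ and combine a Cauchy--Schwarz argument on each circle $\partial\B(a;r)$ with a second Cauchy--Schwarz step in the radial integration against the weight $\d r/r$. Set \(\lambda \doteq \lambda(\Trace_{\partial\mathbb{S}^1}u(a+\tau\cdot))\); since $u \in W^{1,2}$ of the annulus, homotopy invariance (proposition \ref{prop:loc_lower_bound}) gives \(\lambda(\Trace_{\partial \B(a;r)}u) = \lambda\) for a.e. $r\in(\sigma,\tau)$. Introduce $A(r) \doteq \{\theta\in[0,2\pi): a+re^{i\theta}\in\Omega\}$, \(\mu(r) \doteq \mathscr H^1(A(r))\), and the tangential Dirichlet pieces
\[
E(r) \doteq \int_{\partial\B(a;r)\cap\Omega}\frac{|u'|^2}{2}\d\mathscr H^1,
\qquad
F(r) \doteq \int_{\partial\B(a;r)\setminus\Omega}\frac{|u'|^2}{2}\d\mathscr H^1,
\]
so that, by Fubini and \(|Du|^2 \geq |u'|^2\), one has $\int_\sigma^\tau E(r)\,\d r \leq \mathcal A$ and $\int_\sigma^\tau F(r)\,\d r \leq \mathcal B$, where $\mathcal A$ and $\mathcal B$ denote the integrals of $|Du|^2/2$ over $(\B(a;\tau)\setminus\B(a;\sigma))\cap\Omega$ and over $\B(a;\tau)\setminus(\B(a;\sigma)\cup\Omega)$, respectively.

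The core estimate is obtained circlewise: by lemma \ref{lemma:loc_lower_bound_circle} with $p=1$, one has \(\lambda \leq \int_{\partial\B(a;r)}|u'|\), and splitting the circle in two and applying Cauchy--Schwarz to each piece yields
\[
\lambda \leq \sqrt{2\mu(r)r\,E(r)} + \sqrt{2(2\pi-\mu(r))r\,F(r)}.
\]
Dividing by $r$ and integrating over $(\sigma,\tau)$ gives
\[
\lambda\log\frac{\tau}{\sigma} \leq \int_\sigma^\tau\sqrt{\tfrac{2\mu(r)}{r}}\sqrt{E(r)}\,\d r + \int_\sigma^\tau\sqrt{\tfrac{2(2\pi-\mu(r))}{r}}\sqrt{F(r)}\,\d r.
\]
A second application of Cauchy--Schwarz in $r$ to each term, together with
\[
\int_\sigma^\tau\frac{\mu(r)}{r}\d r = 2\pi\nu_\tau^\sigma(a)\log\frac{\tau}{\sigma},
\qquad
\int_\sigma^\tau\frac{2\pi-\mu(r)}{r}\d r = 2\pi\bigl(1-\nu_\tau^\sigma(a)\bigr)\log\frac{\tau}{\sigma}
\]
(which also gives the bound $\nu_\tau^\sigma(a)\leq 1$ asserted in \eqref{eq:cjhksjfgivh}), produces
\[
\lambda\log\frac{\tau}{\sigma} \leq 2\sqrt{\pi\nu_\tau^\sigma(a)\log\tfrac{\tau}{\sigma}\,\mathcal A} + 2\sqrt{\pi(1-\nu_\tau^\sigma(a))\log\tfrac{\tau}{\sigma}\,\mathcal B}.
\]

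Rearranging and, when the left-hand side is non-negative, squaring both sides yields
\[
\mathcal A \geq \frac{\lambda^2 \log(\tau/\sigma)}{4\pi\nu_\tau^\sigma(a)}\left(1 - \frac{2\sqrt{\pi(1-\nu_\tau^\sigma(a))\,\mathcal B}}{\lambda\sqrt{\log(\tau/\sigma)}}\right)^{\!2}.
\]
Using \(1-\nu_\tau^\sigma(a)\leq 1\) to weaken the bracket (and noting \(2\mathcal B = \int_{\B(a;\tau)\setminus(\B(a;\sigma)\cup\Omega)}|Du|^2\) so that \(\frac{2\sqrt{\pi \mathcal B}}{\lambda\sqrt{\log(\tau/\sigma)}} = \frac{\sqrt{2\pi}}{\lambda}\bigl(\frac{1}{\log(\tau/\sigma)}\int_{\B(a;\tau)\setminus(\B(a;\sigma)\cup\Omega)}|Du|^2\bigr)^{1/2}\)) delivers exactly the stated inequality. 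The main obstacle is essentially bookkeeping: one has to keep track of the two Cauchy--Schwarz applications, verify circlewise regularity and homotopy invariance for a.e.\ $r$, and observe that the case where the bracket would turn negative is implicitly understood as a positive part (or trivial since $\mathcal A \geq 0$); no regularity beyond the $W^{1,2}$ hypothesis is required.
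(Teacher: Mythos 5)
The paper itself does not prove this lemma --- it is quoted from \cite[Lemma 6.2]{monteil2021renormalised} --- so there is no internal proof to compare against; your argument is the natural one (circlewise Cauchy--Schwarz after splitting $\partial\B(a;r)$ into its $\Omega$-part and its complement, then a second Cauchy--Schwarz in $r$ against $\d r/r$, with $\int_\sigma^\tau \mu(r)\,\d r/r = 2\pi\nu_\tau^\sigma(a)\log(\tau/\sigma)$), and every computation checks out, including the identification $2\sqrt{\pi\mathcal B}/(\lambda\sqrt{\log(\tau/\sigma)}) = \tfrac{\sqrt{2\pi}}{\lambda}\bigl(\tfrac{1}{\log(\tau/\sigma)}\int|Du|^2\bigr)^{1/2}$. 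The one point you should state more carefully is the final weakening step: passing from your intermediate bound with $x_1 = 2\sqrt{\pi(1-\nu)\mathcal B}/(\lambda\sqrt{\log(\tau/\sigma)})$ to the stated bound with $x_2 = 2\sqrt{\pi\mathcal B}/(\lambda\sqrt{\log(\tau/\sigma)}) \ge x_1$ uses $(1-x_1)^2 \ge (1-x_2)^2$, which holds only when $x_2 \le 1$; together with the squaring step this means your derivation proves the lemma exactly when the printed bracket is nonnegative. That is in fact all one can hope for: when the bracket is negative the inequality as literally printed is false (take $\manifold N = \mathbb S^1$, $\Omega$ a thin sector of angular width $\varepsilon$ of the annulus, and $u = e^{i\phi(\theta)}$ with $\phi' = k$ on the sector and large extra tangential oscillation outside it; the left-hand side is $O(k^2\varepsilon)$ while the right-hand side is of order $k^2/\varepsilon$). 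So the bracket must be read as a positive part, as you suggest --- but note that your parenthetical ``or trivial since $\mathcal A \ge 0$'' is not a valid fallback, since the square of a negative bracket is still positive and the resulting inequality is not trivial. With the positive-part reading (which is how the lemma is used in the paper, where the bracket tends to $1$), your proof is complete and correct.
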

\begin{proof}[Proof of proposition \ref{prop:compactnessthm}]
	In the first part of the proof we assume that
	\begin{equation*}
		u_n \in W^{1,p_n}(\Omega,\manifold N) \cap  W^{1,2}_{\mathrm{loc}}(\bar\Omega\setminus \{a_i^n\}_{i = 1,\dots,k^n},\manifold N),
	\end{equation*}
	where \(k^n \in \N\) and  $\{a_i^n\}_{i = 1,\dots,k^n} \subset \Omega$ are such that 
	\begin{equation*}
		\delta = \inf_n\dist(\{a_i\}_{i = 1,\dots,k^n},\partial\Omega) >0.
	\end{equation*}
	We will treat the general case by density (see section \ref{subsub:density}).

	By our assumption \eqref{eq:upperBoundCompactnessthm}, 
	\begin{equation}\label{eq:upperBoundCompactnessthmlimsupfirstorder}
		\varlimsup_{n\to \infty}(2 - p_n) \int_\Omega \frac{|Du_n|^{p_n}}{p_n} \leq \Esg^{1,2}(\Tr_{\partial \Omega}u_0).
	\end{equation}

	\subsubsection{Convergence of the disks}\label{subsubsec:convergdisk}
	Let $(\eta_m)_m$ be a decreasing sequence such that  $\eta_m \in (0,\delta)$ for each $m$ and  $\eta_m \searrow 0$. For each $m,n$ we apply proposition \ref{prop:circleconstruction} and get the existence of a finite collection of disjoint disks $\collection B_{m,n}$ contained in $\Omega$ such that the sum of the diameter is $\eta_m$ (more details are explained in corollary \ref{coro:lower_bound}) and
	\begin{equation}\label{eq:borninf0}
		(2-p_n) \int_{\bigcup_{\B \in \collection B_{m,n}}}\frac{|Du_n|^{p_n}}{p_n} \geq 		\ds\Big (\sum_{\B \in \collection B_{m,n}}\Esg^{1,p_n'}(\Trace_{\partial \B}u_n) \Big)^{p_n -1}  \eta_m^{2 - p_n}.
	\end{equation}
	We thus have by proposition \ref{lemma:continuite_of_esgp}, \eqref{eq:upperBoundCompactnessthmlimsupfirstorder} and \eqref{eq:borninf0},
	\begin{align*}
		\Esg^{1,2}(\Trace_{\partial \Omega}u_0) &= \lim_{n \to \infty}\Esg^{1,p_n'}(\Trace_{\partial \Omega}u_0)^{p_n - 1}\\&\leq \Big(\varlimsup_{n \to \infty}\sum_{\B \in \collection B_{m,n}}\Esg^{1,p_n'}(\Trace_{\partial \B}u_n)\Big)^{p_n - 1} \\&\leq \varlimsup_{n \to \infty} (2 - p_n) \int_{\Omega}|Du_n|^{p_n} \leq  \Esg^{1,2}(\Trace_{\partial \Omega}u_0).
	\end{align*}
	Thus,
	\begin{equation}\label{eq:borninf2}
		\Esg^{1,2}(\Trace_{\partial \Omega}u_0) = \lim_{n \to \infty} (2 - p_n) \int_{\Omega}|Du_n|^{p_n}.
	\end{equation}
	Letting  $\collection B_{m,n}^{\mathrm{Top}}$ be the subcollection of disks $\B$ of $\collection B_{m,n}$   with the property that \(\Esg^{1,p_n'}(\Trace_{\partial \B}u_n) > 0\);
	in view of \eqref{eq_ius6Cei3Tahwae2ahpoh5Iph} they satisfy the stronger bound
	\begin{equation}\label{eq:systolecrucial}
	\Esg^{1,p_n'}(\Trace_{\partial \B}u_n) \geq \frac{\sysN^{\frac{p_n}{p_n - 1}}}{(2\pi)^{p_n' - 1}p_n'} > 0,
\end{equation} we have that the number of elements of this collection satisfies by \eqref{eq:borninf0} 
	\begin{equation}\label{eq:bornesup}
		\varlimsup_{n \to \infty}\# \collection B_{m,n}^{\mathrm{Top}} \leq \frac{4\pi\Esg^{1,2}(\Trace_{\partial \Omega}u_0)}{\sysN^2}.
	\end{equation}
	Hence up to some subsequence in $n$ the collection $\collection B^{\mathrm{Top}}_{m,n}$ converges to a limit collection $\collection B^{\mathrm{Top}}_{m}$ for each $m$ in the sense that $\# \collection B_{m,n}^{\mathrm{Top}} \to \# \collection B_{m}$ for each $m$ and the vector of the center of the disks and the radii converge to the associated vector of the limit collection. By a Cantor diagonal argument we may ensure that the subsequence does not depend on $m$. Then, \eqref{eq:bornesup} also holds for $\#\collection B_m$. Thus repeating the argument we get up to some subsequence a limit collection $\collection B$ such that $\collection B_m \to \collection B$. As $\eta_m \searrow 0$, this collection consists of \(\kappa \in \N \) with \(\kappa \leq \# \collection B\) 	points $\{a_i\}_{i = 1,\dots,\kappa} \subset \bar \Omega$ such that \[\dist(\{a_i\}_{i = 1,\dots,\kappa},\partial \Omega)\ge \delta.\]
	
	\subsubsection{Uniform bound away from singularities and convergence to \texorpdfstring{$u_*$}{u*}}
	For each $m,n$,  we observe that 
	\begin{align*}
		\int_{\Omega \setminus \bigcup \collection B_{m,n}}\frac{|Du_n|^{p_n}}{p_n} &= \int_{\Omega}\frac{|Du_n|^{p_n}}{p_n} -  \frac{\Esg^{1,2}(\Tr_{\partial\Omega}u_0)}{2 - p_n} \\
		&\quad\quad+ \frac{\Esg^{1,2}(\Tr_{\partial\Omega}u_0) - \Esg^{1,p_n'}(\Tr_{\partial\Omega}u_0)^{p_n - 1}}{2 - p_n} \\
		&\quad\quad+ \Esg^{1,p_n'}(\Tr_{\partial\Omega}u_0)^{p_n - 1}\frac{1 - \eta_{m}^{2-p_n}}{2 - p_n} \\
		&\quad\quad+  \left [  \frac{\Esg^{1,p_n'}(\Tr_{\partial\Omega}u_0)^{p_n - 1}\eta_{m}^{2-p_n}}{2 - p_n} -  \int_{\bigcup \collection B_{m,n}}\frac{|Du_n|^{p_n}}{p_n}\right].
	\end{align*}
	Hence, by our assumption \eqref{eq:upperBoundCompactnessthm}, the local Lipschitz property of the singular energy (lemma \ref{lemma:continuite_of_esgp}) and the lower bound \eqref{eq:borninf0},
	\begin{equation}\label{eq:bigmajoration}
		\varlimsup_{n\to\infty}\int_{\Omega \setminus \bigcup \collection B_{m,n}}\frac{|Du_n|^{p_n}}{p_n} \leq \Lambda + \Esg^{1,2}(\Tr_{\partial\Omega}u_0)\log\frac{1}{\eta_m},
	\end{equation}
	where we write for future use
	\begin{equation}\label{eq:limsuploindesingu}
		\Lambda \doteq \varlimsup_{n\to\infty}\left [ \int_\Omega \frac{|Du_n|^{p_n}}{p_n} - \frac{\Esg^{1,2}(\Trace_{\partial \Omega}u_0)}{2 - p_n}\right ] + \varlimsup_{p \nearrow 2}\frac{\Esg^{1,2}(\Tr_{\partial\Omega}u_0) - \Esg^{1,p'}(\Tr_{\partial\Omega}u_0)^{p' - 1}}{2 - p}.
	\end{equation}
	From \eqref{eq:bigmajoration}, by proposition \ref{prop:compactnessofboundedseqs} we get a limit map  $u_* \in W^{1,2}(\Omega \setminus \bigcup_{i = 1}^k \B(a_i;\rho),\manifold N)$  for every $\rho \in (0,\bar  \rho)$ such that up to some subsequence independent of $m$ we have $u_n \to u_*$ as described by the proposition. This proves \ref{item:boundedseqcompact}.
	
	In the sequel, we will use 
	\[
		\bar \rho  \doteq \rho(\{a_i\}_{i = 1,\dots,\kappa})
	\]
	for the non-intersection radius defined in \eqref{eq:non-intersecting_radius}.
By \eqref{eq:limsuploindesingu} for $m$ large enough, 
	\begin{equation*}
		\int_{\Omega \setminus \bigcup_{i = 1}^\kappa\B(a_i;2\eta_m)}\frac{|Du_*|^2}{2} \leq \varlimsup_{n\to\infty}\int_{\Omega \setminus \bigcup \collection B_{m,n}}\frac{|Du_n|^{p_n}}{p_n} \leq \Lambda  +  \Esg^{1,2}(\Trace_{\partial \Omega}u_0)\log \frac{1}{\eta_m}.
	\end{equation*}
	Hence, using  definition \ref{def:esg}  of the singular energy and lemma \ref{lemma:ainomega}, 
	\begin{align}\label{eq:ineqrestopopt}
		 \Gamma\log\frac{\bar \rho}{2\eta_m} \Esg^{1,2}(\Trace_{\partial \Omega}u_*)&\leq \Gamma\log\frac{\bar \rho}{2\eta_m}\sum_{i = 1}^\kappa \frac{\lambda(\Trace_{\partial \mathbb{S}^1}u_*(a_i + \bar \rho \cdot ))^2}{4\pi \nu_{\bar \rho}^{2\eta_m}(a_i)}\\&\notag\leq \Lambda  +  \Esg^{1,2}(\Trace_{\partial \Omega}u_0)\log \frac{1}{\eta_m},
	\end{align}
	where we have set
	\begin{equation*}
		\Gamma \doteq \Big [1 - \frac{\sqrt{2\pi}}{\sysN}\Big (\log \frac{\bar \rho}{2\eta_m} \int_{\{x \in \Omega : \dist(\partial \Omega,x)<\delta\}} |Du_*|^2 \Big)^{\frac{1}{2}} \Big]^2
	\end{equation*}
	and $\nu_{\bar \rho}^{2\eta_m}(a_i)$ is given by \eqref{eq:cjhksjfgivh}.
	From \eqref{eq:ineqrestopopt}, we get in the limit $m\to \infty$,
	\begin{equation}\label{eq:topres}
		\Esg^{1,2}(\Trace_{\partial \Omega}u_*)\leq \sum_{i = 1}^\kappa \frac{\lambda(\Trace_{\partial \mathbb{S}^1}u_*(a_i + \bar \rho \cdot ))^2}{4\pi}\\\leq \Esg^{1,2}(\Trace_{\partial \Omega}u_0).
	\end{equation}
	But \(\Esg^{1,2}(\Trace_{\partial \Omega}u_0) = \Esg^{1,2}(\Trace_{\partial \Omega}u_*)\) as $\Trace_{\partial \Omega}u_* = \Trace_{\partial \Omega}u_0$ by assumption.
	Hence we deduce that $\nu_{\bar \rho}^{2\eta_m}(a_i) = 1$ for each $i = 1,\dots,\kappa$ and that equality holds in \eqref{eq:topres} which implies that $(\Trace_{\partial \mathbb{S}^1}u_*(a_i + \bar \rho \cdot) )_{i = 1,\dots,\kappa}$ is a minimimal topological resolution of $\Trace_{\partial \Omega}u_0$ \emph{i.e.}
	\begin{equation}\label{eq:equalitylambdaesg}
		\Esg^{1,2}(\Trace_{\partial \Omega}u_*) = \sum_{i = 1}^\kappa \frac{\lambda(\Trace_{\partial \mathbb{S}^1}u_*(a_i + \bar \rho \cdot ))^2}{4\pi}
	\end{equation}
	and 
	\begin{equation}\label{eq:lmqlkfjmkfd}
		\dist(\{a_i\}_{i = 1\dots,\kappa},\partial \Omega)>\delta.
	\end{equation}
	\subsubsection{Narrow convergence \ref{item:propnarrow}}
	By \eqref{eq:bigmajoration} and the convergence results (see proposition \ref{prop:compactnessofboundedseqs} \ref{item:boundedseqcompact}), we have $u_n \to u_*$ almost everywhere and we may futher assume that \[(2 - p_n)|Du_n|^{p_n} \to \sum_{i = 1}^\kappa \alpha_i \delta_{a_i}\] weakly as measures where $\alpha_i \geq 0$. Moreover, \[\sum_{i = 1}^\kappa \alpha_i = \Esg^{1,2}(\Trace_{\partial \Omega}u)\] by \eqref{eq:borninf2}. 
	By section \ref{subsubsec:convergdisk}, for large $m$ and $n$, we may assume that the number of disks contained in $\B(a_i;\bar \rho/2)$ is constant for each $i = 1,\dots,\kappa$. Then, by the construction of the disks given by proposition \ref{prop:circleconstruction}\eqref{eq:propcirlcinequlaitu},
	 for each $i = 1,\dots,\kappa$, for large enough $m$,
	\begin{equation*}
		\alpha_i = \varlimsup_{n\to\infty}(2 - p_n) \int_{\B(a_i, \bar \rho)}\frac{|Du_n|^{p_n}}{p_n} \geq \varlimsup_{n\to\infty}\Esg^{1,p_n'}(\Trace_{\partial \B(a_i;\bar \rho)}u_n)^{p_n - 1} \Bigl(\frac{\bar \rho}{2}\Bigr)^{2 - p_n}
	\end{equation*}
	We consider a subsequence such that the superior limit is an actual limit for each $i = 1,\dots,\kappa$. By lemma \ref{lemma:weakconvergenceimpliesfatouonaecicle}, there exists $r \in (\bar \rho/2,\bar \rho)$ such that for a subsequence $u_{n^r_k}$ depending on $r$ one has $u_{n^r_k} \rightharpoonup u_*$ weakly in $W^{1,p}(\partial \B(a_i;r))$. 
	Using the Sobolev embedding and the Arzelá--Ascoli compactness criterion $u_n \to u_*$ uniformly on  $\partial \B(a_i;r)$. Hence for $k$ large enough $\Trace_{\partial \B(a_i;r)}u_{n^r_k}$ is homotopic to $\Trace_{\partial \B(a_i;r)}u_*$. 
	We deduce that for $k$ and $m$ large enough $\Esg^{1,p_n'}(\Tr_{\partial \B(a_i;\bar \rho)}u_n) = \Esg^{1,p_n'}(\Tr_{\partial \B(a_i;r)}u_n) = \Esg^{1,p_n'}([u_*,a_i])$ and, using lemma \ref{lemma:continuite_of_esgp}, \[\alpha_i\geq \varlimsup_{n\to\infty}\Esg^{1,p_n'}(\Trace_{\partial \B(a_i;\bar \rho)}u_*)^{p_n - 1} \big(\frac{\bar \rho}{2}\big)^{2 - p_n} = \Esg^{1,2}([u_*,a_i]) \] 
	which, combined with the fact that $(\Trace_{\partial \mathbb{S}^1}u_*(a_i + \bar \rho \cdot ))_{i = 1,\dots,\kappa}$, is a minimimal topological resolution of $\Trace_{\partial \Omega}u_0$ results into that \[\alpha_i =  \Esg^{1,2}([u_*,a_i]) = \frac{\lambda([u_*,a_i])^2}{4\pi}\] by lemma \ref{lemma:atomicityinminialtop}.
	This proves the weak convergence of measures. The narrow convergence \ref{item:propnarrow} is reached by \eqref{eq:borninf2}.
\subsubsection{Lower semicontinuity statements}

	We now observe that, by corollary \ref{coro:fatouannulie}, for each $i = 1,\dots,\kappa$ and almost every $r \in (0,\bar \rho)$,
	\begin{equation}\label{eq:fatourannuli}
		\int_{\partial \B(a_i;r)}|Du_*|^{2} \leq \varliminf_{n \to \infty}\int_{\partial \B(a_i;r)}|Du_n|^{p_n}.
	\end{equation}
	In order to prove \eqref{eq:presdessingu}. For all $\rho \in (0,\bar \rho)$, we have by \eqref{eq:fatourannuli}
	\begin{IEEEeqnarray}{rCl}
		\IEEEeqnarraymulticol{3}{l}{ \nonumber\label{eq:qshjkdgf56sdfg64s56}
			\int_0^\rho \int_{\partial \B(a_i;r)} \frac{|Du_*|^2}{2}\d \HH^1 - \frac{\lambda(\Trace_{\partial \B(a_i;r)}u_*)^{2}}{4\pi r} \d r    }\\ \quad
		&  \leq  & \int_0^\rho\varliminf_{n \to \infty}\left [\int_{\partial \B(a_i;r)} \frac{|Du_n|^{p_n}}{p_n}\d \HH^1 - \frac{\lambda(\Trace_{\partial \B(a_i;r)}u_*)^{p_n}(2\pi r)^{2 - p_n}}{2p_n\pi r}\right ] \d r \nonumber\\
		& \leq  & \varliminf_{n \to \infty}\int_0^\rho\left [\int_{\partial \B(a_i;r)} \frac{|Du_n|^{p_n}}{p_n}\d \HH^1 - \frac{\lambda(\Trace_{\partial \B(a_i;r)}u_*)^{p_n}(2\pi r)^{2 - p_n}}{2p_n\pi r}\right ] \d r\nonumber \\
		&= &\varliminf_{n \to \infty} \int_{\B(a_i;\rho)}\frac{|Du_n|^{p_n}}{p_n} - \frac{\lambda(\Trace_{\partial \B(a_i;r)}u_*)^{p_n}}{2p_n\pi  }\frac{(2\pi \rho)^{2 - p_n}}{2 - p_n} \nonumber.
	\end{IEEEeqnarray}
	We prove \ref{eq:scicompactnesslog}. By the integral representation of the renormalized energy, proposition \ref{prop:polarCoordEren}, we have 
	\begin{IEEEeqnarray}{rCl}
		   \nonumber              \Eren^{1,2}(u_*) &=& \int_{ \Omega \setminus \bigcup_{i = 1}^\kappa \B(a_i;\rho)} \frac{|Du_*|^2}{2}    - \sum_{i = 1}^{\kappa}\frac{\lambda(\Trace_{\partial \B(a_i;\rho)}u_*)^2}{4\pi}\ln \frac{1}{\rho}  \\
		&&\nonumber\quad\quad  +  \sum_{i = 1}^{\kappa}\int_0^\rho \int_{\partial \B(a_i;\rho)} \frac{|Du_*|^2}{2}\d \HH^1 - \frac{\lambda(\Trace_{\partial \B(a_i;\rho)}u_*)^2}{4\pi r} \d r   \\
		& \nonumber\leq &  \varliminf_{n \to \infty} \int_{ \Omega \setminus \bigcup_{i = 1}^\kappa \B(a_i;\rho)} \frac{|Du_n|^{p_n}}{p_n} \\
		&&\nonumber \quad\quad + \varliminf_{n\to \infty} \sum_{i = 1}^\kappa\frac{\lambda(\Trace_{\partial \B(a_i;\rho)}u_*)^{p_n}}{2p_n \pi}\frac{(2\pi\rho)^{2 - p_n} - (2\pi )^{2 - p_n}}{2 - p_n}\\
		&&\nonumber \quad\quad + \sum_{i = 1}^{\kappa}\varliminf_{n \to \infty} \int_{\B(a_i;\rho)}\frac{|Du_n|^{p_n}}{p_n} - \frac{\lambda(\Trace_{\partial \B(a_i;\rho)}u_*)^{p_n}}{2p_n\pi  }\frac{(2\pi\rho)^{2 - p_n}}{2 - p_n}  \\
		& \leq &  \varliminf_{n \to \infty} \int_{ \Omega } \frac{|Du_n|^{p_n}}{p_n} -\sum_{i = 1}^\kappa\frac{\lambda(\Trace_{\partial \B(a_i;\rho)}u_*)^{p_n}}{2p_n \pi}\frac{(2\pi)^{2 - p_n}}{2 - p_n}.\label{eq:precksmdlgjkqh}
	\end{IEEEeqnarray}
 	Subtracting and adding  $\Esg^{1,2}(\Trace_{\partial \Omega}u_n)/(2 - p_n)$ in \eqref{eq:precksmdlgjkqh} and using the equality \eqref{eq:equalitylambdaesg}, we obtain the first inequality in \ref{eq:scicompactnesslog} (see also \eqref{eq:technicallimit}), whereas the second inequality in \ref{eq:scicompactnesslog} follows from proposition~\ref{proposition_ren_map_to_pts}.
 	
 	\subsubsection{Density argument}\label{subsub:density}
 	If $u_n \in W^{1,p_n}(\Omega, \manifold N)$, and $\Trace_{\partial \Omega}u_n = \Trace_{\partial \Omega}u_0 \in W^{\sfrac{1}{2},2}(\partial \Omega, \manifold N)$, we argue by density. By \eqref{eq:extension_of_maps}, we may assume, considering a larger domain, that \[u_n \in  W^{1,p_n}(\Omega', \manifold N) \cap W^{1,2}_{\mathrm{loc}}(\{x \in \Omega' : \dist(x,\partial \Omega')< \delta\},\manifold N)\] for some $\delta >0$ and $u_n = u_0$ on $\{x \in \Omega' : \dist(x,\partial \Omega')< \delta\}$. Then, by proposition \ref{prop:density_of_the_R_class}, we obtain for each $n$ a finite set $\{a_i^n\}_{i = 1,\dots,k^n} \subset  \Omega'$ satisfying $\dist(\{a_i^n\}_{i = 1,\dots,k^n},\partial\Omega')> \delta$ such that 
 	\begin{equation*}
 		\bar u_n \in  W^{1,p_n}(\Omega', \manifold N) \cap W^{1,2}_{\mathrm{loc}}(\Omega' \setminus \{a_i^n\}_{i = 1,\dots,k^n},\manifold N),
 	\end{equation*}
 	$(\bar u_n - u_n) \to 0$ almost everywhere in \(\Omega'\), $\|\bar u_n - u_n\|_{W^{1,p_n}(\Omega',\manifold N)} \to 0$ and in particular
 	\begin{equation*}
 		\int_{\Omega'} |D\bar u_n|^{p_n} - \int_{\Omega'} |D u_n|^{p_n} \xrightarrow{n\to\infty}0.
 	\end{equation*}
 	Under these conditions, the assumption \eqref{eq:upperBoundCompactnessthm} on $(u_n)_n$ implies
	\begin{equation*}
		\sup_{n}\left [ \int_{\Omega'} \frac{|D\bar u_n|^{p_n}}{p_n} - \frac{\Esg^{1,2}(\Trace_{\partial \Omega'}\bar u_0)}{2 - p_n}\right ] < +\infty.
	\end{equation*}
	By the proof above, we obtain \ref{item:firsitonfg}--\ref{item:lsc} by restriction from $\Omega'$ to $\Omega$ and by observing that the
	\begin{equation*}
		\lim_{n\to\infty}\int_{\Omega'\setminus \Omega} |D\bar u_n|^{p_n} = \int_{\Omega'\setminus \Omega} |D\bar u_*|^{2}
	\end{equation*}
	is implied by $\|\bar u_n - u_n\|_{W^{1,p_n}(\Omega',\manifold N)} \to 0$. The condition $\dist(\{a_i\}_{i = 1,\dots,\kappa},\partial \Omega')> \delta$ (see \eqref{eq:lmqlkfjmkfd}) implies $\{a_i\}_{i = 1,\dots,\kappa} \subset \Omega$.
	 \end{proof}

 	\subsection{Mixed Marcinkiewicz estimates of bounded sequences}\label{sec:mixed}
 	We also obtain mixed Marcinkiewicz estimates for sequences of $p$-harmonic mappings with bounded renormalized \(p\)--energy.
 	
 	\begin{proposition}\label{prop:mixedboundedness}
 		Let $\Omega\subset \R^2$ be a bounded Lipschitz domain, $\manifold N$ a Riemannian manifold with $\sysN> 0$ and let $p_n \in (1,2)$ be a sequence $p_n \nearrow 2$. Let $(u_n)_n$ be a sequence of maps such that $u_n \in W^{1,p_n}(\Omega, \manifold N)$, $\Trace_{\partial \Omega}u_n = \Trace_{\partial \Omega}u_0 \in W^{\sfrac{1}{2},2}(\partial \Omega, \manifold N)$. If $u_0 \in W^{1,2}(\Omega_\delta\setminus\Omega,\manifold N)$ for some $\delta > 0$, $u_n = u_0$ on $\Omega_\delta\setminus\Omega$ and
 		\begin{equation}\label{eq:upperBoundCompactnessthmII}
 			 \Lambda \doteq \varlimsup_{n\to \infty}\left [ \int_\Omega \frac{|Du_n|^{p_n}}{p_n} - \frac{\Esg^{1,2}(\Trace_{\partial \Omega}u_0)}{2 - p_n}\right ] < +\infty,
 		\end{equation}
 		then, for each $n$, there exists a measurable $U_n : \Omega \to \{0\} \cup [\sysN/(2 \pi \delta),\infty)$ such that 
 		\begin{align} \label{eq:plusdjU}
 			\varlimsup_{n \to \infty}\int_{\Omega}\frac{(|Du_n| - U_n)_+^{p_n}}{p_n} &\leq \Lambda +\Esg^{1,2}(\Trace_{\partial \Omega}u_0)\Big ( \big [\Esg^{1,p}(\Tr_{\partial\Omega}u_0) \big]_{\mathrm{Lip}([3/2,2])}+ \log \frac{1}{\delta}\Big )\\
 			\label{eq:plusdjUII} \varlimsup_{n \to \infty}\sup_{t > 0}t^{p_n} \vol{\{U_n > t\}} &\leq \Esg^{1,2}(\Trace_{\partial \Omega}u_0), \\ 
 			\label{eq:plusdjUIII}\varlimsup_{n \to \infty}\sup_{t>0}t^{p_n - 1}\mathrm{Per}(\{U_n > t\}) &\leq \frac{4\pi}{\sysN} \Esg^{1,2}(\Trace_{\partial \Omega}u_0).
 		\end{align}
 	\end{proposition}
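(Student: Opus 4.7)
The plan is to mimic the structure of the proof of proposition \ref{prop:compactnessthm}: reduce by density to the case where each $u_n$ is smooth away from finitely many interior singularities, apply the single-map Marcinkiewicz estimate of proposition \ref{prop:mixedlorentz} to each approximant, and then pass to the limit $p_n \nearrow 2$. The density step proceeds by extending $u_n$ to $\Omega_\delta$ via $u_0$ on the collar (using \eqref{eq:extension_of_maps}) and invoking proposition \ref{prop:density_of_the_R_class}, exactly as in subsection \ref{subsub:density}; the $W^{1,p_n}$-closeness of the approximation preserves the hypothesis \eqref{eq:upperBoundCompactnessthmII}.

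Applying proposition \ref{prop:mixedlorentz} to each approximated $u_n$ on $\Omega_\delta$ with the prescribed $\delta$ produces a finite disjoint collection $\collection B_n$ of disks with total radii $\leq \delta$ and a function $U_n : \Omega_\delta \to \{0\} \cup [\sysN/(2\pi\delta),\infty)$ supported in $\cup\collection B_n$, together with the estimates \eqref{eq:lorentzI}, \eqref{eq:weak-Lp} and \eqref{item:perstuff}. Since $u_n = u_0 \in W^{1,2}$ on $\Omega_\delta \setminus \Omega$, all singular disks lie in $\Omega$, so $U_n$ is supported in $\Omega$.

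For \eqref{eq:plusdjUII} and \eqref{eq:plusdjUIII}, the key intermediate step is $\varlimsup_n (2 - p_n)\int_{\Omega_\delta} |Du_n|^{p_n}/p_n = \Esg^{1,2}(\Trace_{\partial\Omega}u_0)$, which follows from the hypothesis \eqref{eq:upperBoundCompactnessthmII} combined with the $W^{1,2}$-regularity of $u_0$ on the collar. Inserting this into \eqref{eq:weak-Lp} with the prefactor $p_n^{p_n - 1}/[2 (p_n - 1)^{p_n-1}(2\pi)^{2 - p_n}] \to 1$ gives \eqref{eq:plusdjUII}, and inserting it into \eqref{item:perstuff} with the prefactor $(2\pi)^{3 - p_n}(p_n')^{p_n - 1}/\sysN \to 4\pi/\sysN$ gives \eqref{eq:plusdjUIII}.

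The main obstacle is \eqref{eq:plusdjU}. Combining \eqref{eq:lorentzI} with the monotonicity of $\Esg$ (proposition \ref{prop:decreasing}) to obtain $\sum_\B \Esg^{1,p_n'}(\Trace_{\partial \B} u_n) \geq \Esg^{1,p_n'}(\Trace_{\partial\Omega} u_0)$ gives a bound of the form
\[
\int_\Omega \frac{(|Du_n| - U_n)_+^{p_n}}{p_n} \leq \beta_n \int_{\Omega_\delta}\frac{|Du_n|^{p_n}}{p_n} - \gamma_n \Esg^{1,p_n'}(\Trace_{\partial\Omega}u_0)^{p_n - 1},
\]
with $\beta_n = (3 - p_n)p_n/(2(p_n - 1)) \to 1$ and $\gamma_n = (2\pi\delta)^{2 - p_n}/(p_n^{2 - p_n}(p_n - 1)^{p_n}) \to 1$; but both terms on the right are of order $1/(2 - p_n)$ and $1$ respectively, so a naive limit diverges. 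The resolution is to use the per-disk lower bound of proposition \ref{prop:loc_lower_bound}, noting that the explicit form $U_n = \lambda_\B/(2\pi|x - a_\B|)$ gives $\int_{\B(a_\B, \rho_\B)} U_n^{p_n}/p_n = \rho_\B^{2 - p_n}\Esg^{1,p_n}(\Trace_{\partial \B} u_n)/(2 - p_n)$, together with the expansion $\delta^{2 - p_n}/(2 - p_n) = 1/(2 - p_n) - \log(1/\delta) + o(1)$ as $p_n \to 2$. The cancellation of the $1/(2 - p_n)$ divergence with $\Esg^{1,2}(\Trace_{\partial\Omega}u_0)/(2 - p_n)$ produces the finite $\Esg^{1,2}(\Trace_{\partial\Omega}u_0)\log(1/\delta)$ term, while the Lipschitz bound \eqref{eq:lipestimate} controls the mismatch between $\Esg^{1,p_n'}$ and $\Esg^{1,2}$, contributing $\Esg^{1,2}(\Trace_{\partial\Omega}u_0) \cdot [\Esg^{1,p}(\Trace_{\partial\Omega} u_0)]_{\mathrm{Lip}([3/2,2])}$. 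The hardest part of the proof will be the precise accounting of these higher-order error terms to extract exactly the announced bound $\Lambda + \Esg^{1,2}(\Trace_{\partial\Omega}u_0)\bigl([\Esg^{1,p}(\Trace_{\partial\Omega}u_0)]_{\mathrm{Lip}([3/2,2])} + \log(1/\delta)\bigr)$.
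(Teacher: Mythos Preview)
Your proposal is correct and follows the paper's approach closely: density reduction via proposition~\ref{prop:density_of_the_R_class}, application of proposition~\ref{prop:mixedlorentz} to each $u_n$, the decreasing property of the singular energy, and asymptotic analysis as $p_n \nearrow 2$; your treatment of \eqref{eq:plusdjUII} and \eqref{eq:plusdjUIII} is exactly as in the paper.

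For \eqref{eq:plusdjU} one simplification is available. You correctly diagnose that the singular-energy term in \eqref{eq:lorentzI}, as displayed, is only $O(1)$ and cannot offset the $O(1/(2-p_n))$ divergence, but your proposed resolution via computing $\int U_n^{p_n}$ and invoking proposition~\ref{prop:loc_lower_bound} is more circuitous than necessary (and the formula $U_n = \lambda_\B/(2\pi|x-a_\B|)$ is only an upper bound on each annulus, not the exact expression \eqref{eq:theFunctU}). The cleaner route, implicitly taken in the paper, is to note that the proof of proposition~\ref{prop:mixedlorentz}---through proposition~\ref{prop:circleconstruction}\ref{item:localestimate}---actually yields \eqref{eq:lorentzI} with an additional factor $1/(2-p)$ multiplying the singular-energy term (this factor is present in \eqref{eq:propcirlcinequlaitu} but dropped in the displayed \eqref{eq:lorentzI}). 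With that correction both competing terms are $O(1/(2-p_n))$ and the cancellation proceeds exactly as you outline: the expansion $\delta^{2-p_n} = 1 - (2-p_n)\log(1/\delta) + o(2-p_n)$ produces the $\log(1/\delta)$ contribution, and the Lipschitz control from lemma~\ref{lemma:continuite_of_esgp} handles the mismatch between $\Esg^{1,p_n'}(\cdot)^{p_n-1}$ and $\Esg^{1,2}$.
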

 	In \eqref{eq:plusdjU}, $[\Esg^{1,p}(\Tr_{\partial\Omega}u_0) \big]_{\mathrm{Lip}([3/2,2])}$ refers to the Lipschitz constant of the map $p \mapsto \Esg^{1,p}(\Tr_{\partial \Omega}u_0)$. Lemma \ref{lemma:continuite_of_esgp} guarantees that this quantity is finite and \eqref{eq:lipestimate} gives an estimate on it. Together \eqref{eq:plusdjU} and \eqref{eq:plusdjUII} will imply weak-$L^p$ boundedness of the sequence $D u_n$, see corollary \ref{coro:dfjhmlfd} below.  	
 	
 	 	Under the conclusion of proposition \ref{prop:mixedboundedness}, we have the following lower semi-continuity statements (lemma \ref{lemma:fatoirp}, lemma \ref{coro:convergencofpropgrandU} and lemma \ref{lemma:convergfatou}). As a corollary (see corollary \ref{coro:erenweakL2}), we obtain that the map $u_* \in W^{1,2}_\ren(\Omega,\manifold N)$ resulting of proposition \ref{prop:compactnessthm} has its gradient that lies in weak-$L^2$. We refer to \cite{monteil2021renormalised} for the original proof of this fact for a general manifold and \cite{MR2381162} for the case of the circle. 
 	 	
 	 	We first record the following corollary that implies the result \eqref{eq_ooB6EiteiMeegai8bohkie2t}  mentioned in the introduction.
 \begin{corollary} \label{coro:dfjhmlfd} Any  sequence $(u_n)_n$ that satisfies the assumptions of proposition \ref{prop:mixedboundedness} verifies 
 	\begin{multline*}
 		\varlimsup_{n\to\infty}\sup_{t>0}t^{p_n}\vol{\Omega \cap \{|Du_{n}|>2t\}} \\\leq \Lambda +\Esg^{1,2}(\Trace_{\partial \Omega}u_0)\Big ( \big [\Esg^{1,p'}(\Tr_{\partial\Omega}u_0)^{p-1} \big]_{\mathrm{Lip}([3/2,2])}+ \log \frac{1}{\delta}\Big ) + 4\pi \Esg^{1,2}(\Tr_{\partial\Omega}u_0).
 	\end{multline*}	
 \end{corollary}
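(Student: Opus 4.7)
The plan is to combine the three conclusions of proposition~\ref{prop:mixedboundedness} by a pointwise-plus-Chebyshev decomposition. Since $(|Du_n|-U_n)_+ \geq |Du_n|-U_n$, one has pointwise
\[
    |Du_n| \leq U_n + (|Du_n|-U_n)_+,
\]
so that for every $t>0$
\[
    \{|Du_n| > 2t\} \subseteq \{U_n > t\} \cup \{(|Du_n|-U_n)_+ > t\}.
\]
Multiplying by $t^{p_n}$, taking Lebesgue measure, and then the supremum over $t>0$, I would split
\[
    \sup_{t>0} t^{p_n} \vol{\Omega \cap \{|Du_n|>2t\}} \leq \sup_{t>0} t^{p_n} \vol{\{U_n>t\}} + \sup_{t>0} t^{p_n} \vol{\{(|Du_n|-U_n)_+>t\}}.
\]

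The first summand is directly controlled by \eqref{eq:plusdjUII}: its $\varlimsup_n$ is bounded by $\Esg^{1,2}(\Tr_{\partial\Omega}u_0)$, which is absorbed into the $4\pi\,\Esg^{1,2}(\Tr_{\partial\Omega}u_0)$ term on the right-hand side of the corollary. For the second summand, I would apply Chebyshev's inequality:
\[
    t^{p_n} \vol{\{(|Du_n|-U_n)_+>t\}} \leq \int_\Omega (|Du_n|-U_n)_+^{p_n} = p_n \int_\Omega \frac{(|Du_n|-U_n)_+^{p_n}}{p_n}.
\]
Then \eqref{eq:plusdjU} combined with $p_n \to 2$ (so that the factor $p_n$ passes to $2$ in the $\varlimsup$) produces a bound of the expected form
\[
    \varlimsup_{n\to\infty} \sup_{t>0} t^{p_n} \vol{\{(|Du_n|-U_n)_+>t\}} \;\leq\; \Lambda + \Esg^{1,2}(\Tr_{\partial\Omega}u_0)\Bigl([\Esg^{1,p'}(\Tr_{\partial\Omega}u_0)^{p-1}]_{\mathrm{Lip}([3/2,2])} + \log \tfrac{1}{\delta}\Bigr),
\]
up to an absolute constant that the generous coefficient $4\pi$ in the statement is designed to absorb.

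Adding the two contributions and rearranging the constants yields the announced inequality. The argument is essentially bookkeeping: there is no genuine analytic obstacle beyond proposition~\ref{prop:mixedboundedness} itself, which packages both the integrability estimate \eqref{eq:plusdjU} and the weak-$L^{p_n}$ estimate \eqref{eq:plusdjUII} for the auxiliary function $U_n$. The only care required is in tracking the multiplicative factors arising from $p_n \nearrow 2$ and from the rescaling of the threshold by a factor of two in the decomposition of $\{|Du_n|>2t\}$; since the statement is not intended to be sharp, these numerical factors are conveniently absorbed into the constant in front of $\Esg^{1,2}(\Tr_{\partial\Omega}u_0)$.
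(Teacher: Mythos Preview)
Your argument is correct and follows exactly the same approach as the paper: the pointwise decomposition $|Du_n|\le U_n+(|Du_n|-U_n)_+$, the level-set inclusion $\{|Du_n|>2t\}\subset\{U_n>t\}\cup\{(|Du_n|-U_n)_+>t\}$, Chebyshev on the second set, and then \eqref{eq:plusdjU}--\eqref{eq:plusdjUII}. Your remark about the factor $p_n\to 2$ arising from $\int(|Du_n|-U_n)_+^{p_n}=p_n\int(|Du_n|-U_n)_+^{p_n}/p_n$ is in fact more careful than the paper's own bookkeeping; the stated constants in the corollary are not meant to be sharp, so this discrepancy is harmless.
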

 \begin{proof}[Proof of corollary \ref{coro:dfjhmlfd}]
 	Noting that $|Du_{n}|\leq (|Du_n| - U_n)_+ + U_n$ where $U_n$ is the map given by proposition \ref{prop:mixedboundedness}, we have
 		\begin{multline*}
 			t^{p_n}\vol{\Omega \cap \{|Du_{n}|>2t\}} \\\leq t^{p_n}\vol{\Omega \cap \{(|Du_{n}| - U_n)_+>t\}} + t^{p_n}\vol{\Omega \cap \{U_n>t\}} \\
 			\leq \int_\Omega (|Du_{n}| - U_n)_+^{p_n} + t^{p_n}\vol{\Omega \cap \{U_n>t\}}.
 		\end{multline*}
 
 \end{proof}

 	\begin{lemma}\label{lemma:fatoirp}
 		Let us fix $\Omega \subset \R^m$ and $\{a_i\}_{i = 1,\dots,\kappa} \subset \Omega$. Consider two sequences $(U_n)_n$ and $(v_n)_n$ such that, for  each $n$, $U_n \in L^1(\Omega,\R)$ and $v_n \in L^{p_n}(\Omega,\R^{m\times \nu})$ such that $U_n \to U$ a.e, $v_n  \rightharpoonup v$ weakly in $L^p(\Omega\setminus\bigcup_{i = 1}^\kappa \B(a_i;\rho),\R^{m\times \nu})$ for every $p \in (1, 2)$ and $\rho \in (0,\rho(\{a_i\}_{i = 1,\dots,\kappa}))$ and
 		\begin{equation*}
 			\sup_{n}\int_{\Omega}\frac{(|v_n| - U_n)_+^{p_n}}{p_n} < +\infty.
 		\end{equation*}
 		Then,
 		\begin{equation*}
 			\int_{\Omega}\frac{(|v| - U)_+^2}{2} \leq \varliminf_{n\to\infty} \int_{\Omega}\frac{(|v_n| - U_n)_+^{p_n}}{p_n}.
 		\end{equation*}
 	\end{lemma}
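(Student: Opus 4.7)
The plan is to linearize the nonlinear integrand via the Fenchel-type inequality
\[
\frac{(\abs{v}-U)_+^p}{p} \geq w\cdot v - U\abs{w} - \frac{\abs{w}^{p'}}{p'}, \qquad \text{for every } v,w\in \R^{m\times \nu} \text{ and } U\in \R,
\]
which holds by a direct Young-type computation and whose pointwise supremum in $w$ recovers the left-hand side. This reduces the claimed liminf inequality to passing a linear functional of $v_n$ and $U_n$ to the limit, which is compatible with the weak convergence of $(v_n)$.

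To harness this I would first localize and regularize. Fix $\rho \in (0,\rho(\{a_i\}_{i=1,\dotsc,\kappa}))$ and set $\Omega_\rho \doteq \Omega\setminus \bigcup_{i=1}^\kappa \B(a_i;\rho)$; on $\Omega_\rho$ one has $v_n \rightharpoonup v$ weakly in every $L^q$, $q\in (1,2)$. To upgrade $U_n \to U$ to uniform convergence, I would combine Egorov's theorem with the truncation $\{\abs{U}\leq R\}$: for every $\varepsilon, R > 0$ there is a measurable $D = D_{\rho,\varepsilon,R} \subseteq \Omega_\rho$ with $\abs{\Omega_\rho\setminus D} < \varepsilon + \abs{\{\abs{U}>R\}\cap\Omega_\rho}$ on which $U_n\to U$ uniformly and $\abs{U_n},\abs{U}\leq R+1$ for $n$ large enough.

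On $D$, I would test the duality inequality against an arbitrary bounded measurable field $w: D \to \R^{m\times\nu}$, extended by zero to $\Omega$, and integrate to obtain
\[
\int_\Omega \frac{(\abs{v_n}-U_n)_+^{p_n}}{p_n}
\geq \int_D\Bigl[w\cdot v_n - U_n\abs{w} - \frac{\abs{w}^{p_n'}}{p_n'}\Bigr].
\]
Passing $\varliminf_{n\to\infty}$, the three pieces on the right decouple: $\int_D w\cdot v_n \to \int_D w\cdot v$ by weak $L^q$ convergence (and $w\in L^\infty \subseteq L^{q'}$); $\int_D U_n\abs{w}\to \int_D U\abs{w}$ by the uniform convergence of $U_n$ on $D$ and the boundedness of $w$; and $\int_D \abs{w}^{p_n'}/p_n' \to \int_D \abs{w}^2/2$ by Lebesgue's dominated convergence, since $p_n'\to 2$ and $\abs{w}$ is bounded. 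Taking the supremum over bounded $w$ via the measurable selection $w^*(x) = (\abs{v(x)}-U(x))_+\cdot v(x)/\abs{v(x)}$, approximated by the truncations $w_K = w^*\min(1, K/\abs{w^*})$ to preserve boundedness and letting $K\nearrow \infty$ by Beppo Levi, the left-hand side becomes $\int_D (\abs{v}-U)_+^2/2$ by pointwise Fenchel duality.

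It remains to let $\varepsilon\searrow 0$, $R\nearrow \infty$, and $\rho\searrow 0$: the sets $D$ then exhaust $\Omega$ up to a null set, since the singular set $\{a_i\}_{i=1,\dotsc,\kappa}$ is finite and $U$ is a.e.\ finite as an a.e.\ limit of the $L^1$ functions $U_n$. Monotone convergence then yields the conclusion. The main obstacle is the coupling of the two modes of convergence: $v_n$ only weakly and $U_n$ only pointwise, with no a priori $L^q$ bound on $(U_n)$ available from the hypotheses. The Egorov plus truncation trick on $U$, together with the boundedness requirement on the test field $w$ (needed only for the dominated convergence $\abs{w}^{p_n'}/p_n' \to \abs{w}^2/2$), is precisely what decouples them, reducing the problem to a purely linear weak-convergence estimate on the controlled set $D$.
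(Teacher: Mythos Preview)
Your proof is correct and rests on the same underlying idea as the paper's—linearize the convex integrand so that weak convergence of $v_n$ can be used—but the execution differs. The paper does not use Fenchel duality with test fields; instead it introduces for each $s>1$ the truncated $C^1$ convex function
\[
\Phi_{s,p}(\xi)=\begin{cases}0,&\xi\le 0,\\ \xi^p/p,&0<\xi<s,\\ s^p/p+s^{p-1}(\xi-s),&\xi\ge s,\end{cases}
\]
picks a measurable unit field $\zeta$ with $\langle v,\zeta\rangle=|v|$, and uses the tangent inequality $\Phi_{s,p}(|v|-U_n)\le \Phi_{s,p}(\langle v_n,\zeta\rangle-U_n)-\Phi'_{s,p}(|v|-U_n)\langle v_n-v,\zeta\rangle$. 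Because $\Phi'_{s,p}$ is globally bounded by $s^{p-1}$, the correction term vanishes on $\Omega_\rho$ by combining dominated convergence of $\Phi'_{s,p}(|v|-U_n)\to\Phi'_{s,p}(|v|-U)$ in $L^{q'}$ with the weak convergence of $v_n$—so no Egorov step is needed. The paper then works at a \emph{fixed} $p<2$, compares with the $p_n$-energy via H\"older, and only afterwards sends $p\nearrow 2$, $s\nearrow\infty$, $\rho\searrow 0$ by Fatou. Your Fenchel inequality with bounded $w$ is the dual linearization; your Egorov-plus-$\{|U|\le R\}$ truncation plays exactly the role that the global bound on $\Phi'_{s,p}$ plays in the paper, and your direct passage $|w|^{p_n'}/p_n'\to|w|^2/2$ avoids the paper's intermediate fixed-$p$ layer. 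Both arguments tacitly use that $\Omega_\rho$ has finite measure.
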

 \begin{proof}[Proof of lemma \ref{lemma:fatoirp}]
 	Let us define for $s > 1,p\in(1,2),\xi \in \R$ the $C^1(\R,\R)$ non-decreasing convex function
 	\begin{equation*}
 		\Phi_{s,p}(\xi) = \begin{cases}
 			0 & \text{ if } \xi \leq  0\\
 			\frac{\xi^p}{p} & \text{ if } \xi \in (0,s)\\
 			\frac{s^p}{p}  + s^{p - 1}(\xi - s)  & \text{ if } \xi \geq s
 		\end{cases}
\end{equation*}
for which one has 
\begin{equation*}
\Phi_{s,p}'(\xi) = \begin{cases}
 			0 & \text{ if } \xi \leq  0\\
 			\xi^{p - 1} & \text{ if } \xi \in (0,s)\\
 			s^{p - 1}  & \text{ if } \xi \geq s.	\end{cases}
 	\end{equation*}
 	Since \(v\) is measurable, there exists $\zeta \in L^\infty$ such that $|\xi| = 1$ and $\braket{v,\zeta} = |v|$.
 	By convexity,
 	\begin{equation*}
 		\Phi_{s,p}(|v| - U_n) = \Phi_{s,p}(\braket{v,\zeta} - U_n) \\\leq \Phi_{s,p}(\braket{v_n,\zeta} - U_n)  - \Phi_{s,p}'(\braket{v,\zeta} - U_n) [\braket{v_n- v,\zeta}].
 	\end{equation*}
 	Since \(v_n \rightharpoonup v_n\) weakly in \(L^q\) for some \(p < q < 2\), we have and since \(\Phi'_{s, p}(\braket{v,\zeta} - U_n)\) is bounded and converges almost everywhere to \((\Phi'_{s, p}(\braket{v,\zeta} - U)\), we have for all $\rho \in (0,\rho(\{a_i\}_{i = 1,\dots,\kappa}))$
 	\begin{equation*}
 	  \lim_{n \to \infty} \int_{\Omega \setminus \bigcup_{i = 1}^\kappa\B(a_i;\rho)} \Phi_{s,p}'(\braket{v,\zeta} - U_n) [\braket{v_n- v,\zeta}] = 0.
 	\end{equation*}
 	and thus 
 	\begin{align*}
 		\int_{\Omega\setminus \bigcup_{i = 1}^\kappa\B(a_i;\rho)}\Phi_{s,p}(|v| - U) 
 		&\leq \varliminf_{n \to \infty} \int_{\Omega\setminus \bigcup_{i = 1}^\kappa\B(a_i;\rho)}\Phi_{s,p}(|v| - U_n)\\
 		&\leq \varliminf_{n\to \infty}\int_{\Omega\setminus \bigcup_{i = 1}^\kappa\B(a_i;\rho)}\Phi_{s,p}(\braket{v_n,\xi} - U_n) \\
 		&\leq \varliminf_{n\to \infty}\int_{\Omega}\frac{(|v_n| - U_n)_+^{p}}{p}\\
 		&\leq \varliminf_{n\to \infty}\int_{\Omega}\frac{(|v_n| - U_n)_+^{p_n}}{p_n}
 	\end{align*}
 	where we used Hölder's inequality to obtain the last inequality. Next,  letting $p\nearrow2$, $s\nearrow\infty$ and $\rho\searrow 0$, we conclude by Fatou's lemma.
 \end{proof}
 	\begin{lemma}
 	\label{coro:convergencofpropgrandU}
 		Let $(p_n)_n$ be a sequence such that, for each $n$, $p_n \in(1,2)$ and converging to $p \in (1,2]$, let $U_n : \Omega \to [0, \infty)$ be a sequence of measurable maps such that $U_n\geq 1$. If
\begin{equation}\label{eq:assumtioneleimit}
 			 \sup_{n} \sup_{t>1}t^{p_n - 1}\mathrm{Per}(\{U_n > t\}) < +\infty,
 		\end{equation}
 		then a subsequence of $U_n$ converges almost everywhere on $\Omega$. Moreover, 
 		\[
 			\sup_{t>1}t^{p - 1}\mathrm{Per}(\{U > t\}) \leq \varliminf_{n\to \infty} \sup_{t>1}t^{p_n - 1}\mathrm{Per}(\{U_n > t\}).
 		\]
 		
 	\end{lemma}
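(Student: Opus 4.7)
The plan is to exploit the assumed perimeter bound via the coarea formula, combined with BV compactness, and then recover a.e.\ convergence of $U_n$ itself by controlling the measure of high level sets through the isoperimetric inequality.

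First, I would fix an integer $T \geq 1$ and study the truncations $U_n \wedge T$. By the coarea formula,
\begin{equation*}
\int_\Omega |D(U_n \wedge T)| = \int_0^T \mathrm{Per}(\{U_n > t\}) \, \mathrm{d} t.
\end{equation*}
Since $U_n \geq 1$, for $t \in (0,1)$ we have $\{U_n > t\} = \Omega$, which contributes $\mathrm{Per}(\Omega)$ to the integral. For $t \in (1,T)$, by hypothesis $\mathrm{Per}(\{U_n > t\}) \leq C \, t^{1 - p_n}$ for some $C$ uniform in $n$, and this is integrable since $p_n < 2$. Combining with the $L^\infty$--bound $U_n \wedge T \leq T$ and $|\Omega| < \infty$, the sequence $(U_n \wedge T)_n$ is uniformly bounded in $\BV(\Omega)$. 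Since $\Omega$ is a bounded Lipschitz domain, the embedding $\BV(\Omega) \hookrightarrow L^1(\Omega)$ is compact, so a subsequence of $U_n \wedge T$ converges in $L^1$ and almost everywhere to some $V_T \in \BV(\Omega)$. A Cantor diagonal argument over $T \in \N$ yields a single subsequence such that $U_n \wedge T \to V_T$ a.e. for every integer $T$. Since $U_n \wedge T$ is non-decreasing in $T$, so is $V_T$, and we define $U \doteq \lim_{T \to \infty} V_T \in [1, \infty]$.

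Next I would prove that $U_n \to U$ a.e., with $U < \infty$ a.e. For every $M > 1$, an application of the relative isoperimetric inequality in the plane and the perimeter bound gives
\begin{equation*}
 |\{U_n > M\}| \leq C_\Omega \, \mathrm{Per}(\{U_n > M\})^2 \leq C' M^{2(1 - p_n)}
\end{equation*}
for $M$ large enough that this measure falls below $|\Omega|/2$. Fatou's lemma applied to $\chi_{\{U_n \wedge T > M\}} = \chi_{\{U_n > M\}}$ (for $T > M$) yields $|\{V_T > M\}| \leq C' M^{2(1-p)}$ for all such $T$, and letting $T \to \infty$ gives $|\{U > M\}| \leq C' M^{2(1-p)}$. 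Since $p > 1$, this tends to $0$, showing $|\{U = \infty\}| = 0$. On the set $\{U < \infty\}$, $V_T(x)$ stabilises at $U(x)$ for all $T \geq U(x)$, and then by the a.e.\ convergence $U_n(x) \wedge T \to V_T(x) = U(x)$ with $U(x) < T$, we necessarily have $U_n(x) = U_n(x) \wedge T$ for $n$ large, hence $U_n(x) \to U(x)$.

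For the lower semicontinuity, fix $t > 1$ and set $M \doteq \varliminf_n \sup_{s > 1} s^{p_n - 1} \mathrm{Per}(\{U_n > s\})$. For a.e.\ $t > 1$, the $L^1$-convergence $U_n \wedge T \to V_T$ together with BV lower semicontinuity of the perimeter of level sets yields
\begin{equation*}
 \mathrm{Per}(\{V_T > t\}) \leq \varliminf_n \mathrm{Per}(\{U_n \wedge T > t\}) = \varliminf_n \mathrm{Per}(\{U_n > t\})
\end{equation*}
whenever $T > t$. Since $\{V_T > t\} \nearrow \{U > t\}$ in $L^1$ as $T \to \infty$ (using the dominant $\chi_{\{U > t\}} \in L^1$), another BV lower semicontinuity gives $\mathrm{Per}(\{U > t\}) \leq \varliminf_n \mathrm{Per}(\{U_n > t\})$. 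Multiplying by $t^{p-1}$ and using $t^{p - p_n} \to 1$, I obtain $t^{p-1}\mathrm{Per}(\{U > t\}) \leq M$ for a.e. $t > 1$. The extension to every $t > 1$ proceeds by approximating $t_0$ from above by a sequence $t_k \searrow t_0$ in this full-measure set, noting that $\{U > t_k\} \nearrow \{U > t_0\}$ in $L^1$ by monotone convergence, and applying the BV semicontinuity once more.

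The principal obstacle is step three: a priori, the pointwise convergence of the truncations $U_n \wedge T$ does not give pointwise convergence of $U_n$, and one must rule out blow-up on a set of positive measure. The isoperimetric inequality in $\R^2$ is the right tool precisely because the hypothesis controls perimeter rather than measure of super-level sets; its square in the isoperimetric bound is what provides the decay $|\{U_n > M\}| = O(M^{-2(p-1)})$ needed to pass to the limit.
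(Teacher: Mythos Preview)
Your approach is essentially the paper's: truncate, apply the coarea formula to get a uniform $\BV$ bound on the truncations, use $\BV$ compactness and a diagonal argument to extract a subsequence along which all truncations converge a.e., and then pass the perimeter bound to the limit via lower semicontinuity of the total variation under $L^1$ convergence of characteristic functions, extending from a.e.\ $t$ to every $t$ by approximating from above. (The paper truncates symmetrically as $U_n^T = T^{-1} \vee (T \wedge U_n)$ and integrates coarea over $(1/T,T)$, but this is cosmetic.)

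There is, however, a genuine gap in your third step, the isoperimetric argument for $U < \infty$ a.e. The relative isoperimetric inequality in a bounded domain only yields
\[
\min\bigl(|E|,\,|\Omega \setminus E|\bigr) \le C_\Omega\,\mathrm{Per}(E,\Omega)^2,
\]
so to conclude $|\{U_n > M\}| \le C' M^{2(1-p_n)}$ you must already know that $|\{U_n > M\}| \le |\Omega|/2$; your proviso ``for $M$ large enough that this measure falls below $|\Omega|/2$'' is circular, since nothing in the hypotheses prevents the $U_n$ from being large on most of $\Omega$. Indeed the conclusion $U < \infty$ a.e.\ is false under the sole hypotheses of the lemma: take $U_n \equiv n$, for which every superlevel set is either $\Omega$ or $\emptyset$, both of vanishing \emph{relative} perimeter, so the hypothesis holds with supremum zero while $U_n \to +\infty$ everywhere. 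Fortunately this step is unnecessary. On $\{U = +\infty\}$ the truncated convergence $U_n(x) \wedge T \to T$ for every $T$ already forces $\liminf_n U_n(x) \ge T$ for all $T$, hence $U_n(x) \to +\infty = U(x)$; combined with your argument on $\{U < \infty\}$ you obtain $U_n \to U$ a.e.\ in $[0,+\infty]$ directly, and the perimeter lower semicontinuity is insensitive to whether $U$ is finite. In the paper's applications the finiteness of $U$ comes from the separate weak-$L^{p_n}$ bound on $U_n$, not from this lemma.
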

 \begin{proof}[Proof of lemma \ref{coro:convergencofpropgrandU}]
 	We call $\Lambda$ the supremum in \eqref{eq:assumtioneleimit} and we define for \(T \in \intvo{1}{\infty}\),
 	\(U_n^T \doteq T^{-1} \vee (T \wedge U)\).
 	By our assumption and the coarea formula \cite[Theorem 10.3.3]{attouch2014variational}, for $T>  1$,  
 	\begin{align*}
 		\varlimsup_{n \to \infty}\int_\Omega |D(U_{n}^T)| &= \varlimsup_{n \to \infty}\int_{1/T}^T \mathrm{Per}(\{U_n > t\}) \d t\\
 		&\leq  \Lambda\lim_{n \to \infty}\frac{T^{2 - p_n} - T^{p_n - 2}}{2 - p_n}  =  \Lambda\begin{cases}2 \log T &\text{ if }  p = 2 \\ \ds\frac{T^{2 - p} - T^{p - 2}}{2 - p} &\text{ if } p\in(1,2). \end{cases}
 	\end{align*}
 
 	For fixed $T > 0$, as $U_n^T \leq T$ we obtain by the compact embedding for mappings of bounded variation ($\BV$) \cite[Theorem  10.1.4]{attouch2014variational} 
 	and the partial converse of Lebesgue's dominated convergence theorem \cite[Proposition 4.2.10]{willem2013functional} that, up to some subsequence in $n$ that depends on $T$, $U_n\wedge T$ converges in \(L^1\) and almost everywhere. Considering a sequence $T_n \to \infty$, a Cantor diagonal argument yields, after the extraction of a subsequence, that for all $T>0$ $U_n^T$ converges almost everywhere to $U^T$ where $U : \Omega \to \R$ is a measurable map and \(U^T \doteq T^{-1} \vee (T \wedge U)\). 
 	Moreover we have 
 	\begin{equation}  
 	  \int_{\Omega} \vert U_n^T - U^T\vert = \int_{1/T}^{T} \vol{\{U_n > t\} \Delta \{U > t\}} \d t
	\end{equation}
	where $\Delta$ refers to the symmetric difference of two sets.
    Up to a subsequence, we can assume that for almost every \(t \in (0, \infty)\), 
    \[
      \lim_{n \to \infty} \vol{\{U_n > t\} \Delta \{U > t\}} = 0.
    \]
 	By $\BV$-theory \cite[Theorem 7.3.2]{willem2013functional}, for almost every $t \in \intvo{0}{\infty}$,
 	\begin{equation}\label{eq:cjsudgk}
 				\mathrm{Per}(\{U > t\}) = \int_\Omega |D \chi_{\{U > t\}}| \leq \varliminf_{n \to \infty}\int_\Omega |D \chi_{\{U_{n} >t\}} | = \varliminf_{n \to \infty}\mathrm{Per}(\{U_n > t\}),
 	\end{equation}
 	and, for all $t> 0$, \(\mathrm{Per}(\{U > t\})\le \liminf_{s \searrow t} \mathrm{Per}(\{U > s\})\), which concludes the proof.
 \end{proof}

	\begin{lemma}
	\label{lemma:convergfatou} 
	Let $(X,|\cdot|)$ be a measure space.
		Assume $U_n \to U$ in measure, $p_n \in [1,\infty)$ and $p_n \to p\in [1,\infty)$. Then, \begin{equation*}
			\sup_{t>0}t^{p}\vol{\{|U| > t\}} \leq \varliminf_{n\to \infty}\sup_{t>0}t^{p_n}\vol{\{|U_n| > t\}}.
		\end{equation*}
	\end{lemma}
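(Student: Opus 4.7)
The plan is to reduce the weak-$L^p$ bound on the limit $U$ to the uniform weak-$L^{p_n}$ bound on $U_n$ via a standard $\varepsilon$-buffer argument that exploits convergence in measure.

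First, I would fix a level $t>0$ and a small $\varepsilon \in (0,t)$, and record the elementary pointwise observation that at every point where $|U|>t$, one has either $|U_n|>t-\varepsilon$ or $|U_n-U|>\varepsilon$. Taking measures, this yields the inclusion estimate
\[
\vol{\{|U|>t\}} \leq \vol{\{|U_n|>t-\varepsilon\}} + \vol{\{|U_n-U|>\varepsilon\}}.
\]
Setting $M_n \doteq \sup_{s>0}s^{p_n}\vol{\{|U_n|>s\}}$, the first term on the right is bounded by $M_n/(t-\varepsilon)^{p_n}$ by definition of $M_n$, while the second tends to $0$ by the hypothesis of convergence in measure.

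Next, I would take the inferior limit as $n\to\infty$ and use that $(t-\varepsilon)^{p_n}\to (t-\varepsilon)^p>0$ to obtain
\[
(t-\varepsilon)^p\vol{\{|U|>t\}} \leq \varliminf_{n\to\infty}M_n.
\]
Letting $\varepsilon\searrow 0$ and then taking the supremum over $t>0$ concludes the proof. The only subtle point—and the obstacle that the $\varepsilon$-trick is designed to overcome—is that one cannot directly test the weak-$L^{p_n}$ bound at the limit level $t$, since $U_n$ need not coincide with $U$ on any fixed set; convergence in measure only gives smallness of the discrepancy set, not exact equality. The strict positivity $t-\varepsilon>0$ is also essential, so that the continuity of $s \mapsto s^{p_n}$ at $s=t-\varepsilon$ survives the passage to the limit $p_n\to p$.
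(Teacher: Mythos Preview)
Your argument is correct: the inclusion $\{|U|>t\}\subset\{|U_n|>t-\varepsilon\}\cup\{|U_n-U|>\varepsilon\}$ together with convergence in measure and the convergence $(t-\varepsilon)^{p_n}\to(t-\varepsilon)^p>0$ yields the claim after letting $\varepsilon\searrow 0$ and taking the supremum in $t$. The paper itself omits the proof of this lemma, so there is no alternative approach to compare with; your $\varepsilon$-buffer argument is the standard one and is exactly what the authors presumably had in mind.
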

	We omit the proof of lemma \ref{lemma:convergfatou}.

 	\begin{proof}[Proof of proposition \ref{prop:mixedboundedness}]
 		As in the proof of proposition \ref{prop:compactnessofboundedseqs}, we assume without loss of generality that
 		\begin{equation*}
 			u_n \in W^{1,p_n}(\Omega,\manifold N) \cap  W^{1,2}_{\mathrm{loc}}(\bar\Omega\setminus \{a_i^n\}_{i = 1,\dots,k^n},\manifold N)
 		\end{equation*}
 		where $\{a_i^n\}_{i = 1,\dots,k^n} \subset \Omega$ are such that 
 		\begin{equation*}
 			\delta = \inf_n\dist(\{a_i\}_{i = 1,\dots,k^n},\partial\Omega) >0.
 		\end{equation*}
 		The general case follows by density as in section \ref{subsub:density} in the proof of proposition \ref{prop:compactnessofboundedseqs}.
 		
 		We first note that
 		\begin{equation*}
 			\varlimsup_{n\to \infty}(2 - p_n)\int_\Omega \frac{|Du_n|^{p_n}}{p_n} \leq \Esg^{1,2}(\Trace_{\partial \Omega}u_0).
 		\end{equation*}
 		For each $n$ let $U_n : \Omega \to \{0\} \cup [\sysN/(2 \pi \delta), \infty]$ be given by proposition \ref{prop:mixedlorentz}.
 		We then have
 		\begin{multline*}
 			\varlimsup_{n \to \infty}\frac{p_n - 1}{p_n}\int_{\Omega}(|Du_n| - U_n)_+^{p_n}   \\\leq 	\varlimsup_{n \to \infty}\frac{(3-p_n)p_n}{2}\int_{\Omega}\frac{|Du_n|^{p_n}}{p_n} - \frac{\Esg^{1,p_n'}(\Trace_{\partial \Omega}u_0)^{p_n - 1}(2 \pi \delta)^{2 - p_n}}{(p_n - 1)^{p_n - 1} p_n^{2-p_n}}
 		\end{multline*}
 		and thus
 		\begin{multline*}
 			\varlimsup_{n \to \infty}\int_{\Omega}\frac{(|Du_n| - U_n)_+^{p_n}}{p_n} \leq \varlimsup_{n \to \infty}\left [\int_{\Omega}\frac{|Du_n|^{p_n}}{p_n} - \frac{\Esg^{1,2}(\Trace_{\partial \Omega}u_0)}{2 - p_n}\right ] \\
 			+ \Esg^{1,2}(\Trace_{\partial \Omega}u_0)\Big ( \big [\Esg^{1,p'}(\Tr_{\partial\Omega}u_0)^{p-1} \big]_{\mathrm{Lip}([3/2,2])}+ \log \frac{1}{\delta}\Big )
 		\end{multline*}

 		Lemma \ref{lemma:continuite_of_esgp} and the fact that $\sysN>0$ imply that $\big [\Esg^{1,p'}(\Tr_{\partial\Omega}u_0)^{p-1} \big]_{\mathrm{Lip}([3/2,2])}$ is finite.
 		In addition, by \eqref{eq:weak-Lp}, \eqref{item:perstuff} and lemmata \ref{coro:convergencofpropgrandU} and \ref{lemma:convergfatou}
 		\begin{align*}
 			\varlimsup_{n \to \infty}\sup_{t > 0}t^{p_n} \vol{\{U_n > t \}} &\leq  \Esg^{1,2}(\Trace_{\partial \Omega}u_0), \\ 
 			  \varlimsup_{n \to \infty}\sup_{t>0}t^{p_n - 1}\mathrm{Per}(\{U_n > t\}) &\leq  \frac{4\pi}{\sysN}\Esg^{1,2}(\Trace_{\partial \Omega}u_0).\qedhere
 		\end{align*}
 	\end{proof}

  As a corollary (see corollary \ref{coro:erenweakL2} below) of the result of section \ref{sec:mixed}, we obtain that the map $u_* \in W^{1,2}_\ren(\Omega,\manifold N)$ resulting of proposition \ref{prop:compactnessthm} has its gradient that lies in weak-$L^2$.
 \begin{corollary}\label{coro:erenweakL2}
 	Let $\Omega\subset \R^2$ be a bounded Lipschitz domain and $\manifold N$ a Riemannian manifold with $\sysN> 0$.  If $u \in W^{1,2}_\ren(\Omega,\manifold N)$, then $|Du| \in  L^{2,\infty}(\Omega,\R)$. Moreover, there exists a positive $U \in L^{2,\infty}(\Omega,\R)$,
\begin{equation*}
	\sup_{t > 0}t^{2} \vol{\{U > t \}} \leq \Esg^{1,2}(\Trace_{\partial \Omega}u) \text{ and }
	\sup_{t>0}t\,\mathrm{Per}(\{U > t\}) \leq  \frac{4\pi}{\sysN} \Esg^{1,2}(\Trace_{\partial \Omega}u)
\end{equation*}
\begin{multline*}
	\int_{\Omega}\frac{(|D u| - U)_+^2}{2} 
	\leq \Eren^{1,2}(u) \\+ \Esg^{1,2}(\Trace_{\partial \Omega}u)\Big ( \big [\Esg^{1,{p'}}(\Tr_{\partial\Omega}u) \big]_{\mathrm{Lip}([3/2,2])}+ |\log \dist(\{a_i\}_{i = 1,\dots,k},\partial\Omega)|\Big )
\end{multline*}
where $\{a_i\}_{i = 1,\dots,k}\subset \Omega$ are the associated singularities of the renormalizable map $u$. 
\end{corollary}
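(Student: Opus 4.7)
The plan is to apply Proposition \ref{prop:mixedboundedness} to the constant sequence $u_n \equiv u$ with some $p_n \in (1,2)$ satisfying $p_n \nearrow 2$, and then pass the conclusions \eqref{eq:plusdjU}--\eqref{eq:plusdjUIII} to the limit via the lower-semicontinuity lemmata of Section \ref{sec:mixed}. Since $u \in W^{1,2}_\ren(\Omega,\manifold N) \subseteq W^{1,p}(\Omega,\manifold N)$ for every $p \in [1,2)$ (see the discussion following \eqref{eq_caerieNuoGheich6fohquooz}), the constant sequence lives in $W^{1,p_n}(\Omega,\manifold N)$. As in Section \ref{subsub:density}, I first enlarge the domain by setting $g = \Tr_{\partial\Omega}u$ and extending via \eqref{eq:extension_of_maps} to obtain a map $u_0$ defined on a collar $\Omega_{\delta_0}\setminus\Omega$ such that $u_0\in W^{1,2}(\Omega_{\delta_0}\setminus\Omega,\manifold N)$; the renormalizability is untouched and the associated singular set $\{a_i\}_{i=1,\dots,k}$ remains in $\Omega$.

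The next step is to check the finiteness of the quantity $\Lambda$ in \eqref{eq:upperBoundCompactnessthmII}. Rewriting the integral with \eqref{eq_caerieNuoGheich6fohquooz} gives
\[
\int_\Omega \frac{|Du|^{p_n}}{p_n} - \frac{\Esg^{1,2}(g)}{2-p_n}
= \Eren^{1,p_n}(u) + \frac{1}{2-p_n}\Bigl[\sum_{i=1}^k\frac{\lambda([u,a_i])^{p_n}}{(2\pi)^{p_n-1}p_n} - \Esg^{1,2}(g)\Bigr].
\]
Proposition \ref{prop:limitErenpToEren} handles the first term, $\Eren^{1,p_n}(u)\to\Eren^{1,2}(u)$. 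Because the singularities $([u,a_i],a_i)_{i=1,\dots,k}$ form a minimal topological resolution of $g$, the bracket vanishes to leading order at $p_n=2$, and a Taylor expansion in $p_n$ of the type performed in \eqref{eq:technicallimit} combined with the local Lipschitz bound \eqref{eq:lipestimate} for $p\mapsto\Esg^{1,p}(g)$ given by Lemma \ref{lemma:continuite_of_esgp} yields a finite limit with the announced dependence on $[\Esg^{1,p'}(\Tr_{\partial\Omega}u)]_{\mathrm{Lip}([3/2,2])}$ and $|\log\dist(\{a_i\},\partial\Omega)|$.

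With $\Lambda<\infty$, Proposition \ref{prop:mixedboundedness} supplies functions $U_n\colon\Omega\to\{0\}\cup[\sysN/(2\pi\delta),\infty)$ satisfying \eqref{eq:plusdjU}--\eqref{eq:plusdjUIII} with $u_n\equiv u$. The uniform perimeter bound \eqref{eq:plusdjUIII} and Lemma \ref{coro:convergencofpropgrandU} extract a subsequence with $U_{n_k}\to U$ almost everywhere, the weak-$L^2$ estimate $\sup_{t>0}t^2\vol{\{U>t\}}\leq\Esg^{1,2}(g)$ follows from \eqref{eq:plusdjUII} via Lemma \ref{lemma:convergfatou}, and the analogous perimeter bound is inherited from \eqref{eq:cjsudgk}. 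Finally, Lemma \ref{lemma:fatoirp} applied to the constant sequence $v_n\equiv Du$ and to $U_n\to U$ gives
\[
\int_\Omega\frac{(|Du|-U)_+^{2}}{2}\leq \varliminf_n\int_\Omega\frac{(|Du|-U_n)_+^{p_n}}{p_n}\leq \Lambda,
\]
which is the announced estimate. The bound on $|Du|$ itself in $L^{2,\infty}(\Omega,\R)$ follows from $|Du|\leq(|Du|-U)_+ + U$ together with the two weak-$L^2$ controls, exactly as in the proof of Corollary \ref{coro:dfjhmlfd}.

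The main obstacle is verifying $\Lambda<\infty$: the hypothesis of Proposition \ref{prop:mixedboundedness} features $\Esg^{1,2}(g)$ rather than $\sum_i\lambda([u,a_i])^2/(4\pi)$, so the $1/(2-p_n)$ singularity cancels at leading order only when the singularities of $u$ form a minimal topological resolution of $g$; the quantitative Lipschitz bound \eqref{eq:lipestimate} then absorbs the subleading contribution. Everything else is a routine orchestration of the three Fatou-type lemmata of Section \ref{sec:mixed}.
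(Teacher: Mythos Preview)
Your proposal is correct and follows the same approach as the paper: apply Proposition~\ref{prop:mixedboundedness} to the constant sequence $u_n \equiv u$ (verifying hypothesis~\eqref{eq:upperBoundCompactnessthmII} via Proposition~\ref{prop:limitErenpToEren}), then pass the estimates to the limit using Lemmas~\ref{lemma:fatoirp}, \ref{coro:convergencofpropgrandU} and~\ref{lemma:convergfatou}. The minimal-topological-resolution subtlety you flag in your last paragraph is indeed what makes the $1/(2-p_n)$ singularity cancel and is glossed over in the paper's own proof as well; note that it is not part of the corollary's hypotheses, so either it is being tacitly assumed or $\Esg^{1,2}(\Tr_{\partial\Omega}u)$ should be read as $\sum_i \lambda([u,a_i])^2/(4\pi)$.
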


\begin{proof}[Proof of corollary \ref{coro:erenweakL2}]
	The sequence defined by $u_n = u$ for all $n$ satisfies the assumption \eqref{eq:upperBoundCompactnessthmII} of proposition \ref{prop:mixedboundedness} by proposition \ref{prop:limitErenpToEren}. 
	Combining the estimates of proposition \ref{prop:mixedboundedness} with the ones of lemma \ref{lemma:fatoirp}, lemma \ref{coro:convergencofpropgrandU} and lemma \ref{lemma:convergfatou} we obtain the conclusion thanks to proposition \ref{prop:limitErenpToEren}.
\end{proof}
 
\section{Convergence of minimizers}\label{section:conv_of_mins}
    \begin{proposition}\label{prop:conv_of_min}
	Let $\Omega\subset \R^2$ be a bounded Lipschitz domain and $\manifold N$ a compact Riemannian manifold.

	Let $(p_n)_n$ be a sequence satisfying, for each $n$,  $p_n \in (1,2)$ and let $(u_n)_n$ be a sequence of minimizing $p_n$-harmonic maps such that, for all $n$, $u_n \in W^{1,p_n}(\Omega, \manifold N)$ and  $\Trace_{\partial \Omega}u_n = \Trace_{\partial \Omega}u_0 \in W^{\sfrac{1}{2},2}(\partial \Omega, \manifold N)$.

	If $p_n \nearrow 2$, then up to some subsequence $(u_n)_n$ converges almost everywhere to a  renormalizable map $u_* \in W^{1,2}_\ren(\Omega,\manifold N)$ of trace $\Trace_{\partial \Omega}u_* = \Trace_{\partial \Omega}u_0$. We denote the associated singular points of the renormalizable map $u_*$ by $\{a_i\}_{i = 1,\dots,\kappa} \subset \Omega$. In addition to \ref{item:firsitonfg}--\ref{item:lsc} of proposition \ref{prop:compactnessthm}, we have
	\begin{enumerate}[(i)] \setcounter{enumi}{4}
		\item $Du_n \to Du_*$ almost everywhere in $\Omega$\label{item:dsjklkjvhsd} and 
		\label{eq:convergenceofthemass}
		\begin{equation*} 
			\int_\Omega |Du_*|^{p_n} - \int_\Omega |Du_n|^{p_n}\xrightarrow{n\to \infty}0,
		\end{equation*}	
		\item \label{item:propminlim}
		\begin{equation*}
			\begin{split}
				\lim_{n \to \infty} \int_{ \Omega } \frac{|Du_n|^{p_n}}{p_n} -\frac{\Esg^{1,2}(\Trace_{\partial \Omega}u_n)}{2 - p_n}  & = \Eren^{1,2}(u_*) + \mathrm{H}([u,a_i])_{i = 1,\dots,\kappa}\\
				&=\mathcal{E}^{1, 2}_{\mathrm{top}, \gamma_1, \dotsc, _{\gamma_k}} ([u_*,a_1], \dotsc, [u_*, a_k])
				+ \mathrm H ([u_*, a_i])_{i = 1,\dots,\kappa},
			\end{split}
		\end{equation*}
		\item  \label{eq:minrenconvmin}
		\(		  \Eren^{1,2}(u_*) + \mathrm{H}([u,a_i])_{i = 1,\dots,\kappa}
		= 
		\inf\left\{\Eren^{1,2}(u_*) + \mathrm{H}([u,a_i])_{i = 1,\dots,\kappa} : \begin{matrix}u \in W^{1, 2}_{\mathrm{ren}} (\Omega, \manifold N)\\ 
			\Trace_{\partial \Omega} u_* = g
		\end{matrix}\right\},\)
		
		\item \label{eq:minrenconvgeom} the charges \(([u_*, a_i])_{i = 1, \dotsc, \kappa}\) and points $(a_i)_{i = 1,\dots,\kappa}$ minimize
		the renormalized energy of the configuration of points
		\begin{multline}
			\mathcal{E}^{1, 2}_{\mathrm{top}, \gamma_1, \dotsc, {\gamma_\kappa}} ([u_*,a_1], \dotsc, [u_*, a_k])
			+ \mathrm H ([u_*, a_i])_{i = 1,\dots,\kappa}\\
			= 
			\inf\Big \{
			\mathcal{E}^{1, 2}_{\mathrm{top}, \gamma_1, \dotsc, \gamma_\kappa} (x_1, \dotsc, x_k) + \mathrm H (\gamma_i)_{i = 1,\dots,\kappa}:\\\begin{matrix}
				x_1, \dotsc, x_k \in \Omega\\ 
				(\gamma_1,\dotsc, \gamma_k) \text{ is a minimal topological resolution }
			\end{matrix}
			\Big \}
		\end{multline}
		
		\item for each $i = 1,\dots,\kappa$, and $\rho \in (0,\rho(\{a_i\}_{i = 1,\dots,\kappa}))$
		\begin{equation}\label{eq:min_conv_trois}
			\int_{\B(a_i;\rho)} \frac{|Du_n|^{p_n}}{p_n}- \frac{\lambda([u_*,a_i])^{p_n}}{(2\pi)^{p_n}p_n|\cdot - a_i|^{p_n}}  \xrightarrow{n \to\infty} \int_{ \B(a_i;\rho)}\frac{|Du_*|^2}{2} - \frac{\lambda([u_*,a_i])^2}{8\pi^2 |\cdot - a_i|^2}.
		\end{equation}
	\end{enumerate}
\end{proposition}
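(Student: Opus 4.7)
My plan is to combine the compactness theorem (proposition~\ref{prop:compactnessthm}) with the upper bound (proposition~\ref{prop:upperBound}) via the minimality of the maps $u_n$. First, since $\manifold N$ is compact, proposition~\ref{prop:tralalala} produces at least one competitor $v \in W^{1,2}_\ren(\Omega,\manifold N)$ with $\Trace_{\partial\Omega}v = g$. Applying proposition~\ref{prop:upperBound} to $v$ and using the minimality of $u_n$ (noting the inclusion $W^{1,2}_\ren \subset W^{1,p_n}_g$ recalled after \eqref{eq_caerieNuoGheich6fohquooz}), we have $\int_\Omega |Du_n|^{p_n}/p_n \le \int_\Omega |Dv|^{p_n}/p_n$, which yields the uniform bound \eqref{eq:upperBoundCompactnessthm} required by proposition~\ref{prop:compactnessthm}. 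Extracting the subsequence provided there, we obtain $u_n \to u_*$ almost everywhere with $u_* \in W^{1,2}_\ren(\Omega, \manifold N)$, and assertions \ref{item:firsitonfg}--\ref{item:lsc} transfer directly to items (i)--(iv) of the present statement.

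To close (vi) and (vii), I now use $u_*$ itself as an admissible competitor: minimality gives $\int_\Omega |Du_n|^{p_n}/p_n \le \int_\Omega |Du_*|^{p_n}/p_n$, and subtracting $\Esg^{1,2}(g)/(2-p_n)$ on both sides while invoking proposition~\ref{prop:upperBound} applied to $u_*$ produces
\[
\varlimsup_{n\to \infty} \Bigl[\int_\Omega \tfrac{|Du_n|^{p_n}}{p_n} - \tfrac{\Esg^{1,2}(g)}{2-p_n}\Bigr] \le \Eren^{1,2}(u_*) + \mathrm{H}([u_*, a_i])_{i=1,\dots,\kappa}.
\]
Matching this against the lower bound \ref{item::scicompactnesslog} of proposition~\ref{prop:compactnessthm} forces equality and the existence of the limit, yielding (vi). Repeating the same chain with an arbitrary $v \in W^{1,2}_\ren$ of trace $g$ in place of $u_*$ gives (vii), and (viii) then follows from the two-sided correspondence between renormalizable maps and configurations of points of propositions~\ref{proposition_ren_map_to_pts}--\ref{proposition_ren_map_to_ptsII}, since $\mathrm{H}$ depends only on the homotopy classes of the charges.

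For (ix) and the energy identity in (v), I decompose $\Omega = (\Omega \setminus \bigcup_i \B(a_i;\rho)) \cup \bigcup_i \B(a_i;\rho)$ and use the integral representation of proposition~\ref{prop:polarCoordEren} to split $\Eren^{1,2}(u_*) + \mathrm{H}$ correspondingly. Each piece is bounded below by the piecewise $\varliminf$ estimates of proposition~\ref{prop:compactnessthm}\ref{item:lsc} and its analogue on $\Omega \setminus \bigcup_i \B(a_i;\rho)$. Since the sum of these lower bounds equals the total limit established in (vi), a squeeze argument forces equality in each piece, which is exactly \eqref{eq:min_conv_trois}. The norm identity $\int_\Omega |Du_n|^{p_n} - \int_\Omega |Du_*|^{p_n} \to 0$ in (v) is obtained by the same argument applied to $u_*$ (via proposition~\ref{prop:limitErenpToEren}) and subtraction.

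Finally, for the almost everywhere convergence $Du_n \to Du_*$ in (v): from proposition~\ref{prop:compactnessthm}\ref{item:boundedseqcompact} we have $Du_n \rightharpoonup Du_*$ weakly in $L^q_{\mathrm{loc}}(\Omega \setminus \{a_i\}_{i=1,\dots,\kappa})$ for every $q \in [1,2)$, and the previous squeeze gives convergence of the $p_n$-energies on each piece. Inserting these into a Clarkson-type inequality for $\xi \mapsto |\xi|^{p_n}$ promotes the weak convergence to strong $L^q_{\mathrm{loc}}$ convergence of gradients away from the singularities, and extracting a further subsequence yields the a.e.\ convergence there; near each $a_i$, lemma~\ref{lemma:usefulmajoration} provides the majorant that allows the same convexity argument to be run against the pointwise model $x \mapsto \lambda([u_*,a_i])/(2\pi |x-a_i|)$. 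I expect this last step to be the main technical obstacle, because the exponent $p_n$ varies with $n$ and the singular contribution at the $a_i$ must be tracked carefully to avoid being absorbed into the diverging $\Esg^{1,2}(g)/(2-p_n)$ term.
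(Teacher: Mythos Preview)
Your proposal is correct and follows essentially the paper's own strategy: verify the hypothesis of proposition~\ref{prop:compactnessthm} via proposition~\ref{prop:upperBound} and minimality, then sandwich the $\varliminf$ from \ref{item::scicompactnesslog} against the $\varlimsup$ from the upper bound applied to $u_*$ (and to a generic $v$) to obtain (vi)--(viii), and finally split $\Omega$ along $\partial\B(a_i;\rho)$ and use proposition~\ref{prop:polarCoordEren} together with \ref{item:lsc} to force equality on each piece, yielding (ix) and (v). The paper packages the ``Clarkson-type'' step as lemma~\ref{lemma:uniformconvexitypq}, based on Hanner's inequality with varying exponent, which is exactly the device you describe.

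The only point where you overcomplicate things is the last sentence. You flag the region near the $a_i$ as ``the main technical obstacle'' for the almost-everywhere convergence of $Du_n$, but there is nothing to do there: once you have, for every fixed $\rho>0$, strong convergence of $Du_n$ on $\Omega\setminus\bigcup_i\B(a_i;\rho)$ (which your squeeze plus uniform convexity already gives), a diagonal extraction over $\rho_m\searrow 0$ yields $Du_n\to Du_*$ almost everywhere on $\Omega\setminus\{a_i\}_{i=1,\dots,\kappa}$, hence on $\Omega$ since the singular set is finite. No separate argument on $\B(a_i;\rho)$, and in particular no appeal to lemma~\ref{lemma:usefulmajoration}, is needed for this step.
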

The conclusion of proposition \ref{prop:conv_of_min} implies that $u_n \to u_*$ in $W^{1,p}(\Omega\setminus \bigcup_{i = 1}^k\B(a_i;\rho),\manifold N)$, for each $p\in[1,2)$ and even more (see \eqref{eq:amotrer}):  
\begin{equation}\label{eq:ciogdvdfhi}
	\int_{\Omega\setminus \bigcup_{i = 1}^k\B(a_i;\rho)}|Du_n - Du_*|^{p_n} \xrightarrow{n \to\infty}0.
\end{equation}
We will refer to \eqref{eq:ciogdvdfhi} as strong convergence of the gradients. Meanwhile we cannot hope for
\begin{equation}
	\label{eq_iecaupho6iechooM2thahPhi}
	\varliminf_{n\to\infty}\int_\Omega |Du_n|^{p_n} <+\infty
\end{equation}
which would imply that $W^{1,2}_g(\Omega,\manifold N)$ is not empty by an application of Fatou lemma.

Note that even if 
\begin{equation}\label{eq:ciogdvdfhiII}
	\int_{\Omega}|Du_n - Du_*|^{p_n} \xrightarrow{n \to\infty}0
\end{equation} holds, it would not imply \eqref{eq_iecaupho6iechooM2thahPhi}. Therefore one could wonder if \[\int_{\B(a_i;\rho)}|Du_n - Du_*|^{p_n} \xrightarrow{n \to\infty}0\] holds true near the singularities $a_i$ of the map as \eqref{eq:ciogdvdfhi} already holds true. As the following calculation shows, it could be related to the rate of convergence of singularities of $p$-harmonic maps (in \cite{hardt1987mappings} it is shown that $p$-harmonic mappings are smooth outside a finite number of points) to the points $a_i$ of the renormalized maps. As \eqref{eq:min_conv_trois} suggests, the norm of a derivative of a $p$-harmonic map behaves as $1/|x - a_p|$  near one of its singularities $a_p$ and a renormalized map satisfy $\sup_{x \in \B(a;\rho)}|x - a||Du_*|(x) < \infty$ near on of its singularities $a$ (see proposition \ref{prop:reg_of_ren_map} \ref{item:behviooir_neara pont}). On the other hand, we have
\begin{equation}\label{eq:inequlaity}
	2^{1 - p}3^{p - 2}\pi\frac{|a_p|^{2 - p}}{2 - p} \leq \int_{\B(0;1)}\left | \frac{1}{|x|} - \frac{1}{|x - a_p|}\right |^p\d x  \leq 2^{5}\pi\frac{|a_p|^{2 - p}}{2 - p}
\end{equation}
if $|a_p| < 1/2$.
\begin{proof}[Proof of \eqref{eq:inequlaity}]
	\emph{Lower bound.} For all $x \in \B(a;|a|/3)$, $|x - a| < |x|/4$ and
	\[
	\left | \frac{1}{|x|} - \frac{1}{|x - a|}\right |^2 \geq \frac{1}{|x - a|} \left ( \frac{1}{|x - a|} - \frac{2}{|x|}\right ) \geq \frac{1}{2|x - a|^2}
	\]
	and 
	\[
	\int_{\B(a;|a|/3)}\left | \frac{1}{|x|} - \frac{1}{|x - a|}\right |^p\d x \geq \frac{2\pi|a|^{2 - p}}{3^{2 - p}2^p(2 - p)}.
	\]
	
	\emph{Upper bound.} Setting $a_t = (1 - t) 0 + t a$ for $t \in (0,1)$, there exists a $t \in (0,1)$ such that for all $x \in \B(0;1) \setminus \B(a; 2|a|)$
	\[
	\left | \frac{1}{|x|} - \frac{1}{|x - a|}\right | \leq   \frac{|a|}{|x- a_t|^2}.
	\]
	Also,
	\begin{align*}
		\int_{\B(0;1)}\left | \frac{1}{|x|} - \frac{1}{|x - a|}\right |^p\d x & \leq |a|^p\int_{\B(0;1)\setminus \B(a_t;2|a|)}\frac{\d x}{|x- a_t|^{2p}} + 2^p\int_{\B(0;4|a|)}\frac{\d x}{|x|^p} \\
		& \leq \frac{2^{4p}\pi |a|^{2 - p}}{2p - 2} + 2^{p + 1 + 2(2 - p)}\pi \frac{|a|^{2 - p }}{2 - p} . \qedhere         
	\end{align*}
\end{proof}

For each $a_i$ with $i = 1, \dots,\kappa$, the points in proposition \ref{prop:conv_of_min}, the following lemma (lemma \ref{lemma:convergenceonalsmosteiioeg}) implies that on almost every disk $\partial \B(a_i;\rho)$ and all $i = 1, \dots,\kappa$, for $n$ large enough depending on $\rho$, $u_n|_{\partial \B(a;\rho)}$ is in the same homotopy class that $u_*|_{\partial \B(a;\rho)}$. Since $u_* \in W^{1,2}_\mathrm{loc}(\B(a_i;\rho(\{a_i\}_{i = 1,\dots,\kappa}))\setminus \{a_i\}_{i = 1,\dots,\kappa})$, we obtain that for $n$ large enough $u_n|_{\partial \B(a_i;\rho)} \in [u_*,a_i]$. In particular, for $n$ large enough, $\lambda(\Tr_{\mathbb S^1(a;\rho)}u_n) = \lambda([u_*,a_i])$.
\begin{lemma}\label{lemma:convergenceonalsmosteiioeg} 
	Let $p \in [1,\infty)$ and $\Omega \subset \R^2$.
	Let $(u_n)_n$ be a sequence converging to a map $u$ in $W^{1,p}(\Omega,\manifold N)$. Then for all $a \in \Omega$ there exists an unrelabelled subsequence depending on $a$ such that for  almost every $\rho \in (0,\dist(a,\partial \Omega))$, $u_n \to u$ uniformly on $\partial \B(a;\rho)$.
\end{lemma}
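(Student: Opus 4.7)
The plan is to couple Fubini--Tonelli with a one-dimensional Sobolev embedding on circles. First, writing the $W^{1,p}$--norm of $u_n - u$ on $\Omega$ in polar coordinates centered at $a$, and using that the tangential derivative along $\partial \B(a;\rho)$ is pointwise bounded by $|D(u_n-u)|$, one has
\[
 \int_0^{\dist(a,\partial \Omega)} \|u_n - u\|_{W^{1,p}(\partial \B(a;\rho))}^p \, \d \rho \xrightarrow{n \to \infty} 0.
\]
The converse to Lebesgue's dominated convergence theorem then yields an unrelabelled subsequence $(u_n)_n$, depending on $a$, such that
\(
\|u_n - u\|_{W^{1,p}(\partial \B(a;\rho))} \to 0
\)
for almost every $\rho \in (0, \dist(a,\partial\Omega))$.

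For each such $\rho$, I reduce to showing that $W^{1,p}$--convergence on the one-dimensional circle $\partial \B(a;\rho)$ forces uniform convergence (applied componentwise, viewing maps into $\manifold N \subset \R^\nu$). When $p > 1$ this is the Morrey embedding $W^{1,p}(\partial\B(a;\rho)) \hookrightarrow C^{0,1-1/p}(\partial\B(a;\rho))$. When $p = 1$, one uses instead the elementary inequality
\[
 \|w\|_{L^\infty(\partial \B(a;\rho))} \leq \frac{1}{2\pi \rho}\|w\|_{L^1(\partial \B(a;\rho))} + \|w'\|_{L^1(\partial \B(a;\rho))},
\]
obtained by applying the fundamental theorem of calculus to the absolutely continuous representative of $w = u_n - u$ and averaging in the base point. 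Either way, uniform convergence on $\partial \B(a;\rho)$ follows.

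I do not foresee any substantive obstacle: the proof is a direct coupling of Fubini--Tonelli on the polar slicing with a one-dimensional embedding, and the manifold constraint plays no role since it is preserved under pointwise convergence. The only care required is bookkeeping, namely that the null set of radii and the extracted subsequence both depend on the chosen point $a$, which is precisely what the statement allows.
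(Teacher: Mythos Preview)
Your proof is correct and follows essentially the same route as the paper's: polar integration (Fubini--Tonelli) combined with the partial converse to Lebesgue's dominated convergence theorem to extract a subsequence converging in $W^{1,p}(\partial\B(a;\rho))$ for almost every $\rho$, followed by the one-dimensional Sobolev embedding to upgrade to uniform convergence. Your write-up is in fact more detailed than the paper's sketch, in particular by treating the borderline case $p=1$ explicitly via the elementary $L^\infty$ bound, which the paper absorbs into the phrase ``Sobolev embedding in dimension $1$''.
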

\begin{proof}[Proof of lemma \ref{lemma:convergenceonalsmosteiioeg}] Fix $a \in \Omega$.
	Integration in polar coordinates combined with the partial converse to the Lebesgue dominated convergence theorem yields a subsequence of $(u_n)_n$ converges in $W^{1,p}(\partial \B(a;\rho),\manifold N)$. By the Sobolev embedding in dimension $1$, along this subsequence $u_n \to u$ uniformly on $\partial \B(a;\rho)$.
\end{proof}

\subsection{Uniform convexity of the \texorpdfstring{$p\nearrow 2$}{p↗2} \texorpdfstring{$L^p$}{Lᵖ}-convergence}
We will obtain the strong convergence of the gradients using this proposition concerning the notion of convergence that induces
\begin{equation*}
	\int_X |u_p - u|^p \xrightarrow{p \nearrow 2} 0.
\end{equation*}
In the case of Lebesgue spaces, this proposition would have been a consequence of \emph{uniform convexity} \cite{clarkson1936uniformly}\cite[Theorem 5.4.2]{willem2013functional}.

\begin{lemma}\label{lemma:uniformconvexitypq}
	Let $X$ be a measure space. Let $v_p \in L^p(X,\R^\mu)$ for $p \in [1,\infty)$ and $v\in L^q(X,\R^\mu)$, for some $q \in (1,\infty)$. If
	\begin{equation}
		\lim_{p \to q}\int_X |v_p|^p=\int_X |v|^q \text{ and } \label{eq:weakSum}	2^q\int_X |v|^q \leq \varliminf_{p\to q}\int_X|v + v_p|^p,
	\end{equation}
	then
	\begin{equation*}
		\lim_{p \to q}\int_X |v_p - v|^p =0.
	\end{equation*}
\end{lemma}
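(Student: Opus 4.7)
The plan is to derive the conclusion from Clarkson's inequalities, which quantify the uniform convexity of $L^p$. The two hypotheses express that the pointwise Jensen inequality $|v + v_p|^p \leq 2^{p-1}(|v|^p + |v_p|^p)$ is asymptotically saturated in the integrated sense, and Clarkson's inequalities will upgrade this rigidity to strong convergence $v_p \to v$ in $L^p$.

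The preliminary step is to establish the auxiliary convergence
\[
  \lim_{p \to q} \int_X |v|^p = \int_X |v|^q.
\]
In the intended application the measure space has finite volume and $p \nearrow q = 2$, so this is immediate from Lebesgue's dominated convergence theorem with majorant $1 + |v|^q$. In the abstract measure-space setting of the lemma, one first derives $\limsup_{p \to q} \int |v|^p < \infty$ from the pointwise bound $|v|^p \le 2^{p-1}(|v + v_p|^p + |v_p|^p)$ combined with both hypotheses, and then combines Fatou's lemma with an equi-integrability argument near $q$ to get the precise convergence.

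With this in hand, I would split according to whether $p \geq 2$ or $p \leq 2$. For $p \geq 2$, Clarkson's first inequality is pointwise,
\[
  |v - v_p|^p + |v + v_p|^p \leq 2^{p-1}\bigl(|v|^p + |v_p|^p\bigr),
\]
and integrating then taking $\limsup_{p \to q}$ together with the two hypotheses yields
\[
  \limsup_{p \to q} \int_X |v - v_p|^p \leq 2^{q-1}\cdot 2 \int_X |v|^q - 2^q \int_X |v|^q = 0.
\]
For $p \in (1, 2]$ (in particular in the application $p \nearrow q = 2$), the pointwise bound fails, so I would invoke Clarkson's second inequality on the $L^p$ norms themselves,
\[
  \|v - v_p\|_p^{p'} + \|v + v_p\|_p^{p'} \leq 2\bigl(\|v\|_p^p + \|v_p\|_p^p\bigr)^{p'/p}, \qquad p' = \tfrac{p}{p-1}.
\]
As $p \to q$ the right-hand side converges to $2(2\int|v|^q)^{q'/q} = 2^{q'}(\int|v|^q)^{q'/q}$, by the algebraic identity $1 + q'/q = q'$. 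The liminf hypothesis on $\int |v + v_p|^p$ together with $p'/p \to q'/q$ forces $\liminf \|v + v_p\|_p^{p'} \ge 2^{q'}(\int|v|^q)^{q'/q}$, hence $\limsup \|v - v_p\|_p^{p'} \le 0$, so $\|v - v_p\|_p \to 0$ and thus $\int |v - v_p|^p \to 0$.

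The main obstacle is the auxiliary convergence $\int |v|^p \to \int |v|^q$ outside a finite-measure setting, which requires extracting integrability of $v$ at nearby exponents from the hypotheses themselves. Once it is secured, both Clarkson arguments close up cleanly, the only nontrivial point being the algebraic coincidence $2 \cdot 2^{q'/q} = 2^{q'}$ that makes the upper bound on $\|v - v_p\|_p^{p'}$ exactly meet the lower bound coming from $\|v + v_p\|_p^{p'}$ in the limit.
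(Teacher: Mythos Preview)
Your argument is correct and runs parallel to the paper's, but through Clarkson's inequalities rather than Hanner's. The paper extracts a subsequence realizing \(\eta = \varlimsup \|v_p - v\|_p\), applies Hanner's inequality
\[
  \bigl(\|v+v_n\|_{p_n} + \|v-v_n\|_{p_n}\bigr)^{p_n} + \bigl|\|v+v_n\|_{p_n} - \|v-v_n\|_{p_n}\bigr|^{p_n} \le 2^{p_n}\bigl(\|v_n\|_{p_n}^{p_n} + \|v\|_{p_n}^{p_n}\bigr)
\]
in the range \(p_n \le 2\), passes to the limit, and then invokes a separate elementary lemma (if \(2 \ge (1+x)^p + |1-x|^p\) with \(p \in (1,2]\) and \(x \ge 0\), then \(x=0\)) to conclude \(\eta = 0\). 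Your route via Clarkson's second inequality is essentially the same uniform-convexity mechanism, but it closes directly: the identity \(1 + q'/q = q'\) makes the upper and lower bounds meet without an auxiliary rigidity lemma. So your version is marginally more self-contained, while the paper's version makes the role of uniform convexity slightly more explicit by isolating the scalar inequality.

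One caveat: your proposed bootstrap for the auxiliary convergence \(\int |v|^p \to \int |v|^q\) in a general measure space does not work as written. The pointwise bound \(|v|^p \le 2^{p-1}(|v+v_p|^p + |v_p|^p)\) would require an \emph{upper} bound on \(\int |v+v_p|^p\), but the second hypothesis only supplies a \emph{lower} bound on its \(\liminf\). The paper's proof has the same implicit requirement (it writes \(\|v\|_{p_n}\) inside Hanner's inequality and passes to the limit), so neither argument is fully general on an arbitrary measure space; both are fine in the intended application, where the domain has finite measure and \(p \nearrow q\), so that \(|v|^p \le 1 + |v|^q\) gives the auxiliary convergence by dominated convergence, exactly as you note.
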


The proof of lemma \ref{lemma:uniformconvexitypq}  rely on Hanner's inequality \cite{hanner1956uniform}\cite[Theorem 4.1.9(c)]{willem2013functional} (see \eqref{eq:hanner} in the proof).

\begin{proof}[Proof of lemma \ref{lemma:uniformconvexitypq}]
	We assume $q \in (1,2]$ and write $\|\cdot \|_r$ for $\| \cdot \|_{L^r(X,\R^\mu)}$. Since 
	\begin{equation*}
		\varlimsup_{p\to q}\|v_p - v\|_p \leq 2\|v\|_q <\infty,
	\end{equation*}
	we consider a sequence $p_n \to q$ such that the superior limit is actually a limit that we denote $\eta \geq 0$. We further assume that $p_n \leq q \leq 2$. For each $n$, Hanner's inequality for exponent below $2$ implies
	\begin{equation}\label{eq:hanner}
		2^{p_n} (\|v_n\|_{p_n}^{p_n} + \|v\|_{p_n}^{p_n}) \geq \big (\|v + v_n\|_{p_n} + \|v - v_n\|_{p_n}\big)^{p_n} + \big |\|v + v_n\|_{p_n} + \|v - v_n\|_{p_n}\big|^{p_n}.
	\end{equation}
	Letting $n \to \infty$, we obtain 
	\begin{equation*}
		2\|v\|_{p}^{p} \geq \big (\|v\|_p + \eta\big)^p + \big |\|v\|_p - \eta\big|^p.
	\end{equation*}
	By lemma \ref{lemma:sublemma} below this forces $\eta =0$ if $\|v\|_p \neq 0$ as $\eta$ is nonnegative. If $q \geq 2$ or $p_n \geq q$ one proceeds the same way by if necessary using Hanner's inequality for exponent above $2$.
\end{proof}
\begin{lemma}
	\label{lemma:sublemma} 
	If $p \in (1,2]$ and if $x\geq 0$ satisfy $2 \geq (1 + x)^p + |1 - x|^p$ then $x = 0$. 
\end{lemma}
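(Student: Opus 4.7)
My plan is to reduce the inequality to a single application of strict convexity. Consider the function $\varphi : \mathbb{R} \to \mathbb{R}$ defined by $\varphi(t) = \lvert t \rvert^p$. For $p \in (1, 2]$, the function $\varphi$ is strictly convex on $\mathbb{R}$: on the open half-lines $(-\infty, 0)$ and $(0, \infty)$ this is clear from $\varphi'' = p(p-1)\lvert t \rvert^{p-2} > 0$, and strict convexity across $0$ follows from $\varphi(0) = 0$ together with $\varphi(\pm t) > 0$ for $t \ne 0$.

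Given $x \ge 0$, I would apply the midpoint form of strict convexity to the two points $1+x$ and $1-x$, whose midpoint is exactly $1$. Since $\lvert 1 + x \rvert = 1 + x$, this yields
\begin{equation*}
  \frac{(1+x)^{p} + \lvert 1 - x \rvert^{p}}{2}
  = \frac{\varphi(1+x) + \varphi(1-x)}{2}
  \ge \varphi\!\left(\frac{(1+x)+(1-x)}{2}\right)
  = 1,
\end{equation*}
with equality if and only if $1 + x = 1 - x$, that is, $x = 0$. Consequently, the hypothesis $2 \ge (1+x)^{p} + \lvert 1 - x \rvert^{p}$ forces $x = 0$, which is the desired conclusion.

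There is no real obstacle: the whole content is the strict convexity of $t \mapsto \lvert t \rvert^{p}$ for $p > 1$, and the only point that deserves a brief remark is that strict convexity holds on all of $\mathbb{R}$ (not just away from the origin), so that the argument covers both the regime $x \in [0, 1]$, where $1 - x$ and $1 + x$ have the same sign, and the regime $x > 1$, where they differ in sign. If one prefers a slightly more computational route, one may instead introduce $f(x) = (1+x)^{p} + \lvert 1 - x \rvert^{p}$ and verify directly that $f$ is strictly increasing on $[0, \infty)$ by computing $f'(x) = p(1+x)^{p-1} - p\,\mathrm{sign}(1-x)\lvert 1 - x\rvert^{p-1} > 0$ for $x > 0$ and $x \ne 1$, combined with continuity at $x = 1$; but the convexity argument above is the shortest.
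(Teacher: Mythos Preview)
Your proof is correct. The paper actually states this lemma without proof (it is immediately followed by the next subsection), treating it as an elementary fact, so there is nothing to compare against. Your strict-convexity argument for $t \mapsto |t|^p$ with $p>1$ is the natural route: the derivative $t \mapsto p\,\mathrm{sign}(t)\,|t|^{p-1}$ is continuous and strictly increasing on all of $\mathbb{R}$, which justifies strict convexity across the origin and hence covers both $x \in [0,1]$ and $x > 1$ at once. The alternative monotonicity computation you sketch for $f(x) = (1+x)^p + |1-x|^p$ is also valid and would serve equally well.
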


\subsection{Proof of convergence of minimizers (proposition \ref{prop:conv_of_min})}

\begin{proof}[Proof of proposition \ref{prop:conv_of_min}]
	By proposition \ref{prop:upperBound},  the sequence $(u_n)_n$ satisfies the assumption of the compactness proposition \ref{prop:compactnessthm}. This shows the existence of the limit map $u_* \in W^{1,2}_\ren(\Omega,\manifold N)$ of trace $\Trace_{\partial \Omega}u_* = \Trace_{\partial \Omega}u_0$. We call $\{a_i\}_{i = 1,\dots,\kappa} \subset \Omega$, the associated singular points of the renormalizable map $u_*$. 	
	Let $v \in W^{1,2}_\ren(\Omega,\manifold N)$ such that $\Trace_{\partial \Omega}v = \Trace_{\partial \Omega}u_0$ and its singularities $\{a_i^v\}_{i = 1,\dots,k} \subset \Omega$ satisfy for all $\rho \in (0,\rho(a_i^k)_{i = 1,\dots, k})$, 
	\begin{equation*}
		\Esg^{1,2}(\Trace_{\partial \Omega}v) = \sum_{i = 1}^k \frac{\lambda(\Trace_{\partial \B(a_i;\rho)}v)^2}{4\pi}.
	\end{equation*}
	Hence, by \ref{eq:scicompactnesslog} in proposition \ref{prop:compactnessthm}, the minimality assumption and the continuity proposition \ref{prop:limitErenpToEren}, we get
	\begin{multline}\label{eq:scicompactnessmin}
		\Eren^{1,2}(u_*)   - \sum_{i = 1}^\kappa \frac{\lambda([u_*, a_i])^2}{4\pi}\log\frac{\lambda([u_*, a_i])}{2\pi}  \\\leq  \varliminf_{n \to \infty} \int_{ \Omega } \frac{|Du_n|^{p_n}}{p_n} -\frac{\Esg^{1,2}(\Trace_{\partial \Omega}u_n)}{2 - p_n} - \frac{\Esg^{1,2}(\Trace_\Omega u_*)}{2} \\\leq \Eren^{1,2}(v)-  \sum_{i = 1}^k \frac{\lambda([v, a_i])^2}{4\pi}\log\frac{\lambda([v, a_i])}{2\pi}.
	\end{multline}
	This shows that $u_*$  has the minimizing property \ref{eq:minrenconvmin}. 
	To get \ref{item:propminlim}, we combine the integral representation of the renormalized energy (proposition \ref{prop:polarCoordEren}) and \ref{eq:minrenconvmin} in proposition \ref{prop:compactnessthm}: for all $v \in W^{1,2}_\ren(\Omega,\manifold N)$
	\begin{multline}
		\Eren^{1,2}(u_*) + \mathrm H([u_*,a_i])_{i = 1,\dots,\kappa} \leq \int_{\Omega \setminus \bigcup_{i = 1}^\kappa \B(a_i;\rho)} \frac{|Dv|^2}{2} - \log \frac{1}{\rho} \sum_{i = 1}^\kappa \frac{\lambda([v,a_i])^2}{4\pi} \\+ \sum_{i =1}^\kappa \int_0^\rho\left [\int_{\partial \B(a_i;r)}\frac{|Dv|^2}{2}\d \HH^1 - \frac{\lambda([v, a_i])^2}{4\pi r} \right] \d r.
	\end{multline}
	Minimizing over $u$ as in the definition \eqref{eq_def_renorm_top} of the renormalized geometrical energy and letting $\rho\searrow 0$ we obtain $\Eren^{1,2}(u_*) + \mathrm H([u_*,a_i])_{i = 1,\dots,\kappa} \leq \mathcal E^{1,2}_{\mathrm{top}, [u_*,a_1], \dotsc, [u_*,a_\kappa]}(v) + \mathrm H([u_*,a_i])_{i = 1,\dots,\kappa}$. The reverse inequality is given by proposition \ref{prop:compactnessthm} \ref{item::scicompactnesslog}. 
	The assertion \ref{eq:minrenconvgeom} follows from the arbitrariness of $v$.
	
	Let us now show that for all $\rho \in (0,\rho(\{a_i\}_{i = 1,\dots,\kappa}))$,
	\begin{equation} \label{eq:amotrer}
		\varlimsup_{n \to \infty}\int_{\Omega \setminus \bigcup_{i = 1}^k \B(a_i;\rho)} |Du_{n}|^{p_n}  \leq  \int_{\Omega \setminus\bigcup_{i = 1}^k \B(a_i;\rho)} |Du_{*}|^{2}
	\end{equation}
	because then we will get the strong convergence of the gradients in the sense of \eqref{eq:ciogdvdfhi} in view of lemma \ref{lemma:uniformconvexitypq} and the weak convergence of $Du_n$ given by proposition \ref{prop:compactnessthm}. 
	It implies by the partial converse of the Lebesgue dominated convergence theorem that $Du_n \to Du_*$ almost everywhere. 	
	
	Let us observe by the integral expression of the renormalized energy, proposition \ref{prop:polarCoordEren} and \eqref{eq:scicompactnessmin} evaluated at $v = u_*$, 
	\begin{IEEEeqnarray}{rCl}
		\int_{\Omega\setminus \bigcup_{i = 1}^k\B(a_i;\rho)}\frac{|Du_*|^2}{2} &=& \Eren^{1,2}(u_*) + \sum_{i = 1}^k \frac{\lambda([u_*,a_i])^2}{4\pi}\log\frac{1}{\rho}   \\ 
		\notag&&\quad\quad - \sum_{i = 1}^\kappa\int_0^\rho \left [\int_{\partial \B(a_i;r)}\frac{|Du_*|^2}{2}\d \HH^1 - \frac{\lambda([u_*,a_i])^2}{4\pi r}\right ]\d r \\
		\notag&=&\label{eq:fjcàsdjkhgf}\lim_{n \to \infty}\int_\Omega \frac{|Du_n|^{p_n}}{p_n} - \sum_{i = 1}^\kappa \frac{\lambda([u_*,a_i])^{p_n}}{2\pi p_n }\frac{(2\pi)^{2 - p_n}}{2 - p_n}  \\
		\notag&&\quad\quad+ \lim_{n\to \infty}\sum_{i = 1}^\kappa \frac{\lambda([u_*,a_i])^{p_n}}{2 p_n\pi}\frac{(2\pi)^{2 - p_n} - (2\pi \rho)^{2 - p_n}}{2 - p_n}\\
		\notag&&\quad\quad - \sum_{i = 1}^\kappa\int_0^\rho \left [\int_{\partial \B_r(a_i)}\frac{|Du_*|^2}{2}\d \HH^1 - \frac{\lambda([u_*,a_i])^2}{4\pi r}\right ]\d r. 
	\end{IEEEeqnarray}
	Hence,
	\begin{IEEEeqnarray}{rCl}\label{eq:limsupineq}
		\int_{\Omega \setminus \bigcup_{i = 1}^k\B(a_i;\rho)}\frac{|Du_*|^2}{2}&\geq& \varlimsup_{n\to \infty}\int_{\Omega \setminus \bigcup_{i =1 }^k\B(a_i;\rho)}\frac{|Du_n|^{p_n}}{p_n} \\
		&&\nonumber\quad\quad\quad + \varliminf_{n\to \infty} \int_{\bigcup_{i = 1}^\kappa\B(a_i;\rho)} \frac{|Du_n|^{p_n}}{p_n}- \sum_{i = 1}^\kappa\frac{\lambda([u_*,a_i])^{p_n}}{2p_n\pi  }\frac{(2\pi \rho)^{2 - p_n}}{2 - p_n} \\
		&&\nonumber\quad\quad\quad - \sum_{i = 1}^\kappa\int_0^\rho \left [\int_{\partial \B(a_i;r)}\frac{|Du_*|^2}{2}\d \HH^1 - \frac{\lambda([u_*,a_i])^2}{4\pi r}\right ]\d r. \\
		&\geq&\nonumber \varlimsup_{n\to \infty}\int_{\Omega \setminus  \bigcup_{i =1 }^k\B(a_i;\rho)}\frac{|Du_n|^{p_n}}{p_n},
	\end{IEEEeqnarray}
	where we used for the last inequality the limit \eqref{eq:presdessingu}.
	Moreover, we have equality  in \eqref{eq:limsupineq} by the weak convergence of the weak derivatives $Du_n$. Therefore, we deduce \eqref {eq:min_conv_trois} as \eqref{eq:presdessingu} allow us to localize on balls $\B(a_i;\rho)$ for $\rho \in (0,\rho(\{a_i\}_{i = 1,\dots,\kappa})$.
	
	By proposition \ref{prop:limitErenpToEren}, and by \eqref{eq:limsupineq},
	\begin{align}
		\Eren^{1,2}(u_*) & = \lim_{n\to \infty}\int_\Omega \frac{|Du_*|^{p_n}}{p_n} -  \sum_{i = 1}^k\frac{\lambda([u_*,a_i])^{p_n}}{(2\pi)^{p_n - 1} p_n}\frac{1 - \rho^{2 - p_n}}{2 - p_n},                               \\
		& =  \lim_{n \to \infty}\int_\Omega \frac{|Du_n|^{p_n}}{p_n} - \sum_{i = 1}^\kappa \frac{\lambda([u_*,a_i])^{p_n}}{(2\pi)^{p_n - 1} p_n }\frac{1 - \rho^{2 - p_n}}{2 - p_n}, \label{eq:dkshcsrjhg} 
	\end{align}
	and thus we get \ref{eq:convergenceofthemass}, concluding the proof.
\end{proof}

\section{The case of non-compact manifolds}\label{sec:whattodononcomapct}
We point out assumptions about the complete Riemmanian manifold $\manifold N$ to which the present method of proof applies without changing one iota.
\begin{enumerate}[(i)]
	\item (\emph{Positive systole}) The manifold $\manifold N$ should have a positive systole $\sysN$. This result is crucial to obtain the continuity of $\Esg^{1,p}(g)$ in $p$, see lemma \ref{lemma:continuite_of_esgp} and to count the balls in the proof of proposition \ref{prop:compactnessthm}, see \eqref{eq:systolecrucial}.
	\item (\emph{Nonemptiness of $W^{1,2}_{\ren,g}$}) For $g \in W^{\sfrac{1}{2},2}(\partial\Omega, \manifold N)$, $W^{1,2}_{\ren,g}(\Omega,\mathcal N)$ is not empty. Proposition \ref{prop:wrenpeutetreempty} shows that it can be empty in the non-compact case. This guarantees the existence of competitors, see proposition \ref{prop:upperBound}. One can check that the manifold $\mathcal N \doteq \mathbb S^1 \times \R$ and the map $g = \mathrm{Id}_{\mathbb S^1}\times 0$ verify $W^{1,2}_{\ren,g}(\B(0,1),\mathcal N) \neq \Oset$.
	\item (\emph{$W^{1,2}$-extension of the boundary data}) There exists a $\delta > 0$ such that for the boundary data $g \in W^{\sfrac{1}{2},2}(\partial \Omega, \manifold N)$ one can construct a map $U \in W^{1,2}(\partial \Omega_\delta,\manifold N)$ such that $\Trace_{\partial \Omega}U = g$ where $\partial \Omega_\delta$ means $\{x \in \R^2 : \dist(x,\partial \Omega) < \delta\}$. This replaces proposition \ref{prop:non-surjectivity_of_the_trace} and holds if $g \in L^\infty\cap W^{\sfrac{1}{2},2}(\partial \Omega, \manifold N)$ by proposition \ref{prop:non-surjectivity_of_the_trace} for instance.
\end{enumerate}

\section*{Data availability statement}

Data sharing not applicable to this article as no datasets were generated or analysed during the current study.

\bibliographystyle{abbrv} 
\bibliography{biblio.bib} 

\end{document}